\providerobustcmd*{\bigcupdot}{%
  \mathop{%
    \mathpalette\bigop@dot\bigcup
  }%
}
\newrobustcmd*{\bigop@dot}[2]{%
  \setbox0=\hbox{$\m@th#1#2$}%
  \vbox{%
    \lineskiplimit=\maxdimen
    \lineskip=-0.7\dimexpr\ht0+\dp0\relax
    \ialign{%
      \hfil##\hfil\cr
      $\m@th\cdot$\cr
      \box0\cr
    }%
  }%
}
\newcommand{\mystackrel}[3][T]{\stackrel{\eqmakebox[#1]{\scriptsize#2}}{#3}}
\tikzset{
  schraffiert/.style={pattern=horizontal lines,pattern color=#1},
  schraffiert/.default=black
}
\newcommand\numbereq{%
  \ifmeasuring@\else
    \refstepcounter{equation}%
  \fi
  \tag{\theequation}%
}
\DeclareMathOperator{\Aut}{Aut}
\DeclareMathOperator{\Conf}{Conf}
\DeclareMathOperator{\MapOp}{Map}
\DeclareMathOperator{\PMapOp}{PMap}
\DeclareMathOperator{\HomeoOp}{Homeo}
\DeclareMathOperator{\PHomeoOp}{PHomeo}
\DeclareMathOperator{\Z}{Z}
\DeclareMathOperator{\Sym}{S}
\DeclareMathOperator{\PZ}{PZ}
\DeclareMathOperator{\id}{\mathrm{id}}
\title{Mapping class groups for 2-orbifolds} 
\author{Jonas Flechsig}
\date{May 7, 2023}
\def\l@section{\@tocline{1}{0,2pt}{2pc}{8mm}{\ \ }} 
\def\l@subsection{\@tocline{1}{0,2pt}{2pc}{8mm}{\ \ }} 
\renewcommand{\maketitle}{
    \begin{center}

        \phantom{.}  

        {\LARGE \bf \@title\par}
        \vspace{0.5cm}

    \end{center}
}\makeatother
\begin{document}
\newtheorem*{theorem*}{Theorem}
\newtheorem{theorem}{Theorem}[section]
\newtheorem{corollary}[theorem]{Corollary}
\newtheorem{lemma}[theorem]{Lemma}
\newtheorem{fact}[theorem]{Fact}
\newtheorem*{fact*}{Fact}
\newtheorem{proposition}[theorem]{Proposition}
\newtheorem{thmletter}{Theorem}
\newtheorem{observation}[theorem]{Observation}
\newtheorem{notation}[theorem]{Notation}
\renewcommand*{\thethmletter}{\Alph{thmletter}}
\theoremstyle{definition}
\newtheorem{example}[theorem]{Example}
\newtheorem{question}[theorem]{Question}
\newtheorem{definition}[theorem]{Definition}
\newtheorem{construction}[theorem]{Construction}
\theoremstyle{remark}
\newtheorem{remark}[theorem]{Remark}
\newtheorem{conjecture}[theorem]{Conjecture}
\newtheorem{case}{Case}
\newtheorem{claim}{Claim}
\newtheorem*{claim*}{Claim}
\newtheorem{step}{Step}
\counterwithin{case}{theorem}
\renewcommand{\thecase}{\arabic{case}}
\counterwithin{claim}{theorem}
\renewcommand{\theclaim}{\arabic{claim}}
\counterwithin{step}{theorem}
\renewcommand{\thestep}{\arabic{step}}

\newenvironment{intermediate}[1][\unskip]{%
\vspace*{5pt}
\par
\noindent
\textit{#1.}}
{}
\vspace*{5pt}

\newcommand{\doubletable}[1]{\begin{tabular}[l]{@{}l@{}}#1\end{tabular}}
\newcommand{\set}[1][ ]{\ensuremath{ \lbrace #1 \rbrace}}
\newcommand{\bsl}{\ensuremath{\setminus}}
\newcommand{\grep}[2]{\ensuremath{\left\langle #1 | #2\right\rangle}}
\renewcommand{\ll}{\left\langle}
\newcommand{\rr}{\right\rangle}
\newcommand{\cpxbrmn}{B(\TheOrder,\TheOrder,\TheStrand)}
\newcommand{\cpxbr}[2]{B(#1,#1,#2)}
\newcommand{\Map}[2]{\MapOp_{#1}\left({#2}\right)}
\newcommand{\PMap}[2]{\PMapOp_{#1}\left({#2}\right)}
\newcommand{\MapOrb}[2]{\MapOp_{#1}^{orb}\left({#2}\right)}
\newcommand{\PMapOrb}[2]{\PMapOp_{#1}^{orb}\left({#2}\right)}
\newcommand{\MapId}[2]{\MapOp_{#1}^{\id}\left({#2}\right)}
\newcommand{\PMapId}[2]{\PMapOp_{#1}^{\id}\left({#2}\right)}
\newcommand{\MapIdOrb}[2]{\MapOp_{#1}^{\id,orb}\left({#2}\right)}
\newcommand{\PMapIdOrb}[2]{\PMapOp_{#1}^{\id,orb}\left({#2}\right)}
\newcommand{\HomeoId}[2]{\HomeoOp_{#1}^{\id}({#2})}
\newcommand{\PHomeoId}[2]{\PHomeoOp_{#1}^{\id}({#2})}
\newcommand{\Homeo}[2]{\HomeoOp_{#1}({#2})}
\newcommand{\PHomeo}[2]{\PHomeoOp_{#1}({#2})}
\newcommand{\HomeoIdOrb}[2]{\HomeoOp_{#1}^{\id,orb}({#2})}
\newcommand{\PHomeoIdOrb}[2]{\PHomeoOp_{#1}^{\id,orb}({#2})}
\newcommand{\HomeoOrb}[2]{\HomeoOp_{#1}^{orb}({#2})}
\newcommand{\PHomeoOrb}[2]{\PHomeoOp_{#1}^{orb}({#2})}
\newcommand{\PHomeoOrbt}[3]{\PHomeoOp_{#1}^{orb}({#2})}

\newcommand{\omicron}{o}
\newcommand{\cp}{c}
\newcommand{\pct}{p}
\newcommand{\TheCone}{N}
\newcommand{\ThePct}{L}
\newcommand{\Pc}{\theta}
\newcommand{\Pct}{\iota}
\newcommand{\NPct}{\lambda}
\newcommand{\TheOrder}{m}
\newcommand{\Ord}{t}
\newcommand{\TheStrand}{n}
\newcommand{\Order}{l}
\newcommand{\Strr}{h}
\newcommand{\Str}{i}
\newcommand{\Strand}{j}
\newcommand{\NStrand}{k}
\newcommand{\NNStrand}{l}
\newcommand{\pStrand}{p}
\newcommand{\qStrand}{q}
\newcommand{\sStrand}{s}
\newcommand{\tStrand}{t}
\newcommand{\Subdivision}{i}
\newcommand{\TheSubdivision}{p}
\newcommand{\Dim}{i}
\newcommand{\NDim}{j}
\newcommand{\TheDim}{k}
\newcommand{\Subdiv}{i}
\newcommand{\TheSubdiv}{p}
\newcommand{\NSubdiv}{j}
\newcommand{\TheNSubdiv}{q}
\newcommand{\NNSubdiv}{k}
\newcommand{\TheNNSubdiv}{r}
\newcommand{\NNNSubdiv}{l}
\newcommand{\TheFrac}{\frac{2\pi}{\TheOrder}}
\newcommand{\HalfFrac}{\frac{\pi}{\TheOrder}}
\newcommand{\neigh}{\lambda}
\newcommand{\htwC}{h_1^\twsC}
\newcommand{\htw}{h_{\TheStrand-1}^{\twsC'}}
\newcommand{\col}{blue}
\newcommand{\colo}{olive}
\newcommand{\short}{red}
\newcommand{\mult}{orange}
\newcommand{\red}{red}
\newcommand{\green}{olive}
\newcommand{\gre}{green}
\newcommand{\blue}{blue}
\newcommand{\SGpct}{\Sigma_\freeprod(\ThePct,\TheCone)}
\newcommand{\SG}{\Sigma_\freeprod(\ThePct)}
\newcommand{\Spct}{\Sigma(\ThePct,\TheCone)}
\renewcommand{\S}{\Sigma(\ThePct)}
\newcommand{\Sk}[1]{\Sigma(#1)}
\newcommand{\orbtwo}{\Sigma_{\freeprodtwo}}
\newcommand{\G}{G}
\newcommand{\g}{g}
\newcommand{\freeprod}{\Gamma}
\newcommand{\freeprodtwo}{\cycm\ast\cyc{\TheOrder'}}
\newcommand{\freeprodex}{\Gamma_{\TheOrder_\nu}^\TheCone}
\newcommand{\freeprodexk}[1]{\Gamma_{\TheOrder_\nu}^{#1}}
\newcommand{\freegrp}[1]{F_{#1}}
\newcommand{\free}[1]{F^{(#1)}}
\newcommand{\cycm}{\ZZ_\TheOrder}
\newcommand{\cyc}[1]{\ZZ_{#1}}
\newcommand{\kernel}{K}
\newcommand{\inter}[1]{{#1}^{\circ}}
\newcommand{\interext}[1]{{#1}^{\circ,\text{ext}}}

\newcommand{\Twist}{A}
\newcommand{\TwistP}{B}
\newcommand{\TwistC}{C}
\newcommand{\TwP}[1]{T_{#1}}
\newcommand{\TwC}[1]{U_{#1}}
\newcommand{\twist}{a}
\newcommand{\twistP}{b}
\newcommand{\twistC}{c}
\newcommand{\twP}[1]{t_{#1}}
\newcommand{\twC}[1]{u_{#1}}
\newcommand{\twsP}{t}
\newcommand{\twsC}{u}
\newcommand{\Twistn}[1]{X_{#1}}
\newcommand{\TwistnP}[1]{Y_{#1}}
\newcommand{\TwistnC}[1]{Z_{#1}}
\newcommand{\twistn}[1]{x_{#1}}
\newcommand{\twistnP}[1]{y_{#1}}
\newcommand{\twistnC}[1]{z_{#1}}
\newcommand{\twistnsC}{z}
\newcommand{\TwsP}{T}
\newcommand{\TwsC}{U}
\newcommand{\FD}{F}

\newcommand{\MA}{\ensuremath{\mathcal{MA}_\TheStrand}}
\newcommand{\pMA}{\ensuremath{\mathcal{MA}_\TheStrand(F_\freeprod)}}

\newcommand{\HA}{\ensuremath{\mathcal{HA}_\TheStrand}}

\newcommand{\MAo}{\ensuremath{\mathcal{MA}_{\TheStrand}(\Sigma_\freeprod)}}
\newcommand{\pMAo}{\ensuremath{\mathcal{MA}_{\TheStrand}(\Sigma_\freeprod^\ast)}}

\newcommand{\MAoZ}{\ensuremath{\mathcal{MA}_{\TheStrand}(D_{\ZZ_\TheOrder})}}
\newcommand{\MAoD}{\ensuremath{\mathcal{MA}_{\TheStrand}(\CC_{D_\TheOrder})}}
\newcommand{\tMAo}{\ensuremath{\tilde{\mathcal{MA}}_{\TheStrand}(\Sigma_\freeprod)}}

\newcommand{\HAo}{\ensuremath{\mathcal{HA}_{\TheStrand}(\Sigma_\freeprod)}}

\newcommand{\bpHAk}[1]{\ensuremath{\mathcal{HA}_{\TheStrand,\TheStrand',#1}}}
\newcommand{\bpHAo}{\ensuremath{\mathcal{HA}_{\TheStrand,\TheStrand'}(\Sigma_\freeprod)}}
\newcommand{\bpHAok}[1]{\ensuremath{\mathcal{HA}_{\TheStrand,\TheStrand',#1}(\Sigma_\freeprod)}}
\newcommand{\bpMAo}{\ensuremath{\mathcal{MA}_{\TheStrand,\TheStrand'}(\Sigma_\freeprod)}}
\newcommand{\bpMAok}[1]{\ensuremath{\mathcal{MA}_{\TheStrand,\TheStrand',#1}(\Sigma_\freeprod)}}
\newcommand{\bpMAoF}{\ensuremath{\mathcal{MA}_{\TheStrand,\TheStrand',\ThePct}^{F_\freeprod}(\Sigma_\freeprod)}}

\newcommand{\pbpMAo}{\ensuremath{\mathcal{MA}_{\TheStrand,\TheStrand'}(\Sigma_\freeprod^\ast)}}
\newcommand{\tbpMAo}{\ensuremath{\mathcal{MA}_{\TheStrand,\TheStrand'}(\Sigma_\freeprod)}}
\newcommand{\bpMAos}{\ensuremath{\mathcal{MA}_{\TheStrand,\TheStrand'}^{sim}(\Sigma_\freeprod)}}
\newcommand{\bpMA}{\ensuremath{\mathcal{MA}_{\TheStrand,\TheStrand'}}}
\newcommand{\bpMAk}[2]{\ensuremath{\mathcal{MA}_{\TheStrand,\TheStrand',{#1}}}(#2)}
\newcommand{\pbpMA}{\ensuremath{\mathcal{MA}_{\TheStrand,\TheStrand'}(F_\freeprod^\ast)}}
\newcommand{\tbpMA}{\ensuremath{\mathcal{MA}_{\TheStrand,\TheStrand'}(\Sigma_\freeprod)}}

\newcommand{\bpM}{\ensuremath{\mathcal{M}_{\TheStrand,\TheStrand',k}}}
\newcommand{\seg}{s}
\newcommand{\st}{\mathrm{st}}
\newcommand{\map}{\rho}
\newcommand{\Rep}{T}

\newcommand{\GL}[2][\TheRank]{\ensuremath{\operatorname{GL_{#1}}(#2)}}
\newcommand{\hmu}[2]{h_{#2}^{\tau_{#1}}}
\newcommand{\Stab}{\operatorname{Stab}}
\newcommand{\CC}{\mathbb{C}}
\newcommand{\RR}{\mathbb{R}}
\newcommand{\ZZ}{\mathbb{Z}}
\newcommand{\NN}{\mathbb{N}}
\newcommand{\PP}{\mathbb{P}}
\newcommand{\HH}{\mathbb{H}}
\newcommand{\SSS}{\mathbb{S}}
\newcommand{\iotaPMap}{\iota_{\PMap_\TheStrand}}
\newcommand{\piPMap}{\pi_{\PMap_\TheStrand}}
\newcommand{\iotaMapast}{\iota_{\PMap_\TheStrand^\ast}}
\newcommand{\piMapast}{\pi_{\PMap_\TheStrand^\ast}}
\newcommand{\iotaPZ}{\iota_{\PZ_\TheStrand}}
\newcommand{\piPZ}{\pi_{\PZ_\TheStrand}}
\newcommand{\sPZ}{\mathrm{s}_{\PZ_\TheStrand}}
\newcommand{\iotaPZast}{\iota_{\PZ_\TheStrand^\ast}}
\newcommand{\piPZast}{\pi_{\PZ_\TheStrand^\ast}}
\newcommand{\iotaPZpct}{\iota_{\PZ_\TheStrand^\ast}}
\newcommand{\piPZpct}{\pi_{\PZ_\TheStrand^\ast}}
\newcommand{\Push}{\textup{Push}}
\newcommand{\Forget}{\textup{Forget}}
\newcommand{\PushPMap}{\textup{Push}_{\textup{PMap}_\TheStrand}}
\newcommand{\ForgetPMap}{\textup{Forget}_{\textup{PMap}_\TheStrand}}
\newcommand{\piMap}{\pi_{\textup{Map}}}
\newcommand{\varphiMap}{\varphi_{\textup{Map}}}
\newcommand{\new}[1]{\color{black} #1 \color{black}}
\newcommand{\enew}[1]{\color{black} #1 \color{black}}

\author{J. Flechsig}
\address{Jonas Flechsig: Fakult\"at f\"ur Mathematik, Universit\"at Bielefeld, D-33501 Bielefeld, Germany}

\maketitle
\begin{center}
Jonas Flechsig
\\[5pt]
May 7, 2023
\\[10pt]
\textbf{Abstract} 
\end{center}

We define orbifold mapping class groups (with marked points) and study them using their action on certain orbifold analogs of arcs and simple closed curves. 
Moreover, we establish a Birman exact sequence for suitable subgroups of orbifold mapping class groups. \new{The short exact sequence} allows us to deduce finite presentations of these groups. This is the basis for a similar discussion of orbifold braid groups in \cite{Flechsig2023braid}.

\section{Introduction}

This article is motivated by the study of orbifold braid groups in \cite{Flechsig2023braid}. Orbifold braid groups are analogs of Artin braid groups or, more generally, surface braid groups. Instead of considering braids moving inside a disk or a surface, orbifold braids move inside a $2$-dimensional orbifold. 
Orbifold braid groups attracted interest since some of them contain spherical and affine Artin groups of type $D_\TheStrand,\tilde{B}_\TheStrand$ and $\tilde{D}_\TheStrand$ as finite index subgroups by work of Allcock \cite{Allcock2002}. For these Artin groups, the orbifold braid groups provide us with braid pictures. 
Roushon published several articles on the structure of orbifold braid groups \cite{Roushon2021,Roushon2022b,Roushon2023,Roushon2022a} and the contained Artin groups \cite{Roushon2021a}. Further, Crisp--Paris~\cite{CrispParis2005} studied the outer automorphism group of the orbifold braid group. 

\new{The studied orbifold braid groups are related to orbifold mapping class groups that are associated to the following orbifolds.} Let $\Sigma_\freeprod$ be the orbifold that is defined using the following data: The group~$\freeprod$ is a free product of finitely many finite cyclic groups. As such, $\freeprod$ acts on a planar, contractible surface $\Sigma$ with boundary, obtained by thickening the Bass--Serre tree (see Example \ref{ex:good_orb_free_prod} for details). If we add $\ThePct$ punctures, we obtain a similar orbifold as studied by Roushon in \cite{Roushon2023}, which we denote by $\Sigma_\freeprod(\ThePct)$. In contrast to his paper, we consider orbifolds with non-empty boundary (which does not affect the structure of the orbifold braid groups). The only singular points in the orbifold $\Sigma_\freeprod(\ThePct)$ are cone points that correspond to the finite cyclic factors of the free product $\freeprod$. 

The associated orbifold mapping class group with respect to $\TheStrand$ marked points is denoted by $\MapOrb{\TheStrand}{\Sigma_\freeprod(\ThePct)}$ for the punctured orbifold $\Sigma_\freeprod(\ThePct)$. A mapping class of $\Sigma_\freeprod(\ThePct)$ is represented by a $\freeprod$-equivariant homeomorphism of $\Sigma(\ThePct)$ that fixes cone points and the boundary~$\partial\Sigma(\ThePct)$. Such a homeomorphism respects the $\TheStrand$ marked points if it preserves the $\freeprod$-orbit of the $\TheStrand$ marked points as a set. The equivalence relation is induced by $\freeprod$-equivariant ambient isotopies fixing cone points, marked points and the boundary. 

\enew{
Mapping class groups of surfaces are studied by their action on arcs and simple closed curves. A basic tool for that is the bigon criterion, see \cite[Proposition 1.7]{FarbMargalit2011}. In particular, the bigon criterion implies that homotopic arcs and homotopic simple closed curves are ambient isotopic. For the study of $\MapOrb{}{\Sigma_\freeprod}$, we introduce orbifold analogs of arcs and simple closed curves, called \textit{$\freeprod$-arcs} and \textit{simple closed $\freeprod$-curves} (see Definitions \ref{def:freeprod-arcs} and \ref{def:freeprod-scc}). Moreover, we establish a bigon criterion for these analogs (see Propositions \ref{prop:bigon_crit_orb} and \ref{prop:bigon_crit_orb_scc}). As in the classical case, this allows us to deduce that homotopic $\freeprod$-arcs and homotopic simple closed $\freeprod$-curves are ambient isotopic (see Proposition \ref{prop:homo_ind_amb_iso}). 
}

Moreover, orbifold mapping class groups admit a homomorphism \[
\Forget_\TheStrand^{orb}:\MapOrb{\TheStrand}{\Sigma_\freeprod(\ThePct)}\rightarrow\MapOrb{}{\Sigma_\freeprod(\ThePct)} 
\]
by forgetting the marked points. Let $\MapIdOrb{\TheStrand}{\Sigma_\freeprod(\ThePct)}$ be the kernel of $\Forget_\TheStrand^{orb}$. 

Since the cone points are fixed, we may restrict mapping classes 
to the subsurface $\Sigma(\ThePct,\TheCone)$ that is also punctured at the cone points. Using that the quotient $\Sigma(\ThePct,\TheCone)/\freeprod$ is a disk $D(\ThePct,\TheCone)$ with $\ThePct+\TheCone$ punctures, this allows us to construct an isomorphism from $\MapIdOrb{\TheStrand}{\Sigma_\freeprod(\ThePct)}$ to a similar subgroup $\MapId{\TheStrand}{D(\ThePct,\TheCone)}$ of the mapping class group $\Map{\TheStrand}{D(\ThePct,\TheCone)}$ (see Proposition \ref{prop:iso_Map_orb_disk} for details). Moreover, recall that the Birman exact sequence yields the following for $\Map{\TheStrand}{D(\ThePct,\TheCone)}$: 
\\
\begin{adjustbox}{center}
\begin{tikzcd}[column sep=35pt]
1\rightarrow\pi_1\left(\Conf_\TheStrand\left(D(\ThePct,\TheCone)\right)\right) \arrow[r,"\Push_\TheStrand"] & \Map{\TheStrand}{D(\ThePct,\TheCone)} \arrow[r,"\Forget_\TheStrand"] & \Map{}{D(\ThePct,\TheCone)}\rightarrow1, 
\end{tikzcd}
\end{adjustbox}
see, for instance, \cite[Theorem 9.1]{FarbMargalit2011}. Based on Proposition \ref{prop:iso_Map_orb_disk}, this allows us to deduce a similar short exact sequence for the groups $\MapIdOrb{\TheStrand}{\Sigma_\freeprod(\ThePct)}$: 
\begin{thmletter}[Birman exact sequence for orbifold mapping class groups]
\label{thm-intro:Birman_exact_seq}
The following diagram is a short exact sequence: 
\\
\begin{adjustbox}{center}
$1\rightarrow\pi_1\left(\Conf_\TheStrand\left(D(\ThePct,\TheCone)\right)\right)\xrightarrow{\Push_\TheStrand^{orb}}\MapIdOrb{\TheStrand}{\Sigma_\freeprod(\ThePct)}\xrightarrow{\Forget_\TheStrand^{orb}}\underbrace{\MapIdOrb{}{\Sigma_\freeprod(\ThePct)}}_{=1}\rightarrow1, $
\end{adjustbox}
see Theorem \textup{\ref{thm:Birman_es_orb}} for details. 
\end{thmletter}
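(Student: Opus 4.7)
The plan is to reduce the statement to the classical Birman exact sequence via the isomorphism provided by Proposition \ref{prop:iso_Map_orb_disk}. That proposition supplies a natural isomorphism
$$\Phi_\TheStrand \colon \MapIdOrb{\TheStrand}{\Sigma_\freeprod(\ThePct)} \xrightarrow{\cong} \MapId{\TheStrand}{D(\ThePct,\TheCone)}$$
which replaces the orbifold mapping class group by the identity subgroup of the mapping class group of a punctured disk. This transports the entire question into a classical setting where the Birman machinery is directly applicable.

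Next I would apply the classical Birman exact sequence
$$1\to\pi_1(\Conf_\TheStrand(D(\ThePct,\TheCone)))\xrightarrow{\Push_\TheStrand}\Map{\TheStrand}{D(\ThePct,\TheCone)}\xrightarrow{\Forget_\TheStrand}\Map{}{D(\ThePct,\TheCone)}\to 1$$
and restrict it to $\ker(\Forget_\TheStrand)=\MapId{\TheStrand}{D(\ThePct,\TheCone)}$. Exactness at the middle term immediately gives that $\Push_\TheStrand$ is an isomorphism onto this kernel. Setting $\Push_\TheStrand^{orb}:=\Phi_\TheStrand^{-1}\circ\Push_\TheStrand$ therefore yields an injective homomorphism whose image coincides with $\MapIdOrb{\TheStrand}{\Sigma_\freeprod(\ThePct)}$. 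The right hand term $\MapIdOrb{}{\Sigma_\freeprod(\ThePct)}$ is trivial by definition: with no marked points, $\Forget_0^{orb}$ is the identity, so its kernel is the trivial group. The remaining exactness claims then reduce to $\ker(\Forget_\TheStrand^{orb})=\MapIdOrb{\TheStrand}{\Sigma_\freeprod(\ThePct)}=\im(\Push_\TheStrand^{orb})$, which holds tautologically by the definition of the middle term and by the construction of $\Push_\TheStrand^{orb}$.

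The main technical obstacle is not the diagram chase but verifying that the abstractly defined $\Push_\TheStrand^{orb}=\Phi_\TheStrand^{-1}\circ\Push_\TheStrand$ agrees with the geometric orbifold push one wants in practice: given a loop $\gamma$ in $\Conf_\TheStrand(D(\ThePct,\TheCone))$, lift it to a $\freeprod$-equivariant path of marked point configurations in $\Sigma(\ThePct,\TheCone)$, extend it to a $\freeprod$-equivariant ambient isotopy of $\Sigma(\ThePct)$ via an equivariant version of the isotopy extension theorem, and output the endpoint as an element of $\MapIdOrb{\TheStrand}{\Sigma_\freeprod(\ThePct)}$. Checking this compatibility amounts to verifying naturality of $\Phi_\TheStrand$ with respect to lifting and descending isotopies between $\Sigma(\ThePct,\TheCone)$ and its quotient disk, and is the point at which the detailed construction in Proposition \ref{prop:iso_Map_orb_disk} does the heavy lifting, since that proposition is precisely engineered so that the restriction-and-descent operation is equivariant and respects the marked point data.
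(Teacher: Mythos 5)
Your proposal is correct and follows essentially the same route as the paper: Theorem \ref{thm:Birman_es_orb} transports the classical Birman exact sequence for $D(\ThePct,\TheCone)$, restricted to $\MapId{\TheStrand}{D(\ThePct,\TheCone)}$, across the isomorphism of Proposition \ref{prop:iso_Map_orb_disk}, defining $\Push_\TheStrand^{orb}$ as $\varphiMap\circ\Push_\TheStrand$ exactly as you propose with $\Phi_\TheStrand^{-1}\circ\Push_\TheStrand$. The ``technical obstacle'' you flag at the end is moot here, since the paper simply takes $\varphiMap\circ\Push_\TheStrand$ as the definition of the orbifold point-pushing map rather than matching it against an independently defined geometric construction.
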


Let $\PMapIdOrb{\TheStrand}{\Sigma_\freeprod(\ThePct)}$ be the subgroup of mapping classes in $\MapIdOrb{\TheStrand}{\Sigma_\freeprod(\ThePct)}$ that induce the trivial permutation on the orbits of marked points. For these subgroups, we obtain: 
\begin{thmletter}
\label{thm-intro:Birman_exact_seq_pure}
The following diagram is a short exact sequence that splits: 
\[
1\rightarrow\freegrp{\TheStrand-1+\ThePct+\TheCone}\xrightarrow{\PushPMap^{orb}}\PMapIdOrb{\TheStrand}{\Sigma_\freeprod(\ThePct)}\xrightarrow{\ForgetPMap^{orb}}\PMapIdOrb{\TheStrand-1}{\Sigma_\freeprod(\ThePct)}\rightarrow1, 
\]
see Corollary \textup{\ref{cor:pure_orb_mcg_ses}} for details. 
\end{thmletter}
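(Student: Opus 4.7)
The plan is to reduce to the classical split pure Birman exact sequence for a punctured disk, by transporting everything through the isomorphism of Proposition \ref{prop:iso_Map_orb_disk}. First, I would verify that this isomorphism $\MapIdOrb{\TheStrand}{\Sigma_\freeprod(\ThePct)}\cong\MapId{\TheStrand}{D(\ThePct,\TheCone)}$ restricts to an isomorphism of pure subgroups $\PMapIdOrb{\TheStrand}{\Sigma_\freeprod(\ThePct)}\cong\PMapId{\TheStrand}{D(\ThePct,\TheCone)}$ intertwining $\ForgetPMap^{orb}$ with the classical forget-one-marked-point map. This should be immediate from the construction: a $\freeprod$-equivariant mapping class of $\Sigma(\ThePct,\TheCone)$ sends each $\freeprod$-orbit of marked points to itself if and only if the induced mapping class on the quotient disk $\Sigma(\ThePct,\TheCone)/\freeprod = D(\ThePct,\TheCone)$ fixes each of the $\TheStrand$ downstairs marked points.

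Next I would invoke the classical pure Birman exact sequence for $D(\ThePct,\TheCone)$ (see, e.g., the pure version of \cite[Theorem 9.1]{FarbMargalit2011}), restricted to the $\MapId$-subgroups:
\[
1 \rightarrow K \rightarrow \PMapId{\TheStrand}{D(\ThePct,\TheCone)} \xrightarrow{\ForgetPMap} \PMapId{\TheStrand-1}{D(\ThePct,\TheCone)} \rightarrow 1.
\]
Exactness on the right follows because any $\PMap$-lift of an element of $\PMapId{\TheStrand-1}{D(\ThePct,\TheCone)}$ automatically becomes trivial after forgetting all $\TheStrand$ marked points, hence already lies in $\PMapId{\TheStrand}{D(\ThePct,\TheCone)}$; exactness in the middle is inherited from the classical sequence. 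The Fadell--Neuwirth fibration $\PConf_\TheStrand(D(\ThePct,\TheCone)) \to \PConf_{\TheStrand-1}(D(\ThePct,\TheCone))$ has fibre a disk with $\TheStrand-1+\ThePct+\TheCone$ punctures, and asphericity of these configuration spaces yields $K = \freegrp{\TheStrand-1+\ThePct+\TheCone}$.

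For the splitting I would use the non-empty boundary: fix a point $x_0$ in a collar of $\partial D(\ThePct,\TheCone)$ disjoint from all punctures and from the remaining $\TheStrand-1$ marked points. A representative of any class in $\PMapId{\TheStrand-1}{D(\ThePct,\TheCone)}$ fixes the boundary pointwise, so after an isotopy fixing the marked points it fixes a neighbourhood of $x_0$; declaring $x_0$ to be the $\TheStrand$th marked point then defines a section of $\ForgetPMap$. Transporting the resulting split exact sequence back through Proposition \ref{prop:iso_Map_orb_disk} yields the split short exact sequence in the statement.

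The only non-formal step is the first: carefully tracking how the Push- and Forget-maps and the pure-subgroup condition on the orbifold correspond to their classical counterparts on the disk under the isomorphism of Proposition \ref{prop:iso_Map_orb_disk}. Once this compatibility is in place, the theorem is a direct transcription of the classical split pure Birman sequence for $D(\ThePct,\TheCone)$.
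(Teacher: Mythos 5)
Your proposal is correct and reaches the same conclusion, but its route through the classical disk picture is genuinely different from the paper's.

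The paper does not transfer the whole sequence to $D(\ThePct,\TheCone)$ and then apply a disk Birman/Fadell--Neuwirth sequence for a single forgotten marked point. Instead, it stays in the orbifold picture: it treats the $\TheStrand-1$ other marked points as additional punctures and identifies the kernel of $\ForgetPMap^{orb}$ with $\PMapIdOrb{1}{\Sigma_\freeprod(\TheStrand-1+\ThePct)}$, which by the $\TheStrand=1$ case of Theorem~\ref{thm:Birman_es_orb} is free of rank $\TheStrand-1+\ThePct+\TheCone$. Exactness is then checked by comparing with this embedded copy. Your approach — transport everything through Proposition~\ref{prop:iso_Map_orb_disk}, invoke the pure Birman sequence on the disk and the Fadell--Neuwirth fibration $\PConf_\TheStrand(D(\ThePct,\TheCone))\to\PConf_{\TheStrand-1}(D(\ThePct,\TheCone))$ — is a legitimate shortcut that buys a cleaner identification of the kernel at the cost of having to check one compatibility you wave at: that the family of isomorphisms of Proposition~\ref{prop:iso_Map_orb_disk}, for varying numbers of marked points, intertwines the two forgetful maps. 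This is true (both simply drop $p_\TheStrand$, resp.\ $\bar p_\TheStrand$, and the construction of $\pi$ and $\varphi$ is insensitive to which of the points in $\inter\FD$ are declared marked), but it is a real lemma that must be stated, not an automatic consequence of Proposition~\ref{prop:iso_Map_orb_disk} as written, which treats $\TheStrand$ as fixed.

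The splitting argument you sketch is the same boundary-collar variant of the Alexander trick that the paper isolates in Lemmas~\ref{lem:mcg_id_eps-collar} and~\ref{lem:mcg_p_n_eps}. The one thing your sketch elides that the paper handles explicitly is well-definedness of the resulting map on mapping classes: if two representatives are ambient isotopic rel $r_1,\dots,r_\ThePct,p_1,\dots,p_{\TheStrand-1}$, you must push the whole isotopy to the identity on the collar, not just the two endpoints, in order to see that the section is a well-defined homomorphism. That is precisely the content of the ``$(H_t)_\varepsilon$'' step in the paper's proof of Corollary~\ref{cor:pure_orb_mcg_ses}. With these two details filled in, your proof is complete.
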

In particular, we can deduce finite presentations for 
$\PMapIdOrb{\TheStrand}{\Sigma_\freeprod(\ThePct)}$ and $\MapIdOrb{\TheStrand}{\Sigma_\freeprod(\ThePct)}$ from Theorem \ref{thm-intro:Birman_exact_seq_pure} (see Corollary \ref{cor:pres_PMap_free_prod} and Proposition \ref{prop:pres_map_kcp}). 

\enew{As proven in \cite[Theorem A]{Flechsig2023braid}, certain orbifold braid groups are quotients of the groups $\MapIdOrb{\TheStrand}{\Sigma_\freeprod(\ThePct)}$. Together with the presentations of $\MapIdOrb{\TheStrand}{\Sigma_\freeprod(\ThePct)}$ and its pure subgroup $\PMapIdOrb{\TheStrand}{\Sigma_\freeprod(\ThePct)}$, we obtain presentations of the related orbifold braid groups $\Z_\TheStrand(\Sigma_\freeprod(\ThePct))$ and $\PZ_\TheStrand(\Sigma_\freeprod(\ThePct))$, see Theorem B and Corollary~5.6 in \cite{Flechsig2023braid}. These are essential to deduce structural results about orbifold braid groups. In particular, we obtain a similar Birman exact sequence for pure orbifold braid groups, see \cite[Theorem C]{Flechsig2023braid}. In this case, we surprisingly observe that the analog of the point-pushing map $\PushPMap^{orb}$ is not injective.} 

\section*{Acknowledgments} 

I would like to thank my adviser Kai-Uwe Bux for his support and many helpful discussions. Many thanks are also due to José Pedro Quintanilha and Xiaolei Wu for their helpful advice at different points of this project. Moreover, I am grateful to Elisa Hartmann and José Pedro Quintanilha for their comments on an earlier version of this text. 

The author was funded by the German
Research Foundation (Deutsche For-schungsgemeinschaft, DFG) – 426561549. Further, the author was partially supported by Bielefelder Nachwuchsfonds. 

\newpage

\section{Orbifolds and paths}
\label{sec:intro_orb}

\setlist[enumerate,1]{label=\textup{(\arabic*)}, ref=\thetheorem(\arabic*)}
\setlist[enumerate,2]{label=\textup{\alph*)}, ref=\alph*)}

In this article we only consider orbifolds that are given as the quotient of a manifold (typically a surface) by a proper group action. Recall that an action 
\[
\phi:\G\rightarrow\Homeo{}{M},\g\mapsto\phi_\g 
\]
on a manifold $M$ is \textit{proper} if for each compact set $K\subseteq M$ the set 
\[
\{\g\in\G\mid\phi_\g(K)\cap K\neq\emptyset\} 
\]
is compact. Since we endow $\G$ with the discrete topology, i.e.\ the above set is finite. Orbifolds that appear as proper quotients of manifolds are called \textit{developable} in the terminology of Bridson--Haefliger \cite{BridsonHaefliger2011} and \textit{good} in the terminology of Thurston \cite{Thurston1979}. 

\new{Above and in the following,} all manifolds are orientable and all homeomorphisms are orientation preserving. 

\begin{definition}[{Orbifolds}, {\cite[Chapter III.G, 1.3]{BridsonHaefliger2011}}]
\label{def:good_orb}
Let $M$ be a 
manifold, possibly with boundary, 
and $\G$ a group with a monomorphism 
\[
\phi:\G\rightarrow\Homeo{}{M} 
\]
such that $\G$ acts properly on $M$. Under these conditions the $3$-tuple $(M,\G,\varphi)$ is called an \textit{orbifold}, which we denote by $M_\G$. If $\Stab_\G(\cp)\neq\{1\}$ for a point $\cp\in M$, the point $\cp$ is called a \textit{singular point} of $M_\G$. If $\Stab_\G(\cp)$ for a point $\cp\in M$ is cyclic of finite order $\TheOrder$, the point $\cp$ is called a \textit{cone point} of $M_\G$ of order~$\TheOrder$. 
\end{definition}
 
A first example of an orbifold is the following: 

\begin{example}
\label{ex:good_orb_D_cyc_m}
Let $\cycm$ be a cyclic group of order $\TheOrder$. The group $\cycm$ acts on a disk~$D$ by rotations around its center. The action is via isometries and the acting group is finite, i.e.\ the action is proper. Consequently, $D_{\cycm}$ is an orbifold with exactly one singular point in the center of $D$, which is a cone point. 
\end{example}

Example \ref{ex:good_orb_D_cyc_m} motivates a more general construction for a free product of finitely many finite cyclic groups which we describe briefly in the following. For further details, we refer to the author's PhD thesis \cite[Section 2.1]{Flechsig2023}. We will consider this generalization of the orbifold $D_{\cycm}$ throughout the article.  

\begin{example}
\label{ex:good_orb_free_prod}
Let $\freeprod$ be a free product of finite cyclic groups $\cyc{\TheOrder_1},...,\cyc{\TheOrder_\TheCone}$. The group $\freeprod$ is the fundamental group of the graph of groups with trivial edge groups 
\vspace*{3mm}
\begin{center}
\begingroup%
  \makeatletter%
  \providecommand\color[2][]{%
    \errmessage{(Inkscape) Color is used for the text in Inkscape, but the package 'color.sty' is not loaded}%
    \renewcommand\color[2][]{}%
  }%
  \providecommand\transparent[1]{%
    \errmessage{(Inkscape) Transparency is used (non-zero) for the text in Inkscape, but the package 'transparent.sty' is not loaded}%
    \renewcommand\transparent[1]{}%
  }%
  \providecommand\rotatebox[2]{#2}%
  \newcommand*\fsize{\dimexpr\f@size pt\relax}%
  \newcommand*\lineheight[1]{\fontsize{\fsize}{#1\fsize}\selectfont}%
  \ifx\svgwidth\undefined%
    \setlength{\unitlength}{127.35202975bp}%
    \ifx\svgscale\undefined%
      \relax%
    \else%
      \setlength{\unitlength}{\unitlength * \real{\svgscale}}%
    \fi%
  \else%
    \setlength{\unitlength}{\svgwidth}%
  \fi%
  \global\let\svgwidth\undefined%
  \global\let\svgscale\undefined%
  \makeatother%
  \begin{picture}(1,0.10366749)%
    \lineheight{1}%
    \setlength\tabcolsep{0pt}%
    \put(0,0){\includegraphics[width=\unitlength,page=1]{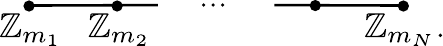}}%
  \end{picture}%
\endgroup%

\end{center}
As such, $\freeprod$ acts on its Bass--Serre tree $T$. The fundamental domain of this action is a path with $N-1$ edges. The action is free on edges and the vertex stabilizers are conjugates $\gamma\cyc{\TheOrder_\nu}\gamma^{-1}$ with $\gamma\in\freeprod$ and $1\leq\nu\leq\TheCone$. By the choice of a generator $\gamma_\nu$ for each $\cyc{\TheOrder_\nu}$ with $1\leq\nu\leq\TheCone$, the link of each vertex carries a cyclic ordering. 

Let us consider a proper embedding of the Bass--Serre tree~$T$ into $\CC$ that respects the local cyclic order on each link. If we choose a regular neighborhood of $T$ inside~$\CC$, we obtain a planar, contractible surface $\Sigma$ (with boundary), see Figure \textup{\ref{fig:constr_fund_domain}} for an example.

\begin{figure}[H]
\import{Grafiken/basics_orb_fundamental_grp/}{constr_fund_domain_wc.pdf_tex}
\caption{Thickened Bass--Serre tree for $\freeprod=\cyc{3}\ast\cyc{2}\ast\cyc{4}$ with fundamental domain shaded in gray. The red $\textcolor{red}{\bullet}$, blue~$\textcolor{blue}{\blacklozenge}$ and yellow $\textcolor{yellow}{\blacktriangle}$ are conjugates of the free factors $\cyc{3}$, $\cyc{2}$ and $\cyc{4}$, respectively.}
\label{fig:constr_fund_domain}
\end{figure}

This surface $\Sigma$ inherits a proper $\freeprod$-action from the Bass--Serre tree such that vertex stabilizers act with respect to the cyclic order on the link of the stabilized vertex. Moreover, the action admits a fundamental domain corresponding to the fundamental domain in $T$. 
In particular, we obtain an orbifold structure $\Sigma_\freeprod$. 

A point in $\Sigma_\freeprod$ is a singular point if and only if it corresponds to a vertex of $T$. Hence, the singular points in $\Sigma_\freeprod$ are all cone points and decompose into $\TheCone$ orbits. The quotient $\Sigma/\freeprod$ is a disk with $\TheCone$ distinguished points that correspond to the orbits of the cone points. 

In general, we may choose a fundamental domain $\FD$ that is a disk as pictured in Figure \textup{\ref{fig:fund_domain}} and contains exactly $\TheCone$ cone points $\cp_1,...,\cp_\TheCone$ that lie on the boundary \new{such that each has exactly two adjacent boundary arcs that lie in the same $\freeprod$-orbit.}
\begin{figure}[H]
\import{Grafiken/basics_orb_fundamental_grp/}{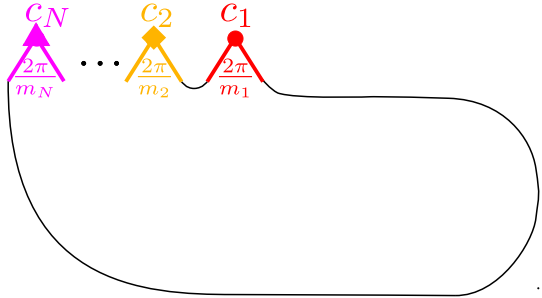}
\caption{The fundamental domain $\FD$.}
\label{fig:fund_domain}
\end{figure}

In this particular case, we can also find a metric on $\Sigma$ such that the $\freeprod$-action on $\Sigma$ is isometric. For each $\gamma\in\freeprod$, let 
\[
\lambda_{\gamma^{-1}}:\gamma(\FD)\rightarrow\FD,\gamma(x)\mapsto x.
\] 
This allows us to define 
\begin{align*}
\label{eq:def_metric_iso_freeprod-action}
d:\Sigma\times\Sigma & \rightarrow\RR_{>0}, \numbereq
\\
(x,y) & \mapsto\inf_{(x_0,x_1,...,x_k)\in P(x,y)}\sum_{i=1}^kd_{\CC}(\lambda_{\gamma_i^{-1}}(x_{i-1}),(\lambda_{\gamma_i^{-1}}(x_i)). 
\end{align*}
Here above, $P(x,y)$ denotes the set of polygonal chains $(x_0,x_1,...,x_k)$ of arbitrary length $k\in\NN_0$ inside $\Sigma$ connecting $x$ and $y$ and such that consecutive points $x_{i-1}$ and $x_i$ are contained in $\gamma_i(\FD)$ with $\gamma_i\in\freeprod$ and $1\leq i\leq k$, see \cite[Section 2.1.7]{Flechsig2023} for details on the well-definedness of $d$. 

If we remove the boundary of $\Sigma$, the quotient $\Sigma^\circ/\freeprod$ is homeomorphic to the complex plane with $\TheCone$ distinguished points and associated cyclic groups $\cyc{\TheOrder_\nu}$ for $1\leq\nu\leq\TheCone$. Adding $\freeprod$-orbits of punctures $\freeprod(r_\NPct)$ for $1\leq\NPct\leq\ThePct$ to $\Sigma$ such that $\freeprod(r_\Pc)\neq\freeprod(r_\NPct)$ for $1\leq\Pc,\NPct\leq\ThePct, \Pc\neq\NPct$, we obtain the orbifold called 
\[
\CC(\ThePct,\TheCone,\textbf{\TheOrder}) \text{ with } \textbf{\TheOrder}=(\TheOrder_1,...,\TheOrder_\TheCone)
\]
in \cite{Roushon2021}. In \cite{Allcock2002}, Allcock studied braids on these orbifolds for 
\[
(\ThePct,\TheCone,\textbf{\TheOrder})=(0,2,(2,2)), (0,1,(2)) \text{ and } (1,1,(2)). 
\]
\end{example}

Since we want to study mapping class groups, which requires to fix the boundary, we will consider the orbifold $\Sigma_\freeprod$ with boundary. Moreover, we use the notation $\Sigma_\freeprod(\ThePct)$ for the orbifold with underlying surface (with boundary) 
\begin{equation}
\label{eq:Sigma_minus_pct}
\Sigma(\ThePct):=\Sigma\setminus\freeprod(\{r_1,...,r_\ThePct\}). 
\end{equation}


We will consider a concept of orbifold paths and their homotopy classes introduced in \cite[Chapter III.G, 3]{BridsonHaefliger2011}. There an orbifold is considered more generally as an \textit{\'etale groupoid} $(\mathcal{G},X)$, see \cite[Chapter III.G, 2]{BridsonHaefliger2011}. If $M_\G$ is an orbifold in the sense of Definition \ref{def:good_orb}, the associated \'etale groupoid is given by 
\[
(\mathcal{G},X)=(\G\times M,M). 
\]
In the following, we will simplify the notation using $\G$ instead of $\mathcal{G}=\G\times M$. In particular, we introduce $\G$-paths. These are the $\mathcal{G}$-paths in \cite{BridsonHaefliger2011}. 

\begin{definition}[{$\G$-path}, {\cite[Chapter III.G, 3.1]{BridsonHaefliger2011}}]
\label{def:G-path}
A \textit{$\G$-path} $\xi=(\g_0,c_1,\g_1,...,c_\TheSubdivision,\g_\TheSubdivision)$ in $M_\G$ with initial point $x\in M$ and terminal point $y\in M$ over a subdivision $a=t_0\leq...\leq t_\TheSubdivision=b$ of the interval $[a,b]$ consists of
\begin{enumerate}
\item continuous maps $c_\Subdivision:[t_{\Subdivision-1},t_\Subdivision]\rightarrow M$ for $1\leq\Subdivision\leq\TheSubdivision$ and
\item group elements $\g_\Subdivision\in\G$ such that $\g_0(c_1(t_0))=x$, $\g_\Subdivision(c_{\Subdivision+1}(t_\Subdivision))=c_\Subdivision(t_\Subdivision)$ for $1\leq\Subdivision<\TheSubdivision$ and $\g_\TheSubdivision(y)=c_\TheSubdivision(t_\TheSubdivision)$ (see Figure \ref{fig:G-path}). 
\end{enumerate}

For brevity, we write $(\g_0,c_1,\g_1,...,c_\TheSubdiv)$ for $(\g_0,c_1,\g_1,...,c_\TheSubdiv,\g_\TheSubdiv)$ if $\g_\TheSubdiv=1$. We say a $\G$-path is \textit{continuous} if it is of the form $(\g,c)$. 
\begin{figure}[H]
\import{Grafiken/basics_orb_fundamental_grp/}{G-path.pdf_tex}
\caption{A $\G$-path.}
\label{fig:G-path}
\end{figure}
\end{definition} 

The following equivalence relation 
identifies certain $\G$-paths whose continuous pieces have the same $\G$-orbits. 

\begin{definition}[{Equivalence of $\G$-paths}, {\cite[Chapter III.G, 3.2]{BridsonHaefliger2011}}]
\label{def:equiv_paths}
Let 
\[
\xi=(\g_0,c_1,\g_1,...,c_\TheSubdivision,\g_\TheSubdivision) 
\]
be a $\G$-path over 
$a=t_0\leq...\leq t_\TheSubdivision=b$. 
\begin{enumerate}
\item
\label{def:equiv_paths_subdiv} 
A \textit{subdivision} of $\xi$ is a $\G$-path obtained from $\xi$ by choosing $t'\in[t_{\Subdivision-1},t_\Subdivision]$ for some $1\leq\Subdiv\leq\TheSubdiv$  and replacing the entry $c_\Subdiv$ with the sequence 
\[
(c_\Subdivision\vert_{[t_{\Subdivision-1},t']},1,c_\Subdivision\vert_{[t',t_\Subdivision]}). 
\] 
\item
\label{def:equiv_paths_shift} 
A \textit{shift} of $\xi$ is a $\G$-path obtained from $\xi$ by choosing $h\in\G$ and replacing a subsequence $(\g_{\Subdiv-1},c_\Subdiv,\g_\Subdiv)$ for some $1\leq\Subdiv\leq\TheSubdiv$ with 
\[
(\g_{\Subdiv-1}h^{-1},h\cdot c_\Subdiv,h\g_\Subdiv). 
\] 
\end{enumerate}
We say that two $\G$-paths are \textit{equivalent} if one can be obtained from the other by a sequence of subdivisions, inverses of subdivisions and shifts. 
\end{definition}
\begin{figure}[H]
\import{Grafiken/basics_orb_fundamental_grp/}{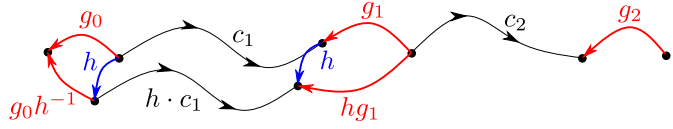}
\caption{Two $\G$-paths equivalent by a shift.}
\label{fig:equivalent_G-paths}
\end{figure}

Using this equivalence relation, we mimic the homotopy relation for paths in topological spaces for $\G$-paths. 

\begin{definition}[{Homotopy of $\G$-paths}, {\cite[Chapter III.G, 3.5]{BridsonHaefliger2011}}]
\label{def:homo_G-paths}
An \textit{elementary homotopy} between two $\G$-paths $\xi$ and $\tilde{\xi}$ is a family of $\G$-paths $\xi_s=(\g_0,c_1^s,...,\g_\TheSubdivision)$ over the subdivisions $0=t_0\leq t_1\leq...\leq t_\TheSubdivision=1$. The family $\xi_s$ is parametrized by $s\in[s_0,s_1]$ such that $c_\Subdivision^s$ depends continuously on the parameter and $\xi^{s_0}=\xi$, $\xi^{s_1}=\tilde{\xi}$. 

\begin{figure}[H]
\import{Grafiken/basics_orb_fundamental_grp/}{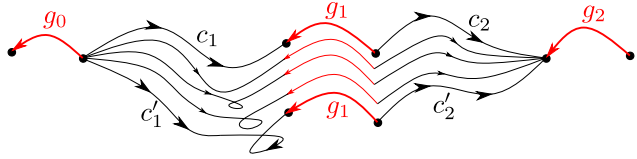}
\caption{An elementary homotopy of $\G$-paths.}
\label{fig:el_homo_G-paths}
\end{figure}

Two $\G$-paths are \textit{homotopic (relative to their endpoints)} if one can pass from the first to the second by a sequence of the following operations: 
\begin{enumerate}
\item equivalence of $\G$-paths, 
\item elementary homotopies. 
\end{enumerate}
\end{definition}

Homotopy classes of \textit{$\G$-loops based at a fixed point $x\in M$} lead to the concept of \textit{orbifold fundamental groups}, see \cite[Chapter III.G, 3]{BridsonHaefliger2011} for details. 

We close this section with a lemma that allows us to restrict to continuous $\G$-paths in many situations. For a fixed index $1\leq\NSubdiv\leq\TheSubdiv$, the shift defined in Definition \ref{def:equiv_paths} allows for the following choice: \new{The element $h$ shifts $c_\NSubdiv$ to $c_\NSubdiv':=h\cdot c_\NSubdiv$, $\g_{\NSubdiv-1}'=\g_{\NSubdiv-1}h^{-1}$ and $\g_\NSubdiv'=h\g_\NSubdiv$. Thus, the path $\xi$ is equivalent to} 
\begin{equation}
\label{eq:restricted_shift}
\xi'=(\g_0,c_1,\g_1,...c_{\NSubdiv-1},\g_{\NSubdiv-1}',c_\NSubdiv',\g_\NSubdiv',c_{\NSubdiv+1},...,c_\TheSubdivision,\g_\TheSubdivision). 
\end{equation}
If we choose $h=\g_\NSubdiv^{-1}$, the element $\g_\NSubdiv'$ is trivial. Replacing $c_\NSubdiv',1,c_{\NSubdiv+1}$ by $c_\NSubdiv'\cup c_{\NSubdiv+1}$, we obtain the path 
\begin{equation}
\label{eq:inverted_subdiv}
\tilde{\xi}'=(\g_0,c_1,\g_1,...c_{\NSubdiv-1},\g_{\NSubdiv-1}',c_\NSubdiv'\cup c_{\NSubdiv+1},\g_{\NSubdiv+1},c_{\NSubdiv+1},\g_{\NSubdiv+2},...,c_\TheSubdivision,\g_\TheSubdivision)
\end{equation}
which is equivalent to $\xi$ and has shorter subdivision length. 

\begin{lemma}[{\cite[Chapter III.G, 3.9(1)]{BridsonHaefliger2011}}]
\label{lem:cont_path_homo}
Every $\G$-path connecting $x$ to $y$ in $M_\G$ is equivalent to a unique continuous $\G$-path $(\g,c)$ with $c:I\rightarrow M$ connecting $\g^{-1}(x)$ to $y$. 
\end{lemma}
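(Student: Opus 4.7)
For existence, I would proceed by induction on the subdivision length $\TheSubdiv$ of the given $\G$-path $\xi = (\g_0, c_1, \g_1, \ldots, c_\TheSubdiv, \g_\TheSubdiv)$. The inductive step is the reduction already displayed just before the lemma: given $\xi$ with $\TheSubdiv \geq 2$, the shift \eqref{eq:restricted_shift} with $h = \g_\NSubdiv^{-1}$ makes the $\NSubdiv$-th inner group element trivial, and the subsequent inverse subdivision \eqref{eq:inverted_subdiv} merges the two adjacent continuous pieces into a single one, producing an equivalent $\G$-path of subdivision length $\TheSubdiv - 1$. Iterating, we may assume $\TheSubdiv = 1$, so $\xi$ is equivalent to some $(\g_0', c', \g_1')$ on an interval $[a, b]$. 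A final shift with $h = (\g_1')^{-1}$ on this single piece yields the continuous $\G$-path $(\g_0' \g_1', (\g_1')^{-1} \cdot c')$ in the sense of Definition \ref{def:G-path}; after reparametrizing the domain to $I$, the $\G$-path conditions $\g_0' c'(a) = x$ and $\g_1'(y) = c'(b)$ translate directly into $\g(c(0)) = x$ and $c(1) = y$ for $\g := \g_0' \g_1'$ and $c := (\g_1')^{-1} \cdot c'$.

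For uniqueness, I would identify two invariants of the equivalence relation generated by subdivisions, their inverses, and shifts. The first is the total holonomy $\g(\xi) := \g_0 \g_1 \cdots \g_\TheSubdiv$. The second is the assembled lift $\tilde{c}(\xi) : [a, b] \to M$ defined by
\[
\tilde{c}(\xi)\big|_{[t_{\Subdiv - 1}, t_\Subdiv]} := (\g_0 \g_1 \cdots \g_{\Subdiv - 1}) \cdot c_\Subdiv,
\]
which is continuous because the compatibility $\g_{\Subdiv - 1}(c_\Subdiv(t_{\Subdiv - 1})) = c_{\Subdiv - 1}(t_{\Subdiv - 1})$ in Definition \ref{def:G-path} makes consecutive pieces match up. Both invariants are trivially preserved by subdivisions; under a shift at position $\NSubdiv$ by $h$ the cancellation $(\g_{\NSubdiv - 1} h^{-1})(h \g_\NSubdiv) = \g_{\NSubdiv - 1} \g_\NSubdiv$ keeps the holonomy and every partial product $\g_0 \cdots \g_{\Subdiv - 1}$ appearing in $\tilde{c}$ unchanged, while the factor $h$ on the shifted piece $c_\NSubdiv' = h \cdot c_\NSubdiv$ is cancelled by the adjusted prefix $\g_0 \cdots \g_{\NSubdiv - 2} \g_{\NSubdiv - 1} h^{-1}$.

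For a continuous $\G$-path $(\g, c)$ the holonomy equals $\g$ and the assembled lift equals $\g \cdot c$. Hence two continuous representatives $(\g, c)$ and $(\g', c')$ of the same equivalence class satisfy $\g = \g'$ and $\g c = \g' c'$, forcing $c = c'$. I expect the main obstacle to be the formulation of the uniqueness invariants: existence is essentially immediate from the reductions \eqref{eq:restricted_shift} and \eqref{eq:inverted_subdiv}, but without singling out the total holonomy and the assembled lift it is not obvious that the equivalence class contains at most one continuous representative; the rest of the verification is then a short telescoping calculation.
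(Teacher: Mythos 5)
Your proof is correct. The paper itself defers the proof of this lemma to \cite[Lemma 2.12]{Flechsig2023}, so no inline comparison is possible, but your existence argument is exactly the reduction the paper sets up in \eqref{eq:restricted_shift} and \eqref{eq:inverted_subdiv} just before the statement, and your uniqueness argument via the two invariants (total holonomy $\g_0\g_1\cdots\g_\TheSubdiv$ and the assembled lift, continuous across breakpoints by the compatibility $\g_\Subdiv(c_{\Subdiv+1}(t_\Subdiv))=c_\Subdiv(t_\Subdiv)$) is the standard mechanism and is verified cleanly; the only slight imprecision is the parenthetical claim that the partial products $\g_0\cdots\g_{\Subdiv-1}$ are all unchanged under a shift, since the one at the shifted index $\NSubdiv$ does change, but you immediately note the compensating factor $h$ on $c_\NSubdiv'$, so the assembled lift itself is indeed unchanged.
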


\new{For a proof of Lemma \ref{lem:cont_path_homo}, we refer to \cite[Lemma 2.12]{Flechsig2023}.} 

\section{Orbifold mapping class groups}
\label{sec:basics_mcg_orb}

\setlist[enumerate,1]{label=\textup{(\arabic*)}, ref=\thetheorem(\arabic*)}
\setlist[enumerate,2]{label=\textup{\alph*)}, ref=\alph*)}

An important approach to the Artin braid groups is the identification with mapping class groups of punctured disks, see, for instance, \cite[Section 9.1.3]{FarbMargalit2011}. For disks and other surfaces, mapping class groups can be studied using their action on arcs and simple closed curves. This is based on the fact that homotopic arcs and homotopic simple closed curves are ambient isotopic. For instance, this follows from the bigon criterion which is stated in Proposition \ref{prop:bigon_crit_surf}. 

This section sets the base for a similar discussion in the orbifold case: 
It introduces a notion of \textit{orbifold mapping class groups} (see Definition \ref{def:mcg_orb}) and establishes orbifold analogs of arcs and simple closed curves, called \textit{$\freeprod$-arcs} and \textit{simple closed $\freeprod$-curves} (see Definitions \ref{def:freeprod-arcs} and \ref{def:freeprod-scc}). In analogy to the surface case, we prove a \textit{bigon criterion} for both $\freeprod$-arcs and simple closed $\freeprod$-curves (see Propositions~\ref{prop:bigon_crit_orb} and~\ref{prop:bigon_crit_orb_scc}). 

As in the surface case, the bigon criterion implies that homotopic $\freeprod$-arcs and homotopic simple closed $\freeprod$-curves are ambient isotopic (see Lemma \ref{prop:homo_ind_amb_iso}). This allows us to understand orbifold mapping class groups via their action on $\freeprod$-arcs and simple closed $\freeprod$-curves. In particular, we will prove that the orbifold mapping class group of $\Sigma_\freeprod$, in the absence of marked points and punctures, is trivial (see Lemma \ref{lem:orb_mcg_trivial}). This will serve as the base case for Section \ref{sec:orb_mcg_marked_pts}, where we study subgroups of orbifold mapping class groups with marked points. 

\subsection{Definition and first examples}
\label{subsec:def_mcg_and_ex}

\new{Given an orbifold $M_\G$,} we want to define its mapping class group as a group of certain homeomorphisms of $M$ modulo an equivalence relation. This generalizes the concept of mapping class groups of manifolds. If the acting group $\G$ is trivial, then the orbifold mapping class group of $M_{\{1\}}$ defined below coincides with the mapping class group of $M$. 

As it is usual in the case of manifolds, we consider homeomorphisms of $M$ 
that fix the boundary 
pointwise. Further, the \new{orbifold} mapping class group should reflect the structure of the $\G$-action on $M$. For this reason, we restrict to \textit{$\G$-equivariant} homeomorphisms. Subject to an isotopy relation on this group, we define the orbifold mapping class group. 

\begin{definition}[$\G$-equivariance]
\label{def:G-equivar}
Let $X,Y$ be topological $\G$-spaces. A map $\psi:X\rightarrow Y$ is $\G$-equivariant if 
\[
\psi(\g(x))=\g(\psi(x)) \text{ for all } \g\in\G, x\in X. 
\]
If $M_\G$ is an orbifold, the group of $\G$-equivariant self-homeomorphisms of $M$ that fix the boundary pointwise is denoted by $\HomeoOrb{}{M_\G,\partial M}$. \new{As a subgroup of the homeomorphism group of~$M$,} it carries the structure of a topological group if it is endowed with the compact-open topology. 
\end{definition}

\begin{definition}[Ambient isotopy]
\label{def:ambient_iso}
An \textit{ambient isotopy} is a continuous map 
\[
I\rightarrow\HomeoOrb{}{M_\G,\partial M}. 
\]
Two $\G$-equivariant homeomorphisms $H,H'$ are \textit{ambient isotopic}, denoted by $H\sim~H'$, if there exists an ambient isotopy $H_t$ with $H_0=H$ and $H_1=H'$. 
\end{definition}

\begin{definition}[Orbifold mapping class group]
\label{def:mcg_orb}
The group of $\G$-equivariant homeomorphisms that fix the boundary pointwise modulo ambient isotopy
\[
\MapOrb{}{M_\G}:=\HomeoOrb{}{M_\G,\partial M}/\sim
\]
is called the \textit{mapping class group} of $M_\G$. 
\end{definition}

Based on the fact that $\HomeoOrb{}{M_\G,\partial M}$ is a topological group, the mapping class group also carries the structure of a topological group. A particular basic example is the following: 

\begin{example}[$\MapOrb{}{D_{\cycm}}$ and Alexander trick]
\label{ex:Map(D_cycm)}
Let $H$ be a self-homeomorphism of $D$ that represents an element in $\MapOrb{}{D_{\cycm}}$ and 
\begin{align*}
H_t:D\rightarrow D, x\mapsto\begin{cases}
(1-t)H\left(\frac{x}{1-t}\right) & 0\leq\vert x\vert\leq 1-t, 
\\
x & 1-t\leq\vert x\vert\leq 1. 
\end{cases}
\end{align*}
For each $t\in I$, the map $H_t$ is $\cycm$-equivariant and fixes the boundary. Moreover, $H_0=H$ and $H_1=\id_D$. Since the map 
\[
I\rightarrow\HomeoOrb{}{D_{\cycm},\partial D},t\mapsto H_t
\]
is continuous, it yields an ambient isotopy that is known as the \textit{Alexander trick}, see \cite[Lemma 2.1]{FarbMargalit2011}. It shows that $\MapOrb{}{D_{\cycm}}$ is trivial. 
\end{example}

We can encode additional information in the orbifold mapping class group if we endow $M_\G$ with a finite set of marked points. 

\begin{definition}[Orbifold mapping class group with marked points]
\label{def:mcg_marked_pts}
Let $M_{\G}$ be an orbifold and let us fix a set of non-singular marked points $P=\{p_1,...,p_\TheStrand\}$ in~$M$ such that $\G(p_\Str)\neq\G(p_\Strand)$ for $1\leq\Str,\Strand\leq\TheStrand,\Str\neq\Strand$. \new{By $\HomeoOrb{\TheStrand}{M_\G,\partial M}$,} we denote the subgroup of homeomorphisms that preserve the orbit of the marked points $\G(P)$ as a set: 
\[
\{H\in\HomeoOrb{}{M_\G,\partial M}\mid H(\G(P))=\G(P)\}. 
\]
We consider these homeomorphisms up to ambient isotopies $I\rightarrow\HomeoOrb{\TheStrand}{M_\G,\partial M}$. The corresponding equivalence relation is denoted by $\sim_\TheStrand$. 
\new{By 
\[
\MapOrb{\TheStrand}{M_\G}:=\HomeoOrb{\TheStrand}{M_\G,\partial M}/\sim_\TheStrand,} 
\]
we denote the \textit{orbifold mapping class group of $M_\G$ with respect to the $\TheStrand$ marked~points}. 
\end{definition}

We stress that the orbit of marked points $\G(P)$ is a discrete set. Hence, a homotopy $H_t$ through $\HomeoOrb{\TheStrand}{M_\G,\partial M}$ is constant on marked points, i.e.\ $H_t(p_\Strand)=H_0(p_\Strand)=H_1(p_\Strand)$ for each $1\leq\Strand\leq\TheStrand$ and $t\in I$. 

While the group $\MapOrb{}{D_{\cycm}}$ is trivial, \new{we obtain for the group $\MapOrb{1}{D_{\cycm}}$ with one orbit of marked points:} 

\begin{example}
\label{ex:orb_mcg_one_pct_ZZ}
$\MapOrb{1}{D_{\cycm}}$ is infinite cyclic. The generator is represented by a $\frac{2\pi}{\TheOrder}$-twist $\TwsC$ around the center $\cp$ 
(see Figure \textup{\ref{fig:2_pi_m-twist}}). 
\end{example}

\begin{figure}[H]
\import{Grafiken/orb_mcg_marked_pts/}{homeo_twist_marked_pts_cp_clockwise.pdf_tex}
\caption{The $\frac{2\pi}{\TheOrder}$-twist $\TwsC$ for $\TheOrder=3$.}
\label{fig:2_pi_m-twist}
\end{figure}

We will give a proof of Example \ref{ex:orb_mcg_one_pct_ZZ} in the next section. It will follow as a corollary from Proposition \ref{prop:iso_Map_orb_disk} using the Alexander trick. 

Homeomorphisms that map each marked point inside its $\G$-orbit yield the so called \textit{pure orbifold mapping class group}: 

\begin{definition}[Pure orbifold mapping class group]
\label{def:pure_mcg} 
Let $\PHomeoOrb{\TheStrand}{M_\G,\partial M}$ be the group of \textit{pure homeomorphisms} 
\[
\{H\in\HomeoOrb{\TheStrand}{M_\G,\partial M}\mid H(p_\Strand)=\g_\Strand(p_{\Strand}) \text{ with } \g_\Strand\in\G \text{ for all } 1\leq\Strand\leq\TheStrand\}. 
\] 
The subgroup of $\MapOrb{\TheStrand}{M_\G}$ induced by pure homeomorphisms is called the \textit{pure orbifold mapping class group} 
\[
\PMapOrb{\TheStrand}{M_\G}:=\PHomeoOrb{\TheStrand}{M_\G,\partial M}/\sim_\TheStrand. 
\]
\end{definition}

\new{At this point,} we recall that a homeomorphism in the pure mapping class group of a manifold fixes each of the marked points. In contrast, we only require the homeomorphisms in $\PMapOrb{\TheStrand}{M_\G}$ to preserve the orbit of each marked point but not to fix the points themselves. Further, we emphasize that we allow different group actions on different orbits of marked points, i.e.\ $H(p_\Str)=\g_\Str(p_\Str)$ and $H(p_\Strand)=\g_\Strand(p_\Strand)$ with $\g_\Str\neq\g_\Strand$ for $\Str\neq\Strand$. 

\subsection{Arcs and simple closed curves}
\label{subsec:arcs_and_scc}

The study of arcs and simple closed curves is a useful tool to deduce information about mapping class groups. This is based on the fact that each homotopy of arcs or simple closed curves can be realized by an ambient isotopy. For instance, this follows from the so called \textit{bigon criterion} for arcs and simple closed curves which we recall in the following. For further information, we refer to \cite[Sections 1.2.2 and~1.2.7]{FarbMargalit2011}. 

Let $\bar{S}$ be a connected compact surface with non-empty boundary. Removing finitely many points from the interior, we obtain a connected surface $S$. Further, we endow $S$ with two finite sets of marked points $P$ and $Q$. The set $P$ is contained in the interior while the points from $Q$ lie on the boundary $\partial S$. 

A continuous map $b:I\rightarrow S$ is called a \textit{path}. We say that $b$ connects $b(0)$ to $b(1)$. In the following, we restrict to paths with $b(0)\in P$ and $b(1)\in Q$. A path $b:I\rightarrow S$ is called \textit{proper} if its image intersects $P,Q$ and $\partial S$ precisely in its endpoints. For every path, we identify $b$ with its image in $S$. If a path $b$ embeds $I$ into~$S$, we call it an \textit{arc}. 
All paths discussed in this article are proper. Mainly, we will deal with arcs. We consider proper paths up to homotopies relative to the endpoints. By $[b]_S$ we denote the homotopy class of a path $b$ in $S$.  


A continuous map $c:S^1\rightarrow S$ is called a \textit{closed curve}. We call a closed curve \textit{proper} if the image of $c$ does not intersect with any marked points or the boundary~$\partial S$. For every closed curve \new{$c$,} we identify $c$ with its image in $S$. A \textit{simple closed curve} is a closed curve $c:S^1\rightarrow S$ that embeds $S^1$ into $S$. 
As for paths, we only consider proper closed curves up to homotopies in the class of proper closed curves and denote the homotopy class of a closed curve $c$ by~$[c]_S$. 

\newcommand{\TheArc}{l}
\renewcommand{\TheDim}{k}

For a set $T$, let $\vert T\vert$ denote the cardinality. If $b,b'$ are arcs in $S$, let 
\[
i_S([b]_S,[b']_S)=\min\{\vert\tilde{b}\cap\tilde{b}'\vert\mid\tilde{b}\in[b]_S, \tilde{b}'\in[b']_S \text{ arcs}\} 
\] 
be the \textit{intersection number} of $[b]_S$ and $[b']_S$. Representing arcs $b$ and $b'$ are in \textit{minimal position} if $\vert b\cap b'\vert=i_S([b]_S,[b']_S)$. 

Likewise, the intersection number and minimal position are defined for homotopy classes of simple closed curves. 

\begin{definition}[{Bigon}, {\cite[Section 1.2.4]{FarbMargalit2011}}]
\label{def:bigon}
Two paths $b$ and~$b'$ form a \textit{bigon} if there exists a disk $D\subseteq S$ such that 
\begin{enumerate}[label={(\arabic*)}, ref={(\arabic*)}]
\item \label{def:bigon_it1}
no marked point is contained in the interior of $D$, 
\item \label{def:bigon_it2}
at most one marked point lies in $\partial D$, 
\item \label{def:bigon_it3}
$D\cap(b\cup b')=\partial D$, 
\item \label{def:bigon_it4}
$\partial D\cap b$ and $\partial D\cap b'$ are non-empty and 
\item \label{def:bigon_it5}
$\partial D\cap b$ and $\partial D\cap b'$ are connected. 
\end{enumerate}

Likewise, we define bigons for simple closed curves. Due to 
properness, the condition from \ref{def:bigon_it2} is automatically satisfied for 
closed curves. See Figure \ref{fig:bigons_and_non-bigons} for a list of examples and \new{counterexamples} of bigons. 

If two arcs or two simple closed curves form a bigon, they intersect exactly twice in $\partial D$. We call these intersection points the \textit{endpoints} of the bigon.  

\new{Clearly, for this definition,} it is not necessary that $S$ stems from a compact surface. In particular, we may also consider bigons in the tree-shaped surface $\S$. 
\end{definition}

\begin{figure}[H]
\import{Grafiken/basics_orb_mcg/}{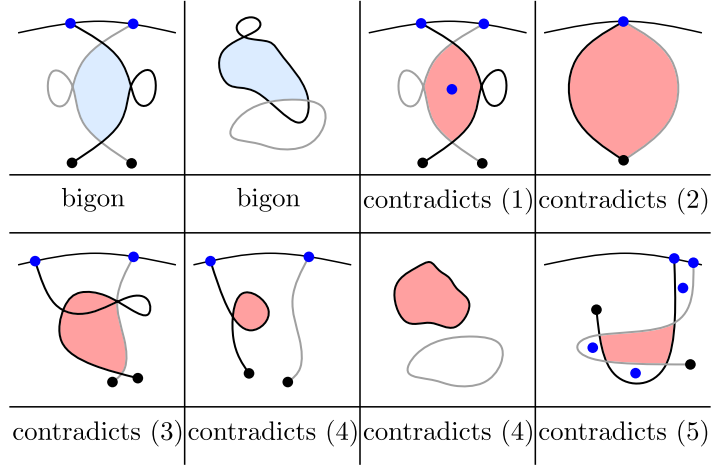}
\caption{Examples and \new{counterexamples} of bigons.}
\label{fig:bigons_and_non-bigons}
\end{figure}

For arcs and simple closed curves, the following is well known: 

\begin{proposition}[{Bigon criterion}, {\cite[Proposition 1.7]{FarbMargalit2011}}]
\label{prop:bigon_crit_surf}
Two transverse arcs or two transverse simple closed curves in a surface $S$ are in minimal position if and only if they do not form a bigon. 
\end{proposition}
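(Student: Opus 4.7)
The plan is to prove the two directions of the equivalence separately, following the well-known approach of \cite{FarbMargalit2011}.

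For the forward direction, suppose transverse arcs $b,b'$ (or simple closed curves) cobound a bigon $D$ with endpoints $p,q$. The subarcs $\partial D\cap b$ and $\partial D\cap b'$ both run from $p$ to $q$ inside the disk $D$ and are therefore homotopic relative to endpoints inside $D$. Conditions \ref{def:bigon_it1} and \ref{def:bigon_it2} of Definition \ref{def:bigon} ensure that no marked points obstruct this local homotopy. Pushing $b$ slightly across $D$ in a small neighborhood then yields $\tilde{b}\in[b]_S$ with $\vert\tilde{b}\cap b'\vert=\vert b\cap b'\vert-2$, so $b,b'$ are not in minimal position.

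For the reverse direction, assume $b,b'$ are transverse, form no bigon, yet $\vert b\cap b'\vert > i_S([b]_S,[b']_S)$. Choose representatives $\beta,\beta'$ realizing the minimum and lift everything to the universal cover $\pi:\tilde{S}\rightarrow S$, which is topologically a disk. Two homotopic arcs lift to arcs with the same endpoints, while two homotopic simple closed curves lift to lines sharing the same pair of endpoints at infinity; accordingly, pick matching lifts $\tilde{b},\tilde{\beta}$ and $\tilde{b}',\tilde{\beta}'$. Two transverse properly embedded arcs (or lines) in the closed disk sharing endpoints and intersecting more than the minimum necessary must cobound an innermost disk whose boundary is one subarc from each, by a planar Jordan-curve argument. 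Iterating this over all pairs of lifts and selecting a globally innermost such disk $\tilde{D}$ ensures that no other lift of $b$ or $b'$ enters $\tilde{D}$. The restriction $\pi\vert_{\tilde{D}}$ is therefore an embedding, and $D:=\pi(\tilde{D})$ is a bigon in $S$ formed by $b$ and $b'$, contradicting the hypothesis.

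The main obstacle is verifying that the projected disk $D$ satisfies all five conditions of Definition \ref{def:bigon}. Innermost-ness taken among \emph{all} pairs of lifts (rather than a single pair) is what secures condition \ref{def:bigon_it3}, since otherwise a distinct strand of $b$ or $b'$ could project into the interior of $D$ and fold $\pi\vert_{\tilde D}$. That no marked point lies in the interior of $D$ follows because marked points lift to marked points in $\tilde{S}$ and one picks $\tilde{D}$ innermost also with respect to these preimages; this simultaneously enforces condition \ref{def:bigon_it2}. The simple-closed-curve case proceeds identically, using lifts that are properly embedded lines sharing pairs of ends rather than arcs sharing endpoints; this is where a cover-theoretic input (e.g.\ a hyperbolic structure on $S$ giving canonical ideal endpoints) replaces the elementary endpoint-matching used for arcs.
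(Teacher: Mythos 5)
The paper does not prove Proposition \ref{prop:bigon_crit_surf} at all: it is imported verbatim from \cite[Proposition~1.7]{FarbMargalit2011}, as the bracketed attribution in the proposition header makes explicit, and no proof environment follows. So there is no ``paper's own proof'' to compare against; what you have done is reconstruct the standard Farb--Margalit argument (lift to the universal cover, find an innermost bigon between lifts, project it down), which is precisely the cited source, so the approach is the same.

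Two remarks on your reconstruction. First, the way you handle Definition~\ref{def:bigon}\ref{def:bigon_it1} and \ref{def:bigon_it2} is imprecise. The cleaner and standard move for \ref{def:bigon_it1} is to pass to the cover of $S$ with all marked points of $P\cup Q$ deleted (treated as punctures); then no marked point can lift into the interior of $\tilde D$ at all, so there is nothing to be ``innermost with respect to.'' More importantly, ``innermost with respect to preimages of marked points'' has no bearing on condition \ref{def:bigon_it2}, which constrains marked points on the \emph{boundary} of $D$, not its interior. That condition exists to admit half-bigons in which two arcs share an endpoint in $P$ or $Q$, and in that degenerate case pushing across the disk reduces $|b\cap b'|$ by $1$, not by $2$ as you assert in the forward direction; a case distinction is needed rather than a further innermost refinement. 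Second, ``lines sharing the same pair of endpoints at infinity'' presupposes a hyperbolic structure on $S$; the sporadic non-hyperbolic cases are handled separately in \cite{FarbMargalit2011} and should at least be acknowledged if the statement is to cover arbitrary $S$. None of this changes the fact that you have identified the right proof; these are exactly the points where the sketch would need filling in to be a complete argument.
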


A similar criterion also holds for arcs and simple closed curves in the tree-shaped surface $\S$ from \eqref{eq:Sigma_minus_pct}. 

\begin{corollary}[Bigon criterion in $\S$]
\label{cor:bigon-crit_Sigma}
Two transverse arcs or two transverse simple closed curves in the tree-shaped surface $\S$ are in minimal position if and only if they do not form a bigon. 
\end{corollary}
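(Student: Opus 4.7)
The plan is to reduce the claim to Proposition \ref{prop:bigon_crit_surf} by confining the data to a compact subsurface of $\S$. Given transverse arcs or simple closed curves $b,b'$ in $\S$, the union $K := b\cup b'$ is compact. Exploiting the thickened Bass--Serre tree structure of $\S$ from Example \ref{ex:good_orb_free_prod}, I would choose a compact connected subsurface $\bar{S}\subseteq\S$ obtained by cutting $\S$ along finitely many proper arcs disjoint from $K$, such that $K\subseteq\bar{S}$, each component of $\S\setminus\bar{S}$ deformation retracts onto the corresponding boundary arc of $\bar{S}$, and $\bar{S}$ contains every puncture of $\S$ lying in a bounded planar component of $\S\setminus K$. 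Then $\bar{S}$ is a compact connected surface with non-empty boundary and finitely many interior punctures, so Proposition \ref{prop:bigon_crit_surf} applies directly to it.

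Next I would establish two correspondences between $\bar{S}$ and $\S$. First, bigons: every bigon in $\bar{S}$ is a bigon in $\S$ by inclusion, and conversely any bigon $D$ in $\S$ formed by $b,b'$ satisfies $\partial D\subseteq K\subseteq\bar{S}$ and, by planarity of $\S$ together with the choice of $\bar{S}$, must itself be contained in $\bar{S}$. Second, the intersection numbers agree: $i_\S([b]_\S,[b']_\S)=i_{\bar{S}}([b]_{\bar{S}},[b']_{\bar{S}})$. The inequality $\leq$ is immediate, since any pair of admissible representatives in $\bar{S}$ is also admissible in $\S$. For $\geq$, I would use the retraction of $\S\setminus\bar{S}$ onto $\partial\bar{S}$ to transport any minimising pair of representatives in $\S$ back into $\bar{S}$ without changing the intersection count. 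Combining the two correspondences with Proposition \ref{prop:bigon_crit_surf} applied to $\bar{S}$ yields the corollary.

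The main obstacle is the precise construction of $\bar{S}$ so that both correspondences go through simultaneously. In particular, $\bar{S}$ has to be large enough that any puncture capable of obstructing a homotopy between representatives of $[b]_\S$ and $[b']_\S$ already lies in $\bar{S}$, while at the same time small enough to be compact. This balance is afforded by the explicit tree-shape of $\S$: each branch of the thickened Bass--Serre tree that does not meet $K$ can be truncated along a single proper arc and then retracted onto it, so only the finitely many branches which actually meet $K$ contribute to $\bar{S}$.
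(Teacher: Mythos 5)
Your overall strategy---reduce to the classical bigon criterion, Proposition~\ref{prop:bigon_crit_surf}, via a compact subsurface---is the right one and matches the paper's. The issue is that you fix the compact subsurface $\bar{S}$ \emph{before} you know the minimizing representatives, and then try to force the intersection numbers to agree by retraction. That step does not go through. The retraction $r:\S\to\bar{S}$ squashes each excursion of $\tilde{b}$ or $\tilde{b}'$ into a complementary region onto the one-dimensional cut arc, so $r\circ\tilde{b}$ is in general not an embedded arc, and if both $\tilde{b}$ and $\tilde{b}'$ dip into the same cut-off branch their images under $r$ can be forced to overlap, \emph{increasing} the intersection count. More fundamentally, even if one perturbs the images off the cut arcs to restore embeddedness, nothing guarantees that the resulting arcs are homotopic to $b$ and $b'$ \emph{inside} $\bar{S}$; you have only produced homotopies through $\S$. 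So the inequality $i_\S\geq i_{\bar{S}}$ is not established.

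The paper avoids this entirely by choosing the compact subsurface \emph{adaptively}, after the homotopies are in hand: supposing for contradiction that $b,b'$ form no bigon yet are not in minimal position, one takes homotopic arcs $\tilde{b},\tilde{b}'$ intersecting fewer times, and lets $\Sigma'$ be the (compact) union of the finitely many $\freeprod$-translates of $\FD(\ThePct)$ meeting the images of the two connecting homotopies. Then the homotopies already live in $\Sigma'$, so $\tilde{b}\sim b$ and $\tilde{b}'\sim b'$ in $\Sigma'$, while $b,b'$ form no bigon in $\Sigma'$ (any bigon there is a bigon in $\S$) and hence, by Proposition~\ref{prop:bigon_crit_surf}, are in minimal position in $\Sigma'$. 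That is the contradiction, with no retraction or a priori choice of $\bar{S}$ needed. Your bigon correspondence (that any bigon bounded by $b\cup b'$ in $\S$ lies in a pre-chosen $\bar{S}$ containing $b\cup b'$) is fine and in fact mirrors what the paper uses implicitly; it is the intersection-number comparison that requires the adaptive choice.
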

\new{Since we want to consider compact subsurfaces of $\Sigma(\ThePct)$, for the proof and in the following,} it is often more convenient to consider the surface $\Sigma$ with marked points instead of punctures in $\freeprod(\{r_1,...,r_\ThePct\})$. We will also use the notation $\Sigma(\ThePct)$ for the surface $\Sigma$ with these marked points. 
\begin{proof}[Proof of  Corollary \textup{\ref{cor:bigon-crit_Sigma}}] 
On the one hand, it is obvious that two arcs in $\S$ that form a bigon are not in minimal position. 

On the other hand, if two arcs $b$ and $b'$ do not form a bigon, they are in minimal position in each compact subsurface that contains both arcs. If we assume that these arcs are not in minimal position in $\S$, there exist homotopic arcs $\tilde{b}$ and $\tilde{b}'$ that intersect less than $b$ and $b'$. Since the connecting homotopies are compactly supported, their images only intersect finitely many $\freeprod$-translates of $\FD(\ThePct)$. The union of these $\freeprod$-translates yields a compact subsurface $\Sigma'$ such that $\tilde{b}$ and $\tilde{b}'$ are contained in $\Sigma'$. Furthermore, in this surface $\Sigma'$ the arcs $\tilde{b}$ and $\tilde{b}'$ are homotopic to $b$ and $b'$, respectively. This contradicts the fact that $b$ and $b'$ are in minimal position in $\Sigma'$. Thus, $b$ and $b'$ are in minimal
position in~$\S$. 

The proof of the criterion for simple closed curves follows verbatim. 
\end{proof}

\subsection{$\freeprod$-arcs and simple closed $\freeprod$-curves}
\label{subsec:freeprod-arcs}

Similarly, we want to discuss the orbifold analogs of arcs and simple closed curves, called \textit{$\freeprod$-arcs} and \textit{simple closed $\freeprod$-curves}. \new{The main goal of this section} is to establish a bigon criterion for $\freeprod$-arcs and simple closed $\freeprod$-curves. 

For this purpose, 
we consider the tree shaped surface $\S$ with marked points in $\freeprod(\{r_1,...,r_\ThePct\})$ and the induced orbifold $\SG$. 

Now we endow the surface $\S$ with two sets of marked points: $P=\{p_1,...,p_\TheStrand\}$ in the interior of the fundamental domain $\FD(\ThePct)$ with $p_\Str\neq p_\Strand$ for $\Str\neq\Strand$ 
and $Q=\{q_1,...,q_{\TheStrand'}\}$ in $\partial\S\cap~\FD(\ThePct)$ with $q_\NStrand\neq q_\NNStrand$ for $\NStrand\neq\NNStrand$ such that all points $q_\Strand$ lie in the same connected component of $\partial\S\cap \FD(\ThePct)$. 

In the following, we consider $\freeprod$-paths that connect points in $P$ to points in $Q$ (see Definition \ref{def:G-path}). Recall that by Lemma \ref{lem:cont_path_homo} each $\freeprod$-path is equivalent to a unique continuous $\freeprod$-path. Considering $\freeprod$-paths \textit{without self-intersections} leads to an orbifold analog of arcs:   

\begin{definition}[$\freeprod$-arc]
\label{def:freeprod-arcs}
Let $(\gamma,b)$ be a continuous $\freeprod$-path. 
\new{By $\freeprod(b)$,} we denote the union of the $\freeprod$-translates of $b$. We call the $\freeprod$-path $(\gamma,b)$ \textit{proper} if $b$ intersects $\freeprod(P)$ and $\partial\S$ precisely in its endpoints. 
If $b$ embeds $I$ into the tree-shaped surface and the $\freeprod$-translates of $b$ are disjoint, we call $(\gamma,b)$ a \textit{$\freeprod$-arc}. Given two $\freeprod$-arcs $(\gamma,b)$ and $(\gamma',b')$, we call them \textit{disjoint} if the orbits $\freeprod(b)$ and $\freeprod(b')$ are disjoint. 

A $\freeprod$-path $\beta=(\gamma_0,b_1,\gamma_1,...,b_\TheSubdiv,\gamma_\TheSubdiv)$ is a $\freeprod$-arc if the unique equivalent continuous $\freeprod$-path $(\gamma,b)$ is a $\freeprod$-arc. We call two $\freeprod$-arcs $\beta$ and $\beta'$ \textit{disjoint} 
if the unique equivalent continuous $\freeprod$-paths are disjoint. 
\end{definition}

An example and a \new{counterexample} for a $\freeprod$-arc are pictured in Figure \ref{fig:freeprod-arcs}. \new{In the upper row from left to right, the figure depicts a $\cyc{3}$-arc, its unique equivalent continuous $\cyc{3}$-arc and its $\cyc{3}$-orbit. In the bottom row, the analogs are given for a $\cyc{3}$-path that is not a $\cyc{3}$-arc.} 
\begin{figure}[H]
\import{Grafiken/basics_orb_mcg/}{freeprod-arcs_s.pdf_tex}
\caption{An example (upper row) and a \new{counterexample} (bottom row) of a $\freeprod$-arc.}
\label{fig:freeprod-arcs}
\end{figure}

As in the case of arcs, we homotope $\freeprod$-arcs through proper $\freeprod$-paths relative to the endpoints: 

\begin{definition}[{Homotopy and isotopy of $\freeprod$-arcs}, {\cite[Chapter III.G, 3.5]{BridsonHaefliger2011}}]
\label{def:homo_iso_orb}
Two $\freeprod$-arcs $\beta$ and $\beta'$ are \textit{homotopic} if one can be obtained from the other by a sequence of the following operations: 
\begin{enumerate}
\item equivalence of $\freeprod$-paths (see Definition \ref{def:equiv_paths}),
\item elementary homotopies $\beta_t$ \new{(see Definition~\ref{def:homo_G-paths})} such that $\beta_t$ is proper for all~$t$. 
\end{enumerate}
If in each elementary homotopy that appears in the finite sequence the $\freeprod$-path $\beta_t$ is a $\freeprod$-arc for every $t$, we call $\beta$ and $\beta'$ \textit{isotopic}. By $[\beta]$ we denote the homotopy class of $\beta$. For a continuous $\freeprod$-arc $(\gamma,b)$, we denote the homotopy class by $[\gamma,b]$. 
\end{definition}

Further, we introduce an orbifold analog of simple closed curves. 

\begin{definition}[Simple closed $\freeprod$-curve]
\label{def:freeprod-scc}
A simple closed curve $c:S^1\rightarrow\S$ is a \textit{simple closed $\freeprod$-curve} if the $\freeprod$-translates are either disjoint or coincide. 
A simple closed $\freeprod$-curve $c$ is \textit{proper} if it does not intersect $\freeprod(P)$ and $\partial\S$. Given two simple closed $\freeprod$-arcs $c$ and $c'$, we call them \textit{disjoint} if the orbits $\freeprod(c)$ and $\freeprod(c')$ are disjoint. 
\end{definition}

An example and a \new{counterexample} for a simple closed $\freeprod$-curve are pictured in Figure~\ref{fig:freeprod-scc}. \new{In the upper row from left to right, the figure depicts a simple closed $\cyc{3}$-curve and its $\cyc{3}$-orbit. In the bottom row, the analogs are given for a simple closed curve that is not a simple closed $\cyc{3}$-curve.} 

\begin{figure}[H]
\import{Grafiken/basics_orb_mcg/}{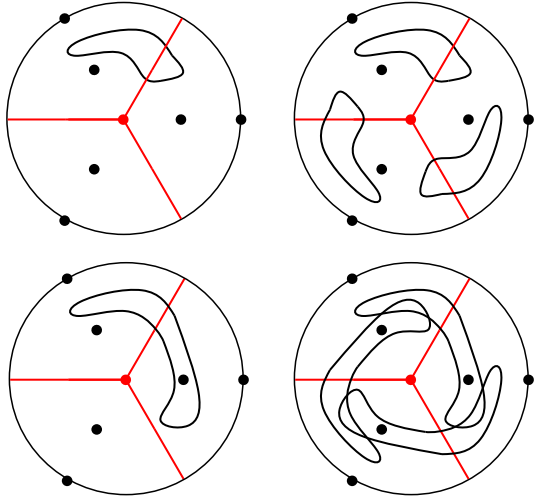}
\caption{An example (upper row) and a \new{counterexample} (bottom row) of a simple closed $\freeprod$-curve.}
\label{fig:freeprod-scc}
\end{figure}

\begin{definition}[Homotopy and isotopy of simple closed $\freeprod$-curves]
\label{def:homo_iso_freeprod-scc}
Two simple closed $\freeprod$-curves are \textit{homotopic} if they are homotopic as simple closed curves in $\S$. Two simple closed curves are \textit{isotopic} if they are isotopic as simple closed curves via an isotopy $h_t$ such that $h_t$ is a simple closed $\freeprod$-curve at each time $t\in I$. 
\end{definition}

Motivated by the intersection number of homotopy classes of 
arcs and simple closed curves, we define \textit{intersection number} and \textit{minimal position} for homotopy classes of $\freeprod$-arcs and simple closed $\freeprod$-curves. 

\newpage

\begin{definition}[Minimal position for $\freeprod$-arcs and simple closed $\freeprod$-curves]
\label{def:intersec_num_disj_min_pos}
Let $\beta$ and $\beta'$ be $\freeprod$-arcs in $\SG$. The homotopy classes $[\beta]$ and $[\beta']$ have \textit{intersection number} 
\[
i([\beta],[\beta'])=\min\{\vert\tilde{\beta}\cap\freeprod(\tilde{\beta}')\vert\mid\tilde{\beta}\in[\beta] \text{ and } \tilde{\beta}'\in[\beta'] \;  \freeprod\text{-arcs}\}. 
\]
Here above, we identify a $\freeprod$-arc $(\gamma_0,b_1,\gamma_1,...,b_\TheSubdiv,\gamma_\TheSubdiv)$ with the image of the induced continuous function $\bigcupdot_{\Subdiv=1}^\TheSubdiv[t_{\Subdiv-1},t_\Subdiv]\rightarrow\S, s\mapsto b_\Subdivision(s)$ for $s\in[t_{\Subdiv-1},t_\Subdiv]$. 

If $i([\beta],[\beta'])=0$, for $[\beta]$ and $[\beta']$ there exist representatives $\tilde{\beta}$ and $\tilde{\beta}'$ that are $\freeprod$-arcs and $\tilde{\beta}$ is disjoint from $\tilde{\beta}'$. 
Representing $\freeprod$-arcs 
$\beta$ and $\beta'$ are in \textit{minimal position} if $\vert\beta\cap\freeprod(\beta')\vert=i([\beta],[\beta'])$. 

Likewise, we define the \textit{intersection number} for homotopy classes of simple closed $\freeprod$-curves and \textit{minimal position} for simple closed $\freeprod$-curves. 
\end{definition}

For $\freeprod$-arcs, we further observe: 

\begin{remark}
\label{rem:int_number}
By Lemma \ref{lem:cont_path_homo}, each $\freeprod$-arc is equivalent to a unique continuous 
$\freeprod$-arc. Since the equivalence relation preserves the $\freeprod$-orbit of a $\freeprod$-arc, the intersection number satisfies: 
\[
i([\beta],[\beta'])=\min\{\vert b\cap\freeprod(b')\vert\mid(\gamma,b)\in[\beta], (\gamma',b')\in[\beta'] \text{ continuous } \freeprod\text{-arcs}\}
\]
\end{remark}

The following yields a sufficient condition for $\freeprod$-arcs and simple closed $\freeprod$-curves in minimal position: 

\begin{lemma}
\label{lem:freeprod-arcs_curves_all_translates_in_min_pos}
Let $(\gamma,b)$ and $(\gamma',b')$ be continuous $\freeprod$-arcs in $\SG$. If $b$ 
is in minimal position with each $\freeprod$-translate of $b'$, 
the $\freeprod$-arcs $(\gamma,b)$ and $(\gamma',b')$ are in minimal position. 
\end{lemma}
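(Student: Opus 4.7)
The plan is to show that for every pair of continuous representatives $(\gamma, \tilde{b}) \in [\gamma, b]$ and $(\gamma', \tilde{b}') \in [\gamma', b']$ we have $|\tilde{b} \cap \freeprod(\tilde{b}')| \geq |b \cap \freeprod(b')|$; by Remark \ref{rem:int_number} this suffices for $(\gamma, b)$ and $(\gamma', b')$ to be in minimal position. The first step is to exploit the disjointness of $\freeprod$-translates of a $\freeprod$-arc (built into Definition \ref{def:freeprod-arcs}) together with properness of the $\freeprod$-action to write
\[
|\tilde{b} \cap \freeprod(\tilde{b}')| = \sum_{\gamma'' \in \freeprod} |\tilde{b} \cap \gamma''(\tilde{b}')|
\]
as a finite sum, and similarly for $b$ and $b'$.

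The second step is to observe that $\tilde{b}$ is homotopic to $b$ as arcs in $\S$ relative to their endpoints, and likewise $\gamma''(\tilde{b}')$ is homotopic to $\gamma''(b')$ for every $\gamma'' \in \freeprod$. The minimal position hypothesis for $b$ against each $\freeprod$-translate of $b'$ then yields
\[
|\tilde{b} \cap \gamma''(\tilde{b}')| \geq i_\S([b]_\S, [\gamma''(b')]_\S) = |b \cap \gamma''(b')|
\]
for every $\gamma'' \in \freeprod$, and summing over $\gamma''$ produces the required inequality.

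The main obstacle I anticipate is the descent statement in the second step: if two continuous $\freeprod$-arcs represent the same homotopy class in the sense of Definition \ref{def:homo_iso_orb}, then their underlying paths in $\S$ are homotopic relative to their endpoints. I would prove this by induction on the length of a connecting sequence of equivalences and elementary homotopies. Equivalences of $\freeprod$-paths preserve the unique continuous representative by Lemma \ref{lem:cont_path_homo}, so they leave the underlying path in $\S$ unchanged. For an elementary homotopy $\xi_s = (\gamma_0, c_1^s, \gamma_1, \ldots, c_k^s, \gamma_k)$, the group elements stay fixed while the pieces $c_i^s$ vary continuously in $s$; the reduction procedure in \eqref{eq:restricted_shift}--\eqref{eq:inverted_subdiv} then assembles the continuous representative by successively applying fixed shifts and concatenating, and this assembly depends continuously on $s$, producing the required ordinary homotopy in $\S$.
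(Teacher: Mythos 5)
Your proposal is correct and takes essentially the same route as the paper: decompose $|\tilde{b}\cap\freeprod(\tilde{b}')|$ as a (finite, by properness and disjointness of translates) sum over $\freeprod$, bound each summand below by $i_\S([b]_\S,[\gamma''(b')]_\S)$ which equals $|b\cap\gamma''(b')|$ by the hypothesis, and reassemble. The one place you add genuine content is the ``descent'' step you flag: the paper silently uses that $(\gamma,\tilde{b})\in[\gamma,b]$ forces $\tilde{b}\in[b]_\S$, whereas you correctly note this needs an argument (fixed group elements in elementary homotopies, continuity of the reduction to continuous representatives, and invariance under equivalences via Lemma \ref{lem:cont_path_homo}); your sketch of that argument is sound.
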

\begin{proof}
If $b$ is in minimal position with each $\freeprod$-translate of $b'$, we have 
\begin{equation}
\label{eq:min_pos_each_translate}
i_{\S}([b]_{\S},[\tilde{\gamma}'(b')]_{\S})=\vert b\cap\tilde{\gamma}'(b')\vert
\end{equation} 
for each $\tilde{\gamma}'\in\freeprod$. Further, recall that the intersection number of $[\gamma,b]$ and $[\gamma',b']$ is given by 
\[
i([\gamma,b],[\gamma',b'])=\min\{\vert\tilde{b}\cap\freeprod(\tilde{b}')\vert\mid(\gamma,\tilde{b})\in[\gamma,b],(\gamma',\tilde{b}')\in[\gamma',b'] \; \freeprod\text{-arcs}\}. 
\]
For $(\gamma,\tilde{b})\in[\gamma,b]$ and $(\gamma',\tilde{b}')\in[\gamma',b']$, we observe: 
\begin{align*}
\label{eq:suff_cond_min_pos}
\vert\tilde{b}\cap\freeprod(\tilde{b}')\vert\mystackrel{}= & \sum_{\tilde{\gamma}'\in\freeprod}\vert\tilde{b}\cap\tilde{\gamma}'(\tilde{b}')\vert & \mystackrel{}\geq & \sum_{\tilde{\gamma}'\in\freeprod}i_{\S}([b]_{\S},[\tilde{\gamma}'(b')]_{\S}) \numbereq
\\
\mystackrel{(\ref{eq:min_pos_each_translate})}= & \sum_{\tilde{\gamma}'\in\freeprod}\vert b\cap\tilde{\gamma}'(b')\vert & \mystackrel{}= & \vert b\cap\freeprod(b')\vert. 
\end{align*}
Here above, the first equation follows from the fact that $(\gamma',\tilde{b}')$ is a $\freeprod$-arc. 
The definition of $i_ {\S}$ implies the following $\geq$-estimate, then we apply \eqref{eq:min_pos_each_translate} and the last equation follows as the first one. Finally, the above estimate implies that 
\[
i([\gamma,b],[\gamma',b'])\geq\vert b\cap\freeprod(b')\vert. 
\]
On the other hand, the definition of the intersection number yields 
\[
i([\gamma,b],[\gamma',b'])\leq\vert b\cap\freeprod(b')\vert. 
\]
Hence, $i([\gamma,b],[\gamma',b'])=\vert b\cap\freeprod(b')\vert$, i.e.\ $(\gamma,b)$ and $(\gamma',b')$ are in minimal position. 
\end{proof}

For the characterization of $\freeprod$-arcs and simple closed $\freeprod$-curves in minimal position, we introduce orbifold analogs of bigons: 

\begin{definition}[Pseudo-bigons and bigons for $\freeprod$-paths and simple closed $\freeprod$-curves]
\label{def:bigon_freeprod-arc}
Two continuous $\freeprod$-paths $(\gamma,b)$ and $(\gamma',b')$ form a \textit{pseudo-bigon} if there exist $\freeprod$-translates $\tilde{\gamma}(b)$ and $\tilde{\gamma}'(b')$ such that the paths $\tilde{\gamma}(b)$ and $\tilde{\gamma}'(b')$ form a bigon (in the sense of Definition~\ref{def:bigon}). 
If no other $\freeprod$-translate of $b$ or $b'$ intersects the bigon spanned by $\tilde{\gamma}(b)$ and $\tilde{\gamma}'(b')$, we say $(\gamma,b)$ and $(\gamma',b')$ form a \textit{bigon}. See Figure \ref{fig:bigon_pseudo_bigon} for examples of a bigon and a pseudo-bigon. 

Likewise, we define pseudo-bigons and bigons of simple closed $\freeprod$-curves. 

Two $\freeprod$-paths $\beta$ and $\beta'$ form a \textit{pseudo-bigon} or a \textit{bigon} if the unique equivalent continuous $\freeprod$-paths $(\gamma,b)$ and $(\gamma',b')$ form a pseudo-bigon or a bigon, respectively. 
\end{definition}

\begin{figure}[H]
\import{Grafiken/basics_orb_mcg/}{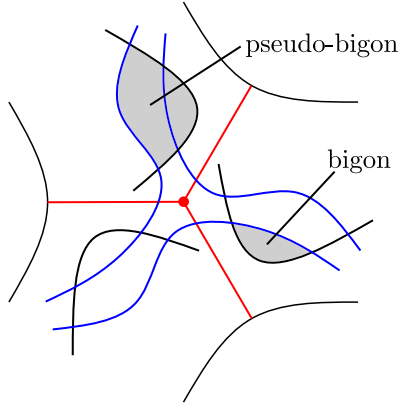}
\caption{Examples of a bigon and a pseudo-bigon.}
\label{fig:bigon_pseudo_bigon}
\end{figure}

For $\freeprod$-arcs and simple closed $\freeprod$-curves, bigons and pseudo-bigons are related as follows: 

\begin{lemma}
\label{lem:pseudo-bigon_without_cp_bigon}
Let $(\gamma,b)$ and $(\gamma',b')$ be $\freeprod$-arcs that bound a pseudo-bigon such that the bigon bounded by the $\freeprod$-translates does not contain a cone point. Then $(\gamma,b)$ and $(\gamma',b')$ bound a bigon. 

The same holds for simple closed $\freeprod$-curves. 
\end{lemma}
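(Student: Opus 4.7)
The plan is an innermost-disk argument inside the given pseudo-bigon. Let $D\subseteq\S$ be the surface-sense bigon bounded by $\tilde{\gamma}(b)$ and $\tilde{\gamma}'(b')$, which by hypothesis contains no cone point, and write $\beta:=\tilde{\gamma}(b)\cap\partial D$ and $\beta':=\tilde{\gamma}'(b')\cap\partial D$.

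The key preparatory observation is that since $(\gamma,b)$ and $(\gamma',b')$ are $\freeprod$-arcs, all $\freeprod$-translates of $b$ are pairwise disjoint, and likewise for $b'$. Hence every other translate $\tilde{\gamma}_1(b)\neq\tilde{\gamma}(b)$ meeting $D$ is disjoint from $\beta$, so $\tilde{\gamma}_1(b)\cap D$ is a finite disjoint union of arcs whose endpoints all lie on $\beta'$; symmetrically for translates of $b'$. By properness of the $\freeprod$-action and compactness of $D$, only finitely many such translates meet $D$ at all.

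Next, I would consider the finite, nonempty set $\mathcal{S}$ consisting of all pseudo-bigons $D'\subseteq D$ bounded by some pair of a translate of $b$ and a translate of $b'$, and pick $D_0\in\mathcal{S}$ of minimum area. I claim $D_0$ is a bigon for $(\gamma,b)$ and $(\gamma',b')$. Indeed, if some other translate $\tilde{\gamma}_1(b)$ or $\tilde{\gamma}_1(b')$ intruded into $D_0$, the preparatory observation applied to $D_0$ shows that any component $\alpha$ of this intrusion is an arc with both endpoints on one of the two boundary arcs of $D_0$. Together with the subarc of that boundary arc between its endpoints, $\alpha$ bounds a proper subdisk $D_1\subsetneq D_0$ that is again a pseudo-bigon (one side a sub-arc of $\tilde{\gamma}_1$, the other side a sub-arc of one of $D_0$'s boundary translates). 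Thus $D_1\in\mathcal{S}$, contradicting the minimality of $D_0$.

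The role of the cone point hypothesis, and the step I expect to require most care, is that it guarantees every $D'\subseteq D$ is a topological disk on which the preparatory observation applies without further correction: otherwise a cone point $c\in D$ would let the stabilizer $\Stab_\freeprod(c)$ produce further translates of $b$ and $b'$ meeting every sub-region $D'$ arbitrarily close to $c$, and the cut construction above might fail to yield a strictly smaller pseudo-bigon. With $D$ cone point free this obstruction does not arise. The case of simple closed $\freeprod$-curves is proved verbatim, replacing ``$\freeprod$-arc'' by ``simple closed $\freeprod$-curve'' throughout: again distinct $\freeprod$-translates are either equal or disjoint, so intrusion components have both endpoints on a single boundary piece of $D_0$ and the innermost argument closes in the same way.
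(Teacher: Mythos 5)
Your argument is essentially the paper's: an innermost-disk descent resting on the observation that, because $\freeprod$-translates of $b$ (resp.\ $b'$) are pairwise disjoint, any intruding translate of $b$ meets $\partial D_0$ only on the $b'$-side and vice versa. The paper runs this in two stages (pick an innermost bigon not met by further translates of $b$, then repeat for $b'$ inside it); your single minimum-area pass over the finite set $\mathcal{S}$ is a tidier packaging of the same idea. Two details need tightening: to guarantee $D_1\in\mathcal{S}$, i.e.\ that condition (3) of Definition~\ref{def:bigon} holds for the pair formed by $\tilde{\gamma}_1$ and the bounding translate it faces, you must take $\alpha$ to be an \emph{innermost} intrusion component, not an arbitrary one, since further components of the same translate could lie inside $D_1$; and for simple closed $\freeprod$-curves an intruding translate may be a closed loop lying entirely in $D_0^\circ$, with no endpoints at all, so your ``both endpoints on one boundary piece'' observation does not cover it (the paper sidesteps this by proving the stronger Corollary~\ref{cor:freeprod-scc_no_pseudo-bigon} via Lemma~\ref{lem:freeprod-scc_intersecting_disks}).

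Your explanation of the cone-point hypothesis is also off the mark: the stabilizer of a cone point is finite and the action proper, so there is no accumulation of translates near $c$, and every $D'$ is a topological disk regardless. In fact, once the innermost fix is in place, your descent does not visibly invoke the hypothesis at all. That the conclusion nevertheless holds for $\freeprod$-arcs when $D$ does contain a cone point is true, but the paper proves it separately and with considerably more effort (the winding argument of Lemma~\ref{lem:freeprod-arcs_no_pseudo-bigon}); so you should check carefully whether some step of your argument quietly relies on the absence of cone points, or whether you have actually found a shorter route to the combined statement.
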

\begin{proof}
Let us consider two $\freeprod$-translates, let us say $b$ and $b'$, that bound a bigon that does not contain a cone point. Then we may consider all the $\freeprod$-translates of $b$ and $b'$ that intersect the pseudo-bigon. 
Since 
$(\gamma,b)$ is a $\freeprod$-arc, its $\freeprod$-translates only intersect the boundary of the bigon in $b'$. This implies that each intersecting $\freeprod$-translate $\tilde{\gamma}(b)$ bounds a bigon with $b'$. Since no cone point lies inside the bigon, the bounded disk has the structure of a surface. This allows us, to pick a bigon that is not intersected by any other $\freeprod$-translate of $b$. Applying the same idea for $(\gamma',b')$, we obtain an innermost bigon that is not intersected by any other $\freeprod$-translates (see Figure \ref{fig:innermost_bigon}). This shows that $(\gamma,b)$ and $(\gamma',b')$ bound a bigon. 
\end{proof}

\begin{figure}[H]
\import{Grafiken/basics_orb_mcg/}{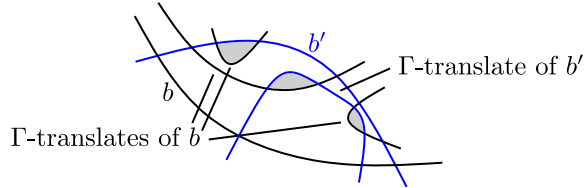}
\caption{Pseudo-bigon without a cone point with innermost bigons shadowed in gray.}
\label{fig:innermost_bigon}
\end{figure}

\new{By Example \ref{ex:good_orb_free_prod},} the tree shaped surface $\S$ embeds into the complex plane. Formally, this map yields an atlas that endows $\S$ with a differentiable structure. In terms of this differentiable structure we define: 

\begin{definition}[Transversality of $\freeprod$-arcs and simple closed $\freeprod$-curves]
\label{def:trans_freeprod-arcs_curves}
Let $(\gamma,b)$ and $(\gamma',b')$ 
be two continuous $\freeprod$-arcs 
that intersect in a point $x$, i.e.\ $\tilde{\gamma}(b)$ and $\tilde{\gamma}'(b')$ 
intersect in $x$. The intersection is \textit{transverse} if the arcs $\tilde{\gamma}(b)$ and $\tilde{\gamma}'(b')$ 
intersect transversely as arcs 
in $\S$. If each intersection of $\freeprod$-translates $\tilde{\gamma}(b)$ and $\tilde{\gamma}'(b')$ is transverse, we call $(\gamma,b)$ and $(\gamma',b')$ \textit{transverse}. 

Likewise, we define transversality of simple closed $\freeprod$-curves. 

Two $\freeprod$-arcs $\beta$ and $\beta'$ are called \textit{transverse} if the unique equivalent continuous $\freeprod$-arcs are transverse. 
\end{definition}

Now we generalize the bigon criterion from Proposition~\ref{prop:bigon_crit_surf} to a criterion for $\freeprod$-arcs and simple closed $\freeprod$-curves. Since the proof for simple closed $\freeprod$-curves is the easier one, we begin with this. We want to prove: 

\begin{proposition}[Bigon criterion for simple closed $\freeprod$-curves]
\label{prop:bigon_crit_orb_scc}
Let $c,c'$ be two transverse simple closed $\freeprod$-curves in $\SG$. The simple closed $\freeprod$-curves are in minimal position if and only if they form no bigons. 
\end{proposition}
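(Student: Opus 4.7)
My plan is to adapt the classical two-step proof of the bigon criterion to the $\freeprod$-equivariant setting, reducing each direction to a statement about translates in the underlying surface $\S$ (via Corollary \ref{cor:bigon-crit_Sigma}) and then accounting for the $\freeprod$-action.

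The forward direction is the easier one. Suppose $c$ and $c'$ bound a bigon, witnessed by translates $\tilde\gamma(c)$ and $\tilde\gamma'(c')$ together with a disk $D\subseteq\S$. By the bigon condition (as opposed to a mere pseudo-bigon), no other $\freeprod$-translate of $c$ or $c'$ meets $D$. I would take the standard isotopy that pushes the arc $\partial D\cap\tilde\gamma'(c')$ across $D$ onto $\partial D\cap\tilde\gamma(c)$, supported in a small neighborhood of $D$ still disjoint from every other $\freeprod$-translate, and transport it by $\freeprod$ to obtain a $\freeprod$-equivariant ambient isotopy. This produces a simple closed $\freeprod$-curve $\tilde c'\in[c']$ with $|c\cap\freeprod(\tilde c')|=|c\cap\freeprod(c')|-2$, so $c$ and $c'$ were not in minimal position.

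For the reverse direction I argue the contrapositive. If $c$ and $c'$ are not in minimal position as simple closed $\freeprod$-curves, then running the estimate from the proof of Lemma \ref{lem:freeprod-arcs_curves_all_translates_in_min_pos} in the simple-closed-curve setting shows that $c$ cannot be in minimal position with every $\freeprod$-translate of $c'$ in $\S$; otherwise the chain of inequalities would collapse and force $c$ and $c'$ into minimal position as $\freeprod$-curves. Hence some translate $\tilde\gamma'(c')$ fails to be in minimal position with $c$ in $\S$, and Corollary \ref{cor:bigon-crit_Sigma} produces a bigon in $\S$ between them, which is a pseudo-bigon for the $\freeprod$-curves. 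Choose such a pseudo-bigon $D$ minimal under inclusion. If $D$ contains no cone point, Lemma \ref{lem:pseudo-bigon_without_cp_bigon} promotes it to a bigon and we are done. Otherwise $D$ contains a cone point $p$ with stabilizer generated by a rotation $\rho$. Every non-trivial rotate $\rho^i(\tilde\gamma(c))$ is either equal to or disjoint from $\tilde\gamma(c)$ by the $\freeprod$-curve condition; if it is distinct and enters the interior of $D$, it cannot cross its parent $\tilde\gamma(c)$ and therefore must exit through $\tilde\gamma'(c')$, enclosing a strictly smaller pseudo-bigon inside $D$ and contradicting minimality. The symmetric analysis for $\tilde\gamma'(c')$ leaves only the case in which both bounding curves are $\rho$-invariant Jordan curves enclosing $p$; a short orbit count on the bounded complementary regions they form in $\S$ then exhibits another bigon between the same pair of curves that does not contain $p$, and iterating over the finitely many cone points inside the original disk yields a pseudo-bigon disjoint from the cone points, to which Lemma \ref{lem:pseudo-bigon_without_cp_bigon} applies.

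The main obstacle is this cone-point analysis in the backward direction: controlling precisely how rotations around cone points interact with the pseudo-bigon structure, and in particular verifying that two $\rho$-invariant bounding curves always admit a non-$\rho$-invariant bigon (which is then automatically disjoint from $p$). This is in the spirit of the innermost-bigon argument already used in Lemma \ref{lem:pseudo-bigon_without_cp_bigon}, but it requires extra bookkeeping of the $\rho$-orbits of intersection points and of complementary regions to ensure the recursion terminates in a cone-point-free pseudo-bigon.
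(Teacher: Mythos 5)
Your forward direction and the first part of your backward direction (using the counting lemma to find a translate $\tilde\gamma'(c')$ not in minimal position with $c$ in $\S$, then invoking Corollary~\ref{cor:bigon-crit_Sigma} to produce a pseudo-bigon) match the paper's structure. But you then diverge, and the divergence is where the gap lies.

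The paper's proof hinges on Lemma~\ref{lem:freeprod-scc_intersecting_disks}: because the $\freeprod$-action is isometric and $\Sigma$ is planar, two $\freeprod$-translates of a simple closed $\freeprod$-curve that bound intersecting disks must coincide. This immediately gives Corollary~\ref{cor:freeprod-scc_no_pseudo-bigon}, namely that \emph{every} pseudo-bigon of simple closed $\freeprod$-curves is already a bigon --- no cone-point analysis is required at all. This is precisely what distinguishes the scc case from the $\freeprod$-arc case (where the analogous Corollary fails and the long Lemma~\ref{lem:freeprod-arcs_no_pseudo-bigon} argument is needed). Your proposal misses this entirely and instead tries to transplant the arc-style cone-point recursion, which is both much heavier than necessary and not actually verified in your sketch. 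In particular, the step ``a short orbit count on the bounded complementary regions they form in $\S$ then exhibits another bigon between the same pair of curves that does not contain $p$'' is not an argument; you have not shown such a bigon exists, and it is unclear that the recursion terminates. The bigon is bounded by \emph{arcs} of the two curves, so the assertion ``both bounding curves are $\rho$-invariant'' does not by itself control the combinatorics of the bigon's intersection pattern with the $\rho$-orbit of those arcs.

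There is a second, smaller gap. You write that the minimal-position estimate from Lemma~\ref{lem:freeprod-arcs_curves_all_translates_in_min_pos} carries over verbatim to simple closed curves, but it does not: the summation over $\tilde\gamma'\in\freeprod$ double-counts when the translate has a non-trivial stabilizer, which happens exactly for curves encircling a cone point. The paper's Lemma~\ref{lem:freeprod-curves_all_translates_in_min_pos} handles this by first proving (again via Lemma~\ref{lem:freeprod-scc_intersecting_disks}) that $\gamma\in\freeprod$ preserves $\tilde c\in[c]$ iff it preserves the set of marked points and punctures bounded by $\tilde c$, and then summing over a minimal set of coset representatives $\freeprod_c$ rather than over all of $\freeprod$. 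So the isometry lemma is doing double duty, and is unavoidable; your proof as written uses neither it nor any substitute.
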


We start with an observation that has strong implications on pseudo-bigons. 

\begin{lemma}
\label{lem:freeprod-scc_intersecting_disks}
If two $\freeprod$-translates of a simple closed $\freeprod$-curve in $\SG$ 
bound intersecting disks, the $\freeprod$-translates coincide. 
\end{lemma}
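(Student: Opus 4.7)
My plan is to argue by contradiction. Assume that the two $\freeprod$-translates $\gamma_1(c)$ and $\gamma_2(c)$ are distinct. Since $c$ is a simple closed $\freeprod$-curve, Definition \ref{def:freeprod-scc} forces $\gamma_1(c)\cap\gamma_2(c)=\emptyset$. Because $\Sigma$ is a contractible planar surface (Example \ref{ex:good_orb_free_prod}), every simple closed curve in $\Sigma$ is null-homotopic and bounds a unique embedded disk; write $D$ for the disk bounded by $c$, so that $D_i:=\gamma_i(D)$ is the disk bounded by $\gamma_i(c)$. I then argue that one of these disks is strictly contained in the other. Since $\gamma_1(c)$ is connected and disjoint from $\gamma_2(c)=\partial D_2$, it lies in a single component of $\Sigma\setminus\gamma_2(c)$. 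If $\gamma_1(c)\subseteq D_2$, the ``inside'' of $\gamma_1(c)$ within the disk $D_2$ is an embedded subdisk bounded by $\gamma_1(c)$, and by uniqueness it must equal $D_1$, giving $D_1\subsetneq D_2$. If instead $\gamma_1(c)\subseteq\Sigma\setminus\overline{D_2}$, the symmetric argument, combined with the hypothesis $D_1\cap D_2\neq\emptyset$, gives $D_2\subsetneq D_1$. After relabelling I may assume $D_1\subsetneq D_2$.

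Setting $\gamma:=\gamma_2^{-1}\gamma_1\in\freeprod\setminus\{1\}$ and applying the homeomorphism $\gamma_2^{-1}$, I obtain
\[
\gamma(D)=\gamma_2^{-1}\gamma_1(D)=\gamma_2^{-1}(D_1)\subsetneq\gamma_2^{-1}(D_2)=D.
\]
Iterating $\gamma$, which is a bijection of $\Sigma$, produces a strictly descending chain
\[
D\supsetneq\gamma(D)\supsetneq\gamma^2(D)\supsetneq\cdots\supsetneq\gamma^n(D)\supsetneq\cdots.
\]
The powers $\gamma^n$ with $n\geq 0$ are pairwise distinct, since any relation $\gamma^k=1$ with $k>0$ would give $\gamma^k(D)=D$ and destroy the strict nesting.

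The decisive step, and in my view the main obstacle, is to turn this infinite nesting into an algebraic contradiction via properness. The closure $\overline{D}$ is a compact subset of $\Sigma$, and the $\freeprod$-action on $\Sigma$ is proper (Example \ref{ex:good_orb_free_prod}), so the set $\{g\in\freeprod\mid g(\overline{D})\cap\overline{D}\neq\emptyset\}$ is finite. However, every $\gamma^n$ with $n\geq 0$ satisfies $\gamma^n(D)\subseteq D$, hence lies in that set, exhibiting infinitely many distinct elements in a finite set. This contradiction shows that $\gamma_1(c)=\gamma_2(c)$.
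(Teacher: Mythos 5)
Your proof is correct, and it takes a genuinely different route from the paper's. Both arguments reduce to the same geometric observation: a distinct translate $\gamma(c)$, being disjoint from $c$ by Definition \ref{def:freeprod-scc}, must lie strictly inside the disk bounded by one of the two curves, so that after relabelling $\gamma(D)\subsetneq D$ for some $\gamma\neq 1$. From there the paper invokes the explicit $\freeprod$-invariant metric $d$ constructed in \eqref{eq:def_metric_iso_freeprod-action}: it compares diameters of $D$ and $\gamma(D)$, arguing that strict containment forces a strictly smaller diameter, contradicting the fact that $\gamma$ acts as an isometry. You instead iterate $\gamma$ to obtain a strictly descending chain $D\supsetneq\gamma(D)\supsetneq\gamma^2(D)\supsetneq\cdots$, deduce that $\gamma$ has infinite order, and observe that the infinitely many distinct elements $\gamma^n$ all satisfy $\gamma^n(\overline{D})\cap\overline{D}\neq\emptyset$; this contradicts properness of the $\freeprod$-action applied to the compact set $\overline{D}$. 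Your route relies only on properness, which is built into the definition of the orbifold (Definition \ref{def:good_orb}), and avoids the auxiliary metric entirely; it also sidesteps the slightly delicate point that diameters of strictly nested compact sets are not automatically strictly smaller. Both approaches are sound, but yours is more intrinsic and would transfer verbatim to any proper action on a planar, contractible surface with no reference to a chosen invariant metric.
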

\begin{proof}
Let $c$ be a simple closed $\freeprod$-curve and $\gamma(c)$ with $\gamma\neq1$ a $\freeprod$-translate such that $c$ and $\gamma(c)$ bound intersecting disks. \new{Due to the Jordan curve theorem \cite[Chapter~XVII, Theorem 5.4]{Dugundji1966},} each of the curves $c$ and $\gamma(c)$ divides the surface $\Sigma$ into two components, the inner and outer region. While the inner region is a bounded disk, the outer region is not bounded. In particular, $\gamma$ maps the disk $D_c$, bounded by $c$, to the disk $D_{\gamma(c)}$, bounded by $\gamma(c)$. 

\new{By Definition \ref{def:freeprod-scc},} $c$ and $\gamma(c)$ either coincide or their intersection is empty. In the first case we are done. The second case implies that $\gamma(c)$ lies inside the disk bounded by $c$ or likewise $c$ lies inside the disk bounded by $\gamma(c)$. We show \new{that} this cannot happen since $\freeprod$ acts isometrically \new{with respect to the metric defined in \eqref{eq:def_metric_iso_freeprod-action}.} 

Let us assume that $\gamma(c)$ lies in the disk bounded by $c$. This implies that the diameter of the bounded disks $D_c$ and $D_{\gamma(c)}$ satisfy 
\[
\sup_{x,y\in D_c}d(x,y)>\sup_{x,y\in D_{\gamma(c)}}d(x,y). 
\]
Since $D_c$ and $D_{\gamma(c)}$ are compact and the metric is continuous, both suprema are maxima. \new{Hence,} there exist $x_0,y_0\in D_c$ with $d(x_0,y_0)=\sup_{x,y\in D_c}d(x,y)$. Using the relation of the maxima and $\gamma(D_c)=D_{\gamma(c)}$, this in particular implies $d(x_0,y_0)\neq d(\gamma(x_0),\gamma(y_0))$. This contradicts the fact that $\freeprod$ acts isometrically on~$\S$. Consequently, $\gamma(c)$ and $c$ coincide and the Lemma holds. 
\end{proof}

\begin{corollary}
\label{cor:freeprod-scc_no_pseudo-bigon}
If two simple closed $\freeprod$-curves $c$ and $c'$ in $\SG$ 
bound a pseudo-bigon, it is a bigon. 
\end{corollary}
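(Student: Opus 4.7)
The plan is to reduce the statement to Lemma~\ref{lem:pseudo-bigon_without_cp_bigon} via an innermost-bigon argument. First, among all pseudo-bigons formed by pairs of $\freeprod$-translates of $c$ and $c'$ with bigon disk contained in the given one, I would pick a pseudo-bigon whose disk $B$, bounded by translates $c_1 = \tilde\gamma(c)$ and $c_1' = \tilde\gamma'(c')$, is minimal under inclusion. It then suffices to show $B$ contains no cone point in its interior, because Lemma~\ref{lem:pseudo-bigon_without_cp_bigon} then applies to yield the desired bigon.

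For the no-cone-point claim, I would argue by contradiction: suppose a cone point $\cp$ lies in $B^\circ$. Since $\Sigma$ is planar and $B^\circ$ is a bounded component of $\Sigma \setminus (c_1 \cup c_1')$, $\cp$ lies in the bounded inner disk of at least one of the bounding curves; without loss of generality, $\cp \in D_1$, the inner disk of $c_1$. Next I would apply Lemma~\ref{lem:freeprod-scc_intersecting_disks}: for any nontrivial $g \in \Stab(\cp)$, both $D_1$ and $g(D_1) = D_{g(c_1)}$ contain $\cp$, forcing $g(c_1) = c_1$. A similar argument gives $g(c_1') = c_1'$ whenever $\cp \in D_1'$; otherwise, $g(c_1') \neq c_1'$ and the disjointness of the inner disks of $c_1'$ and $g(c_1')$ from the same lemma, combined with the rotational action of $g$ around the interior point $\cp \in B^\circ$, would push $g(c_1')$ into $B^\circ$, cutting off a strictly smaller pseudo-bigon between $c_1$ and $g(c_1')$ inside $B$—contradicting minimality. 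Hence both bounding curves of $B$ are $\Stab(\cp)$-invariant.

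The final step is to derive a contradiction from $g$ preserving both $c_1$ and $c_1'$. Since $g(B)$ is then a bounded component of $\Sigma \setminus (c_1 \cup c_1')$, and both $B^\circ$ and $g(B^\circ)$ contain a neighborhood of $\cp$, they coincide as components, so $g(B) = B$. The two bigon corners $\{p, q\} = c_1 \cap c_1' \cap \partial B$ are permuted by $g$; being a nontrivial rotation fixing only $\cp$, $g$ must swap them and consequently interchange the two boundary arcs of $\partial B$ (one lying in $c_1$, one in $c_1'$). Combined with $g(c_1) = c_1$, this places a positive-length arc of $c_1'$ inside $c_1$, contradicting the transversality of two distinct simple closed $\freeprod$-curves. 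Thus no cone point lies in $B^\circ$, and Lemma~\ref{lem:pseudo-bigon_without_cp_bigon} finishes the proof.

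I expect the main obstacle to be the geometric step forcing $g(c_1')$ into $B^\circ$ in the case $\cp \notin D_1'$. Making this rigorous would rely on the planarity of $\Sigma$, the fact that $g$ is a finite-order rotation with only $\cp$ as fixed point, and the disjointness of the inner disks of $c_1'$ and $g(c_1')$ provided by Lemma~\ref{lem:freeprod-scc_intersecting_disks}, which together force the rotated curve to cross from the outer region of $c_1'$ into the interior of the bigon.
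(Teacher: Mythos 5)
Your strategy---pass to an innermost pseudo-bigon, show its disk contains no cone point, then invoke Lemma~\ref{lem:pseudo-bigon_without_cp_bigon}---is genuinely different from what the paper does. The paper's proof is much shorter: it applies Lemma~\ref{lem:freeprod-scc_intersecting_disks} directly, arguing that any $\freeprod$-translate of $c$ (or $c'$) meeting the pseudo-bigon would have its inner disk intersect that of the boundary translate, forcing the two translates to coincide; hence no translate crosses the pseudo-bigon and it is already a bigon. In particular the paper never needs to discuss cone points inside the bigon at all.

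Your version has a genuine gap at the very first reduction: you assert that a cone point $\cp\in B^\circ$ must lie in the inner disk $D_1$ of $c_1=\tilde\gamma(c)$ or in $D_1'$ of $c_1'=\tilde\gamma'(c')$, justified only by planarity of $\Sigma$ and $B^\circ$ being a bounded complementary component. This is false: a bounded component of $\Sigma\setminus(c_1\cup c_1')$ need not lie in either inner disk. Concretely, let $D_1'$ be a disk shaped like a thick ``C'' and let $D_1$ be a disk placed across the opening of the ``C'' so that $D_1\cap D_1'$ has two components (one at each tip). Then $D_1\cup D_1'$ is not simply connected, and the enclosed ``hole'' is a bigon $B$ (its boundary is one arc of $c_1$ and one arc of $c_1'$, and $B\cap(c_1\cup c_1')=\partial B$) with $B^\circ$ disjoint from both $D_1$ and $D_1'$. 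Passing to an innermost such $B$ does not remove this possibility. So you have no handle on $\Stab(\cp)$ via Lemma~\ref{lem:freeprod-scc_intersecting_disks} in this case, and the whole contradiction machinery (through $g(c_1)=c_1$, $g(c_1')=c_1'$, $g(B)=B$) never gets off the ground.

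Separately, as you note yourself, the step in the mixed case ($\cp\in D_1$, $\cp\notin D_1'$) where the rotation is supposed to push $g(c_1')$ into $B^\circ$ and cut off a smaller pseudo-bigon is not an argument yet; Lemma~\ref{lem:freeprod-scc_intersecting_disks} only gives disjointness of $D_1'$ and $g(D_1')$, which by itself does not force $g(c_1')$ to cross into $B$. Both issues disappear if you instead argue as the paper does: if some translate $\gamma(c)$ (or $\gamma'(c')$) meets the bigon disk, compare its inner disk with that of the corresponding boundary translate and apply Lemma~\ref{lem:freeprod-scc_intersecting_disks} to conclude the translates coincide.
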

\begin{proof}
Let us assume that $c$ and $c'$ bound a pseudo-bigon such that there exists a $\freeprod$-translate, let us say $\gamma(c)$, that intersects the pseudo-bigon. Then $c$ and $\gamma(c)$ bound disks that intersect. By Lemma \ref{lem:freeprod-scc_intersecting_disks}, this implies that $c=\gamma(c)$. Hence, no $\freeprod$-translate intersects the pseudo-bigon, i.e.\ it is a bigon. 
\end{proof}

Further, Lemma \ref{lem:freeprod-scc_intersecting_disks} allows us to deduce the analog of Lemma \ref{lem:freeprod-arcs_curves_all_translates_in_min_pos} for simple closed $\freeprod$-curves. 

\begin{lemma}
\label{lem:freeprod-curves_all_translates_in_min_pos}
Let $c$ and $c'$ be simple closed $\freeprod$-curves in $\SG$. If $c$ is in minimal position with each $\freeprod$-translate of $c'$, the simple closed $\freeprod$-curves $c$ and $c'$ are in minimal position. 
\end{lemma}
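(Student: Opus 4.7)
The plan is to mimic the proof of Lemma~\ref{lem:freeprod-arcs_curves_all_translates_in_min_pos} almost verbatim, with one new subtlety: unlike $\freeprod$-arcs, whose distinct $\freeprod$-translates are automatically pairwise disjoint, distinct $\freeprod$-translates of a simple closed $\freeprod$-curve may \emph{coincide} rather than be disjoint (Definition~\ref{def:freeprod-scc}). So the sum that decomposes $|c\cap\freeprod(c')|$ must be indexed by a set $R\subseteq\freeprod$ of representatives of the cosets $\freeprod/\Stab_\freeprod(c')$, in order to count each distinct translate exactly once.

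First I would restate the hypothesis as $i_{\S}([c]_{\S},[\tilde\gamma'(c')]_{\S})=|c\cap\tilde\gamma'(c')|$ for each $\tilde\gamma'\in\freeprod$. Given any simple closed $\freeprod$-curves $\tilde c\in[c]$ and $\tilde c'\in[c']$, I would fix $R$ as above (for $\tilde c'$) so that $\freeprod(\tilde c')=\bigsqcup_{\tilde\gamma'\in R}\tilde\gamma'(\tilde c')$, and then chain the same inequalities as in the arc proof:
\[
|\tilde c\cap\freeprod(\tilde c')|=\sum_{\tilde\gamma'\in R}|\tilde c\cap\tilde\gamma'(\tilde c')|\geq\sum_{\tilde\gamma'\in R}i_{\S}([c]_{\S},[\tilde\gamma'(c')]_{\S})=\sum_{\tilde\gamma'\in R}|c\cap\tilde\gamma'(c')|,
\]
using that $\tilde c\simeq c$ in $\S$ and $\tilde\gamma'(\tilde c')\simeq\tilde\gamma'(c')$ in $\S$ (by translating an scc-homotopy), together with the hypothesis for the final equality. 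Combined with the obvious upper bound $i([c],[c'])\leq|c\cap\freeprod(c')|$, this will yield $i([c],[c'])=|c\cap\freeprod(c')|$.

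The one step that requires justification beyond the arc case is that the final sum above actually equals $|c\cap\freeprod(c')|$, i.e.\ that the same $R$ serves as coset representatives for $\freeprod/\Stab_\freeprod(c')$ as well. This reduces to showing that the setwise stabilizer of a simple closed $\freeprod$-curve is invariant under homotopy through simple closed $\freeprod$-curves, so that $\Stab_\freeprod(\tilde c')=\Stab_\freeprod(c')$. I would argue this via Lemma~\ref{lem:freeprod-scc_intersecting_disks} and the Jordan curve theorem: an element $\gamma\in\freeprod$ stabilizes $c'$ iff it maps the disk bounded by $c'$ to itself, which is equivalent to fixing setwise the (finite) collection of cone points enclosed by $c'$. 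This cone-point data cannot change along a homotopy through simple closed $\freeprod$-curves (a curve cannot cross a cone point without momentarily failing to be an embedding of $S^1$), so $\Stab_\freeprod(c')$ depends only on $[c']$. This homotopy invariance of the stabilizer is the one nontrivial ingredient; once it is in place, the counting argument is identical to the arc proof.
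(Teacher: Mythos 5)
Your overall strategy matches the paper's: decompose $\freeprod(c')$ into distinct translates by choosing coset representatives for $\freeprod/\Stab_\freeprod(c')$, and show that the stabilizer is the same for every $\tilde c'\in[c']$ so that the same set of representatives serves throughout the counting argument. The gap is in how you justify this invariance. You claim that $\Stab_\freeprod(c')$ is exactly the set of $\gamma$ fixing setwise the cone points enclosed by $c'$, and that this cone-point data cannot change under a homotopy ``because a curve cannot cross a cone point without failing to be an embedding of $S^1$.'' Both halves are wrong. Cone points are ordinary points of $\S$ --- they are neither punctures nor marked points $\freeprod(P)$, so a proper simple closed curve is perfectly free to pass through or sweep across them, and a simple closed curve whose image contains a cone point is still an embedding of $S^1$. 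In fact the stabilizer is \emph{not} a homotopy invariant of $c'$ in general: a small circle around a cone point enclosing no marked points or punctures is null-homotopic, hence homotopic to a small circle around a generic regular point, yet only the former has nontrivial stabilizer. Relatedly, your characterization of $\Stab_\freeprod(c')$ fails outright when $D_{c'}$ contains no cone points: then ``fixes the cone points setwise'' is vacuous and would give all of $\freeprod$, whereas the actual stabilizer is trivial (a nontrivial $\gamma$ preserving the compact disk $D_{c'}$ would have a fixed point in it, which would have to be a cone point).

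The paper's fix is a case split on the set $S$ of \emph{marked points and punctures} bounded by the curve, which \emph{is} homotopy-invariant precisely because homotopies are through proper curves. When $S\neq\emptyset$, the claim in the paper's proof shows $\Stab_\freeprod(\tilde c)=\{\gamma:\gamma(S)=S\}$ for every $\tilde c\in[c]$, upgrading ``$\tilde c$ and $\gamma(\tilde c)$ both bound $S$'' to ``$\gamma(\tilde c)=\tilde c$'' via Lemma~\ref{lem:freeprod-scc_intersecting_disks}. When $S=\emptyset$ the curve is null-homotopic, so minimal position with each translate already forces $\vert c\cap\freeprod(c')\vert=0$ and no counting argument is needed at all. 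Your proof needs this case distinction, and the stabilizer argument must be based on marked points and punctures, not on cone points.
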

\begin{proof}
\new{The proof of Lemma \ref{lem:freeprod-arcs_curves_all_translates_in_min_pos} was based on a counting argument. To apply the same argument for simple closed $\freeprod$-curves, we have to take into account that simple closed curves may have a non-trivial stabilizer. We address that distinguishing two cases.} 

Firstly, let $c$ bound no marked points or punctures. Since each $\freeprod$-translate $\gamma'(c')$ is in minimal position with $c$, Corollary \ref{cor:bigon-crit_Sigma} implies that $\gamma'(c')$ and $c$ form no bigon. Thus, no $\freeprod$-translate $\gamma'(c')$ intersects $c$. Hence, 
\[
0\leq i([c],[c'])\leq \vert c\cap\freeprod(c')\vert=0 
\] 
which implies $i([c],[c'])=\vert c\cap\freeprod(c')\vert=0$. 

If $c$ bounds a set of marked points and punctures $S\subseteq\freeprod(\{p_1,...,p_\TheStrand,r_1,...,r_\ThePct\})$, we observe that each curve $\tilde{c}\in[c]$ bounds the same set of marked points $S$. We prove: 

\begin{claim*}
An element $\gamma\in\freeprod$ preserves a simple closed $\freeprod$-curve $\tilde{c}\in[c]$ if and only if $\gamma$ preserves the set $S$. 
\end{claim*}

If $\gamma(\tilde{c})=\tilde{c}$, the element $\gamma$ preserves the bounded disk $D_{\tilde{c}}$. Due to the $\freeprod$-invariance of marked points and punctures, this implies that $\gamma(S)=S$. 

For the opposite implication, let $\gamma\in\freeprod$ such that $\gamma(S)=S$. We observe: 
\[
S=\gamma(S)\subseteq\gamma(D_{\tilde{c}})=D_{\gamma(\tilde{c})}, 
\]
i.e.\ $\gamma(\tilde{c})$ bounds $S$. By Lemma \ref{lem:freeprod-scc_intersecting_disks}, this implies $\gamma(\tilde{c})=\tilde{c}$. This proves the claim. 

Now let $\freeprod_c\subseteq\freeprod$ be a minimal subset such that $\freeprod_c(c)=\freeprod(c)$. The above claim implies that $\freeprod_c\subseteq\freeprod$ is also a minimal set that satisfies $\freeprod_c(\tilde{c})=\freeprod(\tilde{c})$ for each simple closed $\freeprod$-curve $\tilde{c}\in[c]$. Now the proof of the lemma follows verbatim as in Lemma~\ref{lem:freeprod-arcs_curves_all_translates_in_min_pos} if we replace the index set $\freeprod$ by $\freeprod_c$ in equation (\ref{eq:suff_cond_min_pos}). 
\end{proof}

\begin{proof}[Proof of Proposition \textup{\ref{prop:bigon_crit_orb_scc}}]
We use contraposition, i.e.\ we prove: $c$ and $c'$ are not in minimal position if and only if they form a bigon.  

If $c,c'$ form a bigon (in the sense of Definition \ref{def:bigon_freeprod-arc}), we can homotope through the bigon to reduce the number of intersections. Consequently, $c$ and $c'$ are not in minimal position. 

On the other hand, let us assume that the simple closed $\freeprod$-curves $c$ and $c'$ are not in minimal position. This assumption by Lemma \ref{lem:freeprod-curves_all_translates_in_min_pos} implies that there exists a $\freeprod$-translate $\tilde{\gamma}'(c')$ such that $c$ and $\tilde{\gamma}'(c')$ are not in minimal position. By Corollary~\ref{cor:bigon-crit_Sigma}, we obtain that $c$ and $\tilde{\gamma}'(c')$ form a bigon, i.e.\ the simple closed $\freeprod$-curves $c$ and $c'$ bound a pseudo-bigon. Thus, by Corollary \ref{cor:freeprod-scc_no_pseudo-bigon}, $c$ and $c'$ form a bigon. This proves: If $c$ and $c'$ are not in minimal position, then they form a bigon. The bigon criterion for simple closed $\freeprod$-curves follows. 
\end{proof}

\renewcommand{\TheDim}{k}
For $\freeprod$-arcs, in analogy to Proposition \ref{prop:bigon_crit_orb_scc} we want to prove: 

\begin{proposition}[Bigon criterion for $\freeprod$-arcs]
\label{prop:bigon_crit_orb}
Let $\beta$ and $\beta'$ be two transverse $\freeprod$-arcs in $\SG$. The $\freeprod$-arcs are in minimal position if and only if they form no bigons. 
\end{proposition}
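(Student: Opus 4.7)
The plan is to mirror the structure of the proof of Proposition \ref{prop:bigon_crit_orb_scc}, using Corollary \ref{cor:bigon-crit_Sigma} in $\S$ together with Lemma \ref{lem:freeprod-arcs_curves_all_translates_in_min_pos} and Lemma \ref{lem:pseudo-bigon_without_cp_bigon}. I argue by contraposition.

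For the easy direction, suppose $\beta$ and $\beta'$ form a bigon. Then there exist $\freeprod$-translates $\tilde{\gamma}(b)$ and $\tilde{\gamma}'(b')$ of the underlying continuous representatives bounding a disk $D$ in $\S$ that meets no other $\freeprod$-translate of $b$ or $b'$. Since $D$ contains no cone point (it meets $b$ and $b'$ only along $\partial D$ and is a disk embedded in $\S$), one can homotope $\tilde{\gamma}(b)$ across $D$, eliminating the two corner intersections. This homotopy is $\freeprod$-equivariantly extended to all translates, strictly reducing $\vert b \cap \freeprod(b')\vert$, so $\beta, \beta'$ are not in minimal position.

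For the reverse direction, suppose $\beta$ and $\beta'$ are transverse but not in minimal position, and let $(\gamma,b)$ and $(\gamma',b')$ be their unique equivalent continuous representatives. By the contrapositive of Lemma \ref{lem:freeprod-arcs_curves_all_translates_in_min_pos}, there exist $\tilde{\gamma},\tilde{\gamma}' \in \freeprod$ such that $\tilde{\gamma}(b)$ and $\tilde{\gamma}'(b')$ are not in minimal position as arcs in $\S$. By Corollary \ref{cor:bigon-crit_Sigma}, they form a bigon in $\S$, so $\beta$ and $\beta'$ form a pseudo-bigon. In view of Lemma \ref{lem:pseudo-bigon_without_cp_bigon}, it suffices to exhibit a pseudo-bigon bounded by $\freeprod$-translates of $b$ and $b'$ whose interior contains no cone point.

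The main obstacle is to rule out the presence of cone points in pseudo-bigons. I would proceed by minimality: among all pseudo-bigons spanned by pairs $(\tilde{\gamma}(b),\tilde{\gamma}'(b'))$, pick one, say $D$, whose interior contains the minimal number of cone points. Suppose for contradiction that this number is positive and let $c$ be a cone point in the interior of $D$ with stabilizer of order $m \geq 2$, generated by a rotation $\sigma$. Since $(\gamma,b)$ and $(\gamma',b')$ are $\freeprod$-arcs, the translates $\sigma\tilde{\gamma}(b)$ and $\tilde{\gamma}(b)$ (resp.\ $\sigma\tilde{\gamma}'(b')$ and $\tilde{\gamma}'(b')$) are either equal or disjoint; freeness of the $\freeprod$-action on edges of the Bass--Serre tree forces them to be distinct, hence disjoint. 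Consequently $\sigma(D)$ is a distinct pseudo-bigon also containing $c$, and its bounding arcs $\sigma\tilde{\gamma}(b)$, $\sigma\tilde{\gamma}'(b')$ must enter $D$ crossing $\partial D$ only along $\tilde{\gamma}'(b')$ and $\tilde{\gamma}(b)$ respectively. Tracking these crossings, the first innermost sub-disk of $D \cap \sigma(D)$ adjacent to $c$ is bounded by one sub-arc of $\sigma\tilde{\gamma}(b)$ and one of $\tilde{\gamma}'(b')$ (or a $\sigma$-symmetric variant) and is strictly contained in $D$, hence contains strictly fewer cone points. This yields a pseudo-bigon contradicting the minimality of $D$. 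Therefore $D$ contains no cone point, and Lemma \ref{lem:pseudo-bigon_without_cp_bigon} upgrades it to a bigon formed by $\beta$ and $\beta'$.

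The hard part is making the local combinatorial analysis around $c$ rigorous: one needs to verify that the rotated arcs $\sigma\tilde{\gamma}(b)$ and $\sigma\tilde{\gamma}'(b')$ really do enter $D$ across opposite sides of $\partial D$ in a way that cuts off a sub-disk containing $c$, and that this sub-disk is bounded by exactly one translate of $b$ and one translate of $b'$ rather than picking up further crossings. Arguing via the isometric $\freeprod$-action from equation \eqref{eq:def_metric_iso_freeprod-action} and the local picture of $\sigma$ as an angular rotation around $c$ should make the reduction in cone point count explicit.
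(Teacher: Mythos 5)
Your skeleton matches the paper's argument step for step: reduce to continuous representatives via Remark \ref{rem:int_number}, argue by contraposition, handle the easy direction by resolving a bigon, and in the hard direction combine the contrapositive of Lemma \ref{lem:freeprod-arcs_curves_all_translates_in_min_pos} with Corollary \ref{cor:bigon-crit_Sigma} to extract a pseudo-bigon, then aim to upgrade it to a genuine bigon via Lemma \ref{lem:pseudo-bigon_without_cp_bigon}. You correctly identify that the crux is excluding cone points from that pseudo-bigon.

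At that crux you part ways with the paper and leave a genuine gap. The paper isolates the cone-point exclusion as a separate result, Lemma \ref{lem:freeprod-arcs_no_pseudo-bigon}: two continuous $\freeprod$-paths bounding no bigons but bounding a pseudo-bigon that contains a cone point cannot both be $\freeprod$-arcs. Its proof is a winding argument: push $b$ into $\FD(\ThePct)$ by a $\freeprod$-equivariant Hatcher flow, record the finitely many signed crossings of $b'$ with the spokes $\gamma_i(s_\mu)$, and use disjointness of the $\freeprod$-translates of $b'$ to show the crossing parameters $t_{j_k}^{(\mu)}$ are strictly monotone — the translates wind around the cone point like a snail shell — after which iterating forces an endpoint inside the pseudo-bigon, a self-crossing of translates, or an innermost bigon without cone points, with termination guaranteed by finiteness of the crossing sequence.

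Your proposed replacement — minimize the number of cone points over all pseudo-bigons and descend using $\sigma(D)$ — is not established, and you acknowledge as much. Concretely: (i) it is not shown that the "innermost sub-disk of $D\cap\sigma(D)$ adjacent to $c$" has strictly fewer cone points; if $D$ contains only $c$ and the sub-disk still contains $c$, the count does not drop, and if it excludes $c$ you have not argued that it contains no cone point at all. (ii) That sub-disk is not shown to be a pseudo-bigon in the sense of Definition \ref{def:bigon_freeprod-arc}: it must be cut out by one $\freeprod$-translate of $b$ and one of $b'$ and satisfy $D'\cap(b\cup b')=\partial D'$, whereas a priori its boundary could consist of two translates of the same underlying arc, and further translates could meet its interior. (iii) The relative position of $\sigma(D)$ and $D$ is not as constrained as sketched; $\sigma(\tilde\gamma(b))$ may cross $\tilde\gamma'(b')$ many times, and nothing rules out nesting. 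These difficulties are exactly what the paper's Lemma \ref{lem:freeprod-arcs_no_pseudo-bigon} is designed to resolve; without either a rigorous version of your descent or a proof of that lemma, the argument is incomplete.
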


The proof of the bigon criterion for $\freeprod$-arcs in $\SG$ is also based on Corollary~\ref{cor:bigon-crit_Sigma}. However, the proof is more complicated than for simple closed $\freeprod$-curves since we do not have a direct analog of Corollary \ref{cor:freeprod-scc_no_pseudo-bigon}. Instead, we will prove a necessary condition for $\freeprod$-paths that form a pseudo-bigon which contains a cone point: 

\newcommand{\Nbound}{J}
\newcommand{\NNbound}{K}

\begin{lemma}
\label{lem:freeprod-arcs_no_pseudo-bigon}
Let $(\gamma,b)$ and $(\gamma',b')$ be continuous $\freeprod$-paths. If $(\gamma,b)$ and $(\gamma',b')$ bound no bigons and bound a pseudo-bigon such that the bounded disk $D\subseteq\S$ 
contains a cone point, then at least one of the $\freeprod$-paths is not a $\freeprod$-arc. 
\end{lemma}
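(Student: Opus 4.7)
The plan is to argue by contraposition: assume both $(\gamma,b)$ and $(\gamma',b')$ are $\freeprod$-arcs and show that they bound a bigon, contradicting the hypothesis. After a shift of $\freeprod$-paths, we may assume the pseudo-bigon $D$ is bounded by $b$ and $b'$ themselves.

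The cone point is first exploited to produce a concrete $\freeprod$-translate entering $\mathrm{int}(D)$. Let $\sigma\in\freeprod\setminus\{1\}$ generate the finite cyclic stabilizer of the cone point $c_\nu\in\mathrm{int}(D)$; then $c_\nu$ is the only fixed point of $\sigma$. Since the two endpoints of $b$ lie in $\freeprod(P)\cup\freeprod(Q)$, are distinct from $c_\nu$, and $\sigma$ preserves the decomposition into interior and boundary, the equality $\sigma(b)=b$ would force $\sigma$ to fix both endpoints, which is impossible. Hence $\sigma(b)\ne b$ and the $\freeprod$-arc property gives $\sigma(b)\cap b=\emptyset$; analogously $\sigma(b')\cap b'=\emptyset$. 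Because the $\freeprod$-action is isometric (see \eqref{eq:def_metric_iso_freeprod-action}), $D$ and $\sigma(D)$ have equal area and both contain $c_\nu$, so neither can lie strictly inside the other. Therefore $\sigma(\partial D)\cap\partial D\ne\emptyset$, and combined with the above disjointness this forces $\sigma(b)\cap b'\ne\emptyset$ or $\sigma(b')\cap b\ne\emptyset$; by transversality the crossing translate enters $\mathrm{int}(D)$.

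The second step is an innermost descent in the spirit of Lemma \ref{lem:pseudo-bigon_without_cp_bigon}. By properness of the $\freeprod$-action on the compact set $D$, only finitely many $\freeprod$-translates of $b$ or $b'$ meet $D$, each transversely in finitely many points; hence the set of pseudo-bigons of $(\gamma,b),(\gamma',b')$ contained in $D$ is finite. Pick a minimal one $D_\ast$ with respect to inclusion. I claim $D_\ast$ is a bigon. Otherwise some $\freeprod$-translate $\tau(b)$ (the case $\tau(b')$ being symmetric) enters $\mathrm{int}(D_\ast)$; since $\tau(b)\cap b=\emptyset$, it meets $\partial D_\ast$ only through the $b'$-portion. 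An innermost sub-arc of $\tau(b)\cap D_\ast$ together with a sub-arc of that $b'$-portion bounds a disk $D'\subsetneq D_\ast$ satisfying the axioms of Definition \ref{def:bigon}: it inherits the absence of interior marked points from $D_\ast$ (cone points being distinct from the marked points of Definition \ref{def:bigon}), has at most one marked point on its boundary, and each of its boundary arcs is connected by the innermost choice. This yields a pseudo-bigon strictly contained in $D_\ast$, contradicting minimality.

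Thus $D_\ast$ is a bigon of $(\gamma,b)$ and $(\gamma',b')$, contradicting the hypothesis that they bound no bigons. The main technical obstacle will be the careful verification of the axioms of Definition \ref{def:bigon} for the sub-disks $D'$ at each descent step — in particular the connectedness of $D'\cap\tau(b)$ and of $D'\cap b'$ obtained via the innermost choice, and tracking how marked points on $\partial D$ can appear on $\partial D'$ — together with the notationally heavier but entirely parallel treatment of the symmetric case where a translate of $b'$ enters through the $b$-portion of $\partial D_\ast$.
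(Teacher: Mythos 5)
Your plan is a genuinely different route from the paper's. The paper's proof works hard with the cone point: it first pushes $b$ off the segments $\freeprod(s_\mu)$ via a $\freeprod$-equivariant Hatcher flow, then analyzes the winding of $b'$ and its translates around the cone point via a strictly decreasing "snail shell" sequence of crossing times $(t_{k_l}^{(\mu)})$, and finally runs a case analysis that terminates because that sequence is finite. By contrast, you propose a single global descent: take a minimal (w.r.t.\ inclusion) pseudo-bigon in $D$ and show that if it is not already a bigon then some translate cuts off a strictly smaller pseudo-bigon. If this works, it simultaneously subsumes Lemma \ref{lem:pseudo-bigon_without_cp_bigon} and Lemma \ref{lem:freeprod-arcs_no_pseudo-bigon} in one stroke and shows that the cone-point dichotomy the paper maintains between those two lemmas is unnecessary.

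Two concerns. First, your "first step" (using the nontrivial stabilizer of the cone point and isometry of the action to force some translate into $\mathrm{int}(D)$) is never used: in the descent you appeal directly to "$D_\ast$ not a bigon $\Rightarrow$ some translate enters $\mathrm{int}(D_\ast)$", which is exactly Definition \ref{def:bigon_freeprod-arc} and needs no cone point. This is a signal that the cone-point hypothesis has vanished from the argument entirely, even though the paper visibly leans on it. Second — and this is the real issue — the load-bearing step is precisely the one you defer as "the main technical obstacle": that the innermost sub-arc of $\tau(b)\cap D_\ast$ cuts off a disk $D'\subsetneq D_\ast$ that genuinely satisfies all five axioms of Definition \ref{def:bigon} (and hence is a pseudo-bigon in the set you are minimizing over). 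Checking condition \ref{def:bigon_it3}, connectedness of both boundary pieces, and the behavior at a possible shared marked-point endpoint of $\tau(b)$ and $\tilde\gamma'(b')$ on $\partial D_\ast$ is not routine, and the fact that the paper deliberately avoids running an innermost-disk argument across a cone point — proving Lemma \ref{lem:pseudo-bigon_without_cp_bigon} only in the cone-free case and using the more intricate winding argument here — is evidence the author did not consider it automatic. Until that verification is carried out, the plan is a promising alternative strategy rather than a proof.
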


For the proof, we consider the parts of the boundary $\partial \FD(\ThePct)$ that are not contained in $\partial\S$, examples for these parts are the red arcs in Figure \ref{fig:bigon_pseudo_bigon}. These pieces are $\freeprod$-translates of arcs which we denote by $s_\nu$ for $1\leq\nu\leq\TheCone$. The arc $s_\nu$ connects the cone point $\cp_\nu$ to the boundary $\partial\S$. Let $s_\nu$ be parametrized by $I$ starting at the cone point $\cp_\nu$ and ending in $\partial\S$. 

\begin{proof}[Proof of Lemma \textup{\ref{lem:freeprod-arcs_no_pseudo-bigon}}]
Let us assume that $(\gamma,b)$ and $(\gamma',b')$ are \textit{$\freeprod$-arcs} that bound no bigons but a pseudo-bigon that contains the cone point $\rho(\cp_\mu)$ for some $\rho\in\freeprod$. Moreover, for $1\leq\Subdiv\leq\TheOrder_{\mu}$, let $\gamma_\Subdiv(s_{\mu})$ be the $\freeprod$-translates that connect the cone point $\rho(\cp_\mu)$ to the boundary. 

For brevity of notation, let us assume that the $\freeprod$-translates $b$ and $b'$ bound the pseudo-bigon. Furthermore, we may assume, for instance by piecewise linear approximation, that $b$ and $b'$ intersect finitely many times with the $\freeprod$-translates of~$\seg_\nu$ for $1\leq\nu\leq\TheCone$. Using that $(\gamma,b)$ is a $\freeprod$-arc, we may push off the arcs $\freeprod(s_{\mu})$ from $b$ by a $\freeprod$-equivariant Hatcher flow. See \cite{Hatcher1991} for further details on the classical Hatcher flow and Figure \ref{fig:Hatcher-flow} for an example of its $\freeprod$-equivariant version. This allows us to assume that $b$ is disjoint from~$\freeprod(s_{\mu})$, i.e.\ $b$ is contained in $\FD(\ThePct)$. In particular, $(\gamma,b)$ is a $\freeprod$-arc. 
\begin{figure}[H]
\import{Grafiken/basics_orb_mcg/}{Hatcher-flow_snake_s.pdf_tex}
\caption{$\freeprod$-equivariant Hatcher flow.}
\label{fig:Hatcher-flow}
\end{figure}

Using that $b$ and $b'$ bound a disk $D$ that contains $\rho(\cp_{\mu})$, this implies that the arc~$b'$ intersects each of the $\freeprod$-translates $\gamma_\Subdiv(s_{\mu})$ such that the algebraic intersection number $i_{\pm}(b',\gamma_\Subdiv(s_{\mu}))$ has the same sign for each $1\leq\Subdiv\leq\TheOrder_{\mu}$ (see Figure \ref{fig:counterclockwise}). Without loss of generality, we assume that the algebraic intersection number is negative, i.e.\ $b'$ encircles $\gamma_\Subdiv(s_{\mu})$ counterclockwise. 

\begin{figure}[H]
\import{Grafiken/basics_orb_mcg/}{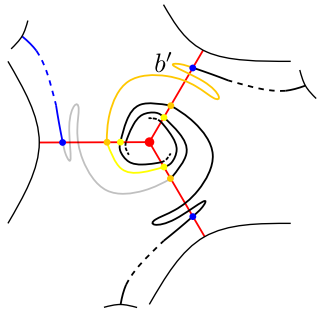}
\caption{{\small A $\freeprod$-arc $(\gamma',b')$ encircling a cone point counterclockwise.}}
\label{fig:counterclockwise}
\end{figure}

Let $T_0$ be the maximal time such that $b'(T_0)\in b\cap\partial D$, i.e.\ $b'(T_0)$ is an endpoint of the pseudo-bigon, and consider the strictly decreasing sequence $(t_\NSubdiv)_{\NSubdiv\in N}$ of all times $t_\NSubdiv\in[0,T_0)$ such that $b'(t_\NSubdiv)\in\bigcup_{\Subdiv=1}^{\TheOrder_\mu}\gamma_\Subdiv(s_{\mu})$, where $N=\{0,...,\Nbound\}$ for some $\Nbound\in\NN$. The definition of $(t_\NSubdiv)_{\NSubdiv\in N}$ further induces a time $t_\NSubdiv^{(\mu)}$ and a $\freeprod$-translate $\gamma_{\Subdiv_\NSubdiv}(s_{\mu})$ such that $b'(t_\NSubdiv)=\gamma_{\Subdiv_\NSubdiv}(s_{\mu})(t_\NSubdiv^{(\mu)})$ for every $\NSubdiv\in N$. 

Now we may choose the following subsequence of elements $t_{\NSubdiv_\NNSubdiv}$: Let $t_{\NSubdiv_1}$ be the maximal time such that $b'$ intersects $\gamma_{\NSubdiv_\NNSubdiv}(s_{\mu})$ in counterclockwise direction. Moreover, let $t_{\NSubdiv_\NNSubdiv}$ be the maximal time $t_{\NSubdiv_\NNSubdiv}<t_{\NSubdiv_{\NNSubdiv-1}}$ such that 
\begin{itemize}
\item $b'$ intersects $\gamma_{\NSubdiv_\NNSubdiv}(s_{\mu})=\rho_\mu\gamma_{\NSubdiv_{\NNSubdiv-1}}(s_\mu)$ counterclockwise at time $t_{\NSubdiv_\NNSubdiv}$ with $\rho_\mu\in\freeprod$ inducing a counterclockwise rotation of angle $\frac{2\pi}{\TheOrder_\mu}$ around $\rho(\cp_\mu)$. 
\item there exists an $\varepsilon>0$ with $[t_{\NSubdiv_{\NNSubdiv-1}}+\varepsilon,t_{\NSubdiv_\NNSubdiv}-\varepsilon]$ containing all $t_\NSubdiv$ from $[t_{\NSubdiv_{\NNSubdiv-1}},t_{\NSubdiv_\NNSubdiv}]$ and $i_{\pm}(b'\vert_{[t_{\NSubdiv_{\NNSubdiv-1}}+\varepsilon,t_{\NSubdiv_\NNSubdiv}-\varepsilon]},\gamma_\Subdiv(s_{\mu}))=0$ for all $1\leq\Subdiv\leq\TheOrder_{\mu}$. 
\end{itemize}

This defines a subsequence $(t_{\NSubdiv_\NNSubdiv})_{\NNSubdiv\in N'}$ of $(t_\NSubdiv)_{\NSubdiv\in N}$. Since $b$ and $b'$ bound a bigon that contains $\rho(c_{\mu})$, this subsequence has at least $\TheOrder_{\mu}$ entries. 

\begin{claim*}
The sequence $(t_{\Strand_\NStrand}^{(\mu)})_{k\in N'}$ is strictly decreasing. 
\end{claim*}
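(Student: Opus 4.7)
I would prove the claim by contradiction, exploiting the $\freeprod$-arc property of $(\gamma',b')$ together with the way $b'$ winds counterclockwise around the cone point $\rho(\cp_\mu)$. Since $(\gamma',b')$ is a $\freeprod$-arc, any two distinct $\freeprod$-translates of $b'$ are disjoint. In particular, $b'$ and $\rho_\mu(b')$ are disjoint.

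First I would establish the key numerical identities that tie the two radial parameters to a common spoke. Because $\rho_\mu$ sends the spoke $\gamma_{\NSubdiv_{\NNSubdiv-1}}(s_\mu)$ isometrically to $\gamma_{\NSubdiv_\NNSubdiv}(s_\mu)=\rho_\mu\gamma_{\NSubdiv_{\NNSubdiv-1}}(s_\mu)$, preserving the parametrization $s_\mu:I\to\S$ based at the cone point, the point $\rho_\mu(b'(t_{\NSubdiv_{\NNSubdiv-1}}))$ lies on $\gamma_{\NSubdiv_\NNSubdiv}(s_\mu)$ at radius $t_{\NSubdiv_{\NNSubdiv-1}}^{(\mu)}$. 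Thus the translate $\rho_\mu(b')$ meets $\gamma_{\NSubdiv_\NNSubdiv}(s_\mu)$ at this radius, while $b'$ itself meets the same spoke at radius $t_{\NSubdiv_\NNSubdiv}^{(\mu)}$. Disjointness of $b'$ and $\rho_\mu(b')$ already rules out equality $t_{\NSubdiv_\NNSubdiv}^{(\mu)}=t_{\NSubdiv_{\NNSubdiv-1}}^{(\mu)}$.

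Next I would assume, for contradiction, that $t_{\NSubdiv_\NNSubdiv}^{(\mu)}>t_{\NSubdiv_{\NNSubdiv-1}}^{(\mu)}$, and show that the two disjoint subarcs must in fact intersect. Consider $b'|_{[t_{\NSubdiv_{\NNSubdiv-1}},t_{\NSubdiv_\NNSubdiv}]}$: by the defining property of the subsequence $(t_{\NSubdiv_\NNSubdiv})_{\NNSubdiv\in N'}$, the algebraic intersection of this subarc with each $\gamma_\Subdiv(s_\mu)$ vanishes on $[t_{\NSubdiv_{\NNSubdiv-1}}+\varepsilon,t_{\NSubdiv_\NNSubdiv}-\varepsilon]$, so the subarc enters the sector $R$ bounded by $\gamma_{\NSubdiv_{\NNSubdiv-1}}(s_\mu)$, $\gamma_{\NSubdiv_\NNSubdiv}(s_\mu)$ and a small arc around $\rho(\cp_\mu)$ from the spoke $\gamma_{\NSubdiv_{\NNSubdiv-1}}(s_\mu)$ at radius $t_{\NSubdiv_{\NNSubdiv-1}}^{(\mu)}$ and leaves through $\gamma_{\NSubdiv_\NNSubdiv}(s_\mu)$ at radius $t_{\NSubdiv_\NNSubdiv}^{(\mu)}$ (counterclockwise). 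Completing this subarc to a Jordan curve by concatenating with the spoke segments from the respective endpoints inward to $\rho(\cp_\mu)$ (and bounding through the cone point) produces a closed loop dividing the neighborhood of $\rho(\cp_\mu)$ into an inner and an outer region of $R$. Under the assumption $t_{\NSubdiv_\NNSubdiv}^{(\mu)}>t_{\NSubdiv_{\NNSubdiv-1}}^{(\mu)}$, the translate $\rho_\mu(b'|_{[t_{\NSubdiv_{\NNSubdiv-1}},t_{\NSubdiv_\NNSubdiv}]})$ begins at a radius on $\gamma_{\NSubdiv_\NNSubdiv}(s_\mu)$ strictly below the endpoint of $b'$ on the same spoke; hence it emerges into the sector counterclockwise to $R$ from a side that forces it to either re-enter $R$ (contradicting $b'\cap\rho_\mu(b')=\emptyset$) or to cross $b'$ itself while following the analogous pattern across the next sector. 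Either outcome contradicts the $\freeprod$-arc property.

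Hence $t_{\NSubdiv_\NNSubdiv}^{(\mu)}<t_{\NSubdiv_{\NNSubdiv-1}}^{(\mu)}$ for every consecutive pair in $N'$, proving the claim. The main obstacle, as indicated above, will be to make the Jordan-curve argument in the third paragraph fully precise: the subarcs $b'|_{[t_{\NSubdiv_{\NNSubdiv-1}},t_{\NSubdiv_\NNSubdiv}]}$ may a priori wander back and forth inside their respective sectors, and only the algebraic (not geometric) intersection with the spokes is controlled by the subsequence construction. I expect to rely crucially on the reduction made earlier in the proof of Lemma~\ref{lem:freeprod-arcs_no_pseudo-bigon} via the $\freeprod$-equivariant Hatcher flow, which ensures that $b$ is disjoint from $\freeprod(s_\mu)$; applied locally, a small isotopy of $b'$ lets one clean up interior oscillations while preserving the radii of the counterclockwise spoke crossings, so that the Jordan-curve argument applies to a simplified representative and yields the desired contradiction.
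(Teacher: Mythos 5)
Your strategy differs from the paper's in a way that matters. The paper does not attempt a sector-by-sector comparison of $b'$ with a local translate of $b'$; instead it uses the \emph{global} barrier $\rho_\mu(b')\vert_{[t_{\NSubdiv_\NNNSubdiv},1]}$, a connected subarc of a $\freeprod$-translate running from the spoke $\gamma_{\Subdiv_{\NSubdiv_{\NNNSubdiv+1}}}(s_\mu)$ all the way to $\partial\S$. Since the translate terminates on the boundary, this barrier genuinely separates a neighbourhood of the cone point from the rest of the surface, and the disjointness of $b'$ from $\rho_\mu(b')$ then \emph{forces} $b'$ to cross the spoke strictly closer to $\rho(\cp_\mu)$ than where the barrier anchors. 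No cleaning-up of oscillations is needed, because the barrier's topological separation property does not depend on the subarcs of $b'$ staying in a single sector.

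The gap you flag in your own last paragraph is real and not patched by your proposed remedy. The subsequence $(t_{\NSubdiv_\NNSubdiv})_{\NNSubdiv\in N'}$ controls only the \emph{algebraic} intersection numbers of $b'$ with the spokes, so the subarc $b'\vert_{[t_{\NSubdiv_\NNSubdiv},\,t_{\NSubdiv_{\NNSubdiv-1}}]}$ need not stay inside the sector $R$; it may dip across $\gamma_{\NSubdiv_{\NNSubdiv-1}}(s_\mu)$ or $\gamma_{\NSubdiv_\NNSubdiv}(s_\mu)$ and return. Your Jordan-curve argument, which closes this subarc with spoke segments and declares an inner/outer region, only makes sense if the subarc genuinely stays in $R$, and you have not established that. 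Worse, the remedy you suggest — a ``local'' $\freeprod$-equivariant Hatcher flow applied to $b'$ — is not the one actually performed in the paper: there the Hatcher flow is applied to $b$ (not $b'$) and precisely because $(\gamma,b)$ is a $\freeprod$-arc it can be pushed off $\freeprod(s_\mu)$ entirely. You cannot similarly simplify $b'$: by construction $b'$ must cross the spokes (that is the whole point of the winding), an isotopy moving $b'$ would generically change the radii $t_{\NSubdiv_\NNSubdiv}^{(\mu)}$ that you are trying to compare, and it is not clear that such an isotopy preserves the pseudo-bigon with $b$ or the pairwise disjointness of the $\freeprod$-translates. Finally, your third paragraph asserts without justification that if $\rho_\mu(b')$'s segment ``emerges into the sector counterclockwise to $R$ from a side,'' it must re-enter $R$ or cross $b'$; a priori it could simply continue in the next sector and never do either, so the dichotomy you rely on is itself unproven. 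To close the proof you should instead introduce the barrier-to-boundary curve that the paper uses, which sidesteps all of these difficulties.
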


We prove the claim by induction on $\NNSubdiv$. The idea is that the disjointness of the $\freeprod$-translates of $b'$ forces these $\freeprod$-translates to wind around $\rho(\cp_{\mu})$ as a snail shell (see Figure \ref{fig:counterclockwise}). 

If $t_{\NSubdiv_\NNSubdiv}^{(\mu)}=t_{\NSubdiv_{\NNSubdiv'}}^{(\mu)}$ for some $\NNSubdiv,\NNSubdiv'\in N',\NNSubdiv\neq\NNSubdiv'$, the $\freeprod$-translates of $b'$ intersect. This contradicts the assumption that $(\gamma',b')$ is a $\freeprod$-arc. 

\new{For $\NNSubdiv=1$,} the $\freeprod$-translate $\rho_{\mu}(b')\vert_{[t_{\NSubdiv_1},1]}$ (i.e.\ the blue arc in Figure \ref{fig:counterclockwise}) connects $\rho_{\mu}(b')(t_{\NSubdiv_1})=\gamma_{\Subdiv_{\NSubdiv_2}}(s_{\mu})(t_{\NSubdiv_1}^{(\mu)})$ to the boundary. Hence, $\rho_{\mu}(b')$ forces the arc $b'$ (the relevant part of $b'$ is drawn in orange in Figure \ref{fig:counterclockwise}) to intersect $\gamma_{\Subdiv_{\NSubdiv_2}}(s_{\mu})\vert_{[t_{\NSubdiv_1}^{(\mu)},1]}$ at time $t_{\NSubdiv_2}$ such that $t_{\NSubdiv_1}^{(\mu)}>t_{\NSubdiv_2}^{(\mu)}$. In Figure \ref{fig:counterclockwise} this is reflected by the fact that the orange intersection points lie closer to the cone point than the blue points. 

Now we may assume that $(t_{\NSubdiv_\NNSubdiv}^{(\mu)})_{\NNSubdiv\in N'}$ is strictly decreasing up to a fixed $\NNNSubdiv$. Recall that $\gamma_{\Subdiv_{\NSubdiv_{\NNNSubdiv+1}}}(s_{\mu})(t_{\NSubdiv_{\NNNSubdiv}})$ connects to the boundary via the $\freeprod$-translate $\rho_\mu(b')\vert_{\new{[t_{\NSubdiv_{\NNNSubdiv}},1]}}$. Thus, $\rho_\mu(b')\vert_{[t_{\NSubdiv_{\NNNSubdiv}},1]}$ enforces $b'\vert_{[0,t_{\NSubdiv_{\NNNSubdiv}}]}$ to intersect the next $\freeprod$-translate $\gamma_{\Subdiv_{\NSubdiv_{\NNNSubdiv+1}}}(s_\mu)$ at a point $\gamma_{\Subdiv_{\NSubdiv_{\NNNSubdiv+1}}}(s_\mu)(t_{\NSubdiv_{\NNSubdiv+1}}^{\new{(\mu)}})$ with $t_{\NSubdiv_\NNNSubdiv}^{(\mu)}>t_{\NSubdiv_{\NNNSubdiv+1}}^{(\mu)}$. For $\NNNSubdiv=2$, this is also visible in Figure \ref{fig:counterclockwise}: The gray piece forces the yellow arc to intersect the next $\freeprod$-translate of $s_\mu$ closer to the cone point than the gray piece. Thus, the yellow intersection points are closer to the cone point than the orange ones. By induction on~$\NNSubdiv$, this implies the claim. 
\\

Now let us consider the pseudo-bigon bounded by $b$ and $b'$ that contains the cone point $\rho(\cp_\mu)$. An example is shaded in gray in Figure \ref{fig:pseudo-bigon}. Moreover, let us consider the $\freeprod$-translate $\rho_\mu^{-1}(b')$. For this $\freeprod$-translate, there exists a time $t_{\NSubdiv_\NNSubdiv}$ such that $\rho_\mu^{-1}(b')(t_{\NSubdiv_\NNSubdiv})=\rho_\mu^{-1}\gamma_{\Subdiv_{\NSubdiv_\NNSubdiv}}(s_\mu)(t_{\NSubdiv_\NNSubdiv}^{(\mu)})$, i.e.\ the blue point in Figure \ref{fig:pseudo-bigon}. Since the sequence $(t_{\NSubdiv_\NNSubdiv}^{(\mu)})_{\NNSubdiv\in N'}$ is strictly decreasing, this intersection on $\rho_\mu^{-1}\gamma_{\Subdiv_{\NSubdiv_\NNSubdiv}}(s_\mu)=\gamma_{\Subdiv_{\NSubdiv_{\NNSubdiv-1}}}(s_\mu)$ lies closer to the cone point than the point $b'(t_{\NSubdiv_{\NNSubdiv-1}})=\gamma_{\Subdiv_{\NSubdiv_{\NNSubdiv-1}}}(s_\mu)(t_{\NSubdiv_{\NNSubdiv-1}}^{(\mu)})$, i.e.\ the yellow point in Figure~\ref{fig:pseudo-bigon}. This implies that the blue point $\rho_\mu^{-1}\gamma_{\Subdiv_{\NSubdiv_\NNSubdiv}}(s_\mu)(t_{\NSubdiv_\NNSubdiv}^{(\mu)})$ lies inside the pseudo-bigon. 

\begin{figure}[H]
\import{Grafiken/basics_orb_mcg/}{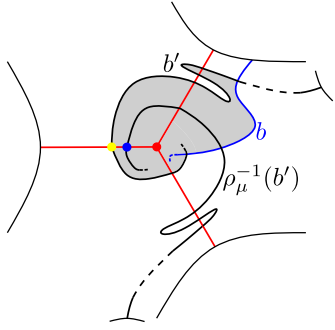}
\caption{A pseudo-bigon that contains a cone-point.}
\label{fig:pseudo-bigon}
\end{figure}

Now we consider the piece $\rho_\mu^{-1}(b')([0,t_{\NSubdiv_\NNSubdiv}])$. 

If the piece $\rho_\mu^{-1}(b')([0,t_{\NSubdiv_\NNSubdiv}])$ is entirely contained in the disk bounded by the pseudo-bigon, this disk in particular contains the initial point of $\rho_\mu^{-1}(b')$. Since this point is a marked point, this contradicts the definition of a pseudo-bigon. 

If the piece $\rho_\mu^{-1}(b')([0,t_{\NSubdiv_\NNSubdiv}])$ intersects 
the arc $b'$, the $\freeprod$-translates of $b'$ are not disjoint. This contradicts the assumption that $(\gamma',b')$ is a $\freeprod$-arc. 

If the piece $\rho_\mu^{-1}(b')([0,t_{\NSubdiv_\NNSubdiv}])$ intersects 
the arc $b$, we obtain that the $\freeprod$-translate $\rho_\mu^{-1}(b')$ intersects the arc $b$ twice - when it enters and when it leaves the pseudo-bigon. Since the pseudo-bigon bounds a disk, this implies that $\rho_\mu^{-1}(b')$ and $b$ bound a bigon.

If this bigon does not contain a cone point, Lemma \ref{lem:pseudo-bigon_without_cp_bigon} implies that $(\gamma,b)$ and $(\gamma',b')$ form a bigon. This contradicts our assumption. 

If the bigon formed by $\rho_\mu^{-1}(b')$ and $b$ contains a cone point (see Figure \ref{fig:pseudo-bigon_add_crossing} shaded in dark gray), we obtain another pseudo-bigon bounded by $(\gamma,b)$ and $(\gamma',b')$ that contains a cone point. The existence of this pseudo-bigon that bounds a cone point requires an additional intersection of $\rho_\mu^{-1}(b')$ with $\langle\rho_\mu\rangle(s_\mu)$ that contributes to the sequence $(t_{\NSubdiv_\NNSubdiv})_{\NNSubdiv\in N'}$. In Figure \ref{fig:pseudo-bigon_add_crossing} the additional intersection is marked by the yellow point. 

Given the new pseudo-bigon, we may iteratively either apply one of the previous cases or construct new pseudo-bigons. As described above, each new pseudo-bigon requires an additional entry in the sequence $(t_{\NSubdiv_\NNSubdiv})_{\NNSubdiv\in N'}$. Since the index set $N'$ of the sequence $(t_{\NSubdiv_\NNSubdiv})_{\NNSubdiv\in N'}$ is finite, after finitely many steps no new pseudo-bigon can be obtained. Thus, we end in one of the previous cases and obtain a contradiction. Consequently, our assumption that both $(\gamma,b)$ and $(\gamma',b')$ are $\freeprod$-arcs was wrong and the lemma follows. 
\end{proof}

\begin{figure}[H]
\import{Grafiken/basics_orb_mcg/}{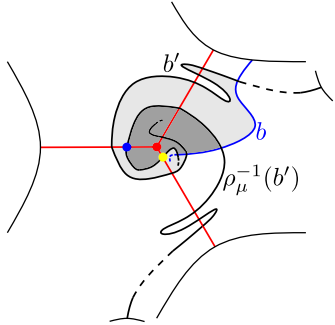}
\caption{Another pseudo-bigon that contains a cone point.}
\label{fig:pseudo-bigon_add_crossing}
\end{figure}

\begin{proof}[Proof of Proposition \textup{\ref{prop:bigon_crit_orb}}]  
Using Remark \ref{rem:int_number}, the $\freeprod$-arcs $\beta$ and $\beta'$ are in minimal position if and only if the equivalent continuous arcs $(\gamma,b)$ and $(\gamma',b')$ are in minimal position. Therefore, it is enough to prove the claim for continuous $\freeprod$-arcs. We use contraposition, i.e.\ we prove: $(\gamma,b)$ and $(\gamma',b')$ are not in minimal position if and only if they form a bigon.  

If the $\freeprod$-arcs $(\gamma,b)$ and $(\gamma',b')$ form a bigon, homotoping through the bigon reduces the number of intersections. 

For the opposite implication, firstly, let $(\gamma,b)$ and $(\gamma',b')$ be $\freeprod$-arcs that are not in minimal position. Secondly, let us assume that $(\gamma,b)$ and $(\gamma',b')$ do not form any bigons. Using the first assumption, Lemma~\ref{lem:freeprod-arcs_curves_all_translates_in_min_pos} implies that there exists a $\freeprod$-translate $\tilde{\gamma}'(b')$ such that $b$ and $\tilde{\gamma}'(b')$ are not in minimal position. By Corollary~\ref{cor:bigon-crit_Sigma}, this implies that $b$ and $\tilde{\gamma}'(b')$ form a bigon as arcs in $\S$. On the basis of Definition~\ref{def:bigon_freeprod-arc}, this implies that $(\gamma,b)$ and $(\gamma',b')$ form a pseudo-bigon. Due to Lemma \ref{lem:freeprod-arcs_no_pseudo-bigon} and the second assumption, this pseudo-bigon does not contain a cone point. Hence, Lemma \ref{lem:pseudo-bigon_without_cp_bigon} implies that $(\gamma,b)$ and $(\gamma',b')$ also bound a bigon which contradicts our assumption. Consequently, each two $\freeprod$-arcs that are not in minimal position form a bigon. 
\end{proof}

As for arcs and simple closed curves, we want to use the bigon criterion 
to observe that each homotopy between two $\freeprod$-arcs or two simple closed $\freeprod$-curves can be realized by an ambient isotopy. Therefore, we prove:  

\begin{lemma}
\label{lem:compact_disj_translates_eps-nbhd}
Let $C$ be a compact set and 
let the group $\freeprod$ act on $\freeprod\times C$ by multiplication in the first factor. Moreover, let $\lambda:\freeprod\times C\rightarrow\S$ be a continuous $\freeprod$-equivariant map. If $\lambda(\{\gamma\}\times C)$ and $\lambda(\{\gamma'\}\times C)$ are disjoint or equal for all $\gamma,\gamma'\in\freeprod, \gamma\neq\gamma'$, then there exists $\varepsilon>0$ such that the $\varepsilon$-neighborhoods of $\lambda(\{\gamma\}\times C)$ for $\gamma\in\freeprod$ are pairwise either disjoint or equal. 
\end{lemma}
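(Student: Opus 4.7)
The plan is to exploit properness of the $\freeprod$-action together with the fact, established in \eqref{eq:def_metric_iso_freeprod-action}, that $\freeprod$ acts on $\Sigma$ by isometries. By $\freeprod$-equivariance of $\lambda$, the image $\lambda(\{\gamma\}\times C)$ equals $\gamma\cdot K$, where $K:=\lambda(\{1\}\times C)$; since $C$ is compact and $\lambda$ is continuous, $K$ is a compact subset of $\Sigma$. Under this identification, what we must produce is a single $\varepsilon>0$ such that for every $\gamma\in\freeprod$ with $\gamma K\neq K$ one has $d(K,\gamma K)>2\varepsilon$; by isometric invariance, this forces $N_\varepsilon(\gamma K)\cap N_\varepsilon(\gamma' K)=\emptyset$ whenever $\gamma K\neq\gamma' K$, while equal translates trivially give equal $\varepsilon$-neighborhoods.

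First I would reduce the problem to finitely many group elements. By local compactness of the surface $\Sigma$ and compactness of $K$, there is a compact set $K'\subseteq\Sigma$ that is a neighborhood of $K$; set $\rho:=d(K,\Sigma\setminus K')>0$. Properness of the $\freeprod$-action (Definition \ref{def:good_orb}), combined with discreteness of $\freeprod$, makes the set
\[
F:=\{\gamma\in\freeprod\mid \gamma K'\cap K'\neq\emptyset\}
\]
finite. For every $\gamma\notin F$ we have $\gamma K\cap K'=\emptyset$, hence $d(K,\gamma K)\geq\rho$, which already yields a uniform lower bound outside the finite set $F$.

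Within $F$, consider the finite subset $F_0:=\{\gamma\in F\mid \gamma K\neq K\}$. For each $\gamma\in F_0$, the hypothesis of the lemma forces $\gamma K$ and $K$ to be disjoint; being compact, they satisfy $d(K,\gamma K)>0$. Let $\delta:=\min_{\gamma\in F_0}d(K,\gamma K)$ (taking $\delta:=\rho$ if $F_0=\emptyset$), and set $\varepsilon:=\tfrac{1}{3}\min(\rho,\delta)>0$. Then for any $\gamma,\gamma'\in\freeprod$ with $\gamma K\neq\gamma' K$, write $\eta:=\gamma^{-1}\gamma'$, so that $\eta K\neq K$; the two cases above give $d(K,\eta K)\geq 3\varepsilon$, and because $\gamma$ acts by isometries, $d(\gamma K,\gamma' K)=d(K,\eta K)>2\varepsilon$, as required.

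The only delicate point is the reduction to the finite set $F$; everything else is a routine pigeonhole using compactness. The essential input is that $\freeprod$ acts properly and isometrically on $\Sigma$, so that (i) only finitely many $\freeprod$-translates of a compact set can meet any fixed compact set, and (ii) the estimate $d(K,\eta K)>2\varepsilon$ can be transported from the fundamental translate to any pair $(\gamma K,\gamma' K)$ without loss.
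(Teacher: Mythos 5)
Your proof is correct, and it reaches the same kind of conclusion as the paper (reduce the infimum of pairwise distances to a minimum over finitely many pairs, then take a third of that minimum) but by a genuinely different finiteness argument. The paper uses the tiling of $\Sigma$ by $\freeprod$-translates of the fundamental domain $\FD(\ThePct)$: it lists the finitely many translates of $\FD(\ThePct)$ that $\lambda(\{1\}\times C)$ touches (or is adjacent to), then lists the finitely many $\gamma$ whose image $\lambda(\{\gamma\}\times C)$ meets any of those, and takes the minimum over this finite index set. You instead bypass the tiling and invoke the properness axiom of Definition~\ref{def:good_orb} directly: enclose $K=\lambda(\{1\}\times C)$ in a compact neighborhood $K'$, note that $\{\gamma:\gamma K'\cap K'\neq\emptyset\}$ is finite, and split the group elements into those outside this set (where the distance is bounded below by $d(K,\Sigma\setminus K')$) and those inside (where you minimize over finitely many genuinely disjoint compact pairs). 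Your route is a bit more portable --- it does not rely on the explicit construction of $\Sigma$ as a thickened Bass--Serre tree --- and it is also slightly more careful in that you explicitly restrict the minimum to the subset $F_0$ of elements with $\gamma K\neq K$, whereas the paper's minimum is taken over all $\gamma_\NSubdiv^{(\Subdiv)}\neq 1$ without explicitly excluding those with $\gamma_\NSubdiv^{(\Subdiv)} K=K$ (which is harmless given the intended use, but your formulation avoids the potential for a zero minimum).

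Two small points you should make explicit if you write this up in full: (i) $d(K,\Sigma\setminus K')>0$ because $K$ is compact and contained in the interior of $K'$, so $K$ is disjoint from the closed set $\overline{\Sigma\setminus K'}$; and (ii) the final displacement estimate $d(\gamma K,\gamma' K)=d(K,\eta K)$ uses that the metric from \eqref{eq:def_metric_iso_freeprod-action} is $\freeprod$-invariant, which is exactly the isometric-action property established there. Both are exactly the facts you flagged as the essential inputs, so the argument is sound.
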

In particular, given a bigon bounded by the $\freeprod$-arcs $(\gamma,b)$ and $(\gamma',b')$, 
this lemma will allow us to construct disjoint $\varepsilon$-neighborhoods of the disks bounded by the $\freeprod$-translates of $b$ and $b'$. For this purpose, we choose the compact set $C$ from Lemma~\ref{lem:compact_disj_translates_eps-nbhd} to be a disk and endow $\freeprod$ with the discrete topology. Additionally, we consider a map $\lambda$ that for each $\gamma\in\freeprod$ embeds the disks $\{\gamma\}\times C$ into $\S$ such that each disk in the image of $\lambda$ is bounded by certain $\freeprod$-translates of $b$ and $b'$. 

To obtain disjoint $\varepsilon$-neighborhoods, we measure the distance of the $\freeprod$-translates of $\lambda(\{\gamma\}\times C)$. Using the tiling of $\Sigma$ into $\freeprod$-translates of $\FD$, we determine this distance considering finitely many $\freeprod$-translates and deduce that the distance is positive. 
\begin{proof}[Proof of Lemma \textup{\ref{lem:compact_disj_translates_eps-nbhd}}]
The compactness of $C$ implies that $\lambda(\{\gamma\}\times C)$ is compact for every $\gamma\in\freeprod$. Hence, each $\lambda(\{\gamma\}\times C)$ intersects only finitely many $\freeprod$-translates of the fundamental domain $\FD(\ThePct)$. Let $\{\gamma_1,...,\gamma_\TheSubdiv\}$ be the subset of $\freeprod$ such that $\lambda(\{1\}\times C)$ intersects $\gamma_i(\FD(\ThePct))$ or an adjacent $\freeprod$-translate. 
Further, the compactness of $\FD(\ThePct)$ implies that $\lambda(\{\gamma\}\times C)$ intersects $\gamma_\Subdiv(\FD(\ThePct))$ only for finitely many $\gamma\in\freeprod$. For each $1\leq\Subdiv\leq\TheSubdiv$, let $\gamma_1^{(\Subdiv)},...,\gamma_\TheNSubdiv^{(\Subdiv)}$ be the elements from $\freeprod$, such that $\lambda(\{\gamma_\NSubdiv^{(\Subdiv)}\}\times C)$ intersects $\gamma_\Subdiv(\FD(\ThePct))$. Thus, using the $\freeprod$-equivariance of $\lambda$, we obtain: 
\begin{align*}
& \inf\{d(\lambda(\{\gamma\}\times C),\lambda(\{\gamma'\}\times C))\mid\gamma,\gamma'\in\freeprod,\gamma\neq\gamma'\}
\\
= & \inf\{d(\lambda(\{\gamma\}\times C),\lambda(\{1\}\times C))\mid\gamma\in\freeprod,\gamma\neq1\}
\\
= & \min\{d(\lambda(\{\gamma_\NSubdiv^{(\Subdiv)}\}\times C),\lambda(\{1\}\times C))\mid1\leq\Subdiv\leq\TheSubdiv,1\leq\NSubdiv\leq\TheNSubdiv \text{ with } \gamma_\NSubdiv^{(\Subdiv)}\neq1\}. 
\end{align*}
The minimum is the distance of two disjoint compact sets. Thus, it has a positive value and we may define\[
\varepsilon:=\frac{1}{3}\min\{d(\lambda(\{\gamma_\NSubdiv^{(\Subdiv)}\}\times C),\lambda(\{1\}\times C))\mid1\leq\Subdiv\leq\TheSubdiv,1\leq\NSubdiv\leq\TheNSubdiv \text{ with } \gamma_\NSubdiv^{(\Subdiv)}\neq1\}. 
\]
\end{proof}

\begin{proposition}[Homotopic implies ambient isotopic]
\label{prop:homo_ind_amb_iso}
Let $\beta$ and $\beta'$ be homotopic $\freeprod$-arcs in $\SG$ 
with equivalent continuous 
$\freeprod$-arcs $(\gamma,b)$ and $(\gamma',b')$. There exists an ambient isotopy 
\[
I\rightarrow\HomeoOrb{\TheStrand}{\Sigma_\freeprod(\ThePct),\partial\S} 
\]
with $H_0=\id_{\S}$, $H_1(b(s))=b'(s)$ for all $s\in I$.  The same holds if $c$ and $c'$ are homotopic simple closed $\freeprod$-curves. 
\end{proposition}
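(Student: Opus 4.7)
The strategy mirrors the classical argument for surfaces (see \cite[Proposition 1.10]{FarbMargalit2011}): first use the bigon criterion to reduce to the case where $b$ and $\freeprod(b')$ are disjoint, then realize the resulting homotopy by an explicit $\freeprod$-equivariant ambient isotopy supported in the cobounding disk.

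First I would put $(\gamma,b)$ and $(\gamma',b')$ into minimal position. If they are not in minimal position, Proposition \ref{prop:bigon_crit_orb} produces a bigon bounded by some $\freeprod$-translates $\tilde{\gamma}(b)$ and $\tilde{\gamma}'(b')$. Because $(\gamma,b)$ and $(\gamma',b')$ are $\freeprod$-arcs, Lemma \ref{lem:freeprod-arcs_no_pseudo-bigon} rules out a cone point in the bigon's interior, so the bigon is a genuine topological disk. I would then perform the classical bigon-removal isotopy inside a slight thickening of this disk. To make the move $\freeprod$-equivariant, Lemma \ref{lem:compact_disj_translates_eps-nbhd} applied to this thickening yields an $\varepsilon > 0$ such that its $\freeprod$-translates are pairwise disjoint (or equal), allowing the isotopy to be performed simultaneously on each translate and extended by the identity elsewhere. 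After finitely many such reductions we may assume $(\gamma,b)$ and $(\gamma',b')$ are in minimal position.

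Since $\beta \simeq \beta'$, we have $i([\beta],[\beta']) = 0$, so minimal position forces $b$ to be disjoint from every $\freeprod$-translate of $b'$ away from the common endpoints dictated by the homotopy. The union $b \cup b'$ then cobounds an embedded disk $D \subseteq \S$ whose interior contains no cone points (again by Lemma \ref{lem:freeprod-arcs_no_pseudo-bigon}) and no marked points or punctures (since the $\freeprod$-arcs meet $\freeprod(P)$ and $\partial\S$ only at their endpoints). On this disk I would build a standard ambient isotopy of $D$ fixing $\partial D$ pointwise and carrying $b$ to $b'$, then transport it via each $\gamma \in \freeprod$ onto the translate $\gamma(D)$. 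A second application of Lemma \ref{lem:compact_disj_translates_eps-nbhd} to a thickening of $D$ gives disjoint $\varepsilon$-neighborhoods of the translates, so the transported pieces fit together into a $\freeprod$-equivariant ambient isotopy $H_t \in \HomeoOrb{\TheStrand}{\Sigma_\freeprod(\ThePct),\partial\S}$ with $H_0 = \id_{\S}$ and $H_1(b(s)) = b'(s)$ for all $s \in I$, since the support is disjoint from cone points, marked points, and boundary.

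The main obstacle is keeping $\freeprod$-equivariance and the $\freeprod$-arc property intact through the reduction: each bigon-removal step must be executed simultaneously on all translates and must preserve the fact that $b$ remains embedded and its $\freeprod$-translates disjoint. The key technical inputs are precisely Lemmas \ref{lem:pseudo-bigon_without_cp_bigon}, \ref{lem:freeprod-arcs_no_pseudo-bigon}, and \ref{lem:compact_disj_translates_eps-nbhd}. The case of simple closed $\freeprod$-curves is handled by the same argument using Proposition \ref{prop:bigon_crit_orb_scc} in place of Proposition \ref{prop:bigon_crit_orb}; here homotopic disjoint simple closed $\freeprod$-curves cobound an annulus (by the Jordan curve analysis of Lemma \ref{lem:freeprod-scc_intersecting_disks}) on which the standard annulus isotopy is then transported $\freeprod$-equivariantly.
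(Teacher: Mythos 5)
Your proof is correct and follows essentially the same route as the paper: reduce to minimal position by successively removing $\freeprod$-bigons, then realize the final homotopy across the cobounding disk, using Lemma \ref{lem:compact_disj_translates_eps-nbhd} in both steps to obtain disjoint $\varepsilon$-neighborhoods on which a $\freeprod$-equivariant Alexander-trick isotopy can be performed. Two small imprecisions worth flagging: $i([\beta],[\beta'])$ equals $2$ rather than $0$, since the homotopic $\freeprod$-arcs share both endpoints; and Lemma \ref{lem:freeprod-arcs_no_pseudo-bigon} concerns pseudo-bigons and does not directly certify that the bigon or the cobounding disk is free of cone points — that instead follows from the disjointness of the $\freeprod$-translates of $b$ and $b'$ (the same observation that makes Lemma \ref{lem:compact_disj_translates_eps-nbhd} applicable).
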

\begin{proof}
Let $\beta$ and $\beta'$ be the $\freeprod$-arcs from above. By Lemma \ref{prop:bigon_crit_orb}, $\beta$ and $\beta'$ are either in minimal position or form a bigon. 

If $\beta$ and $\beta'$ form a bigon, 
we find a disk bounded by $\tilde{\gamma}(b)$ and $\tilde{\gamma}'(b')$ 
such that their $\freeprod$-translates do not intersect.  Applying Lemma \ref{lem:compact_disj_translates_eps-nbhd}, we obtain that the bounded disk has a compact $\varepsilon$-neighborhood such that its $\freeprod$-orbit is a disjoint union of its $\freeprod$-translates. Hence, the Alexander trick (see Example \ref{ex:Map(D_cycm)}) applies to each $\freeprod$-translate of the $\varepsilon$-neighborhood. We obtain an ambient isotopy connecting $\id_{\S}$ to a homeo-morphism that fixes every point outside the $\freeprod$-translates of the $\varepsilon$-neighborhood and removes the chosen bigon of $\tilde{\gamma}(b)$ and $\tilde{\gamma}'(b')$ and its $\freeprod$-translates. This reduces the number of intersections. Iterating this procedure, we may reduce the number of intersections of $\beta$ and $\beta'$ until it is minimal.

\begin{figure}[H]
\import{Grafiken/basics_orb_mcg/}{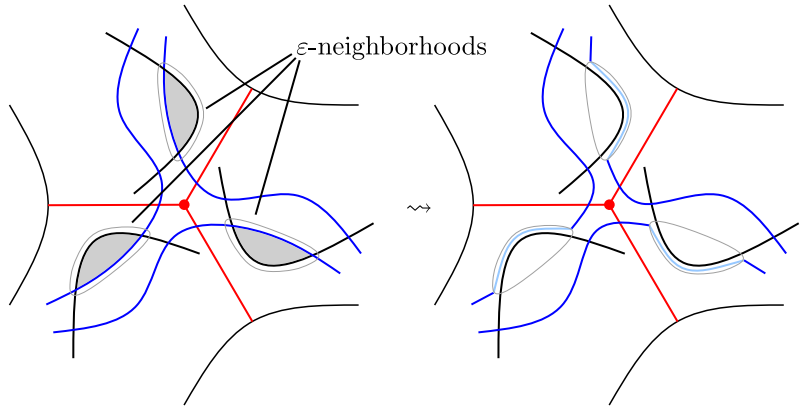}
\caption{Removing the gray shaded bigon.}
\label{fig:remove_bigon}
\end{figure}

If $\beta$ and $\beta'$ are in minimal position, their continuous representatives $(\gamma,b)$ and $(\gamma',b')$ are also in minimal position. Since $\beta$ and $\beta'$ are homotopic, we have $\gamma=\gamma'$ and $b$ and $b'$ are homotopic. Consequently, $b$ and $b'$ share precisely their endpoints and bound a disk such that no $\freeprod$-translate of $b$ or $b'$ intersects. By Lemma \ref{lem:compact_disj_translates_eps-nbhd}, we once more find a compact $\varepsilon$-neighborhood with disjoint $\freeprod$-translates. Applying the Alexander trick to each $\freeprod$-translate, this allows us to construct an ambient isotopy which connects $\id_{\Sigma(\ThePct)}$ to a homeomorphism that maps $b$ to $b'$. 

Whenever a disk bounded by $\tilde{\gamma}(b)$ and $\tilde{\gamma}'(b')$ contains one or both of the endpoints, we may perturb the $\varepsilon$-neighborhood chosen above such that its boundary contains the endpoint. This allows us to assume that the ambient isotopy constructed above fixes the endpoints of $b$ and $b'$. 

Furthermore, the same proof applies for homotopic simple closed $\freeprod$-curves.  
\end{proof}

\begin{corollary}
If two $\freeprod$-arcs or two simple closed $\freeprod$-curves in $\SG$ are homotopic, they are isotopic. 
\end{corollary}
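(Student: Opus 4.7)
The plan is to upgrade the ambient isotopy provided by Proposition~\ref{prop:homo_ind_amb_iso} into an isotopy of $\freeprod$-arcs (respectively simple closed $\freeprod$-curves) in the sense of Definition~\ref{def:homo_iso_orb} (respectively Definition~\ref{def:homo_iso_freeprod-scc}). The main work is to check that applying the ambient isotopy time-by-time preserves the property of being a $\freeprod$-arc or a simple closed $\freeprod$-curve; this will follow from $\freeprod$-equivariance of each $H_t$.

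Let $\beta$ and $\beta'$ be homotopic $\freeprod$-arcs in $\SG$ with unique equivalent continuous representatives $(\gamma,b)$ and $(\gamma',b')$. First, I would apply Proposition~\ref{prop:homo_ind_amb_iso} to obtain an ambient isotopy
\[
H_t:I\rightarrow\HomeoOrb{\TheStrand}{\Sigma_\freeprod(\ThePct),\partial\S}
\]
with $H_0=\id_\S$ and $H_1(b(s))=b'(s)$ for all $s\in I$. Inspecting the proof of that proposition shows that after reducing to minimal position the group elements coincide, i.e.\ $\gamma=\gamma'$, so that $H_1\circ b=b'$ literally identifies the continuous representative of $\beta'$.

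Next, I would define the candidate isotopy
\[
\beta_t:=(\gamma,H_t\circ b),\qquad t\in I,
\]
and verify that each $\beta_t$ is a (continuous) $\freeprod$-arc. Since $H_t$ is a homeomorphism, $H_t\circ b$ embeds $I$ into $\S$; since $H_t$ is $\freeprod$-equivariant and injective, we have $\tilde\gamma(H_t\circ b)=H_t\circ(\tilde\gamma\cdot b)$ for every $\tilde\gamma\in\freeprod$, so the disjointness of the $\freeprod$-translates of $b$ transfers to disjointness of the $\freeprod$-translates of $H_t\circ b$. For properness, recall that every $H_t$ fixes $\partial\S$ pointwise, and by the observation after Definition~\ref{def:mcg_marked_pts} any homotopy through $\HomeoOrb{\TheStrand}{\Sigma_\freeprod(\ThePct),\partial\S}$ is constant on the individual marked points; consequently the endpoints of $H_t\circ b$ do not move with $t$, and $H_t\circ b$ meets $\freeprod(P)\cup\partial\S$ only at those endpoints. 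Finally, continuity of $t\mapsto H_t$ in the compact-open topology gives continuity of $t\mapsto H_t\circ b$, so $(\beta_t)_{t\in I}$ is a single elementary homotopy (Definition~\ref{def:homo_G-paths}) through $\freeprod$-arcs connecting $\beta_0=(\gamma,b)=\beta$ to $\beta_1=(\gamma,b')=\beta'$. This is precisely an isotopy in the sense of Definition~\ref{def:homo_iso_orb}.

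For the simple closed $\freeprod$-curve case, I would argue verbatim: with $H_t$ supplied again by Proposition~\ref{prop:homo_ind_amb_iso} satisfying $H_1\circ c=c'$, set $h_t:=H_t\circ c$. $\freeprod$-equivariance of $H_t$ shows that the $\freeprod$-translates of $h_t$ are pairwise either equal or disjoint, so each $h_t$ is a simple closed $\freeprod$-curve, and properness is preserved as above. Thus $(h_t)_{t\in I}$ realises the required isotopy in the sense of Definition~\ref{def:homo_iso_freeprod-scc}. The potentially delicate point is the verification that the $\freeprod$-arc/curve structure persists along the full family, but as indicated this reduces cleanly to the two ingredients: $\freeprod$-equivariance of each $H_t$ and the fact that $H_t$ fixes both $\partial\S$ and the marked points pointwise.
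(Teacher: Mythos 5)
Your proof is correct and is essentially the argument the paper leaves implicit: the corollary is stated without proof immediately after Proposition~\ref{prop:homo_ind_amb_iso}, and your argument supplies the routine verification that the ambient isotopy $H_t$ from that proposition yields the required elementary homotopy $\beta_t=(\gamma,H_t\circ b)$ through $\freeprod$-arcs (resp.\ $h_t=H_t\circ c$ through simple closed $\freeprod$-curves), using $\freeprod$-equivariance of $H_t$, the fact that it fixes $\partial\S$ pointwise, and the observation (made after Definition~\ref{def:mcg_marked_pts}) that ambient isotopies are constant on the orbit of marked points, so the endpoints stay put.
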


\subsection{The mapping class group $\MapOrb{}{\Sigma_\freeprod}$}
\label{subsec:Map_Sigma_freeprod}

So far we have determined the mapping class group of $D_{\cycm}$ in the case without marked points (see Example \ref{ex:Map(D_cycm)}) and with one orbit of marked points (see Example~\ref{ex:orb_mcg_one_pct_ZZ}). 
We want to close the section with a generalization to the orbifold $\Sigma_\freeprod$ for the case without marked points. This will serve as the base case for an inductive description of the subgroups $\PMapIdOrb{\TheStrand}{\Sigma_\freeprod}$ and $\MapIdOrb{\TheStrand}{\Sigma_\freeprod}$ for all $\TheStrand\in\NN$ in the next section. 

Therefore, we will determine how $\freeprod$-equivariant homeomorphisms manipulate the boundary of the fundamental domain $\FD$. A first step in this direction is to understand the action on cone points: 

\begin{lemma}
\label{lem:Homeo_fix_cp}
Every $\freeprod$-equivariant homeomorphism $H\in\HomeoOrb{}{\SG,\partial\S}$ fixes each cone point. 
\end{lemma}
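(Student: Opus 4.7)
The plan is to exploit $\freeprod$-equivariance to show that each cone point $c$ is uniquely characterized by its stabilizer $\Stab_\freeprod(c) \leq \freeprod$ as a subgroup, not merely up to conjugacy. Concretely, fixing a cone point $c$ with $\Stab_\freeprod(c) = \langle g \rangle$ of order $m \geq 2$, the $\freeprod$-equivariance of $H$ yields, for every $h \in \langle g \rangle$,
\[
h \cdot H(c) = H(h \cdot c) = H(c),
\]
so $H(c)$ is fixed by the nontrivial element $g$.

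Next, I would show that $c$ is the only point of $\Sigma$ fixed by $g$. By Example~\ref{ex:good_orb_free_prod}, $\Sigma$ is a regular neighborhood of the Bass--Serre tree $T$ of $\freeprod$, and the $\freeprod$-action on $\Sigma$ is induced from that on $T$ via a $\freeprod$-equivariant deformation retraction $r \colon \Sigma \to T$. Since $\freeprod$ acts freely on the edges of $T$, no nontrivial element can fix two distinct vertices, as it would then fix every edge along the geodesic path between them; thus $g$ fixes only the vertex $v \in T$ corresponding to $c$. Any $x \in \Sigma$ fixed by $g$ therefore satisfies $r(x) = v$, placing it in the polygonal fan $r^{-1}(v)$ around $c$, on which $\langle g \rangle$ acts by rotation of angle $2\pi/m$; this rotation has only $c$ as its fixed point.

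Combining these observations gives $H(c) = c$, as required. The main obstacle is the second step, where the Bass--Serre-theoretic fact that distinct vertex stabilizers in $\freeprod$ never coincide as subgroups must be combined with the local description of the $\langle g \rangle$-action around $c$ to rule out any spurious fixed point of $g$ elsewhere in $\Sigma$.
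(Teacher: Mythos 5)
Your proof is correct, and your starting move is the same as the paper's: $\freeprod$-equivariance forces $H(c)$ to be fixed by a nontrivial element $g$ of $\Stab_\freeprod(c)$. From there, you diverge. The paper argues algebraically: since $H(c)$ has nontrivial stabilizer it is a singular point and hence (by the structure established in Example~\ref{ex:good_orb_free_prod}) a cone point $\gamma'(\cp_\mu)$, and the free-product normal form then forces $\gamma'\cyc{\TheOrder_\mu}\gamma'^{-1}\cap\cyc{\TheOrder_\nu}\neq\{1\}$ to imply $\mu=\nu$, $\gamma'\in\cyc{\TheOrder_\nu}$, so $H(c)=c$. You instead prove the stronger geometric statement that $c$ is the \emph{unique} fixed point of $g$ in all of $\Sigma$, by pushing the question down to the Bass--Serre tree via an equivariant retraction (free action on edges of a tree means $g$ fixes only one vertex), and then invoking the local rotation model on the fiber over that vertex. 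Both arguments rest in the end on the same fact — free action on edges, equivalently the normal-form theorem for free products — but your version buys a pointwise fixed-point statement while the paper's version avoids having to produce or verify the equivariant retraction $r\colon\Sigma\to T$, which the paper only sketches in the construction of $\Sigma$. Either route is a clean proof of the lemma.
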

\begin{proof}
\new{Let $H\in\HomeoOrb{}{\SG,\partial\S}$ be an arbitrary $\freeprod$-equivariant homeomorphism 
and let $1\neq\gamma\in\Stab_\freeprod(\cp_\nu)=\cyc{\TheOrder_\nu}$.} Then the $\freeprod$-equivariance implies 
\[
\gamma(H(\cp_\nu))=H(\gamma(\cp_\nu))=H(\cp_\nu). 
\]

Consequently, $\gamma\neq1$ is contained in both $\Stab_\freeprod(\cp_\nu)$ and $\Stab_\freeprod(H(\cp_\nu))$. Since the stabilizer of $H(\cp_\nu)$ is non-trivial, the point $H(\cp_\nu)$ is a cone point $\gamma'(\cp_{\mu})$. In this case 
\[
\Stab_\freeprod(H(\cp_\nu))=\Stab_\freeprod(\gamma'(\cp_{\mu}))=\gamma'\cyc{\TheOrder_{\mu}}\gamma'^{-1}. 
\]
This group intersects non-trivially with $\cyc{\TheOrder_\nu}$ if and only if $\nu=\mu$ and $\gamma'\in\cyc{\TheOrder_\nu}$, i.e.\ $H(\cp_\nu)=\cp_\nu$. Due to the $\freeprod$-equivariance of $H$, this also implies that $H$ fixes each $\freeprod$-translate of $\cp_\nu$. 
\end{proof}

\begin{corollary}
The mapping class groups $\MapOrb{\TheStrand}{\SG}$ and $\MapOrb{\TheStrand}{\SGpct}$ are isomorphic. 
\end{corollary}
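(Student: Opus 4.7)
The strategy is to establish mutually inverse maps between $\HomeoOrb{\TheStrand}{\SG,\partial\S}$ and $\HomeoOrb{\TheStrand}{\SGpct,\partial\Spct}$ by restriction and extension across the $\freeprod$-orbit of cone points, and then to check that ambient isotopies correspond so that the bijection descends to mapping class groups.

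First I would build the restriction map. Given $H\in\HomeoOrb{\TheStrand}{\SG,\partial\S}$, Lemma \ref{lem:Homeo_fix_cp} asserts that $H$ fixes each $\cp_\nu$ and, by $\freeprod$-equivariance, preserves the entire $\freeprod$-orbit of cone points. Hence $H$ restricts to a self-homeomorphism $H|_{\Spct}$ which inherits $\freeprod$-equivariance, boundary-fixing, and preservation of $\freeprod(P)$ from $H$; this yields a homomorphism $r\colon\HomeoOrb{\TheStrand}{\SG,\partial\S}\to\HomeoOrb{\TheStrand}{\SGpct,\partial\Spct}$. For the converse direction I would construct an extension map. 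Given $H'\in\HomeoOrb{\TheStrand}{\SGpct,\partial\Spct}$, the claim is that $H'$ sends the topological end of $\Spct$ at each $\cp_\nu$ to itself. The argument parallels Lemma \ref{lem:Homeo_fix_cp}: a small $\cyc{\TheOrder_\nu}$-invariant punctured disk neighborhood $U_\nu$ of the removed cone point $\cp_\nu$ in $\Spct$ is mapped by $H'$ to a $\cyc{\TheOrder_\nu}$-invariant set, and the free-product structure of $\freeprod$ forces the only end stabilized exactly by $\cyc{\TheOrder_\nu}$ to be the one at $\cp_\nu$ itself. Setting $H(\cp_\nu):=\cp_\nu$ and extending $\freeprod$-equivariantly across the orbit produces a continuous extension of $H'$ (continuity at each cone point follows from the end-preservation just established), and $H$ is a homeomorphism since cone points form a discrete subset of $\S$. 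This gives a map $e$ in the reverse direction, and $r$ and $e$ are visibly mutually inverse.

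Finally I would verify that this isomorphism at the level of homeomorphism groups descends to mapping class groups. An ambient isotopy $H_t$ in $\HomeoOrb{\TheStrand}{\SG,\partial\S}$ clearly restricts to a continuous family $H_t|_{\Spct}$ in $\HomeoOrb{\TheStrand}{\SGpct,\partial\Spct}$ with respect to the compact-open topology, since restriction to an open subset is continuous. Conversely, an ambient isotopy $H'_t$ in $\HomeoOrb{\TheStrand}{\SGpct,\partial\Spct}$ extends pointwise to a family $H_t$ fixing every cone-point orbit at every $t$, and the extension therefore defines an ambient isotopy in $\HomeoOrb{\TheStrand}{\SG,\partial\S}$. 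The main obstacle lies in this last step, specifically in showing that the pointwise extension of a continuous family of homeomorphisms is again continuous in the compact-open topology near cone points. This amounts to a uniform version of the end-preservation argument from Lemma \ref{lem:Homeo_fix_cp}, ensuring that neighborhoods of $\cp_\nu$ are mapped arbitrarily close to $\cp_\nu$ by $H'_t$ uniformly in $t\in I$; this is the only delicate point in the proof.
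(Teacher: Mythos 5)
Your approach is the same as the paper's: the paper proves the corollary by observing that restriction $[H]\mapsto[H\vert_{\Spct}]$ is well-defined and invertible via Lemma \ref{lem:Homeo_fix_cp}, which is exactly your map $r$, with $e$ the evident inverse. You have usefully filled in the detail the paper leaves to the reader, and the one point you flag as an obstacle — continuity of the pointwise extension of an ambient isotopy across the cone points — does close, and more easily than a ``uniform end-preservation'' statement suggests: it is an enclosing-curve argument. Fix $t_0\in I$ and a cone point $\cp_\nu$, and let $B_\varepsilon$ be a small metric ball around $\cp_\nu$. Pick a circle $C\subseteq\Spct$ bounding a compact disk $D\ni\cp_\nu$ with $H_{t_0}(D)\subseteq B_\varepsilon$. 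Since $C$ is compact in $\Spct$, continuity of $t\mapsto H'_t$ in the compact-open topology yields $\delta>0$ with $H'_t(C)\subseteq B_\varepsilon$ for $|t-t_0|<\delta$. Each $H_t$ is a homeomorphism of the planar surface $\S$ fixing $\cp_\nu$, so $H_t(C)$ is a simple closed curve bounding the compact disk $H_t(D)\ni\cp_\nu$; since $H_t(C)\subseteq B_\varepsilon$, the Jordan curve theorem forces the bounded side, hence $H_t(D)$, to lie in $B_\varepsilon$. This gives joint continuity of $(t,x)\mapsto H_t(x)$ at $(t_0,\cp_\nu)$; away from cone points it is immediate, so by the exponential law on the locally compact Hausdorff space $\S$ the family $H_t$ is a continuous path in $\HomeoOrb{\TheStrand}{\SG,\partial\S}$, and your bijection descends to an isomorphism of mapping class groups.
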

\begin{proof}
Using Lemma \ref{lem:Homeo_fix_cp}, the map $[H]\mapsto[H\vert_{\Spct}]$ induces an isomorphism. 
\end{proof}

Further, Lemma \ref{prop:homo_ind_amb_iso} puts us in position to apply the Alexander trick to show: 

\begin{lemma}
\label{lem:orb_mcg_trivial}
The mapping class group $\MapOrb{}{\Sigma_\freeprod}=\MapOrb{\TheStrand}{\Sigma_\freeprod(\ThePct)}$ for $\ThePct=\TheStrand=0$ is trivial. 
\end{lemma}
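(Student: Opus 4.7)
The plan is to show that any $H\in\HomeoOrb{}{\Sigma_\freeprod,\partial\Sigma}$ is ambient isotopic to $\id_\Sigma$ through $\freeprod$-equivariant homeomorphisms, by progressively isotoping $H$ to fix more of $\Sigma$: first the cone points, then the non-boundary arcs of the fundamental domain $\FD$, and finally all of $\FD$. $\freeprod$-equivariance then propagates the last isotopy to tile $\Sigma$ with copies of the isotopy on $\FD$.

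First, Lemma \ref{lem:Homeo_fix_cp} tells us that $H$ fixes every cone point. Thus for each $\nu\in\{1,\dots,\TheCone\}$, the arc $H(s_\nu)$ begins at $c_\nu$ and ends at the same point of $\partial\Sigma$ as $s_\nu$, and its $\freeprod$-translates inherit from those of $s_\nu$ the property of being pairwise disjoint away from cone points. Since $\Sigma$ is contractible, $H(s_\nu)$ is homotopic to $s_\nu$ rel endpoints. I would next adapt Proposition \ref{prop:homo_ind_amb_iso} (whose formulation uses arcs with marked-point and boundary endpoints) to arcs ending at a cone point: put $H(s_\nu)$ and $s_\nu$ in minimal position using the bigon criterion in $\Sigma$ (Corollary \ref{cor:bigon-crit_Sigma}), use Lemma \ref{lem:compact_disj_translates_eps-nbhd} to produce a compact $\varepsilon$-neighborhood of the bounded disks whose $\freeprod$-orbit is a disjoint union of translates, and apply the Alexander trick $\freeprod$-equivariantly. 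Because the stabilizer $\cyc{\TheOrder_\nu}$ of $c_\nu$ cyclically permutes the spokes $s_\nu,\gamma_\nu s_\nu,\dots,\gamma_\nu^{\TheOrder_\nu-1}s_\nu$, this must be performed simultaneously on the entire spoke configuration. The outcome is an isotopy of $H$ to a homeomorphism sending each $s_\nu$ to itself as a set.

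Second, once each $s_\nu$ is preserved and both its endpoints are fixed, a one-dimensional Alexander-type isotopy (again applied equivariantly on a tubular $\varepsilon$-neighborhood supplied by Lemma \ref{lem:compact_disj_translates_eps-nbhd}) further isotopes $H$ to the identity on every $s_\nu$. At this stage $H$ fixes all of $\partial\FD$ pointwise: the pieces on $\partial\Sigma$ were fixed by hypothesis, while the arcs $s_\nu$ and $\gamma_\nu s_\nu$ are fixed by the preceding step combined with equivariance. Since $\FD$ is a disk and $H(\FD)=\FD$, the classical Alexander trick inside $\FD$ (Example \ref{ex:Map(D_cycm)}) produces an isotopy $H_t|_{\FD}$ from $H|_{\FD}$ to $\id_{\FD}$ fixing $\partial\FD$ throughout. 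Extending by $H_t(\gamma x):=\gamma H_t(x)$ for $x\in\FD$ and $\gamma\in\freeprod$ is well defined because on the shared edges $s_\nu$ the extensions from adjacent translates agree (both are the identity), and yields a $\freeprod$-equivariant ambient isotopy from $H$ to $\id_\Sigma$.

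The main obstacle is the first step. The bigon machinery of Section \ref{subsec:freeprod-arcs} is formulated for $\freeprod$-arcs with endpoints at marked points and on the boundary, whereas the spokes $s_\nu$ end at cone points whose stabilizers cyclically permute the translates emanating from them. Removing bigons and pushing one spoke onto another must therefore be carried out on the full spoke configuration at each cone point simultaneously and equivariantly; it is precisely the equivariant Alexander trick on the disjoint-orbit neighborhoods supplied by Lemma \ref{lem:compact_disj_translates_eps-nbhd} that makes this step feasible once minimal position has been achieved.
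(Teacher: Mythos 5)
There is a genuine gap. Your plan isotopes $H$ to fix the spokes $s_\nu$ directly, and you are right to flag that the $\freeprod$-arc machinery of Section \ref{subsec:freeprod-arcs} does not cover these arcs: since the $\TheOrder_\nu$ translates of $s_\nu$ all emanate from the cone point $\cp_\nu$, they share an endpoint, so $(\gamma,s_\nu)$ violates the disjoint-translates requirement in Definition \ref{def:freeprod-arcs} and is not a $\freeprod$-arc. This means Proposition \ref{prop:bigon_crit_orb} and Proposition \ref{prop:homo_ind_amb_iso} do not apply, and Lemma \ref{lem:compact_disj_translates_eps-nbhd} cannot be invoked to produce disjoint $\varepsilon$-neighborhoods of the bounded bigons since near $\cp_\nu$ the translates touch. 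You acknowledge the obstacle but the ``adaptation'' of Proposition \ref{prop:homo_ind_amb_iso} to arcs with cone-point endpoints is exactly the nontrivial step, and it is left entirely to the reader; an appeal to ``the equivariant Alexander trick on disjoint-orbit neighborhoods'' does not resolve it, because the needed disjointness is precisely what fails at a cone point.

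The paper circumvents this with one extra idea you are missing: before touching the spokes, it inserts small circles $S_\nu$ around each cone point whose $\freeprod$-translates are disjoint or coincide. Because $S_\nu$ and $H(S_\nu)$ are simple closed $\freeprod$-curves bounding only $\cp_\nu$, they are homotopic, and Proposition \ref{prop:homo_ind_amb_iso} (in its simple-closed-$\freeprod$-curve form) lets one assume $H$ fixes the circles pointwise. Cutting $s_\nu$ along $S_\nu$ then yields the outer arc $s_\nu'$ running from a point of $S_\nu$ (now regarded as a marked point) to $\partial\Sigma$; its $\freeprod$-translates are genuinely disjoint, so $s_\nu'$ and $H(s_\nu')$ are honest $\freeprod$-arcs with common endpoints, and Propositions \ref{prop:bigon_crit_orb} and \ref{prop:homo_ind_amb_iso} apply verbatim. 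After that, the Alexander trick is applied separately on the small disks $D_\nu$ inside $S_\nu$ and on the disk $\FD\setminus\bigcup_\nu\inter{D}_\nu$, in place of your single Alexander trick on $\FD$. The circles $S_\nu$ are not a cosmetic detail: they are what converts the cone-point-anchored arcs into objects the paper's $\freeprod$-arc technology can actually handle.
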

\begin{proof}
Let $H$ be a homeomorphism that represents an element in $\MapOrb{}{\Sigma_\freeprod}$. Due to Lemma \ref{lem:Homeo_fix_cp}, $H$ fixes every cone point. For every $1\leq\nu\leq\TheCone$, let $S_\nu$ be a circle centered in $\cp_\nu$ of radius $\varepsilon_\nu>0$ such that the $\freeprod$-translates of these $\TheCone$ circles are either disjoint or coincide. Consequently, the same holds for the $\freeprod$-translates of $H(S_\nu)$ for $1\leq\nu\leq\TheCone$. For every $\nu$, let $D_\nu$ be the disk bounded by $S_\nu$. Using that $H$ fixes every cone point, this implies that the disk $H(D_\nu)$ bounded by $H(S_\nu)$ contains the cone point $\cp_\nu$. Further, the disjointness condition for $H(S_\nu)$ implies that $H(D_\nu)$ contains no further cone points. \new{Hence,} $S_\nu$ and $H(S_\nu)$ are simple closed $\freeprod$-curves that both bound exactly one cone point. \new{In both cases,} the bounded cone point is $\cp_\nu$. Since the tree-shaped surface $\Sigma$ contains no punctures or marked points, this implies that $S_\nu$ and $H(S_\nu)$ are homotopic. By Lemma \ref{prop:homo_ind_amb_iso}, this allows us to assume that $H$ fixes the simple closed $\freeprod$-curve $S_\nu$ and all its $\freeprod$-translates pointwise. 

Now we may consider the arcs $\seg_\nu$ for $1\leq\nu\leq\TheCone$ whose $\freeprod$-translates span the tessellation of $\Sigma$ into the $\freeprod$-translates of $\FD$. For every $\nu$, cutting along $S_\nu$ splits the arc $\seg_\nu$ into an inner part whose image is contained in $D_\nu$ and an outer part whose image is contained in $\Sigma\setminus\inter{D}_\nu$. Let $\seg_\nu'$ be the outer part. If we consider the endpoints of the arcs $\seg_\nu'$ for $1\leq\nu\leq\TheCone$ as marked points, we may consider these arcs as $\freeprod$-arcs. Since $H$ is $\freeprod$-equivariant, the arcs $H(\seg_\nu')$ can also be considered as $\freeprod$-arcs. Using that $H$ fixes the curves $S_\nu$, 
the arcs $\seg_\nu'$ and $H(\seg_\nu')$ share their endpoints. 

\new{By Proposition \ref{prop:bigon_crit_orb},} these $\freeprod$-arcs are either in minimal position or they form a bigon. If $\seg_\nu'$ and $H(\seg_\nu')$ form a bigon, Lemma \ref{prop:homo_ind_amb_iso} yields an ambient isotopy on $H$ that reduces the number of intersections of $H(\seg_\nu')$ and $\seg_\nu'$. Iterating this argument, we may adjust $H$ such that $H(\seg_\nu')$ and $\seg_\nu'$ are in minimal position, i.e.\ as $\freeprod$-arcs $H(\seg_\nu')$ and $\seg_\nu'$ do not bound any bigons. 

Lemma \ref{lem:freeprod-arcs_no_pseudo-bigon} implies that the $\freeprod$-arcs $H(\seg_\nu')$ and $\seg_\nu'$ also do not bound a pseudo-bigon that contains a cone point. This implies that the arcs $H(\seg_\nu')$ and $\seg_\nu'$ bound a disk without cone points that does not intersect with any of its $\freeprod$-translates. In particular, the $\freeprod$-arcs represented by $H(\seg_\nu')$ and $\seg_\nu'$ are homotopic. Thus, Lemma~\ref{prop:homo_ind_amb_iso} yields an ambient isotopy $H_t$ such that $H_0=H$ and $H_1=H'$ with $H'$ fixing $\seg_\nu'$ and $S_\nu$ for $1\leq\nu\leq\TheCone$ pointwise. 

The fact that $H'$ preserves the simple closed curves $S_\nu$  allows us to apply the Alexander trick on the disks $D_\nu$ inside $S_\nu$. Further, $H'$ preserves the disk $\FD\setminus\bigcup_{\nu=1}^\TheCone\inter{D}_\nu$, highlighted gray in Figure \ref{fig:Map_Sigma_freeprod_Alexander_trick}, fixing its boundary pointwise. This allows us another application of the Alexander trick on every $\freeprod$-translate of this disk. Hence, we obtain that $H'$ is homotopic to the identity. 
\end{proof}
\begin{figure}[H]
\import{Grafiken/basics_orb_mcg/}{Map_Sigma_freeprod_Alexander_trick.pdf_tex}
\caption{The disk $\FD\setminus\bigcup_{\nu=1}^\TheCone\inter{D}_\nu$.}
\label{fig:Map_Sigma_freeprod_Alexander_trick}
\end{figure}

\renewcommand{\Twist}{A}
\renewcommand{\TwistP}{B}
\renewcommand{\TwistC}{C}
\renewcommand{\twist}{a}
\renewcommand{\twistP}{b}
\renewcommand{\twistC}{c}

\section{Orbifold mapping class groups with marked points}
\label{sec:orb_mcg_marked_pts}

For surfaces with marked points, the mapping class group is determined by the Birman exact sequence. If we consider a disk $D$, 
it yields \cite[Theorem 9.1]{FarbMargalit2011}: 
\[
1\rightarrow\pi_1(\Conf_\TheStrand(D))\rightarrow\Map{\TheStrand}{D}\rightarrow\underbrace{\Map{}{D}}_{=1}\rightarrow1. 
\]
Based on the observation that $\Sk{\ThePct,\TheCone}/\freeprod$ is homeomorphic to $D(\ThePct,\TheCone)$, we identify a subgroup $\MapIdOrb{\TheStrand}{\Sigma_\freeprod(\ThePct)}$ of $\MapOrb{\TheStrand}{\Sigma_\freeprod(\ThePct)}$ that is isomorphic to a similar subgroup $\MapId{\TheStrand}{D(\ThePct,\TheCone)}$ of $\Map{\TheStrand}{D(\ThePct,\TheCone)}$ (see Proposition~\ref{prop:iso_Map_orb_disk}). This allows us to deduce a Birman exact sequence for $\MapIdOrb{\TheStrand}{\Sigma_\freeprod(\ThePct)}$ (see Theorem \ref{thm:Birman_es_orb}). Further, we deduce a short exact sequence of pure mapping class groups (see Corollary \ref{cor:pure_orb_mcg_ses}). In particular, we obtain presentations for $\PMapIdOrb{\TheStrand}{\Sigma_\freeprod^\TheStrand(\ThePct)}$ and $\MapIdOrb{\TheStrand}{\Sigma_\freeprod^\TheStrand(\ThePct)}$ (see Corollary \ref{cor:pres_PMap_free_prod} and Proposition \ref{prop:pres_map_kcp}). 

\subsection{Identification of subgroups of the orbifold mapping class group and the mapping class group of a disk}

Let $\SG$ be the orbifold with underlying surface $\Sigma$ punctured in $\freeprod(\{r_1,...,r_\ThePct\})$. Technically, as in Section \ref{sec:basics_mcg_orb}, it is often more convenient to consider the surface $\Sigma$ with marked points at $\freeprod(\{r_1,...,r_\ThePct\})$ instead of punctures. The orbifold mapping class group $\MapOrb{\TheStrand}{\Sigma_\freeprod(\ThePct)}$ has a homomorphism  
\[
\Forget_\TheStrand^{orb}:\MapOrb{\TheStrand}{\Sigma_\freeprod(\ThePct)}\rightarrow\MapOrb{}{\Sigma_\freeprod(\ThePct)} 
\]
by forgetting the marked points. In the following, we consider the kernel of $\Forget_\TheStrand^{orb}$. 
 
\begin{definition}
\label{def:Map_orb(ThePct)}
Let $\MapIdOrb{\TheStrand}{\Sigma_\freeprod(\ThePct)}$ denote the kernel of $\Forget_\TheStrand^{orb}$. 
This subgroup is induced by the subgroup
\[
\HomeoIdOrb{\TheStrand}{\Sigma_\freeprod(\ThePct),\partial\Sigma(\ThePct)}:=\{H\in\HomeoOrb{\TheStrand}{\Sigma_\freeprod(\ThePct),\partial\S}\mid H\sim\id_{\Sigma(\ThePct)}\} 
\]
of $\HomeoOrb{\TheStrand}{\Sigma_\freeprod(\ThePct)}$. Moreover, let $\PMapIdOrb{\TheStrand}{\Sigma_\freeprod(\ThePct)}:=\Forget_\TheStrand^{orb}\vert_{\PMapOrb{\TheStrand}{\Sigma_\freeprod(\ThePct)}}$. This subgroup is induced by the subgroup $\PHomeoIdOrb{\TheStrand}{\Sigma_\freeprod(\ThePct),\partial\Sigma(\ThePct)}$ that contains the pure homeomorphisms of $\HomeoIdOrb{\TheStrand}{\Sigma_\freeprod(\ThePct),\partial\Sigma(\ThePct)}$. 
\end{definition}

Additionally, we consider the disk $D(\ThePct,\TheCone)$ with $\ThePct+\TheCone$ distinct punctures at positions $\bar{r}_1,...,\bar{r}_\ThePct$ and $\bar{\cp}_1,...,\bar{\cp}_\TheCone$. If we endow $D(\ThePct,\TheCone)$ with $\TheStrand$ distinct marked points at $\bar{p}_1,...,\bar{p}_\TheStrand$, there exists an analogous forgetful map 
\[
\Forget_\TheStrand:\Map{\TheStrand}{D(\ThePct,\TheCone)}\rightarrow\Map{}{D(\ThePct,\TheCone)}. 
\] 

\begin{definition}
\label{def:Map_disk(ThePct)}
Let $\MapId{\TheStrand}{D(\ThePct,\TheCone)}$ denote the kernel of $\Forget_\TheStrand$. 
This subgroup is induced by the subgroup
\[
\HomeoId{\TheStrand}{D(\ThePct,\TheCone),\partial D(\ThePct,\TheCone)}:=\{H\in\Homeo{\TheStrand}{D(\ThePct,\TheCone),\partial D(\ThePct,\TheCone)}\mid H\sim\id_{D(\ThePct,\TheCone)}\} 
\]
of $\Homeo{\TheStrand}{D(\ThePct,\TheCone)}$. Moreover, let $\PMapId{\TheStrand}{D(\ThePct,\TheCone)}:=\Forget_\TheStrand\vert_{\PMap{\TheStrand}{D(\ThePct,\TheCone)}}$. This subgroup is induced by the subgroup $\PHomeoId{\TheStrand}{D(\ThePct,\TheCone),\partial D(\ThePct,\TheCone)}$ that contains the pure homeomorphisms of $\HomeoId{\TheStrand}{D(\ThePct,\TheCone),\partial D(\ThePct,\TheCone)}$. 
\end{definition}

In contrast to $\MapIdOrb{\TheStrand}{\Sigma_\freeprod(\ThePct)}$, the subgroup $\Map{\TheStrand}{D(\ThePct,\TheCone)}$ satisfies the additional condition that homeomorphisms and ambient isotopies 
fix points that correspond to cone points. Based on Lemma \ref{lem:Homeo_fix_cp} the cone points are automatically fixed by $\freeprod$-equivariant homeomorphisms of $\S$. This will allow us to prove: 

\begin{proposition}
\label{prop:iso_Map_orb_disk}
The group $\MapId{\TheStrand}{D(\ThePct,\TheCone)}$ is isomorphic to $\MapIdOrb{\TheStrand}{\Sigma_\freeprod(\ThePct)}$ and the isomorphism restricts to an isomorphism between 
$\PMapId{\TheStrand}{D(\ThePct,\TheCone)}$ and $\PMapIdOrb{\TheStrand}{\Sigma_\freeprod(\ThePct)}$. 
\end{proposition}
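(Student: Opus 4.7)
The plan is to build a bijection using the regular covering $\pi:\Sigma(\ThePct,\TheCone)\to D(\ThePct,\TheCone)$, where $\Sigma(\ThePct,\TheCone)$ denotes $\Sigma(\ThePct)$ with all cone points removed. Since the cone points are the only non-free orbits, $\freeprod$ acts freely and properly discontinuously on $\Sigma(\ThePct,\TheCone)$ with quotient $D(\ThePct,\TheCone)$.

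For the forward map $\Phi$, I start with $H\in\HomeoIdOrb{\TheStrand}{\Sigma_\freeprod(\ThePct),\partial\Sigma(\ThePct)}$. By Lemma~\ref{lem:Homeo_fix_cp}, $H$ fixes every cone point, so restricting $H$ to $\Sigma(\ThePct,\TheCone)$ yields a $\freeprod$-equivariant self-homeomorphism fixing the boundary and preserving the marked-point orbit. It descends through $\pi$ to $\bar H\in\Homeo{\TheStrand}{D(\ThePct,\TheCone),\partial D(\ThePct,\TheCone)}$. An ambient isotopy realizing $H\sim\id_{\Sigma(\ThePct)}$ restricts to $\Sigma(\ThePct,\TheCone)$ (again by Lemma~\ref{lem:Homeo_fix_cp}) and descends through $\pi$, giving $\bar H\sim\id_{D(\ThePct,\TheCone)}$; isotopies between $\sim_\TheStrand$-equivalent representatives descend analogously. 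Thus $\Phi:\MapIdOrb{\TheStrand}{\Sigma_\freeprod(\ThePct)}\to\MapId{\TheStrand}{D(\ThePct,\TheCone)}$ is a well-defined group homomorphism.

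For the inverse $\Psi$, pick a representative $\bar H\in\HomeoId{\TheStrand}{D(\ThePct,\TheCone),\partial D(\ThePct,\TheCone)}$ and choose an ambient isotopy $\bar H_t$ in $\Homeo{}{D(\ThePct,\TheCone),\partial D(\ThePct,\TheCone)}$ from $\id$ to $\bar H$. Using the homotopy lifting property of the covering $\pi$, lift $\bar H_t$ to a continuous family $H'_t$ of self-homeomorphisms of $\Sigma(\ThePct,\TheCone)$ with $H'_0=\id$. Each $H'_t$ is $\freeprod$-equivariant, since for every $\gamma\in\freeprod$ the conjugate $\gamma\circ H'_t\circ\gamma^{-1}$ is a lift of $\bar H_t$ starting at $\id$ and hence equals $H'_t$. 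Extend $H'_1$ over the cone points by setting $H(\gamma(\cp_\nu))=\gamma(\cp_\nu)$ for every $\gamma$ and every $\nu$; continuity at $\cp_\nu$ follows because $\bar H$ is isotopic to the identity and hence fixes each puncture $\bar \cp_\nu$, so $H'_1$ maps a $\Stab_\freeprod(\cp_\nu)$-invariant punctured neighborhood of $\cp_\nu$ onto one of itself and shrinks the deleted point to the deleted point. The extension $H$ lies in $\HomeoOrb{\TheStrand}{\Sigma_\freeprod(\ThePct),\partial\Sigma(\ThePct)}$, and applying the same extension to the whole family $H'_t$ produces an ambient isotopy $H\sim\id_{\Sigma(\ThePct)}$. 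Independence of the choice of $\bar H_t$ reduces to the observation that two $\freeprod$-equivariant lifts of the same $\bar H$ that both fix $\partial\Sigma(\ThePct,\TheCone)$ must coincide, by freeness of the $\freeprod$-action.

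Putting it together, the relation $\pi\circ H=\bar H\circ\pi$ on $\Sigma(\ThePct,\TheCone)$ makes $\Phi\circ\Psi=\id$ and $\Psi\circ\Phi=\id$ immediate. The restriction to pure subgroups is clear: by $\freeprod$-equivariance, $H(p_\Strand)=\g_\Strand(p_\Strand)$ for some $\g_\Strand\in\freeprod$ if and only if $\bar H(\bar p_\Strand)=\bar p_\Strand$, for every $1\leq\Strand\leq\TheStrand$. The main technical hurdle is verifying that the extension over each cone point yields a genuine homeomorphism and that the extended family $H_t$ consists of homeomorphisms at all times; both reduce to the local model in which $\Stab_\freeprod(\cp_\nu)\cong\cyc{\TheOrder_\nu}$ acts by rotation on a small disk, and any $\cyc{\TheOrder_\nu}$-equivariant homeomorphism of a punctured disk about its center extends uniquely to a homeomorphism of the closed disk fixing the center.
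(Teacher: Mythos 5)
Your proposal is correct in spirit and takes a genuinely different (and cleaner) route than the paper's. You use the fact that $\pi:\Sigma(\ThePct,\TheCone)\to D(\ThePct,\TheCone)$ is an honest free $\freeprod$-covering and invoke the homotopy lifting property to produce the lift, obtaining $\freeprod$-equivariance, boundary-fixing, and continuity of the lifted isotopy automatically from lifting uniqueness. The paper instead constructs the lift by hand: it records the algebraic intersections of the trace $\bar\phi_t(\bar x)$ with the segments $\bar s_\nu$ to build the group element $\gamma_{\bar H,\bar x}$ and then defines $\bar H_\Sigma(\gamma(\bar x))=\tilde\gamma\gamma_{\bar H,\bar x}(\bar H(\bar x))$ directly on all of $\Sigma(\ThePct)$ (cone points included), after which it must laboriously verify continuity of the induced isotopy (Lemma~\ref{lem:varphi_Map_homo}, Step~1). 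Your covering-theoretic formulation makes those verifications automatic, at the cost of having to delete the cone points and then patch them back in.

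That patching-in step is the one place your write-up is too terse to stand on its own, and in fact the final sentence is false as literally stated: a $\cyc{\TheOrder_\nu}$-equivariant self-homeomorphism of the punctured disk need not extend over the centre, since it could swap the puncture end with the boundary end (e.g.\ $re^{i\theta}\mapsto(1-r)e^{i\theta}$). What is true, and what your argument actually requires, is that $H'_1$ sends some $\cyc{\TheOrder_\nu}$-invariant punctured neighbourhood of $\cp_\nu$ onto a punctured neighbourhood of $\cp_\nu$ itself, not of some other translate $\gamma(\cp_\nu)$. This needs an explicit reason: either (i) the family $H'_t$ is jointly continuous with $H'_0=\id$, and the cone-point orbit is discrete, so the ``tracked'' cone point cannot jump; or (ii) $H'_1(U)$ is $\cyc{\TheOrder_\nu}$-invariant, so its surrounding cone point has stabiliser containing $\cyc{\TheOrder_\nu}$, and in a free product the normaliser of a free factor is the factor itself, forcing the cone point to be $\cp_\nu$. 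Either argument closes the gap; as written it is only gestured at. You also leave to the reader the standard checks that each $H'_t$ is a homeomorphism (lift $\bar H_t^{-1}$ as well and use uniqueness), that $\Psi$ is a homomorphism (concatenate the isotopies and use uniqueness), and that $\Psi$ descends to isotopy classes (lift the connecting isotopy); all of these are routine. With those points supplied, the proof is sound.
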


The proof is divided into the following four steps: 
\begin{itemize}
\item In Lemma \ref{lem:pi_Homeo_cont_homo}, we construct a continuous homomorphism 
\[
\pi:\HomeoOrb{\TheStrand}{\Sigma_\freeprod(\ThePct),\partial\Sigma(\ThePct)}
\rightarrow\Homeo{\TheStrand}{D(\ThePct,\TheCone),\partial D(\ThePct,\TheCone)}
\]
using the projection from $\Spct$ to $\Spct/\freeprod\cong D(\ThePct,\TheCone)$. 
\item Due to the continuity, the homomorphism $\pi$ induces a homomorphism 
\[
\piMap:\MapIdOrb{\TheStrand}{\Sigma_\freeprod(\ThePct)}\rightarrow\MapId{\TheStrand}{D(\ThePct,\TheCone)}, 
\]
see Lemma \ref{lem:pi_Map_homo}.  
\item In the opposite direction, we construct a homomorphism 
\[
\varphi:\HomeoId{\TheStrand}{D(\ThePct,\TheCone),\partial D(\ThePct,\TheCone)}\rightarrow\HomeoOrb{\TheStrand}{\Sigma_\freeprod(\ThePct),\partial\Sigma(\ThePct)} 
\]
in Lemma \ref{lem:varphi_Homeo_homo}. This requires to lift a self-homeomorphism of $D(\ThePct,\TheCone)\cong\Spct/\freeprod$ that satisfies the conditions from Definition \ref{def:Map_disk(ThePct)} to a self-homeo-morphism of $\Spct$. In contrast to Lemma \ref{lem:pi_Homeo_cont_homo}, we will not prove the continuity of $\varphi$ in this case. 
\item However, the homomorphism $\varphi$ induces a homomorphism 
\[
\varphiMap:\MapId{\TheStrand}{D(\ThePct,\TheCone)}\rightarrow\MapIdOrb{\TheStrand}{\Sigma_\freeprod(\ThePct)}, 
\]
see Lemma \ref{lem:varphi_Map_homo}. Since $\varphi$ is not necessarily continuous, in comparison to Lemma \ref{lem:pi_Map_homo} the well-definedness of $\varphiMap$ requires an additional argument. 
\end{itemize}
To deduce that $\MapId{\TheStrand}{D(\ThePct,\TheCone)}$ and $\MapIdOrb{\TheStrand}{\Sigma_\freeprod(\ThePct)}$ are isomorphic, we will finally check that the homomorphisms $\piMap$ and $\varphiMap$ are inverse to each other. 

Given $H\in\HomeoOrb{\TheStrand}{\Sigma_\freeprod(\ThePct)}$, we begin with the definition of an induced map 
\[
\bar{H}:D_\TheStrand(\ThePct,\TheCone)\rightarrow D_\TheStrand(\ThePct,\TheCone). 
\]
By Lemma \ref{lem:Homeo_fix_cp}, $H$ fixes all cone points. This allows us to consider $H$ as a homeomorphism of the surface $\Spct$. 
Using the $\freeprod$-equivariance of $H$, we obtain a well-defined map 
\[
\bar{H}:\Spct/\freeprod\rightarrow\Spct/\freeprod,\freeprod(x)\mapsto \freeprod(H(x)). 
\]
Since $\Spct/\freeprod$ is homeomorphic to $D(\ThePct,\TheCone)$, this defines a self-map of marked disks. 
Further, we observe that the disk $D(\ThePct,\TheCone)$ is homeomorphic to $\FD(\ThePct,\TheCone)/\sim_\freeprod$ with $\sim_\freeprod$ identifying boundary points from the same $\freeprod$-orbit. Hence, we may consider $\bar{H}$ as a map 
\[
\FD(\ThePct,\TheCone)\rightarrow \FD(\ThePct,\TheCone),\bar{x}\mapsto\bar{H}(\bar{x})
\]
that coincides on the identified boundary points, fixes the set of marked points $\{p_1,...,p_\TheStrand\}$ and restricts to the identity on $\partial\Sigma(\ThePct,\TheCone)\cap \FD(\ThePct,\TheCone)$. 

For the proof of Lemma \ref{lem:pi_Homeo_cont_homo}, we start with an observation about the fundamental domain $\FD$. 

\begin{lemma}
\label{lem:H(F)_fund_domain}
If $H\in\HomeoOrb{}{\Sigma_\freeprod,\partial\Sigma}$, then $H(\FD)$ is also a fundamental domain of the $\freeprod$-action on $\Sigma$. 
\end{lemma}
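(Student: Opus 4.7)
The plan is to exploit the $\freeprod$-equivariance of $H$ to move the defining properties of $\FD$ directly over to $H(\FD)$. Recall from Example \ref{ex:good_orb_free_prod} that $\FD$ being a fundamental domain of the $\freeprod$-action on $\Sigma$ means two things: the $\freeprod$-translates $\{\gamma(\FD)\}_{\gamma\in\freeprod}$ cover $\Sigma$, and their interiors $\gamma(\inter{\FD})$ are pairwise disjoint (the overlap between distinct translates is confined to the boundary pieces that get identified).

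First, I would observe that $\freeprod$-equivariance gives $\gamma(H(\FD)) = H(\gamma(\FD))$ for every $\gamma \in \freeprod$. Since $H$ is a self-homeomorphism of $\Sigma$, it maps $\Sigma$ bijectively onto itself, so
\[
\bigcup_{\gamma\in\freeprod}\gamma(H(\FD)) \;=\; \bigcup_{\gamma\in\freeprod}H(\gamma(\FD)) \;=\; H\!\left(\bigcup_{\gamma\in\freeprod}\gamma(\FD)\right) \;=\; H(\Sigma) \;=\; \Sigma,
\]
which gives the covering property for $H(\FD)$.

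Next, for the interior disjointness, I would note that $H$ is a homeomorphism of $\Sigma$, so it sends interiors to interiors: $H(\inter{\FD}) = \inter{(H(\FD))}$. If $\gamma \neq \gamma'$, then $\gamma(\inter{\FD}) \cap \gamma'(\inter{\FD}) = \emptyset$ by the fundamental domain property of $\FD$, and applying $H$ (which is injective) together with $\freeprod$-equivariance yields
\[
\gamma(\inter{H(\FD)}) \cap \gamma'(\inter{H(\FD)}) \;=\; H\bigl(\gamma(\inter{\FD})\bigr)\cap H\bigl(\gamma'(\inter{\FD})\bigr) \;=\; H\bigl(\gamma(\inter{\FD})\cap\gamma'(\inter{\FD})\bigr) \;=\; \emptyset.
\]

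The only mild subtlety is that the fundamental domain $\FD$ of Figure \ref{fig:fund_domain} has prescribed boundary identifications under $\freeprod$ (the two boundary arcs adjacent to each cone point $\cp_\nu$ are identified by an element of $\Stab_\freeprod(\cp_\nu)$). I do not expect this to be a real obstacle: since $H$ fixes each cone point by Lemma \ref{lem:Homeo_fix_cp} and is $\freeprod$-equivariant, it carries the pair of adjacent boundary arcs of $\FD$ at $\cp_\nu$ to a pair of adjacent arcs of $H(\FD)$ at $\cp_\nu$ which are identified by the same stabilizer element. Hence $H(\FD)$ inherits a compatible pattern of boundary identifications, confirming it is a genuine fundamental domain in exactly the sense of Example \ref{ex:good_orb_free_prod}.
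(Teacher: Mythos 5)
Your proof is correct and takes essentially the same route as the paper: use $\freeprod$-equivariance plus bijectivity of $H$ to transfer the covering property and the disjointness-of-translates property from $\FD$ to $H(\FD)$. The only cosmetic difference is that you phrase disjointness via $H(\inter{\FD})=\inter{H(\FD)}$, whereas the paper argues pointwise that any overlap $\gamma(H(\FD))\cap\gamma'(H(\FD))$ lands in $\partial H(\FD)$; your closing paragraph about boundary identifications is a nice extra observation but is not needed for the statement as proved in the paper.
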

\begin{proof}
Let $y$ be an arbitrary point in $\Sigma$. The surjectivity of $H$ implies that there exists a $y'\in\Sigma$ such that $y=H(y')$. Since $\FD$ is a fundamental domain, there exists a point $x'\in \FD$ and $\gamma\in\freeprod$ such that $y'=\gamma(x')$. Using that $H$ is $\freeprod$-equivariant, we obtain $y=\gamma(H(x'))$, i.e.\ $\Sigma=\bigcup_{\gamma\in\freeprod}\gamma(H(\FD))$. 

Moreover, if $y$ is contained in $\gamma(H(\FD))\cap\gamma'(H(\FD))$ for $\gamma\neq\gamma'$, this implies $y=\gamma(H(x'))=\gamma'(H(x''))$ for $x',x''\in \FD$. This is equivalent to $H(\gamma(x'))=H(\gamma'(x''))$. Since $H$ is injective, that is $\gamma(x')=\gamma'(x'')$. Using that $\FD$ is a fundamental domain, this implies that $x'$ and $x''$ are contained in $\partial \FD$. The homeomorphism $H$ maps $\partial\FD$ to $\partial H(\FD)$, i.e.\ $H(x'),H(x'')$ are contained in $\partial H(\FD)$. This proves that $H(\FD)$ is a fundamental domain. 
\end{proof}

\begin{lemma}
\label{lem:pi_Homeo_cont_homo}
The map 
\[
\pi:\HomeoOrb{\TheStrand}{\Sigma_\freeprod(\ThePct,\TheCone),\partial\Sigma(\ThePct,\TheCone)}
\rightarrow\Homeo{\TheStrand}{D(\ThePct,\TheCone),\partial D(\ThePct,\TheCone)}, \quad H\mapsto\bar{H} 
\]
is a continuous homomorphism. 
\end{lemma}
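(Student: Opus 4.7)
The plan is to verify three things in order: that $\bar H$ is a well-defined element of the target group $\Homeo{\TheStrand}{D(\ThePct,\TheCone),\partial D(\ThePct,\TheCone)}$, that $\pi$ is multiplicative, and that $\pi$ is continuous with respect to the compact-open topologies. The first two are essentially bookkeeping; continuity is the only point that needs genuine work.

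For well-definedness, write $q\colon\Sigma(\ThePct,\TheCone)\to\Sigma(\ThePct,\TheCone)/\freeprod\cong D(\ThePct,\TheCone)$ for the quotient map. The $\freeprod$-equivariance of $H$ forces the set-theoretic identity $\bar H\circ q=q\circ H$, making $\bar H$ well-defined. Continuity of $\bar H$ follows from the universal property of the quotient topology applied to the continuous map $q\circ H$. The same construction applied to $H^{-1}$ yields a continuous $\overline{H^{-1}}$ satisfying
\[
\overline{H^{-1}}\circ\bar H\circ q=q\circ H^{-1}\circ H=q,
\]
so by surjectivity of $q$ we get $\overline{H^{-1}}\circ\bar H=\id$, and symmetrically for the other composition; thus $\bar H$ is a homeomorphism. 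The boundary condition $\bar H|_{\partial D(\ThePct,\TheCone)}=\id$ is inherited from $H|_{\partial\Sigma(\ThePct,\TheCone)}=\id$, and $\bar H$ preserves the $\TheStrand$ marked points as a set because $H$ preserves $\freeprod(P)$ as a set. The homomorphism property $\pi(H_1 H_2)=\pi(H_1)\pi(H_2)$ is then immediate from the defining identity $\bar H\circ q=q\circ H$.

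For continuity of $\pi$, it suffices to show that $\pi^{-1}(V(K,U))$ is open for every subbasic open set $V(K,U)=\{f:f(K)\subseteq U\}$ with $K\subseteq D(\ThePct,\TheCone)$ compact and $U\subseteq D(\ThePct,\TheCone)$ open. Set $\tilde K:=q^{-1}(K)\cap\FD(\ThePct,\TheCone)$; this is the intersection of a closed set with the compact fundamental domain, hence compact, and since $\FD(\ThePct,\TheCone)$ meets every $\freeprod$-orbit we have $q(\tilde K)=K$ and $q^{-1}(K)=\freeprod\cdot\tilde K$. Using $\freeprod$-equivariance of $H$ together with $\freeprod$-invariance of the open set $q^{-1}(U)$, one obtains
\[
\bar H(K)\subseteq U\;\Longleftrightarrow\;H(q^{-1}(K))\subseteq q^{-1}(U)\;\Longleftrightarrow\;H(\tilde K)\subseteq q^{-1}(U),
\]
so $\pi^{-1}(V(K,U))=V(\tilde K,q^{-1}(U))$, a subbasic open set in the compact-open topology on the source.

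The main obstacle is precisely this last step: because $\freeprod$ is typically infinite, the quotient map $q$ is not proper and $q^{-1}(K)$ fails to be compact, so one cannot naively lift compacta from $D(\ThePct,\TheCone)$ to $\Sigma(\ThePct,\TheCone)$. The trick is to intersect with the compact fundamental domain $\FD(\ThePct,\TheCone)$ to produce a compact lift $\tilde K$ and then invoke $\freeprod$-equivariance to collapse the relevant condition on $H$ back to this lift; this brings $\pi^{-1}(V(K,U))$ inside the compact-open topology on the source and completes the proof.
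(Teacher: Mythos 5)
Your proof is correct, and it takes a genuinely cleaner route than the paper's in the key step. For continuity of $\bar H$, you invoke the universal property of the quotient map $q\colon\Sigma(\ThePct,\TheCone)\to\Sigma(\ThePct,\TheCone)/\freeprod$: since $q\circ H$ is continuous and $\freeprod$-invariant, it factors through $q$, giving continuity of $\bar H$ (and of $\overline{H^{-1}}$) in one line. The paper instead models $D(\ThePct,\TheCone)$ as $\FD(\ThePct,\TheCone)/\sim_\freeprod$, decomposes $\bar H$ into finitely many pieces on the tiles $\FD\cap\gamma(H^{-1}(\FD))$, checks agreement on overlaps, and invokes a pasting lemma; this in turn requires the auxiliary fact (proved separately) that $H(\FD)$ is again a fundamental domain. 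Your quotient-map argument bypasses all of that. For continuity of $\pi$ both proofs lift the compact set into the fundamental domain and lift the open set to its $\freeprod$-orbit $q^{-1}(U)$; you obtain the exact preimage identity $\pi^{-1}(V(K,U))=V(\tilde K,q^{-1}(U))$ while the paper argues the slightly weaker pointwise form $H\in V'(K_\Sigma,U_\Sigma)$ and $\pi(V'(K_\Sigma,U_\Sigma))\subseteq V(K,U)$ — substantively the same idea, with your version marginally crisper. One reason the paper may have opted for the explicit piecewise construction is that the identical technique is recycled verbatim later (to show $\bar H_\Sigma$ is a homeomorphism in the lifting direction, where the quotient shortcut is unavailable because one is going up to $\Sigma$ rather than down to $D$), so the repeated machinery pays for itself there even though your approach is strictly shorter here.
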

\begin{proof}
We divide the proof into three steps. 

\begin{intermediate}[Step $1$. The map $\pi$ is a homomorphism]
Let $H,H'\in\HomeoOrb{\TheStrand}{\Sigma_\freeprod(\ThePct,\TheCone),\partial\Sigma(\ThePct,\TheCone)}$. Then the definition of $\bar{H}$ and $\bar{H}'$ implies
\[
\bar{H}'\circ\bar{H}(\freeprod(x))=\bar{H}'(\freeprod(H(x)))=\freeprod(H'\circ H(x))=\overline{H'\circ H}(\freeprod(x)). 
\]
Hence, $\pi$ is a homomorphism. 
\end{intermediate}

\begin{intermediate}[\hypertarget{lem:pi_Homeo_cont_homo_step2}{Step $2$.} The map $\bar{H}$ is a homeomorphism]
Since the map $\varphi$ is a homomorphism, the inverse of $\bar{H}$ is given by the image of $H^{-1}$ under $\pi$. As a consequence, it suffices to show that $\bar{H}$ is continuous for every $H$ in $\HomeoOrb{\TheStrand}{\Sigma_\freeprod(\ThePct),\partial\Sigma(\ThePct)}$ to obtain that $\bar{H}$ is a homeomorphism. 

Since $\FD$ is a fundamental domain, $H\vert_{\FD}$ determines the induced map $\bar{H}$. By Lemma \ref{lem:H(F)_fund_domain}, $H^{-1}(\FD)$ is also a fundamental domain. This allows us to cover $\FD$ with the $\freeprod$-translates of $H^{-1}(\FD)$. In this way, we obtain pieces 
\[
H:\FD\cap\gamma(H^{-1}(\FD))\rightarrow\gamma(\FD). 
\]
Since both, domain and codomain, are contained in a single $\freeprod$-translate of $\FD$, we may identify the above map with 
\[
\FD\cap\gamma(H^{-1}(\FD))\rightarrow \FD, x\mapsto \gamma^{-1}(H(x)). 
\]
This map coincides with $\bar{H}\vert_{\FD\cap\gamma(H^{-1}(\FD))}$. Using that $H$ is continuous, this implies that $\bar{H}$ is continuous on every piece $\FD\cap\gamma(H^{-1}(\FD))$. 

Next, we check that the continuous pieces match together to a continuous function $\bar{H}$. Therefore, we recall that $\FD$ and $H^{-1}(\FD)$ are compact disks. This implies that $\FD$ intersects only finitely many $\freeprod$-translates of $H^{-1}(\FD)$. Hence, we have decomposed $\bar{H}$ into finitely many continuous pieces. Let $\FD\cap\gamma(H^{-1}(\FD))$ and $\FD\cap\gamma'(H^{-1}(\FD))$ be pieces such that they or suitable $\freeprod$-translates intersect. If we consider intersecting translates, they share a set contained in the boundary of both (see Figure \ref{fig:continuity_ind_map_H} for an example). 

On the left of Figure \ref{fig:continuity_ind_map_H}, the fundamental domain $\FD$ is shaded in different colors which describe the above decomposition into pieces. An exemplary intersection of two pieces is colored in yellow. \new{For the blue segments,} there exist $\freeprod$-translates of the adjacent pieces that intersect. On the right of Figure \ref{fig:continuity_ind_map_H}, it is shown how the corresponding pieces cover $D_\TheStrand(\ThePct,\TheCone)$. 
\begin{figure}[H]
\import{Grafiken/orb_mcg_marked_pts/}{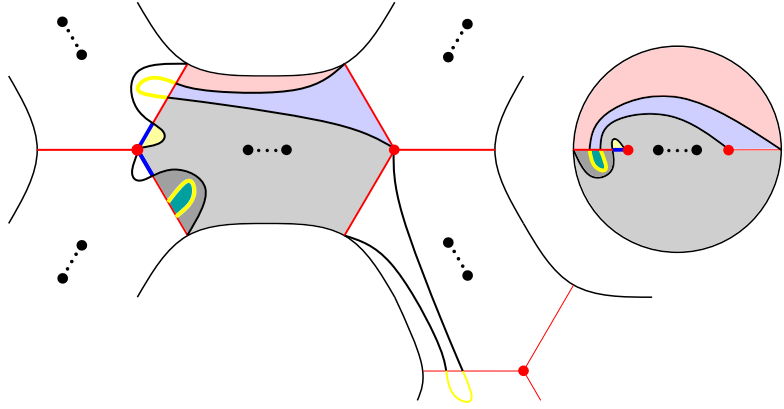}
\caption{Decompositions of $H$ and $\bar{H}$.}
\label{fig:continuity_ind_map_H}
\end{figure} 

Let us consider an open neighborhood in $\Spct$ of the intersecting set of the two pieces. Using that $H$ is continuous on this neighborhood, the $\freeprod$-equivariance of $H$ implies that the maps $H\vert_{\FD\cap\gamma(H^{-1}(\FD))}$ and $H\vert_{\FD\cap\gamma'(H^{-1}(\FD))}$ coincide on the set that corresponds to the intersecting set. Since $H$ identifies with $\bar{H}$ on these pieces, $\bar{H}\vert_{\FD\cap\gamma(H^{-1}(\FD))}$ and $\bar{H}\vert_{\FD\cap\gamma'(H^{-1}(\FD))}$ coincide on their intersecting set. Thus, by \cite[Chapter III, Theorem 9.4]{Dugundji1966}, the finitely many continuous pieces of $\bar{H}$ induce a continuous map on $D_\TheStrand(\ThePct,\TheCone)$. 
\end{intermediate}

\begin{intermediate}[Step $3$. Continuity of $\pi$]
Finally, we need to check that $\pi$ is continuous, i.e.\ for every homeomorphism $H\in\HomeoOrb{\TheStrand}{\SG,\partial\S}$ and every neighborhood $V$ of $\pi(H)=\bar{H}$, there exists a neighborhood $V'$ of $H$ such that $\pi(V')\subseteq V$. Given a compact set $K$ and an open set $U$ in $D(\ThePct,\TheCone)$, let $V(K,U)$ denote the set 
\[
\{\phi\in\Homeo{\TheStrand}{D(\ThePct,\TheCone),\partial D(\ThePct,\TheCone)}\mid\phi(K)\subseteq U\}. 
\]
By definition of the compact-open topology on $\Homeo{\TheStrand}{D(\ThePct,\TheCone),\partial D(\ThePct,\TheCone)}$, the open set $V$ contains a subset $V(K,U)\ni\pi(H)=\bar{H}$. 
Now we have to find an appropriate open subset $V'\subseteq\HomeoIdOrb{\TheStrand}{\Sigma_\freeprod(\ThePct),\partial\Sigma(\ThePct)}$ such that $V'$ maps to $V(K,U)$. We will choose $V'=V'(K_\Sigma,U_\Sigma)$ for some $K_\Sigma,U_\Sigma\subseteq\Sigma(\ThePct)$ such that $K_\Sigma$ is compact and $U_\Sigma$ is open. 

Since $D(\ThePct,\TheCone)$ is homeomorphic to $\FD(\ThePct,\TheCone)/\sim_\freeprod$ and $\FD(\ThePct,\TheCone)\subseteq\Spct$, we may consider $K$ and $U$ as subsets in $\Sigma(\ThePct)$. We choose $K_\Sigma=K$ and $U_\Sigma=\freeprod(U)$. 

$K_\Sigma$ is compact, since it is a compact subset of the compact subset $\FD(\ThePct,\TheCone)$. 

\new{For the openness of $U_\Sigma$,} we observe: If $x\in U_\Sigma$ is contained in the interior of a $\freeprod$-translate of $\FD(\ThePct,\TheCone)$, we may use that $U$ is open in $\FD(\ThePct,\TheCone)$. This yields an $\varepsilon$-ball around $x$ that is contained in the interior of the $\freeprod$-translate of $\FD(\ThePct,\TheCone)$ and $U$. 
In particular, this $\varepsilon$-ball is contained in $U_\Sigma$. 
If $x\in U_\Sigma$ is contained in a $\freeprod$-translate of $\partial \FD(\ThePct,\TheCone)$, we may use the openness of $U$ in $D(\ThePct,\TheCone)$ to find a surrounding $\varepsilon$-ball contained in $U\subseteq D(\ThePct,\TheCone)$ for the point $\bar{x}$ which corresponds to $x$. If we identify this $\varepsilon$-ball with a subset in $\FD(\ThePct,\TheCone)$, it divides into two components. Shifting both of these halves by suitable group elements, yields the $\varepsilon$-ball centered at $x$. Since $U_\Sigma$ covers the whole $\freeprod$-orbit of $U\subseteq \FD(\ThePct,\TheCone)$, this $\varepsilon$-ball is contained in $U_\Sigma$. Thus, $U_\Sigma$ is open. 

For these sets $K_\Sigma$ and $U_\Sigma$, we observe
\[
H(K_\Sigma)=H(K)=H\circ\bar{H}^{-1}\circ\bar{H}(K)\subseteq H\circ\bar{H}^{-1}(U)\subseteq U_\Sigma. 
\]
In the last inclusion, we used that $H$ and $\bar{H}$ differ at most by a $\freeprod$-translation. Now $H(K_\Sigma)\subseteq U_\Sigma$ implies that $H\in V'(K_\Sigma,U_\Sigma)$ and further, for every $H'$ in $\HomeoOrb{\TheStrand}{\Sigma_\freeprod(\ThePct),\partial\Sigma(\ThePct)}$, the condition $H'(K_\Sigma)\subseteq U_\Sigma$ implies $\bar{H}'(K)\subseteq U$, i.e.\ $\pi(V'(K_\Sigma,U_\Sigma))\subseteq V(K,U)$. 
\end{intermediate}
\end{proof}

If we restrict $\pi$ to the subgroup $\HomeoIdOrb{\TheStrand}{\Sigma_\freeprod(\ThePct),\partial\Sigma(\ThePct)}$, we obtain a continuous map $\HomeoIdOrb{\TheStrand}{\Sigma_\freeprod(\ThePct),\partial\Sigma(\ThePct)}\rightarrow\Homeo{\TheStrand}{D(\ThePct,\TheCone),\partial D(\ThePct,\TheCone)}$. 

\begin{lemma}
\label{lem:pi_Map_homo}
The restricted map induces a homomorphism 
\[
\piMap:\MapIdOrb{\TheStrand}{\Sigma_\freeprod(\ThePct)}\rightarrow\MapId{\TheStrand}{D(\ThePct,\TheCone)}. 
\]
\end{lemma}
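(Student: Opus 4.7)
The plan is to leverage the continuity of $\pi$ established in Lemma \ref{lem:pi_Homeo_cont_homo} and simply descend the homomorphism to path-component quotients. A continuous group homomorphism between topological groups automatically sends paths to paths, hence ambient isotopies to ambient isotopies, which is exactly the content needed for a well-defined induced map on mapping class groups. The only substantive points to verify are (i) well-definedness on $\sim_\TheStrand$-classes, (ii) that the image actually lands in the subgroup $\MapId{\TheStrand}{D(\ThePct,\TheCone)}$ and not merely in $\Map{\TheStrand}{D(\ThePct,\TheCone)}$, and (iii) the homomorphism property at quotient level.

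For (i), suppose $H \sim_\TheStrand H'$ via an ambient isotopy $H_t$ in $\HomeoOrb{\TheStrand}{\Sigma_\freeprod(\ThePct),\partial\Sigma(\ThePct)}$, i.e.\ a continuous path $I\to\HomeoOrb{\TheStrand}{\Sigma_\freeprod(\ThePct),\partial\Sigma(\ThePct)}$. Composing with the continuous homomorphism $\pi$ from Lemma \ref{lem:pi_Homeo_cont_homo} yields a continuous path $t\mapsto\overline{H_t}$ in $\Homeo{\TheStrand}{D(\ThePct,\TheCone),\partial D(\ThePct,\TheCone)}$ connecting $\bar H$ to $\bar{H'}$, so $[\bar H]=[\bar{H'}]$ in $\Map{\TheStrand}{D(\ThePct,\TheCone)}$.

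For (ii), the key observation is that the very same formula $H\mapsto\bar H$ and the same proof as in Lemma \ref{lem:pi_Homeo_cont_homo} produce a continuous homomorphism $\pi^\flat\colon \HomeoOrb{}{\Sigma_\freeprod(\ThePct),\partial\Sigma(\ThePct)}\to \Homeo{}{D(\ThePct,\TheCone),\partial D(\ThePct,\TheCone)}$ on the larger groups that do not remember the marked points, since nothing in the proof used the marked-points condition. Now, if $H\in\HomeoIdOrb{\TheStrand}{\Sigma_\freeprod(\ThePct),\partial\Sigma(\ThePct)}$, then by Definition \ref{def:Map_orb(ThePct)} there is an ambient isotopy $H_t$ in $\HomeoOrb{}{\Sigma_\freeprod(\ThePct),\partial\Sigma(\ThePct)}$ from $H$ to $\id_{\Sigma(\ThePct)}$. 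Applying $\pi^\flat$ gives a continuous path $\overline{H_t}$ in $\Homeo{}{D(\ThePct,\TheCone),\partial D(\ThePct,\TheCone)}$ from $\bar H$ to $\id_{D(\ThePct,\TheCone)}$, showing $\bar H\in \HomeoId{\TheStrand}{D(\ThePct,\TheCone),\partial D(\ThePct,\TheCone)}$ and therefore $\piMap([H])=[\bar H]\in\MapId{\TheStrand}{D(\ThePct,\TheCone)}$. Point (iii) is immediate from Step 1 of the proof of Lemma \ref{lem:pi_Homeo_cont_homo}.

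There is no real obstacle once the previous lemma is in hand; the only thing to be careful about is keeping the two ambient isotopy relations distinct, since the definition of $\HomeoIdOrb{\TheStrand}{\ldots}$ uses isotopies in the ambient group $\HomeoOrb{}{\ldots}$ that forget the marked-point condition, so the argument for (ii) requires the mild extension $\pi^\flat$ of $\pi$ to the unmarked setting rather than $\pi$ itself.
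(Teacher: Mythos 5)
Your proposal is correct and takes essentially the same approach as the paper: both descend via continuity of $\pi$ for well-definedness, and both apply the induced map to the ambient isotopy that exhibits $H\sim\id_{\Sigma(\ThePct)}$ in the unmarked homeomorphism group to place $\bar H$ in $\MapId{\TheStrand}{D(\ThePct,\TheCone)}$. Your explicit introduction of $\pi^\flat$ is just a cosmetic clarification of what the paper denotes "the induced ambient isotopy $\bar\psi$"; the paper is slightly more explicit than you in invoking Lemma \ref{lem:Homeo_fix_cp} to justify that $\bar\psi$ is an isotopy of the quotient disk punctured also at the $\bar\cp_\nu$, but that fact is already baked into the construction of $\pi$ in Lemma \ref{lem:pi_Homeo_cont_homo}, which your argument correctly relies on.
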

\begin{proof}
It remains to check that the induced map 
\[
\piMap:\MapIdOrb{\TheStrand}{\Sigma_\freeprod(\ThePct)}\rightarrow\Map{\TheStrand}{D(\ThePct,\TheCone)},[H]\mapsto[\bar{H}] 
\]
is well-defined and that its image is contained in $\MapId{\TheStrand}{D(\ThePct,\TheCone)}$. 

For $H,H'\in\HomeoIdOrb{\TheStrand}{\Sigma_\freeprod(\ThePct),\partial\Sigma(\ThePct)}$ with $H\sim_\TheStrand H'$, there exists an ambient isotopy $\phi:I\rightarrow\HomeoOrb{\TheStrand}{\Sigma_\freeprod(\ThePct),\partial\Sigma(\ThePct)}$ connecting $H$ and $H'$. Since $\pi$ is continuous, by Lemma \ref{lem:pi_Homeo_cont_homo}, we obtain an ambient isotopy $\bar{\phi}:=\pi\circ\phi$ connecting $\bar{H}$ and~$\bar{H}'$. Hence, the induced map $\piMap$ is well-defined. 

Moreover, let $H\in\HomeoIdOrb{\TheStrand}{\Sigma_\freeprod(\ThePct),\partial\Sigma(\ThePct)}$. If we forget the marked points $\freeprod(\{p_1,...,p_\TheStrand\})$, 
by Definition \ref{def:Map_orb(ThePct)}, we obtain an ambient isotopy 
\[
\psi:I\rightarrow\HomeoOrb{}{\Sigma_\freeprod(\ThePct),\partial\Sigma(\ThePct)} 
\]
from $H$ to $\id_{\Sigma(\ThePct)}$. By Lemma \ref{lem:Homeo_fix_cp}, $\psi_t$ fixes the cone points for every $t\in I$. Consequently, the induced ambient isotopy $\bar{\psi}$ connects $\bar{H}$ and $\id_{D(\ThePct,\TheCone)}$ relative 
$\bar{c}_1,...,\bar{c}_\TheCone$ and $\bar{r}_1,...,\bar{r}_\ThePct$. Hence, $\bar{H}$ is contained in the subgroup $\MapId{\TheStrand}{D(\ThePct,\TheCone)}$. 
\end{proof}

Next, we describe an inverse homomorphism for $\piMap$. The first step is to construct a self-homeomorphism of $\SG$ for each $\bar{H}\in\Homeo{\TheStrand}{D(\ThePct,\TheCone),\partial D(\ThePct,\TheCone)}$: 

\begin{construction}[A self-homeomorphism of $\Sigma(\ThePct)$] 
By Definition \ref{def:Map_disk(ThePct)}, there exists an ambient isotopy 
\[
\bar{\phi}:I\rightarrow\Homeo{}{D(\ThePct,\TheCone),\partial D(\ThePct,\TheCone)} 
\]
that connects $\id_{D(\ThePct,\TheCone)}$ to $\bar{H}$. For each point $\bar{x}\in D$ that is not a marked point or cone point (these points are fixed anyway), we may follow its trace $\bar{\phi}_t(\bar{x})$ and detect its algebraic intersections with the segments $\bar{s}_\nu$ for $1\leq\nu\leq\TheCone$. This yields a finite sequence $((\nu_1,\varepsilon_1),...,(\nu_\TheNSubdiv,\varepsilon_\TheNSubdiv))$ with $1\leq\nu_\NSubdiv\leq\TheCone$ and $\varepsilon_\NSubdiv=\pm1$ which induces a group element $\gamma_{\bar{H},\bar{x}}:=\gamma_{\nu_1}^{\varepsilon_1}...\gamma_{\nu_\TheNSubdiv}^{\varepsilon_\TheNSubdiv}$. Now we identify $\bar{x}$ with a point in $\FD(\ThePct,\TheCone)$ and define 
\[
\bar{H}_\Sigma:\gamma(\bar{x})\mapsto\tilde{\gamma}\gamma_{\bar{H},\bar{x}}(\bar{H}(\bar{x})). 
\]
The element $\tilde{\gamma}$ equals $\gamma$ if $\gamma(\bar{x})$ is an interior element in $\gamma(\FD(\ThePct,\TheCone))$ or $\gamma(\bar{x})$ is contained in $\gamma(s_\nu)$ for some $1\leq\nu\leq\TheCone$. If $\gamma(\bar{x})$ is contained in $\gamma'(s_\nu)$, we set $\tilde{\gamma}=\gamma'$.  

To verify that $\bar{H}_\Sigma$ is well-defined, we have to check that $\gamma_{\bar{H},\bar{x}}$ does not depend on the choice of the ambient isotopy $\bar{\phi}$. 

Let $\bar{\psi}:I\rightarrow\Homeo{}{D(\ThePct,\TheCone),\partial D(\ThePct,\TheCone)}$ be another ambient isotopy that connects $\id_{D(\ThePct,\TheCone)}$ and $\bar{H}$. Then $\bar{\psi}^{-1}\circ\bar{\phi}$ yields an ambient isotopy connecting $\id_{D(\ThePct,\TheCone)}$ through $\bar{H}$ to $\id_{D(\ThePct,\TheCone)}$. In particular, $(\bar{\psi}^{-1}\circ\bar{\phi})_t(\bar{x})$ represents an element in $\pi_1\left(D(\ThePct,\TheCone),\bar{x}\right)$. Via point-pushing, $\pi_1\left(D(\ThePct,\TheCone),\bar{x}\right)$ embeds into $\Map{1}{D(\ThePct,\TheCone)}$ and the element $[(\bar{\psi}^{-1}\circ\bar{\phi})_t(\bar{x})]$ maps onto $[\id_{D(\ThePct,\TheCone)}]$. Hence, $(\bar{\psi}^{-1}\circ\bar{\phi})_t(\bar{x})$ represents the trivial element in $\pi_1\left(D(\ThePct,\TheCone),\bar{x}\right)$. Since $\pi_1\left(D(\ThePct,\TheCone),\bar{x}\right)=\freegrp{\ThePct+\TheCone}$, this implies that the intersection patterns of $\bar{\phi}_t(\bar{x})$ and $\bar{\psi}_t(\bar{x})$ with the segments $\bar{s}_\nu$ coincide up to insertion of subsequences $(\mu,\varepsilon),(\mu,-\varepsilon)$ with $1\leq\mu\leq\TheCone$ and $\varepsilon=\pm1$. Hence, $\gamma_{\bar{H},\bar{x}}$ does not depend on the choice of the ambient isotopy. 

Moreover, we need to show: If $\freeprod$-translates of $\FD(\ThePct,\TheCone)$ intersect in their boundaries, the definition of $\bar{H}_\Sigma$ coincides on both $\freeprod$-translates. Let $\gamma(\bar{x})=\gamma'(\bar{y})$ for $\bar{x},\bar{y}\in \FD(\ThePct,\TheCone),\bar{x}\neq\bar{y}$, $\gamma,\gamma'\in\freeprod,\gamma\neq\gamma'$. This implies that $\bar{x}$ and $\bar{y}$ are contained in the same $\freeprod$-orbit. In particular, $\bar{x},\bar{y}\in\partial\FD(\ThePct,\TheCone)$ and $\bar{x},\bar{y}$ get identified via $\sim_\freeprod$. Thus, $\bar{H}(\bar{x})=\bar{H}(\bar{y})$. Without loss of generality, we may assume that $\gamma(\bar{x})=\gamma'(\bar{y})$ is contained in $\gamma(s_\nu)$, i.e.\  $\tilde{\gamma}=\gamma$ for $\gamma(\bar{x})$ and $\gamma'(\bar{y})$. 
Since the points $\bar{x}$ and $\bar{y}$ get identified by $\sim_\freeprod$, we further obtain $\gamma_{\bar{H},\bar{x}}=\gamma_{\bar{H},\bar{y}}$ and consequently 
\[
\bar{H}_\Sigma(\gamma(\bar{x}))=\gamma\gamma_{\bar{H},\bar{x}}(\bar{H}(\bar{x}))=\gamma\gamma_{\bar{H},\bar{y}}(\bar{H}(\bar{y}))=\bar{H}_\Sigma(\gamma'(\bar{y})). 
\]
\end{construction}

The following Lemma shows that $\bar{H}_\Sigma$ satisfies the expected properties. 

\begin{lemma}
\label{lem:varphi_Homeo_homo}
The map $\varphi:\HomeoId{\TheStrand}{D(\ThePct,\TheCone),\partial D(\ThePct,\TheCone)}\rightarrow\HomeoOrb{\TheStrand}{\Sigma_\freeprod(\ThePct),\partial\Sigma(\ThePct)}$ that maps $\bar{H}$ to $\bar{H}_\Sigma$ is a homomorphism. 
\end{lemma}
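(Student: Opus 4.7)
The plan is to verify successively that $\bar{H}_\Sigma$ lies in $\HomeoOrb{\TheStrand}{\Sigma_\freeprod(\ThePct),\partial\Sigma(\ThePct)}$, and then that $\varphi$ respects composition. I would split the verification into four steps: continuity, $\freeprod$-equivariance together with bijectivity, preservation of the boundary and of the marked-point orbit, and finally multiplicativity.

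First, I would establish continuity of $\bar{H}_\Sigma$ piece by piece on the tiling of $\Sigma(\ThePct)$ into $\freeprod$-translates of $\FD(\ThePct,\TheCone)$. On the interior of a single translate $\gamma(\FD(\ThePct,\TheCone))$, the group element $\gamma_{\bar{H},\bar{x}}$ is locally constant in $\bar{x}$ away from the arcs $\bar{s}_\nu$, since small perturbations of $\bar{x}$ do not alter the algebraic intersection pattern of the trace $\bar{\phi}_t(\bar{x})$ with the $\bar{s}_\nu$. Hence $\bar{H}_\Sigma$ restricted to this piece coincides locally with the continuous map $\bar{x}\mapsto\tilde{\gamma}\gamma_{\bar{H},\bar{x}}(\bar{H}(\bar{x}))$. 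For points on the common boundary between two adjacent translates, the key observation is that crossing a segment $\bar{s}_\nu$ at $\bar{x}$ changes $\gamma_{\bar{H},\bar{x}}$ by a factor $\gamma_\nu^{\pm 1}$, which precisely compensates for the change in $\tilde{\gamma}$ between the neighboring copies of $\FD(\ThePct,\TheCone)$. As in Step~2 of the proof of Lemma \ref{lem:pi_Homeo_cont_homo}, applying \cite[Chapter III, Theorem 9.4]{Dugundji1966} to the finitely many matching continuous pieces yields a continuous map on $\Sigma(\ThePct)$.

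Second, for $\freeprod$-equivariance, the defining formula at an interior point $\bar{x}\in\FD(\ThePct,\TheCone)$ and $\gamma'\in\freeprod$ gives
\[
\bar{H}_\Sigma(\gamma'\gamma(\bar{x})) \;=\; (\gamma'\gamma)\,\gamma_{\bar{H},\bar{x}}(\bar{H}(\bar{x})) \;=\; \gamma'\,\bar{H}_\Sigma(\gamma(\bar{x})),
\]
and the boundary case is handled by the well-definedness check already carried out in the construction above. Bijectivity follows by showing that $(\bar{H}^{-1})_\Sigma$ is a two-sided inverse, which is a formal consequence of the composition law below. The boundary and marked-point conditions are inherited directly from $\bar{H}$: if $\bar{x}\in\partial\Sigma(\ThePct)\cap\FD(\ThePct,\TheCone)$, then $\bar{H}$ fixes $\bar{x}$ and we may select the realizing ambient isotopy to fix $\bar{x}$ throughout, forcing $\gamma_{\bar{H},\bar{x}}=1$ and therefore $\bar{H}_\Sigma(\gamma(\bar{x}))=\gamma(\bar{x})$; similarly, the permutation that $\bar{H}$ induces on $\{\bar{p}_1,\ldots,\bar{p}_\TheStrand\}$ lifts to a permutation of the orbit $\freeprod(\{p_1,\ldots,p_\TheStrand\})$.

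Third, and this is where the real work lies, I would verify the homomorphism property $\varphi(\bar{H}_1\circ\bar{H}_2)=\varphi(\bar{H}_1)\circ\varphi(\bar{H}_2)$. The essential input is a cocycle identity for the recorded group elements: if $\bar{\phi}^{(2)}$ realizes $\bar{H}_2$ and $\bar{\phi}^{(1)}$ realizes $\bar{H}_1$, then the concatenation $\bar{\phi}^{(1)}\cdot\bar{\phi}^{(2)}$ realizes $\bar{H}_1\circ\bar{H}_2$, so after writing $\bar{H}_2(\bar{x})=\gamma_{\bar{H}_2,\bar{x}}^{-1}(y)$ with $y$ the unique representative of $\bar{H}_2(\bar{x})$ in $\FD(\ThePct,\TheCone)$, the traces concatenate to give
\[
\gamma_{\bar{H}_1\circ\bar{H}_2,\bar{x}} \;=\; \gamma_{\bar{H}_2,\bar{x}}\cdot\gamma_{\bar{H}_1,\,y}.
\]
Substituting into the formula for $\bar{H}_\Sigma$ on an arbitrary translate and using the already established $\freeprod$-equivariance yields the required equality. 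The main obstacle is precisely this last bookkeeping: one has to track that $\bar{H}_2(\bar{x})$ does not a priori lie in $\FD(\ThePct,\TheCone)$, shift it back into the fundamental domain before computing $\gamma_{\bar{H}_1,\cdot}$, and verify that the resulting correction on $\tilde{\gamma}$ balances. Once the cocycle identity is in place, multiplicativity of $\varphi$ and the fact that $\varphi(\id)=\id$ are immediate.
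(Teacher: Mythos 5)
Your proposal is correct and follows essentially the same route as the paper: continuity of $\bar{H}_\Sigma$ by piecing together over the translates of $\FD(\ThePct,\TheCone)$ (and applying the gluing lemma from Dugundji), and multiplicativity via a cocycle identity for the recorded group elements, with bijectivity obtained formally from the composition law. You are somewhat more explicit than the paper about verifying $\freeprod$-equivariance, fixing of $\partial\Sigma(\ThePct)$, and preservation of the marked-point orbit; the paper treats these as immediate from the construction and concentrates on the two steps you call ``real work''.

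Two small observations. First, you write the cocycle identity as $\gamma_{\bar{H}_1\circ\bar{H}_2,\bar{x}}=\gamma_{\bar{H}_2,\bar{x}}\cdot\gamma_{\bar{H}_1,y}$, whereas the paper's proof displays $\gamma_{\bar{K},\bar{H}(\bar{x})}\gamma_{\bar{H},\bar{x}}=\gamma_{\overline{K\circ H},\bar{x}}$. With the construction's convention $\gamma_{\bar H,\bar x}=\gamma_{\nu_1}^{\varepsilon_1}\cdots\gamma_{\nu_k}^{\varepsilon_k}$ (intersections in time order, so the earliest crossing contributes the leftmost factor) and the lift formula $\bar H_\Sigma(\gamma(\bar x))=\tilde\gamma\,\gamma_{\bar H,\bar x}(\bar H(\bar x))$, your ordering is the one that comes out of a direct computation of $\bar K_\Sigma\circ\bar H_\Sigma$; so the discrepancy in the paper's displayed formula is only a bookkeeping slip and the conclusion is unaffected, but your version is the one to keep. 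Second, the ``shift back into the fundamental domain'' concern you raise at the end is already absorbed by the setup: $\bar H_2$ is by convention viewed as a map $\FD(\ThePct,\TheCone)\to\FD(\ThePct,\TheCone)$ compatible with $\sim_\freeprod$, so $y=\bar H_2(\bar x)$ is already in the fundamental domain, and the only place where a boundary ambiguity arises is exactly the one handled by the well-definedness check in the construction. Your continuity argument (local constancy of $\gamma_{\bar H,\bar x}$ away from the curves $\bar s_\nu$ and compensation across them) is a legitimate alternative to the paper's decomposition into pieces $\gamma(\FD)\cap\gamma'(\bar H_\Sigma^{-1}(\FD))$, but you should be aware that the jump locus for $\gamma_{\bar H,\bar x}$ is where $\bar H(\bar x)$ (not $\bar x$ itself) lies on $\bigcup_\nu\bar s_\nu$; phrasing it the way the paper does sidesteps that distinction.
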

\begin{proof}
It remains to show 
that $\varphi$ satisfies the homomorphism property and that $\bar{H}_\Sigma$ is contained in $\HomeoIdOrb{\TheStrand}{\Sigma_\freeprod(\ThePct),\partial\Sigma(\ThePct)}$. 

\begin{intermediate}[Step $1$. The map $\varphi$ is a homomorphism]
For each $\bar{x}$ that is contained in the interior of $\FD(\ThePct,\TheCone)$ or in $\bigcup_{\nu=1}^\TheCone s_\nu$, we have 
\[
\bar{x}\stackrel{\varphi(\bar{H})}\mapsto\gamma_{\bar{H},\bar{x}}(\bar{H}(\bar{x}))\stackrel{\varphi(\bar{K})}\mapsto\gamma_{\bar{K},\bar{H}
(\bar{x})}\gamma_{\bar{H},\bar{x}}(\bar{K}\circ\bar{H})(\bar{x})=\gamma_{\bar{K},\bar{H}
(\bar{x})}\gamma_{\bar{H},\bar{x}}(\overline{K\circ H})(\bar{x}). 
\]
Let $\phi_{\bar{H}}$ (resp. $\phi_{\bar{K}}$) be a homotopy connecting $\bar{H}$ (resp. $\bar{K}$) to the identity. Then the concatenation $\phi_{\bar{K}}\circ\phi_{\bar{H}}$ connects $\overline{K\circ H}$ to $\id_{D(\ThePct,\TheCone)}$. This implies 
\[
\gamma_{\bar{K},\bar{H}
(\bar{x})}\gamma_{\bar{H},\bar{x}}=\gamma_{\overline{K\circ H},\bar{x}}. 
\]
Consequently, $\varphi(\bar{K})\circ\varphi(\bar{H})(\bar{x})=\varphi(\overline{K\circ H})(\bar{x})$, i.e.\ $\varphi$ is a homomorphism. 
\end{intermediate}

\begin{intermediate}[Step $2$. The map $\bar{H}_\Sigma$ is a homeomorphism]
Since $\varphi$ is a homomorphism, the inverse of $\bar{H}_\Sigma$ is given by $\bar{H}^{-1}_\Sigma$. Thus, it suffices to show that $\bar{H}_\Sigma$ is continuous for any $\bar{H}$ to obtain that $\bar{H}_\Sigma$ is a homeomorphism. 

As in Step \hyperlink{lem:pi_Homeo_cont_homo_step2}{2} in Lemma \ref{lem:pi_Homeo_cont_homo}, we consider pieces $\gamma(\FD(\ThePct,\TheCone))\cap\gamma'(\bar{H}_\Sigma^{-1}(\FD(\ThePct,\TheCone)))$ for $\gamma,\gamma'\in\freeprod$ to prove the continuity of $\bar{H}_\Sigma$. Using that $\FD(\ThePct,\TheCone)$ is a compact disk, each $\freeprod$-translate $\gamma(\FD(\ThePct,\TheCone))$ intersects with finitely many other $\freeprod$-translates $\gamma'(\bar{H}_\Sigma^{-1}(\FD(\ThePct,\TheCone)))$. 
Since $\FD(\ThePct,\TheCone)$ is a fundamental domain, $\freeprod$-translates only intersect in the boundary. Further, the domain and codomain of the restriction $\bar{H}_\Sigma\vert_{\gamma(\FD(\ThePct,\TheCone))\cap\gamma'(\bar{H}_\Sigma^{-1}(\FD(\ThePct,\TheCone)))}$ are both contained in a single $\freeprod$-translate of $\FD(\ThePct,\TheCone)$ and the restriction identifies with the corresponding piece 
\[
\bar{H}:\FD(\ThePct,\TheCone)\cap\gamma'(\bar{H}_\Sigma^{-1}(\FD(\ThePct,\TheCone)))\rightarrow\FD(\ThePct,\TheCone). 
\]
Since $\bar{H}$ is continuous, this implies that $\bar{H}_\Sigma$ is continuous on each piece. If two pieces intersect, the well-definedness of $\bar{H}_\Sigma$ implies that both definitions coincide on the intersection. Consequently, by \cite[Chapter III, Theorem 9.4]{Dugundji1966}, the continuous pieces match together to a continuous function~$\bar{H}_\Sigma$. 
\end{intermediate} 
\end{proof}

It remains to check that the above construction of $\bar{H}_\Sigma$ is appropriate in the sense that it induces an inverse homomorphism for $\piMap$. 

\begin{lemma}
\label{lem:varphi_Map_homo}
The map $\varphi:\HomeoId{\TheStrand}{D(\ThePct,\TheCone),\partial D(\ThePct,\TheCone)}\rightarrow\HomeoOrb{\TheStrand}{\Sigma(\ThePct),\partial\Sigma(\ThePct)}$ induces a homomorphism $\varphiMap:\MapId{\TheStrand}{D(\ThePct,\TheCone)}\rightarrow\MapIdOrb{\TheStrand}{\Sigma_\freeprod(\ThePct)}$. 
\end{lemma}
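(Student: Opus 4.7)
The strategy is to deduce simultaneously the well-definedness of $\varphiMap$ and the fact that its image lies in $\MapIdOrb{\TheStrand}{\Sigma_\freeprod(\ThePct)}$ from a single lifting principle: although $\varphi$ itself need not be continuous, its pointwise construction via the branched cover $\Sigma(\ThePct)\rightarrow\Sigma(\ThePct)/\freeprod\cong D(\ThePct,\TheCone)$ lifts whole ambient isotopies downstairs to ambient isotopies upstairs. Once this is established, the homomorphism property of $\varphiMap$ is immediate from Lemma~\ref{lem:varphi_Homeo_homo}, so the only substantive claims are respect for ambient isotopy and landing in the identity component.

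For well-definedness, suppose $\bar{H}\sim_\TheStrand\bar{H}'$ via an ambient isotopy $\bar{\Psi}:I\rightarrow\Homeo{\TheStrand}{D(\ThePct,\TheCone),\partial D(\ThePct,\TheCone)}$, and fix an isotopy $\bar{\Phi}:I\rightarrow\Homeo{}{D(\ThePct,\TheCone),\partial D(\ThePct,\TheCone)}$ from $\id_{D(\ThePct,\TheCone)}$ to $\bar{H}$. For each $s\in I$ the concatenation $\bar{\Phi}\ast\bar{\Psi}|_{[0,s]}$ is an isotopy from the identity to $\bar{\Psi}_s$, so $\bar{\Psi}_s\in\HomeoId{\TheStrand}{D(\ThePct,\TheCone),\partial D(\ThePct,\TheCone)}$ and $\tilde{\Psi}_s:=\varphi(\bar{\Psi}_s)$ is defined. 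By construction $\tilde{\Psi}_0=\varphi(\bar{H})$ and $\tilde{\Psi}_1=\varphi(\bar{H}')$, so the task reduces to proving continuity of $s\mapsto\tilde{\Psi}_s$ in the compact-open topology. For $\bar{x}\in\FD(\ThePct,\TheCone)$, the value $\tilde{\Psi}_s(\bar{x})=\gamma_{\bar{\Psi}_s,\bar{x}}(\bar{\Psi}_s(\bar{x}))$ is precisely the endpoint at time $s$ of the unique lift to $\Sigma(\ThePct)$, starting at $\bar{x}$, of the trace $t\mapsto(\bar{\Phi}\ast\bar{\Psi})_t(\bar{x})$. Continuous dependence of such lifts on the base path, together with the fact that cone points and punctures remain fixed throughout, yields joint continuity in $(\bar{x},s)$; a standard argument then upgrades this to continuity of $s\mapsto\tilde{\Psi}_s$ in the compact-open topology. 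Since each $\tilde{\Psi}_s$ is $\freeprod$-equivariant and preserves the orbit of marked points (because $\bar{\Psi}_s$ does), this furnishes an ambient isotopy in $\HomeoOrb{\TheStrand}{\Sigma_\freeprod(\ThePct),\partial\Sigma(\ThePct)}$ from $\varphi(\bar{H})$ to $\varphi(\bar{H}')$.

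For the image condition, given $\bar{H}\in\HomeoId{\TheStrand}{D(\ThePct,\TheCone),\partial D(\ThePct,\TheCone)}$, apply the same lifting to a witnessing isotopy $\bar{\Phi}:I\rightarrow\Homeo{}{D(\ThePct,\TheCone),\partial D(\ThePct,\TheCone)}$ from $\id_{D(\ThePct,\TheCone)}$ to $\bar{H}$ to obtain an isotopy $\tilde{\Phi}:I\rightarrow\HomeoOrb{}{\Sigma_\freeprod(\ThePct),\partial\Sigma(\ThePct)}$ from $\id_{\Sigma(\ThePct)}$ to $\varphi(\bar{H})$, placing $\varphi(\bar{H})$ in $\HomeoIdOrb{\TheStrand}{\Sigma_\freeprod(\ThePct),\partial\Sigma(\ThePct)}$. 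The main obstacle throughout is the continuity argument, which cannot be obtained by composing an isotopy with $\varphi$ on the level of homeomorphism spaces but must be read off directly from the unique-path-lifting description of $\varphi$: the delicate moments are the instants when the trace of a point $\bar{x}$ crosses some segment $\bar{s}_\nu$, where the group element $\gamma_{\bar{\Psi}_s,\bar{x}}$ jumps by $\gamma_\nu^{\pm1}$ while the fundamental-domain representative of $\bar{\Psi}_s(\bar{x})$ jumps in the opposite way; the cancellation of these two jumps is exactly what keeps the lift in $\Sigma(\ThePct)$ continuous.
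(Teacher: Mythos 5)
Your proof is correct and follows the same overall route as the paper's: you define the candidate ambient isotopy upstairs by applying $\varphi$ time-by-time to a downstairs ambient isotopy, observe that the homomorphism property is inherited from Lemma~\ref{lem:varphi_Homeo_homo}, and reduce everything to a continuity check which you correctly identify cannot be read off from a (non-existent) continuity of $\varphi$ itself. Your characterization of $\tilde{\Psi}_s(\bar{x})$ as the time-$s$ endpoint of the lift through $\Sigma(\ThePct)\to D(\ThePct,\TheCone)$ of the trace $t\mapsto(\bar{\Phi}\ast\bar{\Psi})_t(\bar{x})$ is exactly the content of the paper's group element $\gamma_{\bar{\phi}_t,\bar{x}}$, and your ``cancellation of jumps at crossings of $\bar{s}_\nu$'' is precisely the paper's remark that $\gamma_{\bar{\phi}_t,\bar{x}}$ moves to an adjacent translate of $\FD(\ThePct,\TheCone)$ whenever the trace crosses the boundary. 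The presentations differ in emphasis: the paper works directly with the metric of \eqref{eq:def_metric_iso_freeprod-action}, first proving a uniform $\varepsilon$–$\delta$ estimate on $\sup_{x\in D} d(\bar{\phi}_t(x),\bar{\phi}_{t_0}(x))$ by a compactness argument and then feeding it into the compact-open definition of continuity, whereas you appeal to continuous dependence of path lifts and the compact-open adjunction as black boxes. Your framing is more conceptual but does delegate the one genuinely delicate point (that lifting remains jointly continuous near the branch locus, where $\Sigma(\ThePct)\to D(\ThePct,\TheCone)$ fails to be a covering) to the parenthetical ``cone points remain fixed''; a fully rigorous write-up would need to spell out that a trace staying in a small neighborhood of a fixed cone point $\bar{\cp}_\nu$ lifts into a correspondingly small $\cyc{\TheOrder_\nu}$-invariant neighborhood of $\cp_\nu$. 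The paper's estimate-based argument handles this automatically because the metric controls distances upstairs uniformly.
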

\begin{proof}
\new{We prove} that the induced map 
\[
\varphiMap:\MapId{\TheStrand}{D(\ThePct,\TheCone)}\rightarrow\MapIdOrb{\TheStrand}{\Sigma_\freeprod(\ThePct)},[\bar{H}]\mapsto[\bar{H}_\Sigma] 
\]
is well-defined and that its image is contained in $\MapIdOrb{\TheStrand}{\Sigma_\freeprod(\ThePct)}$. 

\begin{intermediate}[Step $1$. The map $\varphi_{\MapOp}$ is well-defined]
For $\bar{H},\bar{H}'\in\HomeoId{\TheStrand}{D(\ThePct,\TheCone),\partial D(\ThePct,\TheCone)}$ with $\bar{H}\new{\sim_\TheStrand}\bar{H}'$, we have an ambient isotopy $\bar{\phi}:I\rightarrow\Homeo{\TheStrand}{D(\ThePct,\TheCone),\partial D(\ThePct,\TheCone)}$ connecting $\bar{H}$ and $\bar{H}'$. We want to prove that $\bar{\phi}_\Sigma:=\varphi\circ\bar{\phi}$ is an ambient isotopy connecting $\bar{H}_\Sigma$ and $\bar{H}_\Sigma'$, i.e.\ we have to check that $\bar{\phi}_\Sigma$ is continuous. We begin with the following: 

\begin{claim*}
Let $t_0\in I$. For each $\varepsilon>0$, there exists a $\delta>0$ such that 
\[
\sup_{x\in D}\sup_{t\in(t_0-\delta,t_0+\delta)}d(\bar{\phi}_t(x),\bar{\phi}_{t_0}(x))<\varepsilon. 
\]
\end{claim*}

Let us consider the auxiliary function 
\[
\tilde{d}:D\times I\rightarrow\RR, \quad (x,t)\mapsto d(\bar{\phi}_t(x),\bar{\phi}_{t_0}(x)). 
\]
Firstly, we prove that $\tilde{d}$ is continuous. Observe that $\tilde{d}(x,t_0)=0$ and recall that $\bar{\phi}:I\rightarrow\Homeo{\TheStrand}{D(\ThePct,\TheCone),\partial D(\ThePct,\TheCone)},t\mapsto\bar{\phi}_t$ is continuous. By a classical result about function spaces, see, for instance, \cite[Chapter XII, Theorem 3.1]{Dugundji1966}, this implies that the map $I\times D(\ThePct,\TheCone)\rightarrow D(\ThePct,\TheCone),(t,x)\mapsto\bar{\phi}_t(x)$ is continuous.
Moreover, $d$ is continuous in both arguments. Hence, $\tilde{d}$ is continuous. 

To deduce the claim, let us fix an $\varepsilon>0$. We want to show that there exists a $\delta>0$ such that $\tilde{d}\vert_{D\times I_\delta}$ is bounded above by $\varepsilon$ with $I_\delta=(t_0-\delta,t_0+\delta)$. If no such $\delta$ exists, \new{for each $\delta>0$,} the intersection $D\times I_\delta\cap\tilde{d}^{-1}([\varepsilon,\infty))\neq\emptyset$. Choosing elements from these subsets, we find a sequence $(y_i)_{i\in\NN}$ with $y_i=(x_i,t_i)$ and $\vert t_i-t_0\vert<\frac{1}{i}$ for each $i\in\NN$. Since $D\times I$ is compact, there exists a convergent subsequence $(y_{i_j})_{j\in\NN}$. The condition on the $t_i$ implies that $y=\lim_{j\rightarrow\infty}y_{i_j}$ is of the form $y=(x,t_0)$. In particular, $\tilde{d}(y)=0$. Since $\tilde{d}(y_{i_j})\geq\varepsilon$ for all $j\in\NN$, this contradicts the continuity of $\tilde{d}$. Hence, there exists $\delta>0$ such that $\tilde{d}\vert_{D\times I_\delta}$ is bounded above by $\varepsilon$ and the claim follows. 

Now let $K\subseteq\Sigma$ be a compact set and $U_\varepsilon=(\bar{\phi}_{\Sigma,t_0}(K))_\varepsilon$ an open $\varepsilon$-neighborhood of $\bar{\phi}_{\Sigma,t_0}(K)$. Clearly, $\bar{\phi}_{\Sigma,t_0}\in V(K,U_\varepsilon)$. Further, the claim 
implies
\begin{equation}
\label{eq:phi_t_in_eps-ball}
\bar{\phi}_t(\bar{x})\in B_\varepsilon(\bar{\phi}_{t_0}(\bar{x})) \text{ for } t\in(t_0-\delta,t_0+\delta) \text{ and } \bar{x}\in D. 
\end{equation}

Recall that $\bar{\phi}_{\Sigma,t}$ maps $\gamma(\bar{x})$ to $\tilde{\gamma}\gamma_{\bar{\phi}_t,\bar{x}}(\bar{\phi}_t(\bar{x}))$. The group element $\gamma_{\bar{\phi}_t,\bar{x}}$ detects if $\bar{\phi}_{\Sigma,t}(\bar{x})$ is contained in the same $\freeprod$-translate as $\bar{\phi}_{\Sigma,t_0}(\bar{x})$: It coincides with $\gamma_{\bar{\phi}_{t_0},\bar{x}}$ if the trace $\bar{\phi}_{s}(\bar{x})$ of the ambient isotopy does not intersect any $\freeprod$-translates of $\partial\FD(\ThePct,\TheCone)$ for $s$ between $t$ and $t_0$. Otherwise $\gamma_{\bar{\phi}_t,\bar{x}}\neq\gamma_{\bar{\phi}_{t_0},\bar{x}}$ describes an adjacent translate of $\FD(\ThePct,\TheCone)$ that contains $\bar{\phi}_{\Sigma,t}(\bar{x})$. Using (\ref{eq:phi_t_in_eps-ball}), this implies that $\bar{\phi}_{\Sigma,t}(\gamma(\bar{x}))=\tilde{\gamma}\gamma_{\bar{\phi}_t,\bar{x}}(\bar{\phi}_t(\bar{x}))$ is contained in $B_\varepsilon(\bar{\phi}_{\Sigma,t_0}(\bar{x}))$ for $t\in(t_0-\delta,t_0+\delta)$. This is equivalent to $\bar{\phi}_{\Sigma,t}\in V(K,U_\varepsilon)$ for $t\in(t_0-\delta,t_0+\delta)$, i.e.\ $\bar{\phi}_\Sigma$ is continuous. Hence, the induced map $\varphiMap$ is well-defined. 
\end{intermediate}

\begin{intermediate}[Step $2$. $\new{[\bar{H}_\Sigma]}$ is contained in $\MapIdOrb{\TheStrand}{\Sigma_\freeprod(\ThePct)}$]
\new{This requires to prove that} $\bar{H}_\Sigma$ is $\freeprod$-equivariantly homotopic to $\id_{\Sigma(\ThePct)}$ relative $r_1,...,r_\ThePct$. By definition, $\bar{H}$ is homotopic to $\id_{D(\ThePct,\TheCone)}$ relative $\bar{r}_1,...,\bar{r}_\ThePct$ and $\bar{\cp}_1,...,\bar{\cp}_\TheCone$ via a homotopy~$\bar{\psi}$. Following the same argument as above, the induced map $\bar{\psi}_\Sigma:=\varphi\circ\bar{\psi}$ yields the required homotopy between $\bar{H}_\Sigma$ and $\id_{\Sigma(\ThePct)}$. Hence, $\bar{H}_\Sigma$ is contained in $\MapIdOrb{\TheStrand}{\Sigma_\freeprod(\ThePct)}$. 
\end{intermediate}
\end{proof}

\begin{proof}[Proof of Proposition \textup{\ref{prop:iso_Map_orb_disk}}]
On the one hand, let $\bar{H}\in\HomeoId{\TheStrand}{D(\ThePct,\TheCone)}$. Then, by definition, $\varphi(\bar{H})=\bar{H}_\Sigma$ maps $\gamma(x)$ onto $\gamma\gamma_{\bar{H},x}(\bar{H}(\bar{x}))$. This implies that $\pi\circ\varphi(\bar{H})=\bar{H}$ and consequently, $\piMap\circ\varphiMap=\id_{\MapId{\TheStrand}{D(\ThePct,\TheCone)}}$. 

On the other hand, for each homeomorphism $H$ that represents an element in $\MapIdOrb{\TheStrand}{\Sigma_\freeprod(\ThePct)}$, there exists a group element $\gamma_{H,x}$ with $H(x)=\gamma_{H,x}(\bar{H}(\bar{x}))$ for each $x\in\Sigma(\ThePct)$. Further, recall from Definition \ref{def:Map_orb(ThePct)} that if we forget the marked points $p_1,...,p_\TheStrand$, we obtain an ambient isotopy $\phi_t$ from $\id_{\Sigma(\ThePct)}$ to $H$. The induced ambient isotopy $\bar{\phi}_t$ yields $\gamma_{\bar{H},\bar{x}}$. More precisely, $\gamma_{\bar{H},\bar{x}}$ is determined by the sequence of algebraic intersections of $\bar{\phi}_t(\bar{x})$ with $\bar{s}_\nu$ for $1\leq\nu\leq\TheCone$. The induced homotopy $\bar{\phi}_t(\bar{x})$ intersects $\bar{s}_\nu$ if and only if $\phi_t(x)$ is going over to another $\freeprod$-translate of $\FD(\ThePct,\TheCone)$ \new{passing} a $\freeprod$-translate of $s_\nu$. Hence, $\gamma_{\bar{H},\bar{x}}$ coincides with $\gamma_{H,x}$, i.e.\ $\bar{H}_\Sigma=H$. This implies $\varphi\circ\pi(H)=H$ and consequently $\varphiMap\circ\piMap=\id_{\MapIdOrb{\TheStrand}{\Sigma_\freeprod(\ThePct)}}$. 

Hence, the groups $\MapId{\TheStrand}{D(\ThePct,\TheCone)}$ and $\MapIdOrb{\TheStrand}{\Sigma_\freeprod(\ThePct)}$ are isomorphic. Obviously, the constructed isomorphisms restrict to the pure subgroups. 
\end{proof}

In particular, Proposition \ref{prop:iso_Map_orb_disk} yields a proof that $\MapOrb{1}{D_{\cycm}}$ is infinite cyclic, \new{which was the object of Example \ref{ex:orb_mcg_one_pct_ZZ}.} 

\begin{proof}[Proof of Example \textup{\ref{ex:orb_mcg_one_pct_ZZ}}]
The Alexander trick shows that the groups $\MapOrb{}{D_{\cycm}}$ and $\Map{}{D(0,1)}$ are both trivial, see  Example \ref{ex:orb_mcg_one_pct_ZZ} and \cite[Lemma 2.1]{FarbMargalit2011}. This implies that 
\[
\MapOrb{1}{D_{\cycm}}\cong\MapIdOrb{1}{D_{\cycm}} \; \text{ and } \; \Map{1}{D(0,1)}\cong\MapId{1}{D(0,1)}. 
\]
By Proposition \ref{prop:iso_Map_orb_disk}, we further obtain that $\MapIdOrb{1}{D_{\cycm}}\cong\MapId{1}{D(0,1)}$. Consequently, the group $\MapOrb{1}{D_{\cycm}}$ is isomorphic to $\Map{1}{D(0,1)}$. Moreover, the point-pushing map 
\[
\Push_1:\pi_1(D(0,1),p_1)\rightarrow\Map{1}{D(0,1)}
\]
defined in \cite[Theorem 4.6]{FarbMargalit2011} yields an isomorphism. Since the fundamental group of the punctured disk $D(0,1)$ is isomorphic to $\ZZ$, this implies that $\MapOrb{1}{D_{\cycm}}$ is infinite cyclic. 

A generator of $\pi_1(D(0,1),p_1)$ is represented by a loop that encircles the puncture. The image of this loop under $\Push_1$ is a twist of $p_1$ around the puncture in $D(0,1)$. In $\MapOrb{1}{D_{\cycm}}$ this twist corresponds to the $\frac{2\pi}{\TheOrder}$-twist $\TwsC$ \new{from Figure \ref{fig:2_pi_m-twist}.} 
\end{proof}

\newpage

\subsection{The generalized Birman exact sequence for orbifold mapping class groups}
\label{subsec:gen_Birman_exact_seq_orb}

The identification of $\MapIdOrb{\TheStrand}{\Sigma_\freeprod(\ThePct)}$ and $\MapId{\TheStrand}{D(\ThePct,\TheCone)}$ leads to the Birman exact sequence for the orbifold mapping class groups from Theorem \ref{thm-intro:Birman_exact_seq}. 

\begin{theorem}[Birman exact sequence]
\label{thm:Birman_es_orb}
The following diagram commutes and each row is exact: 
\\
\begin{adjustbox}{center}
\begin{tikzcd}[column sep=40pt]
1\rightarrow\pi_1\left(\Conf_\TheStrand\left(D(\ThePct,\TheCone)\right)\right) \arrow[r,"\Push_\TheStrand"] \arrow[equal]{d} & \Map{\TheStrand}{D(\ThePct,\TheCone)}\arrow[r,"\Forget_\TheStrand"] & \Map{}{D(\ThePct,\TheCone)}\rightarrow1 
\\
1\rightarrow\pi_1\left(\Conf_\TheStrand\left(D(\ThePct,\TheCone)\right)\right) \arrow[r,"\Push_\TheStrand"] \arrow[equal]{d} & \MapId{\TheStrand}{D(\ThePct,\TheCone)} \arrow[r,"\Forget_\TheStrand"] \arrow[u,hook] \arrow[d,"\varphiMap"] & \MapId{}{D(\ThePct,\TheCone)}=1
\\
1\rightarrow\pi_1\left(\Conf_\TheStrand\left(D(\ThePct,\TheCone)\right)\right) \arrow[r,"\new{\Push_\TheStrand^{orb}}"] & \MapIdOrb{\TheStrand}{\Sigma_\freeprod(\ThePct)} \arrow[r,"\Forget_\TheStrand^{orb}"] & \MapOrb{}{\Sigma_\freeprod(\ThePct)}=1  
\end{tikzcd}
\end{adjustbox}
\end{theorem}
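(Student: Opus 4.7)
My plan is to build the bottom row out of the top row by first restricting to the kernel of $\Forget_\TheStrand$ (middle row), then transporting the resulting short exact sequence along the isomorphism $\varphiMap$ from Proposition~\ref{prop:iso_Map_orb_disk}. So essentially no new mapping-class-group content is needed: the work is organisational.

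\textbf{Step 1 (middle row from the top row).} The top row is the classical Birman exact sequence of \cite[Theorem 9.1]{FarbMargalit2011}. By exactness, $\im(\Push_\TheStrand)=\ker(\Forget_\TheStrand)$, and by Definition~\ref{def:Map_disk(ThePct)} this kernel \emph{is} $\MapId{\TheStrand}{D(\ThePct,\TheCone)}$. In particular $\Push_\TheStrand$ factors through the inclusion $\MapId{\TheStrand}{D(\ThePct,\TheCone)}\hookrightarrow\Map{\TheStrand}{D(\ThePct,\TheCone)}$, remains injective when viewed with target $\MapId{\TheStrand}{D(\ThePct,\TheCone)}$, and is in fact surjective onto it. The right-hand map $\Forget_\TheStrand\vert_{\MapId{\TheStrand}{D(\ThePct,\TheCone)}}$ is trivial by definition of $\MapId{\TheStrand}$, so the middle row is exact. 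Commutativity of the upper square is immediate because both horizontal arrows in the middle row are restrictions of those in the top row.

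\textbf{Step 2 (transport to the orbifold via $\varphiMap$).} Proposition~\ref{prop:iso_Map_orb_disk} gives an isomorphism
\[
\varphiMap:\MapId{\TheStrand}{D(\ThePct,\TheCone)}\xrightarrow{\;\cong\;}\MapIdOrb{\TheStrand}{\Sigma_\freeprod(\ThePct)}.
\]
I define $\Push_\TheStrand^{orb}:=\varphiMap\circ\Push_\TheStrand$. Since $\varphiMap$ is an isomorphism and $\Push_\TheStrand:\pi_1(\Conf_\TheStrand(D(\ThePct,\TheCone)))\to\MapId{\TheStrand}{D(\ThePct,\TheCone)}$ is an isomorphism by Step~1, the composite $\Push_\TheStrand^{orb}$ is injective with image all of $\MapIdOrb{\TheStrand}{\Sigma_\freeprod(\ThePct)}$. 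The map $\Forget_\TheStrand^{orb}$ is trivial on $\MapIdOrb{\TheStrand}{\Sigma_\freeprod(\ThePct)}$ by Definition~\ref{def:Map_orb(ThePct)}, so the bottom row is exact with trivial quotient. Commutativity of the lower square holds for $\varphiMap$ by construction, and commutativity on the left column is the definition of $\Push_\TheStrand^{orb}$.

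\textbf{Step 3 (the trivial quotient).} It remains to verify $\MapIdOrb{}{\Sigma_\freeprod(\ThePct)}=1$ (and the analogous $\MapId{}{D(\ThePct,\TheCone)}=1$ in the middle row). Both are immediate from Definitions~\ref{def:Map_orb(ThePct)} and~\ref{def:Map_disk(ThePct)}: in the absence of marked points the corresponding forget map is the identity, so its kernel is trivial by convention. (Equivalently, for $\TheStrand=0$ the symbol $\MapIdOrb{0}{\cdot}$ is the trivial group.)

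\textbf{Main obstacle.} There is no substantial obstacle left — all the real analytic work has been packaged into Proposition~\ref{prop:iso_Map_orb_disk} and the classical Birman sequence. The only point that deserves explicit care is the definition of $\Push_\TheStrand^{orb}$: one should emphasise that, concretely, a loop $[\gamma]\in\pi_1(\Conf_\TheStrand(D(\ThePct,\TheCone)))$ is first pushed to a mapping class $\Push_\TheStrand[\gamma]\in\MapId{\TheStrand}{D(\ThePct,\TheCone)}$ in the disk, then lifted $\freeprod$-equivariantly to $\SGpct$ via the construction preceding Lemma~\ref{lem:varphi_Homeo_homo}, yielding the orbifold point-push $\Push_\TheStrand^{orb}[\gamma]=\varphiMap(\Push_\TheStrand[\gamma])$. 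With this in place, the three rows and two squares of the diagram assemble formally.
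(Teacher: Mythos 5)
Your proof is correct and takes essentially the same approach as the paper: restrict the classical Birman exact sequence to $\ker(\Forget_\TheStrand)=\MapId{\TheStrand}{D(\ThePct,\TheCone)}$ to obtain the middle row, then transport along the isomorphism $\varphiMap$ from Proposition~\ref{prop:iso_Map_orb_disk} to get the bottom row, defining $\Push_\TheStrand^{orb}:=\varphiMap\circ\Push_\TheStrand$. The only cosmetic difference is that the paper verifies $\im(\Push_\TheStrand)\subseteq\MapId{\TheStrand}{D(\ThePct,\TheCone)}$ by observing that the defining ambient isotopy is relative to the punctures and cone-point images, rather than invoking exactness of the top row as you do.
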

\begin{proof}
The upper row is the generalized Birman exact sequence for the disk with $\ThePct+\TheCone$ punctures. The map $\Forget_\TheStrand$ is defined by forgetting the marked points $\bar{p}_1,...,\bar{p}_\TheStrand$. The map $\Push_\TheStrand$ comes from the extension of a braid $b$ in $\Conf_\TheStrand\left(D(\ThePct,\TheCone)\right)$ to an ambient isotopy $I\rightarrow D(\ThePct,\TheCone)$ from $\id_{D(\ThePct,\TheCone)}$ to $H_b$. Then the image of $\Push_\TheStrand$ is defined by $\Push_\TheStrand([b])=[H_b]$. Intuitively, the ambient isotopy is obtained by placing our fingers at the initial points of the strand of $b$ and pushing along the braid. The disk $D(\ThePct,\TheCone)$ drags along as we push. Formally, the point-pushing map is induced by the fiber bundle 
\[
\Homeo{\TheStrand}{D(\ThePct,\TheCone),\partial D(\ThePct,\TheCone)}\rightarrow\Homeo{}{D(\ThePct,\TheCone),\partial D(\ThePct,\TheCone)}\rightarrow\Conf_\TheStrand\left(\inter{D}(\ThePct,\TheCone)\right). 
\]
See \cite[Sections 4.2, 9.1.4]{FarbMargalit2011} for details. 

The ambient isotopy from $\id_{D(\ThePct,\TheCone)}$ to $H_b$ that defines $\Push_\TheStrand$ is relative $\bar{\cp}_1,...,\bar{\cp}_\TheCone$ and $\bar{r}_1,...,\bar{r}_\ThePct$. Hence, the image of $\Push_\TheStrand$ is contained in $\MapId{\TheStrand}{D(\ThePct,\TheCone)}$ and by Definition \ref{def:Map_disk(ThePct)} the image of $\Forget\vert_{\MapId{\TheStrand}{D(\ThePct,\TheCone)}}$ is trivial. Hence, the first row restricts to the short exact sequence in the second row. 

The map \new{$\Push_\TheStrand^{orb}$} 
in the last row is given by the composition $\varphiMap\circ \Push_\TheStrand$. As a composition of isomorphisms, this map is an isomorphism. \new{Hence, the third row is a short exact sequence, i.e.\ the short exact sequence from Theorem~\ref{thm-intro:Birman_exact_seq}, and the diagram commutes.} 
\end{proof}

\new{As in the classical case,} the Birman exact sequence restricts to pure subgroups. The following two lemmas are required for the proof. We recall from \eqref{eq:def_metric_iso_freeprod-action}, that we endowed $\Sigma$ with a metric such that the $\freeprod$-action is isometric. The \textit{$\varepsilon$-collar} of $\partial\Sigma(\ThePct)$ is the subsurface $\{x\in\Sigma(\ThePct)\mid d(x,y)\leq\varepsilon \text{ for some } y\in\partial\Sigma(\ThePct)\}$. 

\begin{lemma}
\label{lem:mcg_id_eps-collar}
Let $\varepsilon>0$ such that each marked point in $\freeprod(\{p_1,...,p_{\TheStrand-1}\})$ and each puncture $\freeprod(\{r_1,...,r_\ThePct\})$ has distance greater than $\varepsilon$ from the boundary. Then 
each homeomorphism $H$ that represents a mapping class in $\MapIdOrb{\TheStrand-1}{\Sigma_\freeprod(\ThePct)}$ is ambient isotopic (relative $p_1,...,p_{\TheStrand-1},r_1,...,r_\ThePct$) to a homeomorphism $H_\varepsilon$ that coincides with the identity on an $\varepsilon$-collar of $\partial\Sigma(\ThePct)$. 
\end{lemma}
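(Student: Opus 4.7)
The plan is to construct a $\freeprod$-equivariant ambient isotopy that uses a collar of the boundary to ``squeeze'' the support of $H$ away from $\partial\Sigma(\ThePct)$.

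First, I would set up equivariant collar coordinates. Since $\freeprod$ acts isometrically on $\Sigma$ with respect to the metric from \eqref{eq:def_metric_iso_freeprod-action}, the distance-to-boundary function is $\freeprod$-invariant, and each closed collar $C_r := \{x \in \Sigma(\ThePct) : d(x, \partial\Sigma(\ThePct)) \leq r\}$ is a $\freeprod$-invariant subset. A $\freeprod$-equivariant parametrization $\nu : \partial\Sigma(\ThePct) \times [0, \varepsilon + \varepsilon_0] \to C_{\varepsilon + \varepsilon_0}$ with $\nu(y, 0) = y$ exists for a sufficiently small $\varepsilon_0 > 0$; this parametrization is obtained from the inward normal flow on a $\freeprod$-equivariant collar. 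I would choose $\varepsilon_0$ small enough that every marked point and puncture still has distance at least $\varepsilon + \varepsilon_0$ from the boundary, which is possible since the hypothesis is a strict inequality, only finitely many $\freeprod$-orbits are involved, and $\freeprod$ acts by isometries.

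Next, for each $t \in [0, 1]$ I would introduce the $\freeprod$-equivariant homeomorphism $\sigma_t : \Sigma(\ThePct) \setminus C_{\varepsilon t}^{\circ} \to \Sigma(\ThePct)$ defined by the identity outside $C_{\varepsilon + \varepsilon_0}$ and by the affine collar reparametrization
\[
\sigma_t(\nu(y, s)) = \nu\!\left(y, \; \tfrac{(\varepsilon + \varepsilon_0)(s - \varepsilon t)}{\varepsilon + \varepsilon_0 - \varepsilon t}\right), \quad s \in [\varepsilon t, \varepsilon + \varepsilon_0],
\]
which stretches the thinned collar $[\varepsilon t, \varepsilon + \varepsilon_0]$ affinely over the full range $[0, \varepsilon + \varepsilon_0]$. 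The candidate isotopy is
\[
H_t(x) := \begin{cases} x, & x \in C_{\varepsilon t}, \\ (\sigma_t^{-1} \circ H \circ \sigma_t)(x), & x \in \Sigma(\ThePct) \setminus C_{\varepsilon t}^{\circ}. \end{cases}
\]
At $t = 0$ we have $\sigma_0 = \id$ and hence $H_0 = H$; at $t = 1$ the $\varepsilon$-collar $C_\varepsilon$ is fixed pointwise by construction, so $H_\varepsilon := H_1$ satisfies the conclusion. The two cases of $H_t$ glue continuously along $\partial C_{\varepsilon t}$: on this inner boundary, $\sigma_t$ lands in $\partial\Sigma(\ThePct)$, where $H$ is the identity, and then $\sigma_t^{-1}$ returns the point to itself.

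Finally, I would verify the remaining properties. Joint continuity of $(t, x) \mapsto H_t(x)$ is clear from the explicit formula together with the matching computation on $\partial C_{\varepsilon t}$; the $\freeprod$-equivariance of each $H_t$ is inherited from that of $\nu$, $\sigma_t$, and $H$. Since every marked point and puncture lies outside $C_{\varepsilon + \varepsilon_0}$, where $\sigma_t$ equals the identity, one has $H_t(p_\Strand) = H(p_\Strand)$ and $H_t(r_\NPct) = H(r_\NPct)$ for all $t$. These values are independent of $t$, so the $\freeprod$-orbits of marked points and of punctures are preserved throughout, which places the isotopy in $\HomeoOrb{\TheStrand - 1}{\Sigma_\freeprod(\ThePct),\partial\Sigma(\ThePct)}$ as required. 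The only nontrivial point is producing the equivariant collar $\nu$ in the first place; once that is available, the squeezing construction and its verifications are direct.
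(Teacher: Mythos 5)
Your proof is correct and follows essentially the same idea as the paper's, which describes a ``variant of the Alexander trick'' that pushes the boundary inward along a collar and replaces $H$ by the identity between the old and new boundary. You make this precise by introducing the explicit squeezing homeomorphisms $\sigma_t$ in equivariant collar coordinates, verifying the matching on $\partial C_{\varepsilon t}$ via $H\vert_{\partial\Sigma(\ThePct)}=\id$, and checking equivariance and constancy on marked points and punctures; the one ingredient you take for granted, an equivariant product collar $\nu$ over $\partial\Sigma(\ThePct)$ covering the relevant metric collar, is also left implicit in the paper's own argument.
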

\begin{proof}
The claimed ambient isotopy is a variant of the Alexander trick: Let $\partial\Sigma(\varepsilon)^\varepsilon$ be a system of boundary parallel arcs with distance $\varepsilon$ to the boundary. Using that the $\varepsilon$-collar bounded by $\partial\Sigma(\ThePct)^\varepsilon$ and $\partial\Sigma$ neither contains marked points nor punctures, we can push the boundary along the collar to $\partial\Sigma(\ThePct)^\varepsilon$ and replace $H$ with the identity outside $\partial\Sigma(\ThePct)^\varepsilon$ (see Figure~\ref{fig:amb_iso_collar}). This ambient isotopy connects $H$ to a homeomorphism~$H_\varepsilon$ that coincides with the identity on the $\varepsilon$-collar of $\partial\Sigma(\ThePct)$. 
\end{proof}

\begin{figure}[H]
\import{Grafiken/orb_mcg_marked_pts/}{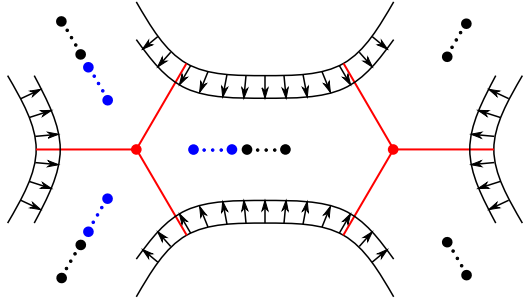}
\caption{A variant of the Alexander trick.}
\label{fig:amb_iso_collar}
\end{figure}

Formally, the following lemma is independent from the previous one. However, given marked points $p_1,...,p_{\TheStrand-1}$, our goal is to apply it to add another marked point lying in a fixed $\varepsilon$-collar. 

\begin{lemma}
\label{lem:mcg_p_n_eps}
Let $\varepsilon>0$ and $p_{\TheStrand(\varepsilon)}$ be a point in $\inter{\FD}(\ThePct,\TheCone)$ with $p_{\TheStrand(\varepsilon)}\neq p_\Strand$ for $1\leq\Strand<\TheStrand$ and distance less than $\varepsilon$ to the boundary. 
The mapping class group $\MapIdOrb{\TheStrand}{\Sigma_\freeprod(\ThePct)}$ with respect to marked points $p_1,...,p_\TheStrand$ is isomorphic to $\MapIdOrb{\TheStrand(\varepsilon)}{\Sigma_\freeprod(\ThePct)}$ with respect to marked points $p_1,...,p_{\TheStrand-1},p_{\TheStrand(\varepsilon)}$. The isomorphism restricts to the pure subgroups. 
\end{lemma}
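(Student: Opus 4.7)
The plan is to realize the isomorphism as conjugation by a $\freeprod$-equivariant homeomorphism $\phi$ of $\Sigma(\ThePct)$ that sends $p_\TheStrand$ to $p_{\TheStrand(\varepsilon)}$, fixes the boundary, the punctures, the cone points and each $p_\Strand$ with $\Strand<\TheStrand$, and agrees with the identity on all $\freeprod$-translates of $\FD(\ThePct,\TheCone)$ outside a small neighborhood of a chosen arc. Once such a $\phi$ is available, I expect the map $[H]\mapsto[\phi H\phi^{-1}]$ to furnish the desired isomorphism, with its inverse given by conjugation by $\phi^{-1}$.

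To construct $\phi$, I would first connect $p_\TheStrand$ to $p_{\TheStrand(\varepsilon)}$ by an embedded arc $\alpha$ lying inside $\inter{\FD}(\ThePct,\TheCone)$ and avoiding the finitely many remaining marked points $p_1,\dots,p_{\TheStrand-1}$, punctures $r_1,\dots,r_\ThePct$ and cone points $\cp_1,\dots,\cp_\TheCone$; such an arc exists because the interior of the fundamental domain with finitely many points removed is still path-connected. Taking a regular neighborhood $N\subseteq\inter{\FD}(\ThePct,\TheCone)$ of $\alpha$ that is an open disk missing those forbidden points, I would choose a homeomorphism $\phi_0$ of $\overline{N}$ fixing $\partial N$ pointwise and mapping $p_\TheStrand$ to $p_{\TheStrand(\varepsilon)}$ (a standard point-pushing homeomorphism inside a disk). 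Since $\overline{N}\subseteq\inter{\FD}(\ThePct,\TheCone)$, the $\freeprod$-translates of $\overline{N}$ are pairwise disjoint, so the rule
\[
\phi(x)=\begin{cases}\gamma\,\phi_0(\gamma^{-1}(x)) & \text{if } x\in\gamma(\overline{N}),\ \gamma\in\freeprod,\\ x & \text{otherwise,}\end{cases}
\]
defines a $\freeprod$-equivariant self-homeomorphism of $\Sigma(\ThePct)$ that fixes $\partial\Sigma$, the cone points, the punctures, and every $p_\Strand$ with $\Strand<\TheStrand$.

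Granted this $\phi$, the candidate isomorphism $\Phi:\MapIdOrb{\TheStrand}{\Sigma_\freeprod(\ThePct)}\to\MapIdOrb{\TheStrand(\varepsilon)}{\Sigma_\freeprod(\ThePct)}$ is $[H]\mapsto[\phi H\phi^{-1}]$. I would verify well-definedness in three checks: first, $\phi H\phi^{-1}$ is $\freeprod$-equivariant and fixes $\partial\Sigma$ pointwise because both $H$ and $\phi$ are; second, it preserves the new orbit of marked points since $\phi(\freeprod(\{p_1,\dots,p_\TheStrand\}))=\freeprod(\{p_1,\dots,p_{\TheStrand-1},p_{\TheStrand(\varepsilon)}\})$; third, conjugation carries an ambient isotopy witnessing $H\in\HomeoIdOrb{\TheStrand}{\cdot}$ (after forgetting marked points) to the analogous ambient isotopy for $\phi H\phi^{-1}$, since $\phi$ is a single fixed homeomorphism fixing the punctures and cone points. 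The homomorphism property is the telescoping identity $\phi H_1\phi^{-1}\cdot\phi H_2\phi^{-1}=\phi H_1H_2\phi^{-1}$, and conjugation by $\phi^{-1}$ supplies a two-sided inverse, so $\Phi$ is an isomorphism.

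The restriction to the pure subgroups comes out automatically: if $H$ preserves each orbit $\freeprod(p_\Strand)$ individually, then for $\Strand<\TheStrand$ one has $\phi H\phi^{-1}(p_\Strand)=\phi(H(p_\Strand))\in\phi(\freeprod(p_\Strand))=\freeprod(p_\Strand)$, and similarly $\phi H\phi^{-1}(p_{\TheStrand(\varepsilon)})=\phi(H(p_\TheStrand))\in\phi(\freeprod(p_\TheStrand))=\freeprod(p_{\TheStrand(\varepsilon)})$. The step I expect to be most delicate, and where I would spend the most care, is the $\freeprod$-equivariant extension in the construction of $\phi$: keeping $\overline{N}$ strictly inside $\inter{\FD}(\ThePct,\TheCone)$ is what forces the translates to be disjoint and allows the piecewise definition to glue continuously to a homeomorphism of $\Sigma(\ThePct)$; once this is set up cleanly, the remainder of the argument is formal.
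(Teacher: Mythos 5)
Your proposal is correct and follows essentially the same approach as the paper: connect $p_\TheStrand$ to $p_{\TheStrand(\varepsilon)}$ by an arc inside $\inter{\FD}(\ThePct,\TheCone)$ avoiding the other marked points, build a $\freeprod$-equivariant point-pushing homeomorphism supported near that arc, and conjugate by it. The paper states this in three lines and leaves the equivariant extension, well-definedness, and pure-subgroup checks implicit, whereas you have spelled them out explicitly, but the underlying argument is identical.
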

\begin{proof}
Since $p_\TheStrand$ and $p_{\TheStrand(\varepsilon)}$ are both contained in $\inter{\FD}(\ThePct,\TheCone)$, there exists a connecting arc contained in $\inter{\FD}(\ThePct,\TheCone)\setminus\{p_1,...,p_{\TheStrand-1}\}$. \new{Via point pushing along this arc,} we find a homeomorphism $P_\TheStrand^\varepsilon$ in $\HomeoOrb{\TheStrand-1}{\Sigma_\freeprod(\ThePct),\partial\Sigma(\ThePct)}$ which maps $p_\TheStrand$ to $p_{\TheStrand(\varepsilon)}$. 
The homeomorphism $P_\TheStrand^\varepsilon$ induces an isomorphism 
\[
\MapIdOrb{\TheStrand}{\Sigma_\freeprod(\ThePct)}\rightarrow\MapIdOrb{\TheStrand(\varepsilon)}{\Sigma_\freeprod(\ThePct)}, [H]\mapsto [P_\TheStrand^\varepsilon\circ H\circ(P_\TheStrand^\varepsilon)^{-1}]. 
\]
\end{proof}

\begin{corollary}
\label{cor:pure_orb_mcg_ses}
The following diagram is a short exact sequence that splits:
\\
\begin{adjustbox}{center}
\begin{tikzcd}[column sep=50pt]
1\rightarrow\freegrp{\TheStrand-1+\ThePct+\TheCone} \arrow[r,"\PushPMap^{orb}"] & \PMapIdOrb{\TheStrand}{\Sigma_\freeprod(\ThePct)}\arrow[r,"\ForgetPMap^{orb}"] & \PMapIdOrb{\TheStrand-1}{\Sigma_\freeprod(\ThePct)}\rightarrow1. 
\end{tikzcd}
\end{adjustbox}
\end{corollary}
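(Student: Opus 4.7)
The plan is to reduce to the classical pure Birman exact sequence for a punctured disk via Proposition \ref{prop:iso_Map_orb_disk}, and then construct the splitting using Lemmas \ref{lem:mcg_id_eps-collar} and \ref{lem:mcg_p_n_eps}. By Proposition \ref{prop:iso_Map_orb_disk}, the isomorphisms $\PMapIdOrb{\TheStrand}{\Sigma_\freeprod(\ThePct)}\cong\PMapId{\TheStrand}{D(\ThePct,\TheCone)}$ and $\PMapIdOrb{\TheStrand-1}{\Sigma_\freeprod(\ThePct)}\cong\PMapId{\TheStrand-1}{D(\ThePct,\TheCone)}$ reduce the problem to establishing the sequence
\[
1\to\freegrp{\TheStrand-1+\ThePct+\TheCone}\to\PMapId{\TheStrand}{D(\ThePct,\TheCone)}\xrightarrow{\Forget_\TheStrand}\PMapId{\TheStrand-1}{D(\ThePct,\TheCone)}\to1
\]
together with a section defined at the level of orbifolds.

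\textbf{Step 1: The disk sequence.} First I would apply the classical pure Birman exact sequence (see \cite[Theorem 4.6]{FarbMargalit2011}) to $D(\ThePct,\TheCone)$, forgetting the marked point $\bar{p}_\TheStrand$. This yields
\[
1\to\pi_1\bigl(D(\ThePct,\TheCone)\setminus\{\bar{p}_1,\dots,\bar{p}_{\TheStrand-1}\},\bar{p}_\TheStrand\bigr)\xrightarrow{\Push}\PMap_\TheStrand(D(\ThePct,\TheCone))\xrightarrow{\Forget_\TheStrand}\PMap_{\TheStrand-1}(D(\ThePct,\TheCone))\to1.
\]
Since $D(\ThePct,\TheCone)\setminus\{\bar{p}_1,\dots,\bar{p}_{\TheStrand-1}\}$ is a disk with $\TheStrand-1+\ThePct+\TheCone$ punctures, its fundamental group is $\freegrp{\TheStrand-1+\ThePct+\TheCone}$.

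\textbf{Step 2: Restriction to $\PMapId$.} Next I would verify that the above sequence restricts to the $\PMapId$ subgroups of Definition \ref{def:Map_disk(ThePct)}. The image of $\Push$ lies in $\PMapId{\TheStrand}{D(\ThePct,\TheCone)}$ since the ambient isotopy used to define $\Push$ fixes $\bar{r}_1,\dots,\bar{r}_\ThePct$ and $\bar{\cp}_1,\dots,\bar{\cp}_\TheCone$ throughout, witnessing that the image is isotopic to $\id_{D(\ThePct,\TheCone)}$ after forgetting the $\TheStrand$ marked points. Conversely, if $[H]\in\PMapId{\TheStrand}{D(\ThePct,\TheCone)}$ is isotopic to the identity after forgetting all $\TheStrand$ marked points, the same isotopy witnesses that $\Forget_\TheStrand([H])$ is isotopic to the identity after forgetting the remaining $\TheStrand-1$ marked points, so $\Forget_\TheStrand([H])\in\PMapId{\TheStrand-1}{D(\ThePct,\TheCone)}$. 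Surjectivity on $\PMapId{\TheStrand-1}$ is obtained by lifting a witnessing isotopy from $\PMap_{\TheStrand-1}$ to $\PMap_\TheStrand$ using the splitting constructed below. Passing through $\piMap$ and $\varphiMap$ transports this sequence into the one claimed in the Corollary, with $\PushPMap^{orb}:=\varphiMap\circ\Push$ and $\ForgetPMap^{orb}$ the restriction of $\Forget_\TheStrand^{orb}$.

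\textbf{Step 3: The splitting.} For the section, I would work directly on the orbifold side. Fix $\varepsilon>0$ small enough that no point of $\freeprod(\{p_1,\dots,p_{\TheStrand-1},r_1,\dots,r_\ThePct\})$ lies in the $\varepsilon$-collar of $\partial\Sigma(\ThePct)$. Using Lemma \ref{lem:mcg_p_n_eps}, I may assume $p_\TheStrand$ lies in the interior of this $\varepsilon$-collar inside $\FD(\ThePct,\TheCone)$. Then a section $\sigma:\PMapIdOrb{\TheStrand-1}{\Sigma_\freeprod(\ThePct)}\to\PMapIdOrb{\TheStrand}{\Sigma_\freeprod(\ThePct)}$ is defined by $[H']\mapsto[H'_\varepsilon]$, where $H'_\varepsilon$ is the representative produced by Lemma \ref{lem:mcg_id_eps-collar} that is the identity on the $\varepsilon$-collar (and hence fixes $p_\TheStrand$ and all its $\freeprod$-translates). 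Clearly $\ForgetPMap^{orb}\circ\sigma=\id$.

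\textbf{Main obstacle.} The delicate point is showing that $\sigma$ is well-defined and is a homomorphism: the representative $H'_\varepsilon$ produced by Lemma \ref{lem:mcg_id_eps-collar} involves choices, and two such choices give $H'$-representatives that differ by an ambient isotopy in $\HomeoOrb{\TheStrand-1}{\Sigma_\freeprod(\ThePct),\partial\Sigma(\ThePct)}$ which in general \emph{moves} $p_\TheStrand$. I would address this by arguing that once we restrict to representatives that are the identity on the $\varepsilon$-collar, any two such representatives of the same class $[H']$ are related by an ambient isotopy supported away from the $\varepsilon$-collar, which lies in $\HomeoOrb{\TheStrand}{\Sigma_\freeprod(\ThePct),\partial\Sigma(\ThePct)}$ (it fixes $p_\TheStrand$ at all times). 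This uses that ambient isotopies can be cut off along $\partial\Sigma(\ThePct)^\varepsilon$ in the manner of the Alexander trick of Lemma~\ref{lem:mcg_id_eps-collar}, analogous to the standard splitting argument for the classical pure Birman sequence.
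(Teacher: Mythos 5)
Your proposal is essentially correct. The splitting construction in Step~3 is the paper's: the section $\mathrm{Res}_\varepsilon$ is built from Lemmas \ref{lem:mcg_id_eps-collar} and \ref{lem:mcg_p_n_eps}, and the well-definedness is handled exactly as you suggest in the ``Main obstacle'' paragraph — one applies the collar-pushing of Lemma \ref{lem:mcg_id_eps-collar} to the \emph{whole} connecting ambient isotopy $H_t$, not just to the endpoints, so that $(H_t)_\varepsilon$ is the identity on the $\varepsilon$-collar and hence fixes $p_\TheStrand$ at every time. You should, however, also record why $\mathrm{Res}_\varepsilon$ is a homomorphism; the paper does this by observing $(H'\circ H)_\varepsilon=H'_\varepsilon\circ H_\varepsilon$.

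For exactness (Steps 1--2) you take a genuinely different route. You invoke the classical pure one-point Birman exact sequence for the punctured disk directly and then transport it through $\piMap/\varphiMap$. The paper instead avoids citing that result: it reuses its own Theorem \ref{thm:Birman_es_orb} with $\ThePct$ replaced by $\TheStrand-1+\ThePct$, that is, it treats $p_1,\dots,p_{\TheStrand-1}$ as punctures; this yields the one-marked-point orbifold Birman sequence for $\Sigma_\freeprod(\TheStrand-1+\ThePct)$, which is then embedded vertically into the claimed $\TheStrand$-point sequence. Exactness at the middle term follows because a class killed by $\ForgetPMap^{orb}$ has a representative isotopic to $\id$ relative $r_1,\dots,r_\ThePct,p_1,\dots,p_{\TheStrand-1}$ and hence lands in $\PMapIdOrb{1}{\Sigma_\freeprod(\TheStrand-1+\ThePct)}$. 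Your route is shorter if one is willing to quote the classical pure Birman sequence; the paper's route is more self-contained, deriving everything from the single Theorem \ref{thm:Birman_es_orb}. One point you should make explicit: the transport via $\piMap/\varphiMap$ requires these isomorphisms to intertwine the one-point forgetful maps $\Forget_\TheStrand$ (forgetting only $p_\TheStrand$), not just the forget-all map of Theorem \ref{thm:Birman_es_orb}. This compatibility is routine from the constructions of $\pi$ and $\varphi$, but it is not stated anywhere in the paper, so your argument must supply it.
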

\begin{proof}
For $\TheStrand=1$, the point-pushing along loops in $\pi_1\left(\Conf_1\left(D(\ThePct,\TheCone)\right)\right)$ from Theorem~\ref{thm:Birman_es_orb} yields pure homeomorphisms that represent elements in $\PMapIdOrb{1}{\Sigma_\freeprod(\ThePct)}$. Hence, the short exact sequence for orbifold mapping class groups restricts to a short exact sequence of pure subgroups:
\\
\begin{adjustbox}{center}
\begin{tikzcd}[column sep=30pt]
1\rightarrow\pi_1(\Conf_1(D(\ThePct,\TheCone))\arrow[r,"\Push_1^{orb}"] & \PMapIdOrb{1}{\Sigma_\freeprod(\ThePct)}\arrow[r,"\Forget_1^{orb}"] & \underbrace{\PMapIdOrb{}{\Sigma_\freeprod(\ThePct)}}_{=1}\rightarrow1. 
\end{tikzcd}
\end{adjustbox}
Since $\Conf_1\left(D(\ThePct,\TheCone)\right)$ is homeomorphic to a disk with $\ThePct+\TheCone$ punctures, its fundamental group is the free group $\freegrp{\ThePct+\TheCone}$. Replacing $\ThePct$ by $\TheStrand-1+\ThePct$, yields the first row of the following diagram: 
\\
\begin{adjustbox}{center}
\begin{tikzcd}[column sep=40pt]
1\rightarrow\freegrp{\TheStrand-1+\ThePct+\TheCone} \arrow[r,"\Push_1^{orb}"] \arrow[d,equal] & \PMapIdOrb{1}{\Sigma_\freeprod(\TheStrand-1+\ThePct)} \arrow[r,"\Forget_1^{orb}"] \arrow[d,hook] & \PMapIdOrb{}{\Sigma_\freeprod(\TheStrand-1+\ThePct)}=1 \arrow[d,hook] 
\\
1\rightarrow\freegrp{\TheStrand-1+\ThePct+\TheCone} \arrow[r,"\PushPMap^{orb}"] & \PMapIdOrb{\TheStrand}{\Sigma_\freeprod(\ThePct)} \arrow[r,"\ForgetPMap^{orb}"] & \PMapIdOrb{\TheStrand-1}{\Sigma_\freeprod(\ThePct)}\rightarrow1. 
\end{tikzcd}
\end{adjustbox}
Every homeomorphism that is $\freeprod$-equivariantly homotopic to the identity relative $r_1,...,r_\ThePct,p_1,...,p_{\TheStrand-1}$ in particular is $\freeprod$-equivariantly homotopic to $\id_{\Sigma(\ThePct)}$ relative $r_1,...,r_\ThePct$. This allows us to define the vertical maps in the above diagram as embeddings. If we compose the $\Push_1^{orb}$-map with the embedding 
\[
\PMapIdOrb{1}{\Sigma_\freeprod(\TheStrand-1+\ThePct)}\hookrightarrow\PMapIdOrb{\TheStrand}{\Sigma_\freeprod(\ThePct)}, 
\]
we obtain $\PushPMap^{orb}$. \new{As in the first row,} $\ForgetPMap^{orb}$ is given by forgetting \new{the} marked point $p_\TheStrand$. Using these maps, the above diagram commutes. 

It remains to show that the bottom row is a short exact sequence. \new{As a composition of embeddings,} $\PushPMap^{orb}$ is \new{injective}. Since the diagram commutes, its image 
is contained in the kernel of $\ForgetPMap^{orb}$. 
On the other hand, if $[H]$ is contained in the kernel of $\ForgetPMap^{orb}$, 
$H$ is homotopic to $\id_{\Sigma(\ThePct)}$ relative marked points $r_1,...,r_\ThePct,p_1,...,p_{\TheStrand-1}$. This implies that $[H]$ is contained in the pure subgroup $\PMapIdOrb{1}{\Sigma_\freeprod(\TheStrand-1+\ThePct)}$ and the exactness of the first row implies that an element of $\freegrp{\TheStrand-1+\ThePct+\TheCone}$ maps onto~$[H]$. 

Further, we need to show that $\ForgetPMap^{orb}$ is surjective. For this purpose, we construct a map $\mathrm{Res}_\varepsilon:\MapIdOrb{\TheStrand-1}{\Sigma_\freeprod(\ThePct)}\rightarrow\MapIdOrb{\TheStrand}{\Sigma_\freeprod(\ThePct)}$ that restricts to a section of $\ForgetPMap^{orb}$. Therefore, we use Lemma \ref{lem:mcg_p_n_eps} to assume that the marked points $\freeprod(p_\TheStrand)$ are contained in the $\varepsilon$-collar of $\partial\Sigma(\ThePct)$ but no other marked point or puncture is contained in this $\varepsilon$-collar. By Lemma \ref{lem:mcg_id_eps-collar}, each mapping class $[H]$ in $\MapIdOrb{\TheStrand-1}{\Sigma_\freeprod(\ThePct)}$ is represented by a homeomorphism $H_\varepsilon$ that is the identity on the $\varepsilon$-collar. Since all $\freeprod$-translates of $p_\TheStrand$ are contained in the $\varepsilon$-collar, the homeomorphism $H_\varepsilon$ in particular fixes the $\freeprod$-translates of the $\TheStrand$-th marked point, i.e.\ $H_\varepsilon$ represents an element in $\MapIdOrb{\TheStrand}{\Sigma_\freeprod(\ThePct)}$. We define 
\[
\textrm{Res}_\varepsilon:\MapIdOrb{\TheStrand-1}{\Sigma_\freeprod(\ThePct)}\rightarrow\MapIdOrb{\TheStrand}{\Sigma_\freeprod(\ThePct)}, \; [H]\mapsto[H_\varepsilon]. 
\]
To obtain that $\textrm{Res}_\varepsilon$ is well-defined let $H$ and $H'$ be two homeomorphisms that are ambient isotopic (relative marked points $r_1,...,r_\ThePct,p_1,...,p_{\TheStrand-1}$) via $H_t$.  We can apply the pushing from Lemma \ref{lem:mcg_id_eps-collar} to $H_t$. This yields an ambient isotopy $(H_t)_\varepsilon$ that fixes $r_1,...,r_\ThePct,p_1,...,p_{\TheStrand-1}$ and restricts to the identity on the $\varepsilon$-collar. This ambient isotopy connects $H_\varepsilon$ and $H'_\varepsilon$ which implies that the map $\textrm{Res}_\varepsilon$ is well-defined. It is immediate that the restricted map $\textrm{Res}_\varepsilon\vert_{\PMapIdOrb{\TheStrand-1}{\Sigma_\freeprod(\ThePct)}}$ is a section of $\ForgetPMap^{orb}$. Moreover, the pushing procedure from Lemma \ref{lem:mcg_id_eps-collar} is compatible with the group structure: If $H$, $H'$ represent elements in $\MapIdOrb{\TheStrand-1}{\Sigma_\freeprod(\ThePct)}$, we have $(H'\circ H)_\varepsilon=H'_\varepsilon\circ H_\varepsilon$. This implies that $\textrm{Res}_\varepsilon$ is a homomorphism. Thus, the second row of the above diagram is a short exact sequence that splits. 
\end{proof}

\begin{definition}
\label{def:semidir_prod}
A group $G$ is a \textit{semidirect product} with \textit{normal subgroup}~$N$ and \textit{quotient} $H$ if there exists a short exact sequence 
\[
1\rightarrow N\xrightarrow{\iota}G\xrightarrow{\pi}H\rightarrow1
\]
that has a section $s:H\rightarrow G$. In this case, we denote $G=N\rtimes H$. 
\end{definition}

In particular, Corollary \ref{cor:pure_orb_mcg_ses} shows: $\PMapIdOrb{\TheStrand}{\Sigma_\freeprod(\ThePct)}$ is a semidirect product 
\[
\freegrp{\TheStrand-1+\ThePct+\TheCone}\rtimes\PMapIdOrb{\TheStrand-1}{\Sigma_\freeprod(\ThePct)}. 
\]

In the following, presentations of groups will be an important tool for us. In particular, presentations allow us to define group homomorphisms by assignments defined on generating sets \new{by the Theorem of von Dyck, see \cite[p.\ 346]{Rotman2012}.} 

\begin{lemma}[{\cite[Lemma 5.17]{Flechsig2023}}]
\label{lem:semidir_prod_pres}
Let $N$ and $H$ be groups given by presentations $N=\langle X\mid R\rangle$ and $H=\langle Y\mid S\rangle$. Then the following are equivalent: 
\begin{enumerate}
\item \label{lem:semidir_prod_pres_it1}
$G$ is a semidirect product with normal subgroup $N$ and quotient $H$. 
\item \label{lem:semidir_prod_pres_it2}
$G$ has a presentation 
\[
G=\langle X,Y\mid R,S,y^{\pm1}xy^{\mp1}=\phi_{y^{\pm1}}(x) \text{ for all } x\in X, y\in Y\rangle
\]
such that $\phi_{y^{\pm1}}(x)$ is a word in the alphabet~$X$ for all $x\in X$ and $y\in Y$. Moreover, for each $y\in Y$, the assignments 
\begin{equation}
\label{lem:semidir_prod_it2_cond_auto}
x\mapsto\phi_y(x)
\end{equation}
induce an automorphism $\phi_y\in\Aut(N)$ and the assignments 
\begin{equation}
\label{lem:semidir_prod_it2_cond_homo}
y\mapsto\phi_y 
\end{equation}
induce a homomorphism $H\rightarrow\Aut(N)$. 
\end{enumerate}
\end{lemma}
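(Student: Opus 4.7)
The plan is to prove the two implications separately, using the external semidirect product $N\rtimes_\phi H$ as a bridge. Recall that for a homomorphism $\phi:H\to\Aut(N)$, $y\mapsto\phi_y$, the external semidirect product has underlying set $N\times H$ and multiplication $(n,h)(n',h')=(n\phi_h(n'),hh')$; it sits in a split short exact sequence $1\to N\to N\rtimes_\phi H\to H\to 1$ with section $h\mapsto(1,h)$, and the conjugation relation $(1,y)(x,1)(1,y)^{-1}=(\phi_y(x),1)$ holds.

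For \ref{lem:semidir_prod_pres_it1}$\Rightarrow$\ref{lem:semidir_prod_pres_it2}, I would identify $N$ with the normal subgroup $\iota(N)\trianglelefteq G$ and define $\phi_h\in\Aut(N)$ by $\phi_h(n)=s(h)\,\iota(n)\,s(h)^{-1}$ (well-defined by normality); this gives a homomorphism $H\to\Aut(N)$. For each $y\in Y$ and $x\in X$, the element $yxy^{-1}\in\iota(N)$ can be rewritten as a word $\phi_y(x)$ in the alphabet $X$; likewise for $y^{-1}xy$. These relations, together with $R$ and $S$, hold in $G$, and since $G=\iota(N)\cdot s(H)$ the set $X\cup Y$ generates $G$, so by von Dyck's theorem there is a surjection $G'\twoheadrightarrow G$ from the group $G'$ presented by \ref{lem:semidir_prod_pres_it2} onto $G$. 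For injectivity I would use the conjugation relations to rewrite any word over $X\cup Y$ as $wu$ with $w$ a word in $X$ and $u$ a word in $Y$; its image in $G$ is $\iota(n_w)s(h_u)$, and the splitting together with injectivity of $\iota$ forces $n_w=1_N$, $h_u=1_H$ if this image is trivial, whence $w$ lies in the normal closure of $R$ and $u$ in the normal closure of $S$.

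For \ref{lem:semidir_prod_pres_it2}$\Rightarrow$\ref{lem:semidir_prod_pres_it1}, I would form $\tilde G=N\rtimes_\phi H$ using the homomorphism $\phi$ provided by \ref{lem:semidir_prod_pres_it2}. The assignment $x\mapsto(x,1)$, $y\mapsto(1,y)$ satisfies $R$ (in $N\times\{1\}$), $S$ (in $\{1\}\times H$), and the conjugation relations (by construction of $\tilde G$), so von Dyck gives a homomorphism $\Psi:G\to\tilde G$. Conversely, sending $(n,h)\mapsto nh$ inside $G$ (where $n,h$ denote the images of the generators in $G$) is a homomorphism $\tilde G\to G$: the needed identity $(n,1)(1,h)(n',1)(1,h)^{-1}=(n\phi_h(n'),1)$ translates, after multiplying by $(1,h)$ on the right, to $h n' h^{-1}=\phi_h(n')$ in $G$, which is exactly a consequence of the presentation. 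These maps invert each other on generators, so $G\cong\tilde G$, and the canonical split short exact sequence transports to give \ref{lem:semidir_prod_pres_it1}.

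The main obstacle is the injectivity step in \ref{lem:semidir_prod_pres_it1}$\Rightarrow$\ref{lem:semidir_prod_pres_it2}, i.e.\ ruling out extra relations beyond $R$, $S$, and the conjugation relations; this is precisely where the section $s$ is needed, through the normal form $\iota(n)s(h)$. A dual subtlety in \ref{lem:semidir_prod_pres_it2}$\Rightarrow$\ref{lem:semidir_prod_pres_it1} is that the subgroup of $G$ generated by $X$ must be isomorphic to $N$ (not a quotient); this is secured by the inverse map $\tilde G\to G$, whose restriction to $N\times\{1\}$ provides a left inverse to the map $N\to G$ induced by the inclusion $X\hookrightarrow X\cup Y$.
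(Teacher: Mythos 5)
The paper states this lemma without proof, deferring to the author's thesis \cite[Lemma 5.17]{Flechsig2023}, so there is no in-paper argument against which to compare; your proof is the standard one — both directions go through the external semidirect product $N\rtimes_\phi H$, von Dyck's theorem, and the normal form $wu$ with $w$ a word over $X$ and $u$ a word over $Y$ — and it is essentially correct. Two details are worth making explicit. When verifying that $(n,h)\mapsto nh$ defines a homomorphism $\tilde{G}\to G$, you need $hn'h^{-1}=\phi_h(n')$ to hold in $G$ for \emph{arbitrary} words $n'$ over $X$ and $h$ over $Y$, not just single generators; this is exactly where the two extra hypotheses in item~(2) are used, since the requirement that each $\phi_y$ be an automorphism of $N$ lets the identity propagate along a word in $X$, and the requirement that $y\mapsto\phi_y$ be a homomorphism lets it propagate along a word in $Y$. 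You should spell out this short induction, because it is the precise reason those two conditions appear in the statement. Also, your closing sentence has the arrows reversed: a left inverse to the induced map $N\to G$ must be a map $G\to N$, so it should come from $\Psi\colon G\to\tilde{G}$ followed by projection onto $N\times\{1\}\cong N$, not from the restriction of $\Phi\colon\tilde{G}\to G$ to $N\times\{1\}$, whose source is wrong for a left inverse. Once $\Psi$ and $\Phi$ are proved mutually inverse this is moot, but as written the sentence does not quite parse.
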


\subsection{Generating set and presentation of $\PMapIdOrb{\TheStrand}{\Sigma_\freeprod(\ThePct)}$} 
\label{subsec:gen_sets_and_pres_PMap}

Endowed with the Birman exact sequence for pure subgroups, our next step is to deduce a generating set and a presentation of $\PMapIdOrb{\TheStrand}{\Sigma_\freeprod(\ThePct)}$. \enew{Therefore, we recall the shape of the fundamental domain $\FD$ from Figure \ref{fig:fund_domain}. This allows us to embed $\FD$ in $\CC$ as the disk of radius $\frac{\TheStrand+\ThePct+\TheCone+2}{2}$ centered at $\frac{\TheStrand-\ThePct-\TheCone}{2}$. For each $1\leq\nu\leq\TheCone$, let $\cp_\nu$ be the upper boundary point of $\partial \FD$ with $\text{Re}(\cp_\nu)=-\ThePct-\nu$, for each $1\leq\NPct\leq\ThePct$, let $r_\NPct$ be the point $-\NPct\in\RR$ and for each $1\leq\Strand\leq\TheStrand$, let $p_\Strand$ be the point $\Strand$ in $\RR$ (see Figure \ref{fig:disks_with_marked_pts}). Moreover, recall that each cone point in $\partial\FD$ has two adjacent arcs that lie in $\partial\FD$. For technical reasons, let us assume that the arcs adjacent to $\cp_\nu$ embed into $\partial\FD$ as the boundary arcs with positive imaginary part and real part between $-\ThePct-\nu-\frac{1}{2}$ and $-\ThePct-\nu$ or $-\ThePct-\nu$ and $-\ThePct-\nu+\frac{1}{2}$, respectively. 

For every $1\leq\Str<\Strand\leq\TheStrand$, let $D_{\Str,\Strand}\subseteq\FD(\ThePct)$ be the disk
\begin{align*}
& \left(B_{\frac{1}{4}}(p_\Str)\cup B_{\frac{1}{4}}(p_\Strand)\right)\cap\{x\in\CC\mid\textup{Im}(x)\geq0\}
\\
\cup & A_{\frac{\Strand-\Str}{2}-\frac{1}{4},\frac{\Strand-\Str}{2}+\frac{1}{4}}\left(\tfrac{p_\Str+p_\Strand}{2}\right)\cap\{x\in\CC\mid\textup{Im}(x)\leq0\}
\end{align*}
where $A_{r,R}(x)$ denotes the annulus with inner radius $r$ and outer radius $R$ centered around $x$. 
The disk $D_{\Str,\Strand}$ contains precisely the marked points $p_\Str$ and $p_\Strand$. See Figure~\ref{fig:disks_with_marked_pts} (left) for a picture of $D_{\Str,\Strand}$. 

Moreover, for every $1\leq\NStrand\leq\TheStrand$ and $1\leq\NPct\leq\ThePct$, let $D_{r_\NPct,\NStrand}\subseteq\FD(\ThePct)$ be the disk 
\begin{align*}
& \left(B_{\frac{1}{4}}(r_\NPct)\cup B_{\frac{1}{4}}(p_\NStrand)\right)\cap\{x\in\CC\mid\textup{Im}(x)\geq0\}
\\
\cup & A_{\frac{\NStrand+\NPct}{2}-\frac{1}{4},\frac{\NStrand+\NPct}{2}+\frac{1}{4}}\left(\tfrac{r_\NPct+p_\NStrand}{2}\right)\cap\{x\in\CC\mid\textup{Im}(x)\leq0\}. 
\end{align*}
The disk $D_{r_\NPct,\NStrand}$ contains precisely the marked point $r_\NPct$ and $p_\NStrand$. See Figure \ref{fig:disks_with_marked_pts} (right) for a picture of $D_{r_\NPct,\NStrand}$. 
\begin{figure}[H]
\import{Grafiken/orb_mcg_marked_pts/}{disks_with_marked_pts_2.pdf_tex}
\caption{The disks $D_{\Str,\Strand}$ (left) and $D_{r_\NPct,\NStrand}$ (right).}
\label{fig:disks_with_marked_pts}
\end{figure}
}
The homeomorphisms $\Twist_{\Strand\Str}$ and $\TwistP_{\NStrand\NPct}$ perform the twists pictured in Figure \ref{fig:homeos_twist_marked_pts} on each $\freeprod$-translate of $D_{\Str,\Strand}$ and $D_{r_\NPct,\NStrand}$. 
\begin{figure}[H]
\import{Grafiken/orb_mcg_marked_pts/}{homeos_twist_marked_pts_clockwise.pdf_tex}
\caption{The twists induced by $\Twist_{\Strand\Str}$ (left) and $\TwistP_{\NStrand\NPct}$ (right).}
\label{fig:homeos_twist_marked_pts}
\end{figure}

\enew{Moreover, for every $1\leq\NStrand\leq\TheStrand$ and $1\leq\nu\leq\TheCone$, let $\tilde{D}_{\cp_\nu,\NStrand}$ be the disk 
\begin{align*}
& B_{\frac{1}{4}}(p_\NStrand)\cap\{x\in\CC\mid\textup{Im}(x)\geq0\}
\\
\cup & A_{\frac{\NStrand+\ThePct+\nu}{2}-\frac{1}{4},\frac{\NStrand+\ThePct+\nu}{2}+\frac{1}{4}}\left(\tfrac{-\ThePct-\nu+p_\NStrand}{2}\right)\cap\{x\in\CC\mid\textup{Im}(x)\leq0\}
\\
\cup & \left\lbrace x\in\CC\mid\textup{Im}(x)\geq0, \text{Re}(x)\in\left[-\ThePct-\nu-\tfrac{1}{4},-\ThePct-\nu+\tfrac{1}{4}\right]\right\rbrace\cap\FD. 
\end{align*}
Then $D_{\cp_\nu,\NStrand}:=\cyc{\TheOrder_\nu}.\tilde{D}_{\cp_\nu,\NStrand}$ is a $\cyc{\TheOrder_\nu}$-invariant disk that contains the cone point $\cp_\nu$ and the adjacent marked points $\ZZ_{\TheOrder_\nu}(p_\NStrand)$. See Figure \ref{fig:disk_with_marked_pts_cp} for a picture of $\tilde{D}_{\cp_\nu,\NStrand}$ (left) and an example of the disk $D_{\cp_\nu,\NStrand}\subseteq\Sigma(\ThePct)$ (right). 
\begin{figure}[H]
\import{Grafiken/orb_mcg_marked_pts/}{disk_with_marked_pts_cp_2.pdf_tex}
\caption{The disk $\tilde{D}_{\cp_\nu,\NStrand}\subseteq\FD$ (left) and $D_{\cp_\nu,\NStrand}\subseteq\Sigma(\ThePct)$ for a cone point of order three (right).}
\label{fig:disk_with_marked_pts_cp}
\end{figure}
}

Let $\TwistC_{\NStrand\nu}$ be the homeomorphism that performs a $\frac{2\pi}{\TheOrder_\nu}$-twist as in Figure \ref{fig:2_pi_m-twist} 
on each $\freeprod$-translate of $D_{\cp_\nu,\NStrand}$. For the homeomorphisms $\Twist_{\Strand\Str}, \TwistP_{\NStrand\NPct}$ and $\TwistC_{\NStrand\nu}$, we will use their names as acronyms of the corresponding mapping classes. 

\begin{corollary}
\label{cor:PMap_gen_set}
The pure mapping class group $\PMapIdOrb{\TheStrand}{\Sigma_\freeprod(\ThePct)}$ is generated by 
\[
\Twist_{\Strand\Str},\TwistP_{\NStrand\NPct} \; \text{ and } \; \TwistC_{\NStrand\nu}
\]
for $1\leq\Str,\Strand,\NStrand\leq\TheStrand$ with $\Str<\Strand$, \; $1\leq\NPct\leq\ThePct$ and $1\leq\nu\leq\TheCone$. 
\end{corollary}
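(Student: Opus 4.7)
The plan is to induct on the number $\TheStrand$ of marked points, exploiting the split short exact sequence established in Corollary \ref{cor:pure_orb_mcg_ses}:
\[
1\rightarrow\freegrp{\TheStrand-1+\ThePct+\TheCone}\xrightarrow{\PushPMap^{orb}}\PMapIdOrb{\TheStrand}{\Sigma_\freeprod(\ThePct)}\xrightarrow{\ForgetPMap^{orb}}\PMapIdOrb{\TheStrand-1}{\Sigma_\freeprod(\ThePct)}\rightarrow1.
\]
The base case $\TheStrand=0$ is immediate, since $\PMapIdOrb{0}{\Sigma_\freeprod(\ThePct)}=\MapIdOrb{0}{\Sigma_\freeprod(\ThePct)}$ is trivial by definition (the forgetful map $\Forget_0^{orb}$ is the identity). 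Because the sequence splits, every element of $\PMapIdOrb{\TheStrand}{\Sigma_\freeprod(\ThePct)}$ factors as a product of a lift from the quotient and an element of the kernel, so it suffices to exhibit generators for each factor that lie in the claimed list.

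For the quotient factor, the inductive hypothesis furnishes a generating set of $\PMapIdOrb{\TheStrand-1}{\Sigma_\freeprod(\ThePct)}$ consisting of $\Twist_{\Strand\Str},\TwistP_{\NStrand\NPct},\TwistC_{\NStrand\nu}$ with indices $1\leq\Str<\Strand\leq\TheStrand-1$, $1\leq\NStrand\leq\TheStrand-1$, $1\leq\NPct\leq\ThePct$, $1\leq\nu\leq\TheCone$. Applying Lemma \ref{lem:mcg_p_n_eps} I first relocate $p_\TheStrand$ into an $\varepsilon$-collar of $\partial\Sigma(\ThePct)$ chosen thin enough to be disjoint from every disk $D_{\Str,\Strand}$, $D_{r_\NPct,\NStrand}$, $D_{\cp_\nu,\NStrand}$ that defines one of these twists. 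The section $\mathrm{Res}_\varepsilon$ from the proof of Corollary \ref{cor:pure_orb_mcg_ses} then lifts each such generator to a representative that is the identity on the $\varepsilon$-collar; since the original twist is already supported in the complement of this collar, its lift agrees on the nose with the identically-named twist viewed inside $\PMapIdOrb{\TheStrand}{\Sigma_\freeprod(\ThePct)}$.

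For the kernel factor, I identify $\freegrp{\TheStrand-1+\ThePct+\TheCone}$ with $\pi_1(\Conf_1(D(\TheStrand-1+\ThePct,\TheCone)))$, the fundamental group of a disk punctured at the quotient images of $p_1,\ldots,p_{\TheStrand-1},r_1,\ldots,r_\ThePct,\cp_1,\ldots,\cp_\TheCone$. This free group admits a standard basis of loops, one encircling each puncture. Under $\Push_1$ followed by the isomorphism $\varphiMap$ from Proposition \ref{prop:iso_Map_orb_disk}, the loop around the image of $p_\Str$ becomes the $\freeprod$-equivariant point-push of $p_\TheStrand$ once around $p_\Str$, realising $\Twist_{\TheStrand\Str}$; the loop around the image of $r_\NPct$ realises $\TwistP_{\TheStrand\NPct}$; and the loop around the image of $\cp_\nu$ realises the $\tfrac{2\pi}{\TheOrder_\nu}$-twist $\TwistC_{\TheStrand\nu}$, following verbatim the computation in the proof of Example \ref{ex:orb_mcg_one_pct_ZZ}. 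Together with the lifts from the previous paragraph, this produces precisely the generating set asserted in the corollary.

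The main obstacle is the bookkeeping in the kernel step: the basing loops in $\pi_1(\Conf_1(D(\TheStrand-1+\ThePct,\TheCone)))$ must be chosen so that, after lifting along $\varphiMap$, they are supported inside the very disks $D_{\Str,\Strand}$, $D_{r_\NPct,\NStrand}$, $D_{\cp_\nu,\NStrand}$ used to define the named twists, and not merely conjugate in $\PMapIdOrb{\TheStrand}{\Sigma_\freeprod(\ThePct)}$ to those twists. Concretely, I will route each generating loop along an explicit arc in $D(\TheStrand-1+\ThePct,\TheCone)$ whose preimage under the quotient $\FD(\ThePct,\TheCone)\to D(\ThePct,\TheCone)$ traces out the prescribed disk. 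Once this geometric matching is carried out, the inductive step closes and the corollary follows.
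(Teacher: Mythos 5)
Your proposal follows essentially the same line of argument as the paper: induction on $\TheStrand$ via the split short exact sequence from Corollary \ref{cor:pure_orb_mcg_ses}, with the kernel $\freegrp{\TheStrand-1+\ThePct+\TheCone}$ supplying the top-row twists $\Twist_{\TheStrand\Strand},\TwistP_{\TheStrand\NPct},\TwistC_{\TheStrand\nu}$ via point-pushing and the section lifting the inductively-known generators of $\PMapIdOrb{\TheStrand-1}{\Sigma_\freeprod(\ThePct)}$ to their homonyms. The paper handles the two points you flag — that the point-pushing image hits the named twists on the nose rather than up to conjugacy, and that the section sends each lower-index twist to its homonym — by a brief assertion rather than the explicit tracing of supporting disks and collar-pushing you sketch, but the mechanism is the same and your elaboration is sound.
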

\begin{proof}
The proof that the above elements generate $\PMapIdOrb{\TheStrand}{\Sigma_\freeprod(\ThePct)}$ proceeds by induction on $\TheStrand$. 

For $\TheStrand=0$, the group coincides with $\PMapIdOrb{}{\Sigma_\freeprod(\ThePct)}$. By Definition \ref{def:Map_orb(ThePct)}, this group is trivial. For $\TheStrand\geq1$, we recall the split short exact sequence from Corollary~\ref{cor:pure_orb_mcg_ses}: 
\[
1\rightarrow\freegrp{\TheStrand-1+\ThePct+\TheCone}\xrightarrow{\PushPMap^{orb}}\PMapIdOrb{\TheStrand}{\Sigma_\freeprod(\ThePct)}\xrightarrow{\ForgetPMap^{orb}}\PMapIdOrb{\TheStrand-1}{\Sigma_\freeprod(\ThePct)}\rightarrow1. 
\]
The included free group $\freegrp{\TheStrand-1+\ThePct+\TheCone}$ stems from $\pi_1(D(\TheStrand-1+\ThePct+\TheCone))$. This group embeds into $\PMapIdOrb{\TheStrand}{\Sigma_\freeprod(\ThePct)}$ via $\PushPMap^{orb}$. Recall that the natural generators of $\pi_1(D(\TheStrand-1+\ThePct+\TheCone))$ are represented by loops 
that encircle precisely one puncture of $D(\TheStrand-1+\ThePct+\TheCone)$. \new{These generators can be chosen such that $\PushPMap^{orb}$} maps them to the mapping classes represented by $\Twist_{\TheStrand\Strand},\TwistP_{\TheStrand\NPct}$ and $\TwistC_{\TheStrand\nu}$ for $1\leq\Strand<\TheStrand,1\leq\NPct\leq\ThePct$ and $1\leq\nu\leq\TheCone$. For $\TheStrand=1$, this implies that $\PMapIdOrb{1}{\Sigma_\freeprod(\ThePct)}$ is isomorphic to $\freegrp{\ThePct+\TheCone}$ and the elements $\TwistP_{1 \NPct},\TwistC_{1\nu}$ with $1\leq\NPct\leq\ThePct$ and $1\leq\nu\leq\TheCone$ form a basis of the free group. 

By induction, this allows us to assume that $\PMapIdOrb{\TheStrand-1}{\Sigma_\freeprod(\ThePct)}$ is generated by 
\[
\Twist_{\Strand\Str},\TwistP_{\NStrand\NPct} \; \text{ and } \; \TwistC_{\NStrand\nu}
\]
with $1\leq\Str,\Strand,\NStrand<\TheStrand,\Str<\Strand,1\leq\NPct\leq\ThePct$ and $1\leq\nu\leq\TheCone$. The section from Corollary~\ref{cor:pure_orb_mcg_ses} embeds $\PMapIdOrb{\TheStrand-1}{\Sigma_\freeprod(\ThePct)}$  into $\PMapIdOrb{\TheStrand}{\Sigma_\freeprod(\ThePct)}$ sending these generators 
of $\PMapIdOrb{\TheStrand-1}{\Sigma_\freeprod(\ThePct)}$ to their homonyms in $\PMapIdOrb{\TheStrand}{\Sigma_\freeprod(\ThePct)}$. Moreover, as described above, $\freegrp{\TheStrand-1+\ThePct+\TheCone}$ embeds into $\PMapIdOrb{\TheStrand}{\Sigma_\freeprod(\ThePct)}$ as the subgroup generated by 
\[
\Twist_{\TheStrand\Strand},\TwistP_{\TheStrand\NPct} \; \text{ and } \; \TwistC_{\TheStrand\nu} 
\]
with $1\leq\Strand<\TheStrand,1\leq\NPct\leq\ThePct$ and $1\leq\nu\leq\TheCone$. Hence, the short exact sequence from Corollary \ref{cor:pure_orb_mcg_ses} implies that the above set generates $\PMapIdOrb{\TheStrand}{\Sigma_\freeprod(\ThePct)}$. 
\end{proof}

The next step to deduce a presentation from Corollary \ref{cor:pure_orb_mcg_ses} is to observe relations for the above generators. 

\begin{lemma}
\label{lem:conj_manipulates_supp}
Let $A$ be a 
homeomorphism that represents an element in the group $\MapIdOrb{\TheStrand}{\Sigma_\freeprod(\ThePct)}$. Further, let $H$ be one of the homeomorphisms 
\[
\Twist_{\Strand\Str}, \TwistP_{\NStrand\NPct} \; \text{ and } \; \TwistC_{\NStrand\nu} 
\]
for some $1\leq\Str,\Strand,\NStrand\leq\TheStrand$ with $\Str<\Strand$, $1\leq\nu\leq\TheCone$ and $1\leq\NPct\leq\ThePct$. If $D_H\subseteq\FD(\ThePct)$ is the associated supporting disk of $H$ and $A(\freeprod(D_H))=\freeprod(D_H)$, then $[AHA^{-1}]=[H]$. 
\end{lemma}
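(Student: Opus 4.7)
The plan is to reduce the statement to its analog in the mapping class group of the disk $D(\ThePct,\TheCone)$ via Proposition~\ref{prop:iso_Map_orb_disk}, then exploit that the mapping class group of a disk with two distinguished points, relative to its boundary, is infinite cyclic and hence abelian.

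Setting $\bar{A}:=\piMap([A])$ and $\bar{H}:=\piMap([H])$, and letting $\bar{D}_H\subseteq D(\ThePct,\TheCone)$ denote the image of $D_H$ under the quotient $\Sigma(\ThePct)\to\Sigma(\ThePct)/\freeprod\cong D(\ThePct,\TheCone)$, the hypothesis $A(\freeprod(D_H))=\freeprod(D_H)$ descends to $\bar{A}(\bar{D}_H)=\bar{D}_H$; since $\piMap$ is an isomorphism onto $\MapId{\TheStrand}{D(\ThePct,\TheCone)}$, it suffices to prove $[\bar{A}\bar{H}\bar{A}^{-1}]=[\bar{H}]$ there. In each of the three cases for $H$, the disk $\bar{D}_H$ is a closed disk containing exactly two distinguished points of $D(\ThePct,\TheCone)$: two marked points $\{\bar{p}_\Str,\bar{p}_\Strand\}$ (for $\Twist_{\Strand\Str}$), a marked point and a puncture $\{\bar{p}_\NStrand,\bar{r}_\NPct\}$ (for $\TwistP_{\NStrand\NPct}$), or a marked point and a cone point $\{\bar{p}_\NStrand,\bar{c}_\nu\}$ (for $\TwistC_{\NStrand\nu}$), where the cone point appears as a puncture in the disk picture. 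The mapping class group of such a twice-distinguished disk, taken relative to its boundary, is infinite cyclic, hence abelian, and $\bar{H}|_{\bar{D}_H}$ represents an element thereof. Moreover, since $\bar{A}\in\MapId{\TheStrand}{D(\ThePct,\TheCone)}$ fixes each puncture and each cone point individually, in Cases~2 and~3 the marked point in $\bar{D}_H$ is also fixed by $\bar{A}$; in Case~1, the points $\bar{p}_\Str,\bar{p}_\Strand$ may be swapped or fixed.

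The twist $\bar{H}$ fixes a collar neighborhood of $\partial\bar{D}_H$ pointwise. Using that the orientation-preserving self-homeomorphism group of a circle is connected, I would isotope $\bar{A}$ through $\MapId{\TheStrand}{D(\ThePct,\TheCone)}$, via an ambient isotopy supported in a thin collar of $\partial\bar{D}_H$ disjoint from all distinguished points, to arrange $\bar{A}|_{\partial\bar{D}_H}=\id$. Then $\bar{A}|_{\bar{D}_H}$ defines an element of the abelian mapping class group of $\bar{D}_H$ relative boundary, and therefore commutes with $\bar{H}|_{\bar{D}_H}$. Since $\bar{A}\bar{H}\bar{A}^{-1}$ and $\bar{H}$ both agree with the identity outside $\bar{D}_H$, this yields $[\bar{A}\bar{H}\bar{A}^{-1}]=[\bar{H}]$ in $\MapId{\TheStrand}{D(\ThePct,\TheCone)}$, and transferring back via $\varphiMap$ produces $[AHA^{-1}]=[H]$ in $\MapIdOrb{\TheStrand}{\Sigma_\freeprod(\ThePct)}$.

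The main obstacle is the collar-adjustment step: the isotopy modifying $\bar{A}$ must remain in $\MapId{\TheStrand}{D(\ThePct,\TheCone)}$, which I would arrange by choosing the collar to avoid all distinguished points (possible because $\partial\bar{D}_H$ itself is distinguished-point-free by the choice of $D_H$) and damping the circle isotopy so that it is the identity on the outer boundary of the collar. A secondary point requiring care is that, in the $\TwistC_{\NStrand\nu}$ case, the $\cyc{\TheOrder_\nu}$-invariant disk $D_{\cp_\nu,\NStrand}$ projects to a disk $\bar{D}_H$ containing only the cone point $\bar{c}_\nu$ and the single marked point $\bar{p}_\NStrand$, because $\cyc{\TheOrder_\nu}$ identifies the $\TheOrder_\nu$ points of $\cyc{\TheOrder_\nu}(p_\NStrand)$ to one point in the quotient, so the hypothesis on the mapping class group of a twice-distinguished disk does apply uniformly across all three cases.
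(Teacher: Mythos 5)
Your proof is correct and rests on exactly the same structural fact as the paper's: the mapping class group of the supporting disk rel boundary is infinite cyclic, hence abelian, so anything preserving that disk setwise conjugates the twist to itself. The difference is where you run the argument. The paper works directly on the orbifold side, considering $H\vert_{D_H}$ and $AHA^{-1}\vert_{D_H}$ as classes in the orbifold mapping class group $\MapOrb{\TheStrand(H)}{D_H}$ (using $\MapOrb{1}{D_{\cyc{\TheOrder_\nu}}}$ in the cone-point case), and bringing $A$ back over $D_H$ by a translation $\gamma\in\freeprod$; you instead transport the entire problem to the quotient disk $D(\ThePct,\TheCone)$ through the isomorphism $\piMap$ of Proposition~\ref{prop:iso_Map_orb_disk}, run the abelian-commutation argument for the twice-distinguished disk $\bar{D}_H$ there, and pull the conclusion back via $\varphiMap$. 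Both are valid; the paper's version is more self-contained and makes the cone-point case symmetric with the rest by invoking Example~\ref{ex:orb_mcg_one_pct_ZZ} directly, while yours leans on the disk-identification machinery but has the virtue of explicitly spelling out the boundary-collar isotopy making $\bar{A}\vert_{\partial\bar{D}_H}=\id$ --- a step the paper passes over silently when it asserts that $\gamma A$ represents an element of $\MapOrb{\TheStrand(H)}{D_H}$ (which is rel $\partial D_H$). One small caution on notation: $\piMap$ acts on mapping classes, so $\bar{A}$ should be understood as the homeomorphism $\pi(A)$ from Lemma~\ref{lem:pi_Homeo_cont_homo} rather than literally as $\piMap([A])$ when you speak of $\bar{A}(\bar{D}_H)$ and $\bar{A}\vert_{\partial\bar{D}_H}$; with that reading your argument goes through.
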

\begin{proof}
In general, a conjugate $AHA^{-1}$ is related to $H$ by the following commuting diagram
\begin{center}
\begin{tikzcd}[column sep=100pt]
A(\freeprod(D_H)) \arrow[r,"AHA^{-1}\vert_{A(\freeprod(D_H))}"] & A(\freeprod(D_H))
\\
\freeprod(D_H) \arrow[u,"A\vert_{\freeprod(D_H)}"] \arrow[r,"H\vert_{\freeprod(D_H)}"] & \freeprod(D_H). \arrow[u,"A\vert_{\freeprod(D_H)}"]
\end{tikzcd}
\end{center}
Since we further assume $A(\freeprod(D_H))=\freeprod(D_H)$, both elements $H$ and $AHA^{-1}$ induce an element in $\MapOrb{\TheStrand(H)}{D_H}$, respectively. 

If the supporting disk contains no cone point, it either is a disk $D_{\Str,\Strand}$ with two marked points $p_\Str$ and $p_\Strand$ or a disk $D_{r_\NPct,\NStrand}$ with one puncture $r_\NPct$ and one marked point~$p_\NStrand$. In the first case, $\MapOrb{\TheStrand(H)}{D_H}$ is isomorphic to the braid group on two strands by an application of the Alexander trick, see \cite[Section 9.1.3]{FarbMargalit2011}. Since the Alexander trick also applies to a disk with one puncture, the group in the second case is isomorphic to the fundamental group of a punctured disk. In both cases this implies that $\MapOrb{\TheStrand(H)}{D_H}$ is infinite cyclic. 

Otherwise $D_H$ contains a cone point $\cp_\nu$ and $\TheOrder_\nu$ translates of marked points, i.e.\ $(D_H)_{\cyc{\TheOrder_\nu}}$ is the orbifold described in Example \ref{ex:orb_mcg_one_pct_ZZ}. \new{As determined in the example,} $\MapOrb{\TheStrand(H)}{D_H}$ is also infinite cyclic. In particular, the group $\MapOrb{\TheStrand(H)}{D_H}$ is abelian in each case. 

Since $A(\freeprod(D_H))=\freeprod(D_H)$, there exists an element $\gamma\in\freeprod$ such that 
\[
\gamma A:x\mapsto \gamma(A(x)) 
\]
defines a homeomorphism of $D_H$ that represents an element in $\MapOrb{\TheStrand(H)}{D_H}$. Using that these groups are abelian, this implies $[\gamma AH(\gamma A)^{-1}]=[H\vert_{D_H}]$. This is equivalent to $[AHA^{-1}\vert_{\gamma^{-1}(D_H)}]=[\gamma^{-1}H\gamma]$ with 
\[
\gamma^{-1}H\gamma:\gamma^{-1}(D_H)\rightarrow\gamma^{-1}(D_H), \quad x\mapsto\gamma^{-1}(H(\gamma(x))). 
\]
Since $H$ and $AHA^{-1}$ are $\freeprod$-equivariant, this implies that $[AHA^{-1}\vert_{D_H}]=[H\vert_{D_H}]$ and consequently $[AHA^{-1}]=[H]$. 
\end{proof}

Using Lemma \ref{lem:conj_manipulates_supp}, we observe the following relations in $\PMapIdOrb{\TheStrand}{\Sigma_\freeprod(\ThePct)}$: 

\begin{lemma}
\label{lem:PMap_gens_sat_rels}
Let $1\leq\Strr,\Str,\Strand,\NStrand,\NNStrand<\TheStrand$ with $\Strr<\Str<\Strand<\NStrand<\NNStrand$, $1\leq\Pc,\NPct\leq\ThePct$ with $\Pc<\NPct$ and $1\leq\mu,\nu\leq\TheCone$ with $\mu<\nu$. Then the following relations hold: 
\begin{enumerate}
\item 
\label{lem:PMap_gens_sat_rels_it1}
\begin{enumerate}
\item \label{lem:PMap_gens_sat_rels_it1a}
$\Twist_{\NNStrand\Strand}\Twist_{\TheStrand\Strand}\Twist_{\NNStrand\Strand}^{-1}= \Twist_{\TheStrand\Strand}^{-1}\Twist_{\TheStrand\NNStrand}^{-1}\Twist_{\TheStrand\Strand}\Twist_{\TheStrand\NNStrand}\Twist_{\TheStrand\Strand}\; \text{ and } \; \hypertarget{lem:PMap_gens_sat_rels_it1a_2}{\textup{a')}}\;\Twist_{\NNStrand\Strand}^{-1}\Twist_{\TheStrand\Strand}\Twist_{\NNStrand\Strand}=\Twist_{\TheStrand\NNStrand}\Twist_{\TheStrand\Strand}\Twist_{\TheStrand\NNStrand}^{-1}$, 
\item \label{lem:PMap_gens_sat_rels_it1b}
$\Twist_{\Strand\Str}\Twist_{\TheStrand\Strand}\Twist_{\Strand\Str}^{-1}=\Twist_{\TheStrand\Str}^{-1}\Twist_{\TheStrand\Strand}\Twist_{\TheStrand\Str}\; \text{ and } \; \hypertarget{lem:PMap_gens_sat_rels_it1b_2}{\textup{b')}}\;\Twist_{\Strand\Str}^{-1}\Twist_{\TheStrand\Strand}\Twist_{\Strand\Str}=\Twist_{\TheStrand\Strand}\Twist_{\TheStrand\Str}\Twist_{\TheStrand\Strand}\Twist_{\TheStrand\Str}^{-1}\Twist_{\TheStrand\Strand}^{-1}$, 
\item \label{lem:PMap_gens_sat_rels_it1c}
$\TwistP_{\Strand\NPct}\Twist_{\TheStrand\Strand}\TwistP_{\Strand\NPct}^{-1}=\TwistP_{\TheStrand\NPct}^{-1}\Twist_{\TheStrand\Strand}\TwistP_{\TheStrand\NPct} \; \text{ and } \;\hypertarget{lem:PMap_gens_sat_rels_it1c_2}{\textup{c')}}\;\TwistP_{\Strand\NPct}^{-1}\Twist_{\TheStrand\Strand}\TwistP_{\Strand\NPct}=\Twist_{\TheStrand\Strand}\TwistP_{\TheStrand\NPct}\Twist_{\TheStrand\Strand}\TwistP_{\TheStrand\NPct}^{-1}\Twist_{\TheStrand\Strand}^{-1}$, 
\item \label{lem:PMap_gens_sat_rels_it1d}
$\TwistC_{\Strand\nu}\Twist_{\TheStrand\Strand}\TwistC_{\Strand\nu}^{-1}=\TwistC_{\TheStrand\nu}^{-1}\Twist_{\TheStrand\Strand}\TwistC_{\TheStrand\nu}\; \text{ and } \; \hypertarget{lem:PMap_gens_sat_rels_it1d_2}{\textup{d')}}\;\TwistC_{\Strand\nu}^{-1}\Twist_{\TheStrand\Strand}\TwistC_{\Strand\nu}=\Twist_{\TheStrand\Strand}\TwistC_{\TheStrand\nu}\Twist_{\TheStrand\Strand}\TwistC_{\TheStrand\nu}^{-1}\Twist_{\TheStrand\Strand}^{-1}$, 
\item \label{lem:PMap_gens_sat_rels_it1e}
$\TwistP_{\Strand\NPct}\TwistP_{\TheStrand\NPct}\TwistP_{\Strand\NPct}^{-1}=\TwistP_{\TheStrand\NPct}^{-1}\Twist_{\TheStrand\Strand}^{-1}\TwistP_{\TheStrand\NPct}\Twist_{\TheStrand\Strand}\TwistP_{\TheStrand\NPct}\; \text{ and } \; \hypertarget{lem:PMap_gens_sat_rels_it1e_2}{\textup{e')}}\;\TwistP_{\Strand\NPct}^{-1}\TwistP_{\TheStrand\NPct}\TwistP_{\Strand\NPct}=\Twist_{\TheStrand\Strand}\TwistP_{\TheStrand\NPct}\Twist_{\TheStrand\Strand}^{-1}$, 
\item \label{lem:PMap_gens_sat_rels_it1f}
$\TwistC_{\Strand\nu}\TwistC_{\TheStrand\nu}\TwistC_{\Strand\nu}^{-1}=\TwistC_{\TheStrand\nu}^{-1}\Twist_{\TheStrand\Strand}^{-1}\TwistC_{\TheStrand\nu}\Twist_{\TheStrand\Strand}\TwistC_{\TheStrand\nu}\; \text{ and } \; \hypertarget{lem:PMap_gens_sat_rels_it1f_2}{\textup{f')}}\;\TwistC_{\Strand\nu}^{-1}\TwistC_{\TheStrand\nu}\TwistC_{\Strand\nu}=\Twist_{\TheStrand\Strand}\TwistC_{\TheStrand\nu}\Twist_{\TheStrand\Strand}^{-1}$,
\end{enumerate} 
\item 
\label{lem:PMap_gens_sat_rels_it2}
\begin{enumerate}
\item \label{lem:PMap_gens_sat_rels_it2a}
$[\Twist_{\Str\Strr},\Twist_{\TheStrand\Strand}]=1$, \; $[\TwistP_{\Str\NPct},\Twist_{\TheStrand\Strand}]=1 \; \text{ and } \; [\TwistC_{\Str\nu},\Twist_{\TheStrand\Strand}]=1$, 
\item \label{lem:PMap_gens_sat_rels_it2b}
$[\Twist_{\NNStrand\NStrand},\Twist_{\TheStrand\Strand}]=1$, \; $[\Twist_{\Strand\Str},\TwistP_{\TheStrand\NPct}]=1$, \;
$[\TwistP_{\Strand\Pc},\TwistP_{\TheStrand\NPct}]=1$, 
\\
$[\Twist_{\Strand\Str},\TwistC_{\TheStrand\nu}]=1$, \; 
$[\TwistP_{\Strand\NPct},\TwistC_{\TheStrand\nu}]=1 \; \text{ and } \; [\TwistC_{\Strand\mu},\TwistC_{\TheStrand\nu}]=1$, 
\item \label{lem:PMap_gens_sat_rels_it2c}
$[\Twist_{\TheStrand\NNStrand}\Twist_{\TheStrand\Strand}\Twist_{\TheStrand\NNStrand}^{-1},\Twist_{\NNStrand\Str}]=1$, \; $[\Twist_{\TheStrand\NNStrand}\Twist_{\TheStrand\Strand}\Twist_{\TheStrand\NNStrand}^{-1},\TwistP_{\NNStrand\NPct}]=1 \;$ \text{ and } 
\\ 
$[\Twist_{\TheStrand\NNStrand}\Twist_{\TheStrand\Strand}\Twist_{\TheStrand\NNStrand}^{-1},\TwistC_{\NNStrand\nu}]=1$, 
\item \label{lem:PMap_gens_sat_rels_it2d} 
$[\Twist_{\TheStrand\Strand}\TwistP_{\TheStrand\Pc}\Twist_{\TheStrand\Strand}^{-1},\TwistP_{\Strand\NPct}]=1 \; \text{ and } \; [\Twist_{\TheStrand\Strand}\TwistP_{\TheStrand\Pc}\Twist_{\TheStrand\Strand}^{-1},\TwistC_{\Strand\nu}]=1$, 
\item \label{lem:PMap_gens_sat_rels_it2e}
$[\Twist_{\TheStrand\Strand}\TwistC_{\TheStrand\mu}\Twist_{\TheStrand\Strand}^{-1},\TwistC_{\Strand\nu}]=1$. 
\end{enumerate}
\end{enumerate}
In particular, these relations imply: 
\begin{enumerate}[resume]
\item \label{lem:PMap_gens_sat_rels_it3}
\begin{enumerate}
\item \label{lem:PMap_gens_sat_rels_it3a}
$\Twist_{\NNStrand\Str}\Twist_{\TheStrand\Strand}\Twist_{\NNStrand\Str}^{-1} = \Twist_{\TheStrand\Str}^{-1}\Twist_{\TheStrand\NNStrand}^{-1}\Twist_{\TheStrand\Str}\Twist_{\TheStrand\NNStrand}\Twist_{\TheStrand\Strand}\Twist_{\TheStrand\NNStrand}^{-1}\Twist_{\TheStrand\Str}^{-1}\Twist_{\TheStrand\NNStrand}\Twist_{\TheStrand\Str}$, 
\item[\textup{a')}] \label{lem:PMap_gens_sat_rels_it3a_2}
$\Twist_{\NNStrand\Str}^{-1}\Twist_{\TheStrand\Strand}\Twist_{\NNStrand\Str} = \Twist_{\TheStrand\NNStrand}\Twist_{\TheStrand\Str}\Twist_{\TheStrand\NNStrand}^{-1}\Twist_{\TheStrand\Str}^{-1}\Twist_{\TheStrand\Strand}\Twist_{\TheStrand\Str}\Twist_{\TheStrand\NNStrand}\Twist_{\TheStrand\Str}^{-1}\Twist_{\TheStrand\NNStrand}^{-1}$, 
\item \label{lem:PMap_gens_sat_rels_it3b}
$\TwistP_{\NNStrand\NPct}\Twist_{\TheStrand\Strand}\TwistP_{\NNStrand\NPct}^{-1} = \TwistP_{\TheStrand\NPct}^{-1}\Twist_{\TheStrand\NNStrand}^{-1}\TwistP_{\TheStrand\NPct}\Twist_{\TheStrand\NNStrand}\Twist_{\TheStrand\Strand}\Twist_{\TheStrand\NNStrand}^{-1}\TwistP_{\TheStrand\NPct}^{-1}\Twist_{\TheStrand\NNStrand}\TwistP_{\TheStrand\NPct}$, 
\item[\textup{b')}] \label{lem:PMap_gens_sat_rels_it3b_2}
$\TwistP_{\NNStrand\NPct}^{-1}\Twist_{\TheStrand\Strand}\TwistP_{\NNStrand\NPct} = \Twist_{\TheStrand\NNStrand}\TwistP_{\TheStrand\NPct}\Twist_{\TheStrand\NNStrand}^{-1}\TwistP_{\TheStrand\NPct}^{-1}\Twist_{\TheStrand\Strand}\TwistP_{\TheStrand\NPct}\Twist_{\TheStrand\NNStrand}\TwistP_{\TheStrand\NPct}^{-1}\Twist_{\TheStrand\NNStrand}^{-1}$, 
\item \label{lem:PMap_gens_sat_rels_it3c}
$\TwistC_{\NNStrand\nu}\Twist_{\TheStrand\Strand}\TwistC_{\NNStrand\nu}^{-1} = \TwistC_{\TheStrand\nu}^{-1}\Twist_{\TheStrand\NNStrand}^{-1}\TwistC_{\TheStrand\nu}\Twist_{\TheStrand\NNStrand}\Twist_{\TheStrand\Strand}\Twist_{\TheStrand\NNStrand}^{-1}\TwistC_{\TheStrand\nu}^{-1}\Twist_{\TheStrand\NNStrand}\TwistC_{\TheStrand\nu}$, 
\item[\textup{c')}] \label{lem:PMap_gens_sat_rels_it3c_2}
$\TwistC_{\NNStrand\nu}^{-1}\Twist_{\TheStrand\Strand}\TwistC_{\NNStrand\nu} = \Twist_{\TheStrand\NNStrand}\TwistC_{\TheStrand\nu}\Twist_{\TheStrand\NNStrand}^{-1}\TwistC_{\TheStrand\nu}^{-1}\Twist_{\TheStrand\Strand}\TwistC_{\TheStrand\nu}\Twist_{\TheStrand\NNStrand}\TwistC_{\TheStrand\nu}^{-1}\Twist_{\TheStrand\NNStrand}^{-1}$, 
\item \label{lem:PMap_gens_sat_rels_it3d}
$\TwistP_{\Strand\NPct}\TwistP_{\TheStrand\Pc}\TwistP_{\Strand\NPct}^{-1} = \TwistP_{\TheStrand\NPct}^{-1}\Twist_{\TheStrand\Strand}^{-1}\TwistP_{\TheStrand\NPct}\Twist_{\TheStrand\Strand}\TwistP_{\TheStrand\Pc}\Twist_{\TheStrand\Strand}^{-1}\TwistP_{\TheStrand\NPct}^{-1}\Twist_{\TheStrand\Strand}\TwistP_{\TheStrand\NPct}$, 
\item[\textup{d')}] \label{lem:PMap_gens_sat_rels_it3d_2}
$\TwistP_{\Strand\NPct}^{-1}\TwistP_{\TheStrand\Pc}\TwistP_{\Strand\NPct} = \Twist_{\TheStrand\Strand}\TwistP_{\TheStrand\NPct}\Twist_{\TheStrand\Strand}^{-1}\TwistP_{\TheStrand\NPct}^{-1}\TwistP_{\TheStrand\Pc}\TwistP_{\TheStrand\NPct}\Twist_{\TheStrand\Strand}\TwistP_{\TheStrand\NPct}^{-1}\Twist_{\TheStrand\Strand}^{-1}$, 
\item \label{lem:PMap_gens_sat_rels_it3e}
$\TwistC_{\Strand\nu}\TwistP_{\TheStrand\NPct}\TwistC_{\Strand\nu}^{-1} = \TwistC_{\TheStrand\nu}^{-1}\Twist_{\TheStrand\Strand}^{-1}\TwistC_{\TheStrand\nu}\Twist_{\TheStrand\Strand}\TwistP_{\TheStrand\NPct}\Twist_{\TheStrand\Strand}^{-1}\TwistC_{\TheStrand\nu}^{-1}\Twist_{\TheStrand\Strand}\TwistC_{\TheStrand\nu}$, 
\item[\textup{e')}] \label{lem:PMap_gens_sat_rels_it3e_2}
$\TwistC_{\Strand\nu}^{-1}\TwistP_{\TheStrand\NPct}\TwistC_{\Strand\nu} = \Twist_{\TheStrand\Strand}\TwistC_{\TheStrand\nu}\Twist_{\TheStrand\Strand}^{-1}\TwistC_{\TheStrand\nu}^{-1}\TwistP_{\TheStrand\NPct}\TwistC_{\TheStrand\nu}\Twist_{\TheStrand\Strand}\TwistC_{\TheStrand\nu}^{-1}\Twist_{\TheStrand\Strand}^{-1}$,  
\item \label{lem:PMap_gens_sat_rels_it3f}
$\TwistC_{\Strand\nu}\TwistC_{\TheStrand\mu}\TwistC_{\Strand\nu}^{-1} = \TwistC_{\TheStrand\nu}^{-1}\Twist_{\TheStrand\Strand}^{-1}\TwistC_{\TheStrand\nu}\Twist_{\TheStrand\Strand}\TwistC_{\TheStrand\mu}\Twist_{\TheStrand\Strand}^{-1}\TwistC_{\TheStrand\nu}^{-1}\Twist_{\TheStrand\Strand}\TwistC_{\TheStrand\nu}$, 
\item[\textup{f')}] \label{lem:PMap_gens_sat_rels_it3f_2}
$\TwistC_{\Strand\nu}^{-1}\TwistC_{\TheStrand\mu}\TwistC_{\Strand\nu} = \Twist_{\TheStrand\Strand}\TwistC_{\TheStrand\nu}\Twist_{\TheStrand\Strand}^{-1}\TwistC_{\TheStrand\nu}^{-1}\TwistC_{\TheStrand\mu}\TwistC_{\TheStrand\nu}\Twist_{\TheStrand\Strand}\TwistC_{\TheStrand\nu}^{-1}\Twist_{\TheStrand\Strand}^{-1}$. 
\end{enumerate}
\end{enumerate}
\end{lemma}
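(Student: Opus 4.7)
The plan is to verify (1) and (2) by direct geometric arguments based on Lemma \ref{lem:conj_manipulates_supp}, and then to deduce (3) algebraically from (1) and (2).

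For each commutation relation in (2), the strategy is to produce isotopic representatives of the two generators whose supporting disks have disjoint $\freeprod$-orbits. In (2a) and (2b), the standard disks $D_{\Strr,\Str}, D_{\Strand,\TheStrand}, D_{r_\NPct,\NStrand}$, etc.\ drawn inside $\FD(\ThePct)$ as in Figure \ref{fig:disks_with_marked_pts} already have disjoint $\freeprod$-orbits by inspection of the index conditions; Lemma \ref{lem:compact_disj_translates_eps-nbhd} then supplies pairwise disjoint $\varepsilon$-neighborhoods, and the corresponding homeomorphisms commute on the nose. In (2c)--(2e), which have the shape $[AHA^{-1}, K]=1$, a direct isotopy of $\FD(\ThePct)$ moves $A(\freeprod(D_H))$ into a region disjoint from $\freeprod(D_K)$, reducing to the previous case.

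For each braid relation in (1), the key observation is that a conjugate $AHA^{-1}$ is itself a twist of the same type and order as $H$ around the moved disk $A(D_H)$ (together with its $\freeprod$-translates). Given a proposed right-hand side $W$, I would first rewrite $W$ as $W'HW'^{-1}$ times factors that commute with $H$ by (2), and then verify pictorially in $\FD(\ThePct)$ that $W'(\freeprod(D_H))$ and $A(\freeprod(D_H))$ coincide up to ambient isotopy. Lemma \ref{lem:conj_manipulates_supp} applied to $W'^{-1}A$ then yields $[AHA^{-1}] = [W'HW'^{-1}]$, completing the verification. An equivalent but cleaner route is to transport through the isomorphism $\piMap$ of Proposition \ref{prop:iso_Map_orb_disk}: the generators $\Twist_{\Strand\Str}, \TwistP_{\NStrand\NPct}, \TwistC_{\NStrand\nu}$ descend to standard pure surface braid generators in $\PMap{\TheStrand}{D(\ThePct,\TheCone)}$, and (1) and (2) then reduce to the classical Artin-type relations for the pure mapping class group of a disk with $\ThePct+\TheCone$ punctures. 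I expect the main obstacle to be the bookkeeping required for (1): for each identity one must accurately draw $A(D_H)$ and recognize it as the support of $W'HW'^{-1}$, which is particularly delicate when the conjugating generator is a $\TwistC_{\NStrand\nu}$ whose $\cyc{\TheOrder_\nu}$-symmetric support meets several $\freeprod$-translates of $D_H$.

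Finally, the relations in (3) follow algebraically from (1) and (2): for instance, (3a) is derived by applying (1a) and then conjugating by the appropriate $\TheStrand$-indexed generator and normalizing with the commutations from (2) so that every first subscript becomes $\TheStrand$; the remaining parts of (3) are obtained by the same scheme of nested substitution from (1) followed by rearrangement via (2), requiring no new geometric input.
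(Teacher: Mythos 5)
Your plan matches the paper's proof essentially step for step: relations in (1) are verified via Lemma \ref{lem:conj_manipulates_supp} by checking pictorially that both sides are conjugates of a common generator $H$ by elements carrying $\freeprod(D_H)$ to the same orbit of disks; the commutator relations in (2) are handled by realizing the two mapping classes with disjoint supporting disks, directly from Figure \ref{fig:disks_with_marked_pts} for (2a)--(2b) and after tracking the conjugated support for (2c)--(2e); and (3) is pure algebra from (1) and (2). The only addition you offer beyond the paper is the aside about transporting through $\piMap$ to $\PMap{\TheStrand}{D(\ThePct,\TheCone)}$, which would indeed reduce (1)--(2) to classical pure braid relations and is a legitimate alternative, but your primary route is the one the paper takes.
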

\begin{proof}The relations in \ref{lem:PMap_gens_sat_rels_it1} are of the form $AHA^{-1}=\tilde{A}H\tilde{A}^{-1}$ with $H$ as in Lemma~\ref{lem:conj_manipulates_supp} and $A,\tilde{A}$ pure $\freeprod$-equivariant homeomorphisms. This is equivalent to $\tilde{A}^{-1}AHA^{-1}\tilde{A}=H$. Thus, by Lemma \ref{lem:conj_manipulates_supp}, it remains to observe that $\tilde{A}^{-1}A(\freeprod(D_H))=\freeprod(D_H)$ or equivalent $A(\freeprod(D_H))=\tilde{A}(\freeprod(D_H))$. This is elaborated in Figure \ref{fig:PMap_gens_sat_rels_conj} for the relations \ref{lem:PMap_gens_sat_rels_it1}\ref{lem:PMap_gens_sat_rels_it1a}-\ref{lem:PMap_gens_sat_rels_it1f}. The remaining relations \ref{lem:PMap_gens_sat_rels_it1}\hyperlink{lem:PMap_gens_sat_rels_it1a_2}{a')}-\hyperlink{lem:PMap_gens_sat_rels_it1f_2}{f')} follow from analogous pictures where we twist along the red and blue arrows, respectively, in the opposite direction. 
\begin{figure}[H]
\centerline{\import{Grafiken/orb_mcg_marked_pts/}{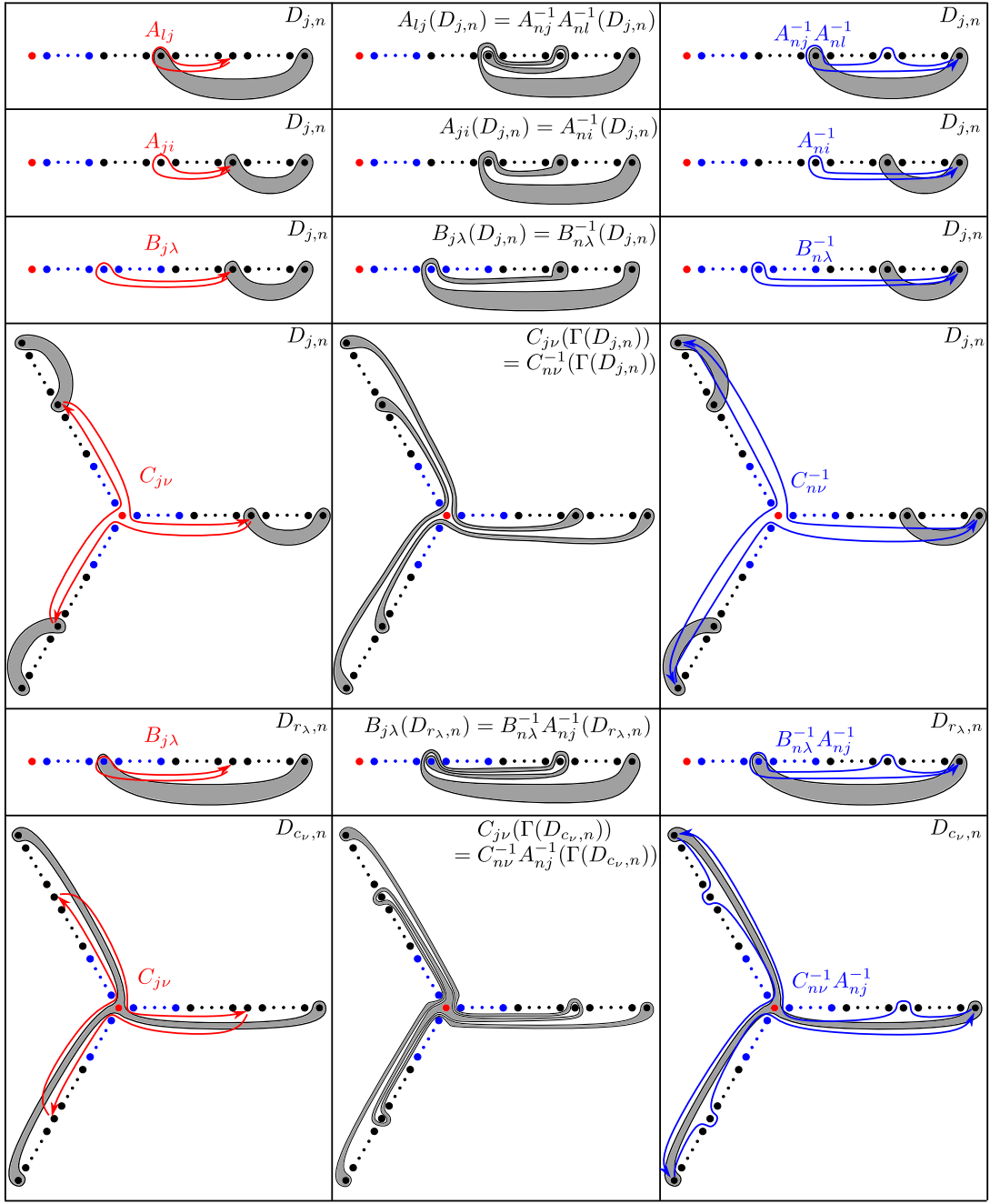}}
\caption{Observation of the relations \ref{lem:PMap_gens_sat_rels_it1}\ref{lem:PMap_gens_sat_rels_it1a}-\ref{lem:PMap_gens_sat_rels_it1f} (from top to bottom) by consideration of the supporting disks.}
\label{fig:PMap_gens_sat_rels_conj}
\end{figure}

For the commutator relations in \ref{lem:PMap_gens_sat_rels_it2}, we check that the commuting mapping classes can be realized by homeomorphisms with disjoint support. \new{In the cases when two generators commute,} this follows directly from the definition of the generators on page \pageref{fig:disks_with_marked_pts}. \new{Further, in Figure \ref{fig:PMap_gens_sat_rels_comm},} we determine the supporting disks of 
$\Twist_{\TheStrand\NNStrand}\Twist_{\TheStrand\Strand}\Twist_{\TheStrand\NNStrand}^{-1}$, $\Twist_{\TheStrand\Strand}\TwistP_{\TheStrand\Pc}\Twist_{\TheStrand\Strand}^{-1}$ and $\Twist_{\TheStrand\Strand}\TwistC_{\TheStrand\mu}\Twist_{\TheStrand\Strand}^{-1}$.  \new{For each of these disks,} the $\freeprod$-orbit is disjoint from the support of the commuting generators. Thus, the commutator relations follow. In Figure \ref{fig:PMap_gens_sat_rels_comm} the support of exemplary commuting generators are depicted grayed out. 
\begin{figure}[H]
\centerline{\import{Grafiken/orb_mcg_marked_pts/}{PMap_gens_sat_rels_comm.pdf_tex}}
\caption{Determining the supporting disks of $\Twist_{\TheStrand\NNStrand}\Twist_{\TheStrand\Strand}\Twist_{\TheStrand\NNStrand}^{-1}$, $\Twist_{\TheStrand\Strand}\TwistP_{\TheStrand\Pc}\Twist_{\TheStrand\Strand}^{-1}$ and $\Twist_{\TheStrand\Strand}\TwistC_{\TheStrand\mu}\Twist_{\TheStrand\Strand}^{-1}$.}
\label{fig:PMap_gens_sat_rels_comm}
\end{figure}

\new{For the additional conjugation relations,} we work out the first two examples. The remaining relations follow analogously. For this purpose, we emphasize that the relations from \ref{lem:PMap_gens_sat_rels_it1} imply:  
\begin{equation}
\label{lem:PMap_gens_sat_rels_eq}
\Twist_{\NNStrand\Str}\Twist_{\TheStrand\NNStrand}\Twist_{\TheStrand\Str}\mystackrel{\ref{lem:PMap_gens_sat_rels_it1}\hyperlink{lem:PMap_gens_sat_rels_it1_a2}{a')}}=\Twist_{\TheStrand\Str}\Twist_{\NNStrand\Str}\Twist_{\TheStrand\NNStrand}\mystackrel{\ref{lem:PMap_gens_sat_rels_it1}\ref{lem:PMap_gens_sat_rels_it1b}}=\Twist_{\TheStrand\NNStrand}\Twist_{\TheStrand\Str}\Twist_{\NNStrand\Str}. 
\end{equation}
Based on the commutator relation $[\Twist_{\TheStrand\NNStrand}\Twist_{\TheStrand\Strand}\Twist_{\TheStrand\NNStrand}^{-1},\Twist_{\NNStrand\Str}]=1$ from \ref{lem:PMap_gens_sat_rels_it2} and relation~(\ref{lem:PMap_gens_sat_rels_eq}), the first of the missing relations follows: 
\begin{align*}
& \Twist_{\NNStrand\Str}\Twist_{\TheStrand\NNStrand}\Twist_{\TheStrand\Strand}\textcolor{\short}{\Twist_{\TheStrand\NNStrand}^{-1}}=\Twist_{\TheStrand\NNStrand}\Twist_{\TheStrand\Strand}\Twist_{\TheStrand\NNStrand}^{-1}\Twist_{\NNStrand\Str} && \vert\;\Twist_{\TheStrand\Str}\cdot\;\cdot \Twist_{\TheStrand\NNStrand}
\\
\mystackrel{}\Leftrightarrow & \Twist_{\TheStrand\Str}\Twist_{\NNStrand\Str}\Twist_{\TheStrand\NNStrand}\Twist_{\TheStrand\Strand}=\Twist_{\TheStrand\Str}\Twist_{\TheStrand\NNStrand}\Twist_{\TheStrand\Strand}\Twist_{\TheStrand\NNStrand}^{-1}\Twist_{\NNStrand\Str}\Twist_{\TheStrand\NNStrand} &&
\\
\mystackrel{$\vee$}\Leftrightarrow & \textcolor{\col}{\Twist_{\TheStrand\Str}\Twist_{\NNStrand\Str}\Twist_{\TheStrand\NNStrand}}\Twist_{\TheStrand\Strand}=\Twist_{\TheStrand\Str}\Twist_{\TheStrand\NNStrand}\Twist_{\TheStrand\Strand}\Twist_{\TheStrand\NNStrand}^{-1}(\Twist_{\TheStrand\Str}^{-1}\textcolor{\col}{\Twist_{\TheStrand\Str})\Twist_{\NNStrand\Str}\Twist_{\TheStrand\NNStrand}} && 
\\
\mystackrel{(\ref{lem:PMap_gens_sat_rels_eq})}
\Leftrightarrow & \Twist_{\TheStrand\NNStrand}\Twist_{\TheStrand\Str}\Twist_{\NNStrand\Str}\Twist_{\TheStrand\Strand}=\Twist_{\TheStrand\Str}\Twist_{\TheStrand\NNStrand}\Twist_{\TheStrand\Strand}\Twist_{\TheStrand\NNStrand}^{-1}\Twist_{\TheStrand\Str}^{-1}\Twist_{\TheStrand\NNStrand}\Twist_{\TheStrand\Str}\Twist_{\NNStrand\Str} && \vert\;\Twist_{\TheStrand\Str}^{-1}\Twist_{\TheStrand\NNStrand}^{-1}\cdot\;\cdot \Twist_{\NNStrand\Str}^{-1}
\\
\mystackrel{}\Leftrightarrow & \Twist_{\NNStrand\Str}\Twist_{\TheStrand\Strand}\Twist_{\NNStrand\Str}^{-1}=\Twist_{\TheStrand\Str}^{-1}\Twist_{\TheStrand\NNStrand}^{-1}\Twist_{\TheStrand\Str}\Twist_{\TheStrand\NNStrand}\Twist_{\TheStrand\Strand}\Twist_{\TheStrand\NNStrand}^{-1}\Twist_{\TheStrand\Str}^{-1}\Twist_{\TheStrand\NNStrand}\Twist_{\TheStrand\Str}. && 
\end{align*}
From the last equation we also obtain the second conjugation relation: 
\begin{align*}
& \Twist_{\TheStrand\Str}^{-1}\Twist_{\TheStrand\NNStrand}^{-1}\Twist_{\TheStrand\Str}\Twist_{\TheStrand\NNStrand}\Twist_{\TheStrand\Strand}\Twist_{\TheStrand\NNStrand}^{-1}\Twist_{\TheStrand\Str}^{-1}\Twist_{\TheStrand\NNStrand}\Twist_{\TheStrand\Str}=\Twist_{\NNStrand\Str}\Twist_{\TheStrand\Strand}\Twist_{\NNStrand\Str}^{-1} \quad \vert\;\Twist_{\TheStrand\NNStrand}^{-1}\Twist_{\TheStrand\Str}^{-1}\Twist_{\TheStrand\NNStrand}\Twist_{\TheStrand\Str}\cdot\;\cdot \Twist_{\TheStrand\Str}^{-1}\Twist_{\TheStrand\NNStrand}^{-1}\Twist_{\TheStrand\Str}\Twist_{\TheStrand\NNStrand}
\\
\mystackrel{}\Leftrightarrow & \Twist_{\TheStrand\Strand}=\Twist_{\TheStrand\NNStrand}^{-1}\Twist_{\TheStrand\Str}^{-1}\textcolor{\col}{\Twist_{\TheStrand\NNStrand}\Twist_{\TheStrand\Str}\Twist_{\NNStrand\Str}}\Twist_{\TheStrand\Strand}\textcolor{\col}{\Twist_{\NNStrand\Str}^{-1}\Twist_{\TheStrand\Str}^{-1}\Twist_{\TheStrand\NNStrand}^{-1}}\Twist_{\TheStrand\Str}\Twist_{\TheStrand\NNStrand} 
\\
\mystackrel{(\ref{lem:PMap_gens_sat_rels_eq})}
\Leftrightarrow & \Twist_{\TheStrand\Strand}=\Twist_{\TheStrand\NNStrand}^{-1}\Twist_{\TheStrand\Str}^{-1}\Twist_{\NNStrand\Str}\Twist_{\TheStrand\NNStrand}\Twist_{\TheStrand\Str}\Twist_{\TheStrand\Strand}\Twist_{\TheStrand\Str}^{-1}\Twist_{\TheStrand\NNStrand}^{-1}\Twist_{\NNStrand\Str}^{-1}\Twist_{\TheStrand\Str}\Twist_{\TheStrand\NNStrand} 
\\
\mystackrel{$\vee$}\Leftrightarrow & \Twist_{\TheStrand\Strand}=\textcolor{\short}{(\Twist_{\NNStrand\Str}}\Twist_{\NNStrand\Str}^{-1})\Twist_{\TheStrand\NNStrand}^{-1}\Twist_{\TheStrand\Str}^{-1}\Twist_{\NNStrand\Str}\Twist_{\TheStrand\NNStrand}\Twist_{\TheStrand\Str}\Twist_{\TheStrand\Strand}\Twist_{\TheStrand\Str}^{-1}\Twist_{\TheStrand\NNStrand}^{-1}\Twist_{\NNStrand\Str}^{-1}\Twist_{\TheStrand\Str}\Twist_{\TheStrand\NNStrand}(\Twist_{\NNStrand\Str}\textcolor{\short}{\Twist_{\NNStrand\Str}^{-1})} \quad \vert\; \Twist_{\NNStrand\Str}^{-1}\cdot\;\cdot \Twist_{\NNStrand\Str} 
\\
\mystackrel{}\Leftrightarrow & \Twist_{\NNStrand\Str}^{-1}\Twist_{\TheStrand\Strand}\Twist_{\NNStrand\Str}=\textcolor{\col}{\Twist_{\NNStrand\Str}^{-1}\Twist_{\TheStrand\NNStrand}^{-1}\Twist_{\TheStrand\Str}^{-1}\Twist_{\NNStrand\Str}}\Twist_{\TheStrand\NNStrand}\Twist_{\TheStrand\Str}\Twist_{\TheStrand\Strand}\Twist_{\TheStrand\Str}^{-1}\Twist_{\TheStrand\NNStrand}^{-1}\textcolor{\col}{\Twist_{\NNStrand\Str}^{-1}\Twist_{\TheStrand\Str}\Twist_{\TheStrand\NNStrand}\Twist_{\NNStrand\Str}} 
\\
\mystackrel{\ref{lem:PMap_gens_sat_rels_it1}\hyperlink{lem:PMap_gens_sat_rels_it1a2}{a')},\hyperlink{lem:PMap_gens_sat_rels_it1b2}{b')}}\Leftrightarrow & \Twist_{\NNStrand\Str}^{-1}\Twist_{\TheStrand\Strand}\Twist_{\NNStrand\Str}=\Twist_{\TheStrand\NNStrand}\Twist_{\TheStrand\Str}\Twist_{\TheStrand\NNStrand}^{-1}\Twist_{\TheStrand\Str}^{-1}\textcolor{\short}{\Twist_{\TheStrand\NNStrand}^{-1}\Twist_{\TheStrand\NNStrand}\Twist_{\TheStrand\Str}^{-1}\Twist_{\TheStrand\NNStrand}^{-1}\Twist_{\TheStrand\NNStrand}\Twist_{\TheStrand\Str}}\Twist_{\TheStrand\Strand} 
\\
& \textcolor{\short}{\Twist_{\TheStrand\Str}^{-1}\Twist_{\TheStrand\NNStrand}^{-1}\Twist_{\TheStrand\NNStrand}\Twist_{\TheStrand\Str}\Twist_{\TheStrand\NNStrand}^{-1}\Twist_{\TheStrand\NNStrand}}\Twist_{\TheStrand\Str}\Twist_{\TheStrand\NNStrand}\Twist_{\TheStrand\Str}^{-1}\Twist_{\TheStrand\NNStrand}^{-1} 
\\
\mystackrel{}\Leftrightarrow & \Twist_{\NNStrand\Str}^{-1}\Twist_{\TheStrand\Strand}\Twist_{\NNStrand\Str}=\Twist_{\TheStrand\NNStrand}\Twist_{\TheStrand\Str}\Twist_{\TheStrand\NNStrand}^{-1}\Twist_{\TheStrand\Str}^{-1}\Twist_{\TheStrand\Strand}\Twist_{\TheStrand\Str}\Twist_{\TheStrand\NNStrand}\Twist_{\TheStrand\Str}^{-1}\Twist_{\TheStrand\NNStrand}^{-1}. 
\end{align*}
\end{proof}

Now the semidirect product structure of $\PMapIdOrb{\TheStrand}{\Sigma_\freeprod(\ThePct)}$ allows us to deduce a finite presentation in terms of the above generators. 

\begin{corollary}
\label{cor:pres_PMap_free_prod}
The pure mapping class group $\PMapIdOrb{\TheStrand}{\Sigma_\freeprod(\ThePct)}$ has a presentation with generators 
\[
\Twist_{\Strand\Str}, \TwistP_{\NStrand\NPct} \; \text{ and } \; \TwistC_{\NStrand\nu}, 
\]
for $1\leq\Str,\Strand,\NStrand\leq\TheStrand$ with $\Str<\Strand$, $1\leq\NPct\leq\ThePct$ and $1\leq\nu\leq\TheCone$ and the following defining relations for $1\leq\Str,\Strand,\NStrand,\NNStrand\leq\TheStrand$ with $\Str<\Strand<\NStrand<\NNStrand$, $1\leq\Pc,\NPct\leq\ThePct$ with $\Pc<\NPct$ and $1\leq\mu,\nu\leq\TheCone$ with $\mu<\nu$: 
\begin{enumerate}
\item 
\label{cor:pres_PMap_free_prod_rel1}
$[\Twist_{\Strand\Str},\Twist_{\NNStrand\NStrand}]=1$, 
$[\TwistP_{\Strand\NPct},\Twist_{\NNStrand\NStrand}]=1 \; \text{ and } \; 
[\TwistC_{\Strand\nu},\Twist_{\NNStrand\NStrand}]=1$, 
\item 
\label{cor:pres_PMap_free_prod_rel2}
$[\Twist_{\NNStrand\Str},\Twist_{\NStrand\Strand}]=1$, 
$[\TwistP_{\NNStrand\NPct},\Twist_{\NStrand\Strand}]=1$, 
$[\TwistP_{\NNStrand\NPct},\TwistP_{\NStrand\Pc}]=1$, 
$[\TwistC_{\NNStrand\nu},\Twist_{\NStrand\Strand}]=1$, 
\\
$[\TwistC_{\NNStrand\nu},\TwistP_{\NStrand\NPct}]=1 \; \text{ and } \; 
[\TwistC_{\NNStrand\nu},\TwistC_{\NStrand\mu}]=1$, 
\item 
\label{cor:pres_PMap_free_prod_rel3}
$[\Twist_{\NNStrand\NStrand}\Twist_{\NNStrand\Strand}\Twist_{\NNStrand\NStrand}^{-1},\Twist_{\NStrand\Str}]=1$, 
$[\Twist_{\NStrand\Strand}\Twist_{\NStrand\Str}\Twist_{\NStrand\Strand}^{-1},\TwistP_{\Strand\NPct}]=1$, 
$[\Twist_{\NStrand\Strand}\TwistP_{\NStrand\Pc}\Twist_{\NStrand\Strand}^{-1},\TwistP_{\Strand\NPct}]=1$, 
$[\Twist_{\NStrand\Strand}\Twist_{\NStrand\Str}\Twist_{\NStrand\Strand}^{-1},\TwistC_{\Strand\nu}]=1$, 
$[\Twist_{\NStrand\Strand}\TwistC_{\NStrand\mu}\Twist_{\NStrand\Strand}^{-1},\TwistC_{\Strand\nu}]=1 \; \text{ and } \; 
[\Twist_{\NStrand\Strand}\TwistP_{\NStrand\NPct}\Twist_{\NStrand\Strand}^{-1},\TwistC_{\Strand\nu}]=~1$, 
\item 
\label{cor:pres_PMap_free_prod_rel4}
$\Twist_{\Strand\Str}\Twist_{\NStrand\Strand}\Twist_{\NStrand\Str}=\Twist_{\NStrand\Str}\Twist_{\Strand\Str}\Twist_{\NStrand\Strand}=\Twist_{\NStrand\Strand}\Twist_{\NStrand\Str}\Twist_{\Strand\Str}$, 
\\
$\Twist_{\Strand\Str}\TwistP_{\Strand\NPct}\TwistP_{\Str\NPct}=\TwistP_{\Str\NPct}\Twist_{\Strand\Str}\TwistP_{\Strand\NPct}=\TwistP_{\Strand\NPct}\TwistP_{\Str\NPct}\Twist_{\Strand\Str}$ and 
\\
$\Twist_{\Strand\Str}\TwistC_{\Strand\nu}\TwistC_{\Str\nu}=\TwistC_{\Str\nu}\Twist_{\Strand\Str}\TwistC_{\Strand\nu}=\TwistC_{\Strand\nu}\TwistC_{\Str\nu}\Twist_{\Strand\Str}$. 
\end{enumerate}
\end{corollary}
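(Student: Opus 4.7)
The plan is to induct on the number of marked points $\TheStrand$, using the split short exact sequence from Corollary \ref{cor:pure_orb_mcg_ses} together with Lemma \ref{lem:semidir_prod_pres}. The base case $\TheStrand = 0$ holds trivially since $\PMapIdOrb{}{\Sigma_\freeprod(\ThePct)} = 1$ by Definition \ref{def:Map_orb(ThePct)} and the listed relations are vacuous; for $\TheStrand = 1$, the proof of Corollary \ref{cor:PMap_gen_set} shows that $\PMapIdOrb{1}{\Sigma_\freeprod(\ThePct)} \cong \freegrp{\ThePct+\TheCone}$ is free on $\{\TwistP_{1\NPct}, \TwistC_{1\nu}\}$, and again the relations in (1)--(4) are vacuous because they all involve at least two indices from $\{1,\ldots,\TheStrand\}$ with $\Str<\Strand$.

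For the inductive step, assume the presentation for $\PMapIdOrb{\TheStrand-1}{\Sigma_\freeprod(\ThePct)}$. Corollary \ref{cor:pure_orb_mcg_ses} yields the splitting
$$
\PMapIdOrb{\TheStrand}{\Sigma_\freeprod(\ThePct)} \;\cong\; \freegrp{\TheStrand-1+\ThePct+\TheCone} \rtimes \PMapIdOrb{\TheStrand-1}{\Sigma_\freeprod(\ThePct)},
$$
and the proof of Corollary \ref{cor:PMap_gen_set} identifies a free basis of the kernel as the ``new'' generators $\{\Twist_{\TheStrand\Strand}, \TwistP_{\TheStrand\NPct}, \TwistC_{\TheStrand\nu} : 1\leq \Strand < \TheStrand,\; 1\leq\NPct\leq\ThePct,\; 1\leq\nu\leq\TheCone\}$. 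By Lemma \ref{lem:semidir_prod_pres}, a presentation of $\PMapIdOrb{\TheStrand}{\Sigma_\freeprod(\ThePct)}$ is then produced by taking the inductively known presentation of $\PMapIdOrb{\TheStrand-1}{\Sigma_\freeprod(\ThePct)}$, adjoining the new generators (with no further relations, since the kernel is free), and adjoining the conjugation relations $y^{\pm 1} x y^{\mp 1} = \phi_{y^{\pm 1}}(x)$ for every old generator $y$ and every new generator $x$. The required formulas for $\phi_{y^{\pm 1}}(x)$ are exactly what Lemma \ref{lem:PMap_gens_sat_rels} provides: parts (1) and the primed variants encode the nontrivial conjugations, while part (2) encodes the commuting pairs. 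That the assignments $y\mapsto\phi_y$ genuinely define automorphisms of $\freegrp{\TheStrand-1+\ThePct+\TheCone}$ and extend to a homomorphism into $\Aut(\freegrp{\TheStrand-1+\ThePct+\TheCone})$ is automatic, since the splitting in Corollary \ref{cor:pure_orb_mcg_ses} realises this action by honest conjugation inside the ambient mapping class group.

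It remains to check that the semidirect-product presentation produced by Lemma \ref{lem:semidir_prod_pres} is Tietze-equivalent to the list (1)--(4) of the corollary with indices running over $1,\ldots,\TheStrand$. The commutator relations (1)--(3) match, for top index $\TheStrand$, with Lemma \ref{lem:PMap_gens_sat_rels}(2) (and for indices strictly below $\TheStrand$ follow from the inductive hypothesis). The braid-type relations (4) arise by rearranging the conjugation identities of Lemma \ref{lem:PMap_gens_sat_rels}(1): for instance, the primed version of \ref{lem:PMap_gens_sat_rels_it1b} can, after multiplying on both sides and cancelling, be rewritten as $\Twist_{\Strand\Str}\Twist_{\NStrand\Strand}\Twist_{\NStrand\Str} = \Twist_{\NStrand\Str}\Twist_{\Strand\Str}\Twist_{\NStrand\Strand} = \Twist_{\NStrand\Strand}\Twist_{\NStrand\Str}\Twist_{\Strand\Str}$ (and similarly for the $\TwistP$- and $\TwistC$-versions from \ref{lem:PMap_gens_sat_rels_it1e}, \ref{lem:PMap_gens_sat_rels_it1f}), exactly matching (4). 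Conversely, the conjugation rules of Lemma \ref{lem:PMap_gens_sat_rels}(1) can be recovered from (1)--(4) by reversing these manipulations.

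The main obstacle is precisely this dictionary between the conjugation relations of Lemma \ref{lem:PMap_gens_sat_rels} and the symmetric relations (1)--(4): one must verify both directions of the Tietze-equivalence for each family of generators ($\Twist$, $\TwistP$, $\TwistC$) and each configuration of indices, which is routine but requires careful bookkeeping along the lines of the displayed manipulations at the end of the proof of Lemma \ref{lem:PMap_gens_sat_rels}.
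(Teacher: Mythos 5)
Your proposal follows the same strategy as the paper's proof: induction on $\TheStrand$, the split short exact sequence from Corollary \ref{cor:pure_orb_mcg_ses}, the semidirect-product presentation of Lemma \ref{lem:semidir_prod_pres}, and the conjugation relations supplied by Lemma \ref{lem:PMap_gens_sat_rels}, followed by a Tietze-equivalence rewriting. The one substantive difference is the amount of detail at the end: what you describe as ``routine but requires careful bookkeeping'' is in fact the bulk of the paper's proof, namely an explicit itemised list of equivalences (the displays labelled \eqref{cor:pres_PMap_free_prod_eq1}--\eqref{cor:pres_PMap_free_prod_eq4} and their numbered variants) verifying both directions of the Tietze equivalence for every generator family and index configuration; your one worked example is representative but you would need all of these to complete the argument, including the conjugations from Lemma \ref{lem:PMap_gens_sat_rels}\ref{lem:PMap_gens_sat_rels_it3} where the conjugated index differs from both conjugator indices.
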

\begin{proof}
We prove the claimed presentation of $\PMapIdOrb{\TheStrand}{\Sigma_\freeprod(\ThePct)}$ by induction on~$\TheStrand$. 

For $\TheStrand=0$ and $\TheStrand=1$, the same arguments as in the proof of Corollary \ref{cor:PMap_gen_set} show that $\PMapIdOrb{0}{\Sigma_\freeprod(\ThePct)}$ is trivial and $\PMapIdOrb{1}{\Sigma_\freeprod(\ThePct)}$ is a free group of rank $\ThePct+\TheCone$ with basis elements $\TwistP_{1\NPct},\TwistC_{1\nu}$ for $1\leq\NPct\leq\ThePct$ and $1\leq\nu\leq\TheCone$. That is, $\PMapIdOrb{\TheStrand}{\Sigma_\freeprod(\ThePct)}$ has the above presentation for $\TheStrand=0$ and $\TheStrand=1$. 

By induction hypothesis, we assume that $\PMapIdOrb{\TheStrand-1}{\Sigma_\freeprod(\ThePct)}$ has a presentation as claimed above. Moreover, we recall that the group $\PMapIdOrb{\TheStrand}{\Sigma_\freeprod(\ThePct)}$ by Corollary~\ref{cor:pure_orb_mcg_ses} has a semidirect product structure $\freegrp{\TheStrand-1+\ThePct+\TheCone}\rtimes\PMapIdOrb{\TheStrand-1}{\Sigma_\freeprod(\ThePct)}$. Invoking Lemma \ref{lem:semidir_prod_pres}, we obtain a presentation by generators $\Twist_{\Strand\Str}$ for $1\leq\Str<\Strand\leq\TheStrand$ and $\TwistC_{\NStrand\nu},\TwistP_{\NStrand\NPct}$ for $1\leq\NStrand\leq\TheStrand, 1\leq\nu\leq\TheCone, 1\leq\NPct\leq\ThePct$ with the following relations: Since $\freegrp{\TheStrand-1+\ThePct+\TheCone}$ is a free group, the set of relations named $R$ in Lemma \ref{lem:semidir_prod_pres} is empty in this case. From $\PMapIdOrb{\TheStrand-1}{\Sigma_\freeprod(\ThePct)}$ the group $\PMapIdOrb{\TheStrand}{\Sigma_\freeprod(\ThePct)}$ inherits the following relations for $1\leq\Str,\Strand,\NStrand,\NNStrand<\TheStrand$ with $ \Str<\Strand<\NStrand<\NNStrand$, $1\leq\Pc,\NPct\leq\ThePct$ with $\Pc<\NPct$ and $1\leq\mu,\nu\leq\TheCone$ with $\mu<\nu$ named $S$ in Lemma \ref{lem:semidir_prod_pres}: 
\begin{itemize}
\item 
$[\Twist_{\Strand\Str},\Twist_{\NNStrand\NStrand}]=1$, 
$[\TwistP_{\Strand\NPct},\Twist_{\NNStrand\NStrand}]=1 \; \text{ and } \; 
[\TwistC_{\Strand\nu},\Twist_{\NNStrand\NStrand}]=1$, 
\item 
$[\Twist_{\NNStrand\Str},\Twist_{\NStrand\Strand}]=1$, 
$[\TwistP_{\NNStrand\NPct},\Twist_{\NStrand\Strand}]=1$, 
$[\TwistP_{\NNStrand\NPct},\TwistP_{\NStrand\Pc}]=1$, 
$[\TwistC_{\NNStrand\nu},\Twist_{\NStrand\Strand}]=1$, 
\\
$[\TwistC_{\NNStrand\nu},\TwistP_{\NStrand\NPct}]=1 \; \text{ and } \; 
[\TwistC_{\NNStrand\nu},\TwistC_{\NStrand\mu}]=1$, 
\item 
$[\Twist_{\NNStrand\NStrand}\Twist_{\NNStrand\Strand}\Twist_{\NNStrand\NStrand}^{-1},\Twist_{\NStrand\Str}]=1$, 
$[\Twist_{\NStrand\Strand}\Twist_{\NStrand\Str}\Twist_{\NStrand\Strand}^{-1},\TwistP_{\Strand\NPct}]=1$, 
$[\Twist_{\NStrand\Strand}\TwistP_{\NStrand\Pc}\Twist_{\NStrand\Strand}^{-1},\TwistP_{\Strand\NPct}]=1$, 
$[\Twist_{\NStrand\Strand}\Twist_{\NStrand\Str}\Twist_{\NStrand\Strand}^{-1},\TwistC_{\Strand\nu}]=1$, 
$[\Twist_{\NStrand\Strand}\TwistC_{\NStrand\mu}\Twist_{\NStrand\Strand}^{-1},\TwistC_{\Strand\nu}]=1 \; \text{ and } \; 
[\Twist_{\NStrand\Strand}\TwistP_{\NStrand\NPct}\Twist_{\NStrand\Strand}^{-1},\TwistC_{\Strand\nu}]=1$, 
\item 
$\Twist_{\Strand\Str}\Twist_{\NStrand\Strand}\Twist_{\NStrand\Str}=\Twist_{\NStrand\Str}\Twist_{\Strand\Str}\Twist_{\NStrand\Strand}=\Twist_{\NStrand\Strand}\Twist_{\NStrand\Str}\Twist_{\Strand\Str}$, 
\\
$\Twist_{\Strand\Str}\TwistP_{\Strand\NPct}\TwistP_{\Str\NPct}=\TwistP_{\Str\NPct}\Twist_{\Strand\Str}\TwistP_{\Strand\NPct}=\TwistP_{\Strand\NPct}\TwistP_{\Str\NPct}\Twist_{\Strand\Str}\;$ and 
\\
$\Twist_{\Strand\Str}\TwistC_{\Strand\nu}\TwistC_{\Str\nu}=\TwistC_{\Str\nu}\Twist_{\Strand\Str}\TwistC_{\Strand\nu}=\TwistC_{\Strand\nu}\TwistC_{\Str\nu}\Twist_{\Strand\Str}$. 
\end{itemize}
Additionally, the generators satisfy the conjugation relations from Lemma \ref{lem:PMap_gens_sat_rels}. Below we list them in an order, that allows us to observe that these relations guarantee that $\PushPMap^{orb}$ embeds $\freegrp{\TheStrand-1+\ThePct+\TheCone}$ as a normal subgroup, i.e.\ the conjugation relations and the relations from $\PMapIdOrb{\TheStrand-1}{\Sigma_\freeprod(\ThePct)}$ yield a presentation as in Lemma~\ref{lem:semidir_prod_pres_it2}. 

It remains to show that this presentation is equivalent to the claimed presentation. Therefore, we explain how the conjugation relations fit into the presentation of $\PMapIdOrb{\TheStrand}{\Sigma_\freeprod(\ThePct)}$ claimed above. This requires the observations described below. \new{In (\ref{cor:pres_PMap_free_prod_eq1}),} we use: 
\begin{align*}
& \Twist_{\NNStrand\Strand}\Twist_{\TheStrand\Strand}\textcolor{\short}{\Twist_{\NNStrand\Strand}^{-1}}= \textcolor{\short}{\Twist_{\TheStrand\Strand}^{-1}\Twist_{\TheStrand\NNStrand}^{-1}}\Twist_{\TheStrand\Strand}\Twist_{\TheStrand\NNStrand}\Twist_{\TheStrand\Strand} && \vert \Twist_{\TheStrand\NNStrand}\Twist_{\TheStrand\Strand}\cdot\;\cdot \Twist_{\NNStrand\Strand}
\\
\mystackrel{}\Leftrightarrow & \textcolor{\col}{\Twist_{\TheStrand\NNStrand}\Twist_{\TheStrand\Strand}\Twist_{\NNStrand\Strand}}\Twist_{\TheStrand\Strand}=\Twist_{\TheStrand\Strand}\textcolor{\col}{\Twist_{\TheStrand\NNStrand}\Twist_{\TheStrand\Strand}\Twist_{\NNStrand\Strand}} && 
\\
\mystackrel{(\ref{cor:pres_PMap_free_prod_eq2_1})}\Leftrightarrow & \textcolor{\short}{\Twist_{\TheStrand\Strand}}\Twist_{\NNStrand\Strand}\Twist_{\TheStrand\NNStrand}\Twist_{\TheStrand\Strand}=\textcolor{\short}{\Twist_{\TheStrand\Strand}}\Twist_{\TheStrand\Strand}\Twist_{\NNStrand\Strand}\Twist_{\TheStrand\NNStrand} && \vert\; \Twist_{\TheStrand\Strand}^{-1}\cdot 
\\
\mystackrel{}\Leftrightarrow & \Twist_{\NNStrand\Strand}\Twist_{\TheStrand\NNStrand}\Twist_{\TheStrand\Strand}=\Twist_{\TheStrand\Strand}\Twist_{\NNStrand\Strand}\Twist_{\TheStrand\NNStrand} \text{ from } \ref{cor:pres_PMap_free_prod_rel4}. && 
\end{align*}

The observation in (\ref{cor:pres_PMap_free_prod_eq2}) is immediate. \new{For (\ref{cor:pres_PMap_free_prod_eq3}),} we observe: 
\begin{align*}
& \Twist_{\NNStrand\Str}\Twist_{\TheStrand\Strand}\textcolor{\short}{\Twist_{\NNStrand\Str}^{-1}}=\textcolor{\short}{\Twist_{\TheStrand\Str}^{-1}\Twist_{\TheStrand\NNStrand}^{-1}}\Twist_{\TheStrand\Str}\Twist_{\TheStrand\NNStrand}\Twist_{\TheStrand\Strand}\Twist_{\TheStrand\NNStrand}^{-1}\Twist_{\TheStrand\Str}^{-1}\Twist_{\TheStrand\NNStrand}\Twist_{\TheStrand\Str} && \vert\; \Twist_{\TheStrand\NNStrand}\Twist_{\TheStrand\Str}\cdot\;\cdot \Twist_{\NNStrand\Str}
\\
\mystackrel{}\Leftrightarrow & \textcolor{\col}{\Twist_{\TheStrand\NNStrand}\Twist_{\TheStrand\Str}\Twist_{\NNStrand\Str}}\Twist_{\TheStrand\Strand}=\Twist_{\TheStrand\Str}\Twist_{\TheStrand\NNStrand}\Twist_{\TheStrand\Strand}\Twist_{\TheStrand\NNStrand}^{-1}\Twist_{\TheStrand\Str}^{-1}\textcolor{\col}{\Twist_{\TheStrand\NNStrand}\Twist_{\TheStrand\Str}\Twist_{\NNStrand\Str}} && 
\\
\mystackrel{(\ref{cor:pres_PMap_free_prod_eq2_1})}\Leftrightarrow & \textcolor{\short}{\Twist_{\TheStrand\Str}}\Twist_{\NNStrand\Str}\Twist_{\TheStrand\NNStrand}\Twist_{\TheStrand\Strand}=\textcolor{\short}{\Twist_{\TheStrand\Str}}\Twist_{\TheStrand\NNStrand}\Twist_{\TheStrand\Strand}\Twist_{\TheStrand\NNStrand}^{-1}\textcolor{\short}{\Twist_{\TheStrand\Str}^{-1}\Twist_{\TheStrand\Str}}\Twist_{\NNStrand\Str}\Twist_{\TheStrand\NNStrand} && \vert\; \Twist_{\TheStrand\Str}^{-1}\cdot 
\\
\mystackrel{}\Leftrightarrow & \Twist_{\NNStrand\Str}\Twist_{\TheStrand\NNStrand}\Twist_{\TheStrand\Strand}=\Twist_{\TheStrand\NNStrand}\Twist_{\TheStrand\Strand}\Twist_{\TheStrand\NNStrand}^{-1}\Twist_{\NNStrand\Str}\textcolor{\short}{\Twist_{\TheStrand\NNStrand}} && \vert\;\cdot \Twist_{\TheStrand\NNStrand}^{-1} 
\\
\mystackrel{}\Leftrightarrow & \Twist_{\NNStrand\Str}\Twist_{\TheStrand\NNStrand}\Twist_{\TheStrand\Strand}\Twist_{\TheStrand\NNStrand}^{-1}=\Twist_{\TheStrand\NNStrand}\Twist_{\TheStrand\Strand}\Twist_{\TheStrand\NNStrand}^{-1}\Twist_{\NNStrand\Str} \text{ from } \ref{cor:pres_PMap_free_prod_rel3}. && 
\end{align*}
\new{To deduce (\ref{cor:pres_PMap_free_prod_eq4}),} we additionally observe 
\begin{equation}
\label{cor:pres_PMap_free_prod_eq_cyc_permut}
\Twist_{\NNStrand\Strand}\Twist_{\TheStrand\NNStrand}\Twist_{\TheStrand\Strand}\mystackrel{(\ref{cor:pres_PMap_free_prod_eq1})}=\Twist_{\TheStrand\Strand}\Twist_{\NNStrand\Strand}\Twist_{\TheStrand\NNStrand}\mystackrel{(\ref{cor:pres_PMap_free_prod_eq2_1})}=\Twist_{\TheStrand\NNStrand}\Twist_{\TheStrand\Strand}\Twist_{\NNStrand\Strand}. 
\end{equation}
This implies: 
\begin{align*}
& \textcolor{\short}{\Twist_{\NNStrand\Str}^{-1}}\Twist_{\TheStrand\Strand}\textcolor{\short}{\Twist_{\NNStrand\Str}} & \mystackrel{}= & \Twist_{\TheStrand\NNStrand}\Twist_{\TheStrand\Str}\Twist_{\TheStrand\NNStrand}^{-1}\Twist_{\TheStrand\Str}^{-1}\Twist_{\TheStrand\Strand}\Twist_{\TheStrand\Str}\Twist_{\TheStrand\NNStrand}\Twist_{\TheStrand\Str}^{-1}\Twist_{\TheStrand\NNStrand}^{-1} \hspace*{16mm} \vert\;\Twist_{\NNStrand\Str}\cdot\;\cdot \Twist_{\NNStrand\Str}^{-1}
\\
\mystackrel{}\Leftrightarrow & \Twist_{\TheStrand\Strand} & \mystackrel{}= & \textcolor{\col}{\Twist_{\NNStrand\Str}\Twist_{\TheStrand\NNStrand}\Twist_{\TheStrand\Str}}\Twist_{\TheStrand\NNStrand}^{-1}\Twist_{\TheStrand\Str}^{-1}\Twist_{\TheStrand\Strand}\Twist_{\TheStrand\Str}\Twist_{\TheStrand\NNStrand}\textcolor{\col}{\Twist_{\TheStrand\Str}^{-1}\Twist_{\TheStrand\NNStrand}^{-1}\Twist_{\NNStrand\Str}^{-1}} 
\\
& & \mystackrel{(\ref{cor:pres_PMap_free_prod_eq_cyc_permut})}= & \Twist_{\TheStrand\NNStrand}\Twist_{\TheStrand\Str}\Twist_{\NNStrand\Str}\Twist_{\TheStrand\NNStrand}^{-1}\Twist_{\TheStrand\Str}^{-1}\Twist_{\TheStrand\Strand}\Twist_{\TheStrand\Str}\Twist_{\TheStrand\NNStrand}\Twist_{\NNStrand\Str}^{-1}\Twist_{\TheStrand\Str}^{-1}\Twist_{\TheStrand\NNStrand}^{-1}  
\\
& & \mystackrel{$\vee$}= & \Twist_{\TheStrand\NNStrand}\Twist_{\TheStrand\Str}\textcolor{\col}{\Twist_{\NNStrand\Str}\Twist_{\TheStrand\NNStrand}^{-1}\Twist_{\TheStrand\Str}^{-1}(\Twist_{\NNStrand\Str}^{-1}}\Twist_{\NNStrand\Str})\Twist_{\TheStrand\Strand}
\\
& & & (\Twist_{\NNStrand\Str}^{-1}\textcolor{\col}{\Twist_{\NNStrand\Str})\Twist_{\TheStrand\Str}\Twist_{\TheStrand\NNStrand}\Twist_{\NNStrand\Str}^{-1}}\Twist_{\TheStrand\Str}^{-1}\Twist_{\TheStrand\NNStrand}^{-1} 
\\
& & \mystackrel{(\ref{cor:pres_PMap_free_prod_eq1},\ref{cor:pres_PMap_free_prod_eq2_1})}= & \textcolor{\short}{\Twist_{\TheStrand\NNStrand}\Twist_{\TheStrand\Str}\Twist_{\TheStrand\Str}^{-1}\Twist_{\TheStrand\NNStrand}^{-1}\Twist_{\TheStrand\Str}\Twist_{\TheStrand\Str}^{-1}}\Twist_{\TheStrand\NNStrand}^{-1}\Twist_{\TheStrand\Str}^{-1}\Twist_{\TheStrand\NNStrand}\Twist_{\TheStrand\Str}\Twist_{\NNStrand\Str}\Twist_{\TheStrand\Strand}
\\
&  &  & \Twist_{\NNStrand\Str}^{-1}\Twist_{\TheStrand\Str}^{-1}\Twist_{\TheStrand\NNStrand}^{-1}\Twist_{\TheStrand\Str}\Twist_{\TheStrand\NNStrand}\textcolor{\short}{\Twist_{\TheStrand\Str}\Twist_{\TheStrand\Str}^{-1}\Twist_{\TheStrand\NNStrand}\Twist_{\TheStrand\Str}\Twist_{\TheStrand\Str}^{-1}\Twist_{\TheStrand\NNStrand}^{-1}} 
\\
& & \mystackrel{}= & \Twist_{\TheStrand\NNStrand}^{-1}\Twist_{\TheStrand\Str}^{-1}\Twist_{\TheStrand\NNStrand}\Twist_{\TheStrand\Str}\Twist_{\NNStrand\Str}\Twist_{\TheStrand\Strand}\Twist_{\NNStrand\Str}^{-1}\Twist_{\TheStrand\Str}^{-1}\Twist_{\TheStrand\NNStrand}^{-1}\Twist_{\TheStrand\Str}\Twist_{\TheStrand\NNStrand}  
\\
\mystackrel{}\Leftrightarrow & \Twist_{\NNStrand\Str}\Twist_{\TheStrand\Strand}\Twist_{\NNStrand\Str}^{-1} &  \mystackrel{}= & \Twist_{\TheStrand\Str}^{-1}\Twist_{\TheStrand\NNStrand}^{-1}\Twist_{\TheStrand\Str}\Twist_{\TheStrand\NNStrand}\Twist_{\TheStrand\Strand}\Twist_{\TheStrand\NNStrand}^{-1}\Twist_{\TheStrand\Str}^{-1}\Twist_{\TheStrand\NNStrand}\Twist_{\TheStrand\Str}
\\
\mystackrel{(\ref{cor:pres_PMap_free_prod_eq3})}\Leftrightarrow & [\Twist_{\TheStrand\NNStrand}\Twist_{\TheStrand\Strand}\Twist_{\TheStrand\NNStrand}^{-1},\Twist_{\NNStrand\Str}] & \mystackrel{}= & 1. 
\end{align*}

Using these and further similar observations, the conjugation relations fit into the above presentation. \new{In the list below,} an equivalence numbered by (x-y) relies on an observation analogous to the observation for (x) described above. Let 
\linebreak 
$1\leq\Strr,\Str,\Strand,\NStrand,\NNStrand<\TheStrand$ with $\Strr<\Str<\Strand<\NStrand<\NNStrand$, \;$1\leq\Pc,\Pct,\NPct\leq\ThePct$ with $\Pc<\Pct<\NPct$ and $1\leq\mu,\nu,\omicron\leq\TheCone$ with $\mu<\nu<\omicron$. 
\begin{align*}
\Twist_{\NNStrand\NStrand}\Twist_{\TheStrand\Strand}\Twist_{\NNStrand\NStrand}^{-1}=\Twist_{\NNStrand\NStrand}^{-1}\Twist_{\TheStrand\Strand}\Twist_{\NNStrand\NStrand}=\Twist_{\TheStrand\Strand} & \mystackrel{}\Leftrightarrow [\Twist_{\NNStrand\NStrand},\Twist_{\TheStrand\Strand}]=1, 
\\
\label{cor:pres_PMap_free_prod_eq1}
\Twist_{\NNStrand\Strand}\Twist_{\TheStrand\Strand}\Twist_{\NNStrand\Strand}^{-1}= \Twist_{\TheStrand\Strand}^{-1}\Twist_{\TheStrand\NNStrand}^{-1}\Twist_{\TheStrand\Strand}\Twist_{\TheStrand\NNStrand}\Twist_{\TheStrand\Strand} & \mystackrel{(\ref{cor:pres_PMap_free_prod_eq2_1})}\Leftrightarrow \Twist_{\TheStrand\Strand}\Twist_{\NNStrand\Strand}\Twist_{\TheStrand\NNStrand}=\Twist_{\NNStrand\Strand}\Twist_{\TheStrand\NNStrand}\Twist_{\TheStrand\Strand}, \numbereq
\\
\label{cor:pres_PMap_free_prod_eq2}
\Twist_{\NNStrand\Strand}^{-1}\Twist_{\TheStrand\Strand}\Twist_{\NNStrand\Strand}=\Twist_{\TheStrand\NNStrand}\Twist_{\TheStrand\Strand}\Twist_{\TheStrand\NNStrand}^{-1} & \mystackrel{}\Leftrightarrow \Twist_{\TheStrand\Strand}\Twist_{\NNStrand\Strand}\Twist_{\TheStrand\NNStrand}=\Twist_{\NNStrand\Strand}\Twist_{\TheStrand\NNStrand}\Twist_{\TheStrand\Strand}, \numbereq
\\
\label{cor:pres_PMap_free_prod_eq3}
\Twist_{\NNStrand\Str}\Twist_{\TheStrand\Strand}\Twist_{\NNStrand\Str}^{-1}=\Twist_{\TheStrand\Str}^{-1}\Twist_{\TheStrand\NNStrand}^{-1}\Twist_{\TheStrand\Str}\Twist_{\TheStrand\NNStrand}\Twist_{\TheStrand\Strand}\Twist_{\TheStrand\NNStrand}^{-1}\Twist_{\TheStrand\Str}^{-1}\Twist_{\TheStrand\NNStrand}\Twist_{\TheStrand\Str} & \mystackrel{(\ref{cor:pres_PMap_free_prod_eq2_1})}\Leftrightarrow [\Twist_{\TheStrand\NNStrand}\Twist_{\TheStrand\Strand}\Twist_{\TheStrand\NNStrand}^{-1},\Twist_{\NNStrand\Str}]=1, \numbereq
\\
\label{cor:pres_PMap_free_prod_eq4}
\Twist_{\NNStrand\Str}^{-1}\Twist_{\TheStrand\Strand}\Twist_{\NNStrand\Str}=\Twist_{\TheStrand\NNStrand}\Twist_{\TheStrand\Str}\Twist_{\TheStrand\NNStrand}^{-1}\Twist_{\TheStrand\Str}^{-1}\Twist_{\TheStrand\Strand}\Twist_{\TheStrand\Str}\Twist_{\TheStrand\NNStrand}\Twist_{\TheStrand\Str}^{-1}\Twist_{\TheStrand\NNStrand}^{-1} & \mystackrel{(\ref{cor:pres_PMap_free_prod_eq1},\ref{cor:pres_PMap_free_prod_eq2_1})}\Leftrightarrow [\Twist_{\TheStrand\NNStrand}\Twist_{\TheStrand\Strand}\Twist_{\TheStrand\NNStrand}^{-1},\Twist_{\NNStrand\Str}]=1, \numbereq
\\
\label{cor:pres_PMap_free_prod_eq3_1}
\TwistP_{\NNStrand\NPct}\Twist_{\TheStrand\Strand}\TwistP_{\NNStrand\NPct}^{-1}=\TwistP_{\TheStrand\NPct}^{-1}\Twist_{\TheStrand\NNStrand}^{-1}\TwistP_{\TheStrand\NPct}\Twist_{\TheStrand\NNStrand}\Twist_{\TheStrand\Strand}\Twist_{\TheStrand\NNStrand}^{-1}\TwistP_{\TheStrand\NPct}^{-1}\Twist_{\TheStrand\NNStrand}\TwistP_{\TheStrand\NPct} & \mystackrel{(\ref{cor:pres_PMap_free_prod_eq2_2n})}\Leftrightarrow [\Twist_{\TheStrand\NNStrand}\Twist_{\TheStrand\Strand}\Twist_{\TheStrand\NNStrand}^{-1},\TwistP_{\NNStrand\NPct}]=1, \tag{\ref{cor:pres_PMap_free_prod_eq3}-1} 
\\
\label{cor:pres_PMap_free_prod_eq4_1}
\TwistP_{\NNStrand\NPct}^{-1}\Twist_{\TheStrand\Strand}\TwistP_{\NNStrand\NPct}=\Twist_{\TheStrand\NNStrand}\TwistP_{\TheStrand\NPct}\Twist_{\TheStrand\NNStrand}^{-1}\TwistP_{\TheStrand\NPct}^{-1}\Twist_{\TheStrand\Strand}\TwistP_{\TheStrand\NPct}\Twist_{\TheStrand\NNStrand}\TwistP_{\TheStrand\NPct}^{-1}\Twist_{\TheStrand\NNStrand}^{-1} & \mystackrel{(\ref{cor:pres_PMap_free_prod_eq1_4n},\ref{cor:pres_PMap_free_prod_eq2_2n})}\Leftrightarrow [\Twist_{\TheStrand\NNStrand}\Twist_{\TheStrand\Strand}\Twist_{\TheStrand\NNStrand}^{-1},\TwistP_{\NNStrand\NPct}]=1, \tag{\ref{cor:pres_PMap_free_prod_eq4}-1} 
\\
\label{cor:pres_PMap_free_prod_eq3_2}
\TwistC_{\NNStrand\nu}\Twist_{\TheStrand\Strand}\TwistC_{\NNStrand\nu}^{-1}=\TwistC_{\TheStrand\nu}^{-1}\Twist_{\TheStrand\NNStrand}^{-1}\TwistC_{\TheStrand\nu}\Twist_{\TheStrand\NNStrand}\Twist_{\TheStrand\Strand}\Twist_{\TheStrand\NNStrand}^{-1}\TwistC_{\TheStrand\nu}^{-1}\Twist_{\TheStrand\NNStrand}\TwistC_{\TheStrand\nu} & \mystackrel{(\ref{cor:pres_PMap_free_prod_eq2_3n})}\Leftrightarrow [\Twist_{\TheStrand\NNStrand}\Twist_{\TheStrand\Strand}\Twist_{\TheStrand\NNStrand}^{-1},\TwistC_{\NNStrand\nu}]=1, \tag{\ref{cor:pres_PMap_free_prod_eq3}-2}
\\
\label{cor:pres_PMap_free_prod_eq4_2}
\TwistC_{\NNStrand\nu}^{-1}\Twist_{\TheStrand\Strand}\TwistC_{\NNStrand\nu}=\Twist_{\TheStrand\NNStrand}\TwistC_{\TheStrand\nu}\Twist_{\TheStrand\NNStrand}^{-1}\TwistC_{\TheStrand\nu}^{-1}\Twist_{\TheStrand\Strand}\TwistC_{\TheStrand\nu}\Twist_{\TheStrand\NNStrand}\TwistC_{\TheStrand\nu}^{-1}\Twist_{\TheStrand\NNStrand}^{-1} & \mystackrel{(\ref{cor:pres_PMap_free_prod_eq1_5n},\ref{cor:pres_PMap_free_prod_eq2_3n})}\Leftrightarrow [\Twist_{\TheStrand\NNStrand}\Twist_{\TheStrand\Strand}\Twist_{\TheStrand\NNStrand}^{-1},\TwistC_{\NNStrand\nu}]=1, \tag{\ref{cor:pres_PMap_free_prod_eq4}-2} 
\\
\label{cor:pres_PMap_free_prod_eq2_1}
\Twist_{\Strand\Str}\Twist_{\TheStrand\Strand}\Twist_{\Strand\Str}^{-1}=\Twist_{\TheStrand\Str}^{-1}\Twist_{\TheStrand\Strand}\Twist_{\TheStrand\Str} & \mystackrel{}\Leftrightarrow \Twist_{\TheStrand\Str}\Twist_{\Strand\Str}\Twist_{\TheStrand\Strand}=\Twist_{\TheStrand\Strand}\Twist_{\TheStrand\Str}\Twist_{\Strand\Str}, \tag{\ref{cor:pres_PMap_free_prod_eq2}-1}
\\
\label{cor:pres_PMap_free_prod_eq1_1n}
\Twist_{\Strand\Str}^{-1}\Twist_{\TheStrand\Strand}\Twist_{\Strand\Str}=\Twist_{\TheStrand\Strand}\Twist_{\TheStrand\Str}\Twist_{\TheStrand\Strand}\Twist_{\TheStrand\Str}^{-1}\Twist_{\TheStrand\Strand}^{-1} & \mystackrel{(\ref{cor:pres_PMap_free_prod_eq2})}\Leftrightarrow \Twist_{\Strand\Str}\Twist_{\TheStrand\Strand}\Twist_{\TheStrand\Str}=\Twist_{\TheStrand\Strand}\Twist_{\TheStrand\Str}\Twist_{\Strand\Str}, \tag{\ref{cor:pres_PMap_free_prod_eq1}-1}
\\
\label{cor:pres_PMap_free_prod_eq2_2n}
\TwistP_{\Strand\NPct}\Twist_{\TheStrand\Strand}\TwistP_{\Strand\NPct}^{-1}=\TwistP_{\TheStrand\NPct}^{-1}\Twist_{\TheStrand\Strand}\TwistP_{\TheStrand\NPct} & \mystackrel{}\Leftrightarrow \TwistP_{\TheStrand\NPct}\TwistP_{\Strand\NPct}\Twist_{\TheStrand\Strand}=\Twist_{\TheStrand\Strand}\TwistP_{\TheStrand\NPct}\TwistP_{\Strand\NPct}, \tag{\ref{cor:pres_PMap_free_prod_eq2}-2}
\\
\label{cor:pres_PMap_free_prod_eq1_2n}
\TwistP_{\Strand\NPct}^{-1}\Twist_{\TheStrand\Strand}\TwistP_{\Strand\NPct}=\Twist_{\TheStrand\Strand}\TwistP_{\TheStrand\NPct}\Twist_{\TheStrand\Strand}\TwistP_{\TheStrand\NPct}^{-1}\Twist_{\TheStrand\Strand}^{-1} & \mystackrel{(\ref{cor:pres_PMap_free_prod_eq2_4n})}\Leftrightarrow \TwistP_{\Strand\NPct}\Twist_{\TheStrand\Strand}\TwistP_{\TheStrand\NPct}=\Twist_{\TheStrand\Strand}\TwistP_{\TheStrand\NPct}\TwistP_{\Strand\NPct}, \tag{\ref{cor:pres_PMap_free_prod_eq1}-2} 
\\
\label{cor:pres_PMap_free_prod_eq2_3n}
\TwistC_{\Strand\nu}\Twist_{\TheStrand\Strand}\TwistC_{\Strand\nu}^{-1}=\TwistC_{\TheStrand\nu}^{-1}\Twist_{\TheStrand\Strand}\TwistC_{\TheStrand\nu} & \mystackrel{}\Leftrightarrow \TwistC_{\TheStrand\nu}\TwistC_{\Strand\nu}\Twist_{\TheStrand\Strand}=\Twist_{\TheStrand\Strand}\TwistC_{\TheStrand\nu}\TwistC_{\Strand\nu}, \tag{\ref{cor:pres_PMap_free_prod_eq2}-3}
\\
\label{cor:pres_PMap_free_prod_eq1_3n}
\TwistC_{\Strand\nu}^{-1}\Twist_{\TheStrand\Strand}\TwistC_{\Strand\nu}=\Twist_{\TheStrand\Strand}\TwistC_{\TheStrand\nu}\Twist_{\TheStrand\Strand}\TwistC_{\TheStrand\nu}^{-1}\Twist_{\TheStrand\Strand}^{-1} & \mystackrel{(\ref{cor:pres_PMap_free_prod_eq2_5n})}\Leftrightarrow \TwistC_{\Strand\nu}\Twist_{\TheStrand\Strand}\TwistC_{\TheStrand\nu}=\Twist_{\TheStrand\Strand}\TwistC_{\TheStrand\nu}\TwistC_{\Strand\nu}, \tag{\ref{cor:pres_PMap_free_prod_eq1}-3}
\\
\Twist_{\Str\Strr}\Twist_{\TheStrand\Strand}\Twist_{\Str\Strr}^{-1}=\Twist_{\Str\Strr}^{-1}\Twist_{\TheStrand\Strand}\Twist_{\Str\Strr}=\Twist_{\TheStrand\Strand} & \mystackrel{}\Leftrightarrow [\Twist_{\Str\Strr},\Twist_{\TheStrand\Strand}]=1, 
\\
\TwistP_{\Str\NPct}\Twist_{\TheStrand\Strand}\TwistP_{\Str\NPct}^{-1}=\TwistP_{\Str\NPct}^{-1}\Twist_{\TheStrand\Strand}\TwistP_{\Str\NPct}=\Twist_{\TheStrand\Strand} & \mystackrel{}\Leftrightarrow [\TwistP_{\Str\NPct},\Twist_{\TheStrand\Strand}]=1, 
\\
\TwistC_{\Str\nu}\Twist_{\TheStrand\Strand}\TwistC_{\Str\nu}^{-1}=\TwistC_{\Str\nu}^{-1}\Twist_{\TheStrand\Strand}\TwistC_{\Str\nu}=\Twist_{\TheStrand\Strand} & \mystackrel{}\Leftrightarrow [\TwistC_{\Str\nu},\Twist_{\TheStrand\Strand}]=1. 
\end{align*}
\begin{align*}
\Twist_{\Strand\Str}\TwistP_{\TheStrand\Pct}\Twist_{\Strand\Str}^{-1}=\Twist_{\Strand\Str}^{-1}\TwistP_{\TheStrand\Pct}\Twist_{\Strand\Str}=\TwistP_{\TheStrand\Pct} & \mystackrel{}\Leftrightarrow [\Twist_{\Strand\Str},\TwistP_{\TheStrand\Pct}]=1, 
\\
\TwistP_{\Strand\Pc}\TwistP_{\TheStrand\Pct}\TwistP_{\Strand\Pc}^{-1}=\TwistP_{\Strand\Pc}^{-1}\TwistP_{\TheStrand\Pct}\TwistP_{\Strand\Pc}=\TwistP_{\TheStrand\Pct} & \mystackrel{}\Leftrightarrow [\TwistP_{\Strand\Pc},\TwistP_{\TheStrand\Pct}]=1, 
\\
\label{cor:pres_PMap_free_prod_eq1_4n}
\TwistP_{\Strand\Pct}\TwistP_{\TheStrand\Pct}\TwistP_{\Strand\Pct}^{-1}=\TwistP_{\TheStrand\Pct}^{-1}\Twist_{\TheStrand\Strand}^{-1}\TwistP_{\TheStrand\Pct}\Twist_{\TheStrand\Strand}\TwistP_{\TheStrand\Pct} & \mystackrel{(\ref{cor:pres_PMap_free_prod_eq2_2n})}\Leftrightarrow \Twist_{\TheStrand\Strand}\TwistP_{\TheStrand\Pct}\TwistP_{\Strand\Pct}=\TwistP_{\Strand\Pct}\Twist_{\TheStrand\Strand}\TwistP_{\TheStrand\Pct}, \tag{\ref{cor:pres_PMap_free_prod_eq1}-4}
\\
\label{cor:pres_PMap_free_prod_eq2_4n}
\TwistP_{\Strand\Pct}^{-1}\TwistP_{\TheStrand\Pct}\TwistP_{\Strand\Pct}=\Twist_{\TheStrand\Strand}\TwistP_{\TheStrand\Pct}\Twist_{\TheStrand\Strand}^{-1} & \mystackrel{}\Leftrightarrow \TwistP_{\TheStrand\Pct}\TwistP_{\Strand\Pct}\Twist_{\TheStrand\Strand}=\TwistP_{\Strand\Pct}\Twist_{\TheStrand\Strand}\TwistP_{\TheStrand\Pct}, \tag{\ref{cor:pres_PMap_free_prod_eq2}-4}
\\
\label{cor:pres_PMap_free_prod_eq3_3}
\TwistP_{\Strand\NPct}\TwistP_{\TheStrand\Pct}\TwistP_{\Strand\NPct}^{-1}=\TwistP_{\TheStrand\NPct}^{-1}\Twist_{\TheStrand\Strand}^{-1}\TwistP_{\TheStrand\NPct}\Twist_{\TheStrand\Strand}\TwistP_{\TheStrand\Pct}\Twist_{\TheStrand\Strand}^{-1}\TwistP_{\TheStrand\NPct}^{-1}\Twist_{\TheStrand\Strand}\TwistP_{\TheStrand\NPct} & \mystackrel{(\ref{cor:pres_PMap_free_prod_eq2_2n})}\Leftrightarrow [\Twist_{\TheStrand\Strand}\TwistP_{\TheStrand\Pct}\Twist_{\TheStrand\Strand}^{-1},\TwistP_{\Strand\NPct}]=1, \tag{\ref{cor:pres_PMap_free_prod_eq3}-3} 
\\
\label{cor:pres_PMap_free_prod_eq4_3}
\TwistP_{\Strand\NPct}^{-1}\TwistP_{\TheStrand\Pct}\TwistP_{\Strand\NPct}=\Twist_{\TheStrand\Strand}\TwistP_{\TheStrand\NPct}\Twist_{\TheStrand\Strand}^{-1}\TwistP_{\TheStrand\NPct}^{-1}\TwistP_{\TheStrand\Pct}\TwistP_{\TheStrand\NPct}\Twist_{\TheStrand\Strand}\TwistP_{\TheStrand\NPct}^{-1}\Twist_{\TheStrand\Strand}^{-1} & \mystackrel{(\ref{cor:pres_PMap_free_prod_eq1_4n},\ref{cor:pres_PMap_free_prod_eq2_2n})}\Leftrightarrow [\Twist_{\TheStrand\Strand}\TwistP_{\TheStrand\Pct}\Twist_{\TheStrand\Strand}^{-1},\TwistP_{\Strand\NPct}]=1, \tag{\ref{cor:pres_PMap_free_prod_eq4}-3} 
\\
\label{cor:pres_PMap_free_prod_eq3_4}
\TwistC_{\Strand\nu}\TwistP_{\TheStrand\Pct}\TwistC_{\Strand\nu}^{-1}=\TwistC_{\TheStrand\nu}^{-1}\Twist_{\TheStrand\Strand}^{-1}\TwistC_{\TheStrand\nu}\Twist_{\TheStrand\Strand}\TwistP_{\TheStrand\Pct}\Twist_{\TheStrand\Strand}^{-1}\TwistC_{\TheStrand\nu}^{-1}\Twist_{\TheStrand\Strand}\TwistC_{\TheStrand\nu} & \mystackrel{(\ref{cor:pres_PMap_free_prod_eq2_3n})}\Leftrightarrow [\Twist_{\TheStrand\Strand}\TwistP_{\TheStrand\Pct}\Twist_{\TheStrand\Strand}^{-1},\TwistC_{\Strand\nu}]=1, \tag{\ref{cor:pres_PMap_free_prod_eq3}-4} 
\\
\label{cor:pres_PMap_free_prod_eq4_4}
\TwistC_{\Strand\nu}^{-1}\TwistP_{\TheStrand\Pct}\TwistC_{\Strand\nu}=\Twist_{\TheStrand\Strand}\TwistC_{\TheStrand\nu}\Twist_{\TheStrand\Strand}^{-1}\TwistC_{\TheStrand\nu}^{-1}\TwistP_{\TheStrand\Pct}\TwistC_{\TheStrand\nu}\Twist_{\TheStrand\Strand}\TwistC_{\TheStrand\nu}^{-1}\Twist_{\TheStrand\Strand}^{-1} & \mystackrel{(\ref{cor:pres_PMap_free_prod_eq1_5n},\ref{cor:pres_PMap_free_prod_eq2_3n})}\Leftrightarrow [\Twist_{\TheStrand\Strand}\TwistP_{\TheStrand\Pct}\Twist_{\TheStrand\Strand}^{-1},\TwistC_{\Strand\nu}]=1. \tag{\ref{cor:pres_PMap_free_prod_eq4}-4} 
\end{align*}
\begin{align*}
\Twist_{\Strand\Str}\TwistC_{\TheStrand\nu}\Twist_{\Strand\Str}^{-1}=
\Twist_{\Strand\Str}^{-1}\TwistC_{\TheStrand\nu}\Twist_{\Strand\Str}=\TwistC_{\TheStrand\nu} & \mystackrel{}\Leftrightarrow [\Twist_{\Strand\Str},\TwistC_{\TheStrand\nu}]=1, 
\\
\TwistP_{\Strand\NPct}\TwistC_{\TheStrand\nu}\TwistP_{\Strand\NPct}^{-1}=\TwistP_{\Strand\NPct}^{-1}\TwistC_{\TheStrand\nu}\TwistP_{\Strand\NPct}=\TwistC_{\TheStrand\nu} & \mystackrel{}\Leftrightarrow [\TwistP_{\Strand\NPct},\TwistC_{\TheStrand\nu}]=1, 
\\
\TwistC_{\Strand\mu}\TwistC_{\TheStrand\nu}\TwistC_{\Strand\mu}^{-1}=\TwistC_{\Strand\mu}^{-1}\TwistC_{\TheStrand\nu}\TwistC_{\Strand\mu}=\TwistC_{\TheStrand\nu} & \mystackrel{}\Leftrightarrow [\TwistC_{\Strand\mu},\TwistC_{\TheStrand\nu}]=1, 
\\
\label{cor:pres_PMap_free_prod_eq1_5n}
\TwistC_{\Strand\nu}\TwistC_{\TheStrand\nu}\TwistC_{\Strand\nu}^{-1}=\TwistC_{\TheStrand\nu}^{-1}\Twist_{\TheStrand\Strand}^{-1}\TwistC_{\TheStrand\nu}\Twist_{\TheStrand\Strand}\TwistC_{\TheStrand\nu} & \mystackrel{(\ref{cor:pres_PMap_free_prod_eq2_3n})}\Leftrightarrow \TwistC_{\Strand\nu}\Twist_{\TheStrand\Strand}\TwistC_{\TheStrand\nu}=\TwistC_{\TheStrand\nu}\TwistC_{\Strand\nu}\Twist_{\TheStrand\Strand}, \tag{\ref{cor:pres_PMap_free_prod_eq1}-5}
\\
\label{cor:pres_PMap_free_prod_eq2_5n}
\TwistC_{\Strand\nu}^{-1}\TwistC_{\TheStrand\nu}\TwistC_{\Strand\nu}=\Twist_{\TheStrand\Strand}\TwistC_{\TheStrand\nu}\Twist_{\TheStrand\Strand}^{-1} & \mystackrel{}\Leftrightarrow \TwistC_{\TheStrand\nu}\TwistC_{\Strand\nu}\Twist_{\TheStrand\Strand}=\TwistC_{\Strand\nu}\Twist_{\TheStrand\Strand}\TwistC_{\TheStrand\nu}, \tag{\ref{cor:pres_PMap_free_prod_eq2}-5}
\\
\label{cor:pres_PMap_free_prod_eq3_5}
\TwistC_{\Strand\omicron}\TwistC_{\TheStrand\nu}\TwistC_{\Strand\omicron}^{-1}=\TwistC_{\TheStrand\omicron}^{-1}\Twist_{\TheStrand\Strand}^{-1}\TwistC_{\TheStrand\omicron}\Twist_{\TheStrand\Strand}\TwistC_{\TheStrand\nu}\Twist_{\TheStrand\Strand}^{-1}\TwistC_{\TheStrand\omicron}^{-1}\Twist_{\TheStrand\Strand}\TwistC_{\TheStrand\omicron} &  \mystackrel{(\ref{cor:pres_PMap_free_prod_eq2_3n})}\Leftrightarrow [\Twist_{\TheStrand\Strand}\TwistC_{\TheStrand\nu}\Twist_{\TheStrand\Strand}^{-1},\TwistC_{\Strand\omicron}]=1, \tag{\ref{cor:pres_PMap_free_prod_eq3}-5}
\\
\label{cor:pres_PMap_free_prod_eq4_5}
\TwistC_{\Strand\omicron}^{-1}\TwistC_{\TheStrand\nu}\TwistC_{\Strand\omicron}=\Twist_{\TheStrand\Strand}\TwistC_{\TheStrand\omicron}\Twist_{\TheStrand\Strand}^{-1}\TwistC_{\TheStrand\omicron}^{-1}\TwistC_{\TheStrand\nu}\TwistC_{\TheStrand\omicron}\Twist_{\TheStrand\Strand}\TwistC_{\TheStrand\omicron}^{-1}\Twist_{\TheStrand\Strand}^{-1} & \mystackrel{(\ref{cor:pres_PMap_free_prod_eq1_5n},\ref{cor:pres_PMap_free_prod_eq2_3n})}\Leftrightarrow [\Twist_{\TheStrand\Strand}\TwistC_{\TheStrand\nu}\Twist_{\TheStrand\Strand}^{-1},\TwistC_{\Strand\omicron}]=1. \tag{\ref{cor:pres_PMap_free_prod_eq4}-5} 
\end{align*}
\new{Together,} all these relations yield the presentation from above. 
\end{proof}

\subsection{Generating set and presentation of $\MapIdOrb{\TheStrand}{\Sigma_\freeprod(\ThePct)}$} 
\label{subsec:gen_sets_and_pres_Map}

Now we want to deduce a presentation of $\MapIdOrb{\TheStrand}{\Sigma_\freeprod(\ThePct)}$. For this purpose, let $H_\Strand$ for $1\leq\Strand<\TheStrand$ be the homeomorphism that performs the following half-twist on each $\freeprod$-translate of the disk $D_{\Strand,\Strand+1}$, i.e.\ the disk $D_{\Str,\Strand+1}$ from Figure \ref{fig:disks_with_marked_pts} for $\Str=\Strand$:  
\begin{figure}[H]
\import{Grafiken/orb_mcg_marked_pts/}{homeo_half-twist_marked_pts_clockwise.pdf_tex}
\caption{The half-twist $H_\Strand$.}
\label{fig:homeo_half-twist}
\end{figure}

For each $1\leq\NPct\leq\ThePct$ and $1\leq\nu\leq\TheCone$, let $\TwP{\NPct}:=\TwistP_{1\NPct}$ and $\TwC{\nu}:=\TwistC_{1\nu}$. \new{As for the pure generators,} we will use the names $H_\Strand,\TwP{\NPct}$ and $\TwC{\nu}$ as acronyms for the represented mapping classes. 

\begin{lemma}
\label{lem:orb_mcg_n_gen_set}
The group $\MapIdOrb{\TheStrand}{\Sigma_\freeprod(\ThePct)}$ is generated by 
\[
H_\Strand, \TwP{\NPct} \; \text{ and } \; \TwC{\nu}
\]
with $1\leq\Strand<\TheStrand$, $1\leq\NPct\leq\ThePct$ and $1\leq\nu\leq\TheCone$. 
\end{lemma}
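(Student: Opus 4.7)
The plan is to exploit the short exact sequence
\[
1\rightarrow\PMapIdOrb{\TheStrand}{\Sigma_\freeprod(\ThePct)}\rightarrow\MapIdOrb{\TheStrand}{\Sigma_\freeprod(\ThePct)}\xrightarrow{\sigma}\Sym_\TheStrand\rightarrow1,
\]
where $\sigma$ records the permutation induced on the $\freeprod$-orbits $\freeprod(p_1),\ldots,\freeprod(p_\TheStrand)$ of marked points. The kernel is $\PMapIdOrb{\TheStrand}{\Sigma_\freeprod(\ThePct)}$ by Definition~\ref{def:pure_mcg}, so by Corollary~\ref{cor:PMap_gen_set} it is already generated by the $\Twist_{\Strand\Str}$, $\TwistP_{\NStrand\NPct}$ and $\TwistC_{\NStrand\nu}$. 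Thus it suffices to show (i) that $H_\Strand$ indeed represents a mapping class in $\MapIdOrb{\TheStrand}{\Sigma_\freeprod(\ThePct)}$ whose image under $\sigma$ generates $\Sym_\TheStrand$, and (ii) that the pure generators can be rewritten as words in $H_\Strand,\TwP{\NPct},\TwC{\nu}$.

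For (i), I would observe that $H_\Strand$ is supported on the $\freeprod$-orbit of $D_{\Strand,\Strand+1}$, a disk containing no cone point or puncture. Forgetting the marked points $\freeprod(p_\Strand)$ and $\freeprod(p_{\Strand+1})$ therefore makes the restriction of $H_\Strand$ to every $\freeprod$-translate of $D_{\Strand,\Strand+1}$ a homeomorphism of a disk fixing its boundary, so the Alexander trick from Example~\ref{ex:Map(D_cycm)} applied $\freeprod$-equivariantly yields $H_\Strand\sim\id_{\Sigma(\ThePct)}$ after forgetting. By inspection of Figure~\ref{fig:homeo_half-twist}, $\sigma(H_\Strand)$ is the transposition $(\Strand,\Strand+1)$, and these transpositions generate $\Sym_\TheStrand$. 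Hence every mapping class in $\MapIdOrb{\TheStrand}{\Sigma_\freeprod(\ThePct)}$ differs from a word in the $H_\Strand$ by an element of the pure subgroup.

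For (ii), I would use three conjugation identities, each verified via Lemma~\ref{lem:conj_manipulates_supp} by tracking the image of the relevant supporting disk. First, $H_\Strand^2$ acts on each $\freeprod$-translate of $D_{\Strand,\Strand+1}$ as a full Dehn twist about the boundary and fixes each marked point, so $H_\Strand^2=\Twist_{\Strand+1,\Strand}$. Second, for $\Str+1<\Strand$, the word $w_{\Str,\Strand}:=H_{\Strand-1}H_{\Strand-2}\cdots H_{\Str+1}$ maps the disk $D_{\Str,\Str+1}$ to a disk isotopic (as a $\freeprod$-disk) to $D_{\Str,\Strand}$, so Lemma~\ref{lem:conj_manipulates_supp} gives $\Twist_{\Strand\Str}=w_{\Str,\Strand}H_\Str^2w_{\Str,\Strand}^{-1}$. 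Third, for $\NStrand\geq 2$, the word $v_\NStrand:=H_{\NStrand-1}H_{\NStrand-2}\cdots H_1$ takes the first strand to position $\NStrand$; the homeomorphism it represents carries the $\freeprod$-orbit of $D_{r_\NPct,1}$ onto that of $D_{r_\NPct,\NStrand}$, and likewise for the cone-point disks $D_{\cp_\nu,1}$ onto $D_{\cp_\nu,\NStrand}$. Again by Lemma~\ref{lem:conj_manipulates_supp},
\[
\TwistP_{\NStrand\NPct}=v_\NStrand\,\TwP{\NPct}\,v_\NStrand^{-1}\quad\text{and}\quad\TwistC_{\NStrand\nu}=v_\NStrand\,\TwC{\nu}\,v_\NStrand^{-1}.
\]

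The main obstacle is bookkeeping: one must check that the conjugating words $w_{\Str,\Strand}$ and $v_\NStrand$ actually move the \emph{$\freeprod$-orbit} of the starting disk onto the $\freeprod$-orbit of the target disk, and not merely onto an isotopic but $\freeprod$-inequivalent disk. This is a purely combinatorial check using Figure~\ref{fig:homeo_half-twist}: in each step, the image disk lies inside the fundamental domain $\FD(\ThePct)$ and shares the same marked points (and the same cone point or puncture in the $\TwistP$/$\TwistC$ case) as the target disk, so Lemma~\ref{lem:conj_manipulates_supp} applies and the identification of mapping classes follows. Once these three families of identities are in place, combining (i) and (ii) yields the generation statement.
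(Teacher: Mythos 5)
Your proof follows essentially the same route as the paper's: you invoke the short exact sequence
\[
1\rightarrow\PMapIdOrb{\TheStrand}{\Sigma_\freeprod(\ThePct)}\rightarrow\MapIdOrb{\TheStrand}{\Sigma_\freeprod(\ThePct)}\rightarrow\Sym_\TheStrand\rightarrow1,
\]
observe that the $H_\Strand$ project to adjacent transpositions, and then rewrite the pure generators $\Twist_{\Strand\Str}$, $\TwistP_{\NStrand\NPct}$, $\TwistC_{\NStrand\nu}$ as conjugates of $H_\Str^2$, $\TwP{\NPct}$, $\TwC{\nu}$ by half-twist words via Lemma~\ref{lem:conj_manipulates_supp}. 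That is exactly the paper's argument. The only discrepancy is in the precise conjugating words: the paper establishes $H_{\Strand-1}^{-1}\cdots H_{\Str+1}^{-1}(D_{\Str,\Str+1})=D_{\Str,\Strand}$ and hence uses $H_{\Strand-1}^{-1}\cdots H_{\Str+1}^{-1}\,H_\Str^2\,H_{\Str+1}\cdots H_{\Strand-1}$, whereas you propose conjugating by $w_{\Str,\Strand}=H_{\Strand-1}\cdots H_{\Str+1}$ (and similarly $v_\NStrand=H_{\NStrand-1}\cdots H_1$ for the $\TwistP$ and $\TwistC$ families). These are genuinely different group elements, and because the supporting disks are defined to dip into the lower half-plane of the fundamental domain while the half-twists have a fixed (clockwise) orientation, only one sign convention maps $D_{\Str,\Str+1}$ onto $D_{\Str,\Strand}$ on the nose; the other produces a disk that is not isotopic to $D_{\Str,\Strand}$ rel marked points. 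You correctly flag this as the point requiring a combinatorial check against the figures, but the figures in fact settle it in favor of the paper's signs. The added justification you give in (i) — applying a $\freeprod$-equivariant Alexander trick to see that $H_\Strand$ lies in the kernel of the forgetful map — is a legitimate supplementary argument the paper leaves implicit, and does not affect the overall structure.
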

\begin{proof}
Definition \ref{def:Map_orb(ThePct)} yields a short exact sequence
\[
1\rightarrow\PMapIdOrb{\TheStrand}{\Sigma_\freeprod(\ThePct)}\rightarrow\MapIdOrb{\TheStrand}{\Sigma_\freeprod(\ThePct)}\rightarrow\Sym_\TheStrand\rightarrow1. 
\]
Further, the elements $H_\Strand$ for $1\leq\Strand<\TheStrand$ map to the generating set of adjacent transpositions in $\Sym_\TheStrand$. Hence, the elements $H_\Strand$ and the pure mapping classes generate $\MapIdOrb{\TheStrand}{\Sigma_\freeprod(\ThePct)}$. Using Lemma \ref{cor:pres_PMap_free_prod}, the subgroup $\PMapIdOrb{\TheStrand}{\Sigma_\freeprod(\ThePct)}$ is generated by 
\[
\Twist_{\Strand\Str}, \TwistP_{\NStrand\NPct} \; \text{ and } \; \TwistC_{\NStrand\nu}
\]
with $1\leq\Str<\Strand\leq\TheStrand, 1\leq\NStrand\leq\TheStrand, 1\leq\NPct\leq\ThePct$ and $1\leq\nu\leq\TheCone$. Moreover, we observe 
\begin{align*}
H_{\Strand-1}^{-1}...H_{\Str+1}^{-1}(D_{\Str,\Str+1}) & =D_{\Str,\Strand}, 
\\
H_{\Strand-1}^{-1}...H_1^{-1}(D_{r_\NPct,1}) & =D_{r_\NPct,\Strand} \text{ and }
\\
H_{\NStrand-1}^{-1}...H_1^{-1}(D_{\cp_\nu,1}) & =D_{\cp_\nu,\NStrand},
\end{align*}
see Figure \ref{fig:Map_gens_rel_PMap_gens}. Using Lemma \ref{lem:conj_manipulates_supp}, this implies 
\begin{align*}
\label{eq:def_A_ji}
\Twist_{\Strand\Str} & = H_{\Strand-1}^{-1}...H_{\Str+1}^{-1}H_\Str^2H_{\Str+1}...H_{\Strand-1}, \numbereq 
\\
\label{eq:def_A_kr}
\TwistP_{\NStrand\NPct} & = H_{\NStrand-1}^{-1}...H_1^{-1}\TwP{\NPct}H_1...H_{\NStrand-1} \text{ and } \numbereq 
\\
\label{eq:def_A_kc}
\TwistC_{\NStrand\nu} & = H_{\NStrand-1}^{-1}...H_1^{-1}\TwC{\nu}H_1...H_{\NStrand-1}. \numbereq 
\end{align*}
Consequently, $H_\Strand, \TwP{\NPct}$ and $\TwC{\nu}$ with $1\leq\Strand<\TheStrand,1\leq\NPct\leq\ThePct$ and $1\leq\nu\leq\TheCone$ generate $\MapIdOrb{\TheStrand}{\Sigma_\freeprod(\ThePct)}$. 
\end{proof}

\begin{figure}[H]
\centerline{\import{Grafiken/orb_mcg_marked_pts/}{Map_gens_rel_PMap_gens_s.pdf_tex}}
\caption{\new{Relating the supporting disks of $H_\Str$ and $\Twist_{\Strand\Str}$, $\TwP{\NPct}$ and $\TwistP_{\NStrand\NPct}$, and $\TwC{\nu}$ and $\TwistC_{\NStrand\nu}$.}}
\label{fig:Map_gens_rel_PMap_gens}
\end{figure}

Subject to the above generating set, we will give a suitable set of relations that defines $\MapIdOrb{\TheStrand}{\Sigma_\freeprod(\ThePct)}$. For this purpose, we introduce the abbreviations
\begin{align*}
\label{eq:def_X_Strand}
\Twistn{\Strand}:=\Twist_{\TheStrand\Strand} & = H_{\TheStrand-1}^{-1}...H_{\Strand+1}^{-1}H_\Strand^2H_{\Strand+1}...H_{\TheStrand-1}, \numbereq
\\
\label{eq:def_X_r_Pct}
\TwistnP{\NPct}:=\TwistP_{\TheStrand\NPct} & = H_{\TheStrand-1}^{-1}...H_1^{-1}\TwP{\Pct}H_1...H_{\TheStrand-1} \text{ and } \numbereq
\\
\label{eq:def_X_tau_nu}
\TwistnC{\nu}:=\TwistC_{\TheStrand\nu} & = H_{\TheStrand-1}^{-1}...H_1^{-1}\TwC{\nu} H_1...H_{\TheStrand-1}. \numbereq 
\end{align*}
We recall that these elements generate the image of $\PushPMap^{orb}$, i.e.\ they generate a free subgroup $\freegrp{\TheStrand-1+\ThePct+\TheCone}$ in $\PMapIdOrb{\TheStrand}{\Sigma_\freeprod(\ThePct)}$. 

\new{In the following, the relations $H_\Str H_{\Str+1}H_\Str=H_{\Str+1}H_\Str H_{\Str+1}$ for $1\leq\Str\leq\TheStrand-2$ and $[H_\Strand,H_\NStrand]=1$ for $1\leq\Strand,\NStrand<\TheStrand$ with $\vert\Strand-\NStrand\vert\geq2$ are called \textit{braid and commutator relations} for $H_1,...,H_{\TheStrand-1}$.} 

\begin{lemma}
\label{lem:gens_sat_rels_mcg}
The generators $H_1,...,H_{\TheStrand-1},\TwP{1},...,\TwP{\ThePct},\TwC{1},...,\TwC{\TheCone}$ of $\MapIdOrb{\TheStrand}{\Sigma_\freeprod(\ThePct)}$ satisfy the following relations for $2\leq\Strand<\TheStrand$, $1\leq\Pc,\NPct\leq\ThePct$ with $\Pc<\NPct$ and $1\leq\mu,\nu\leq\TheCone$ with $\mu<\nu$: 
\begin{enumerate}
\item \label{lem:gens_sat_rels_mcg_rel1}
braid and commutator relations for the generators $H_1,...,H_{\TheStrand-1}$, 
\item \label{lem:gens_sat_rels_mcg_rel2}
\begin{enumerate}
\item \label{lem:gens_sat_rels_mcg_rel2a}
$[\TwP{\NPct},H_\Strand]=1$, 
\item \label{lem:gens_sat_rels_mcg_rel2b}
$[\TwC{\nu},H_\Strand]=1$, 
\end{enumerate}
\item \label{lem:gens_sat_rels_mcg_rel3}
\begin{enumerate}
\item \label{lem:gens_sat_rels_mcg_rel3a} 
$[H_1\TwP{\NPct}H_1,\TwP{\NPct}]=1$, 
\item \label{lem:gens_sat_rels_mcg_rel3b} 
$[H_1\TwC{\nu} H_1,\TwC{\nu}]=1$ and 
\end{enumerate}
\item \label{lem:gens_sat_rels_mcg_rel4}
\begin{enumerate}
\item \label{lem:gens_sat_rels_mcg_rel4a}
$[\TwP{\Pc},\TwistP_{2\NPct}]=1$ \; for \; $\TwistP_{2\NPct}=H_1^{-1}\TwP{\NPct}H_1$,
\item\label{lem:gens_sat_rels_mcg_rel4b}
$[\TwC{\mu},\TwistC_{2\nu}]=1$ \; for \; $\TwistC_{2\nu}=H_1^{-1}\TwC{\nu}H_1$ and
\item \label{lem:gens_sat_rels_mcg_rel4c}
$[\TwP{\NPct},\TwistC_{2\nu}]=1$ \; for \; $\TwistC_{2\nu}=H_1^{-1}\TwC{\nu}H_1$. 
\end{enumerate}
\end{enumerate}
In particular, these relations imply that 
\[
\MapIdOrb{\TheStrand-1}{\Sigma_\freeprod(\ThePct)}=\langle H_1,...,H_{\TheStrand-2},\TwP{\NPct},\TwC{\nu}\mid 1\leq\NPct\leq\ThePct, 1\leq\nu\leq\TheCone\rangle 
\] 
acts on $\freegrp{\TheStrand-1+\ThePct+\TheCone}$ via conjugation. More precisely, for $1\leq\Str,\Strand,\NStrand<\TheStrand$ with $\Str<\Strand<\NStrand$, $1\leq\Pc,\Pct,\NPct\leq\ThePct$ with $\Pc<\Pct<\NPct$ and $1\leq\mu,\nu,\omicron\leq\TheCone$ with $\mu<\nu<\omicron$, we have: 

\begin{align*}
H_\NStrand \Twistn{\Strand} H_\NStrand^{-1} & =H_\NStrand^{-1}\Twistn{\Strand} H_\NStrand=\Twistn{\Strand} && \text{ for } \NStrand\leq\TheStrand-2, 
\\
H_\Strand \Twistn{\Strand} H_\Strand^{-1} & =\Twistn{\Strand}^{-1}\Twistn{\Strand+1}\Twistn{\Strand} && \text{ for } \Strand\leq\TheStrand-2, 
\\
H_\Strand^{-1}\Twistn{\Strand} H_\Strand & =\Twistn{\Strand+1} && \text{ for } \Strand\leq\TheStrand-2, 
\\
H_{\Strand-1}\Twistn{\Strand} H_{\Strand-1}^{-1} & =\Twistn{\Strand-1} && \text{ for } \Strand<\TheStrand, 
\\
H_{\Strand-1}^{-1}\Twistn{\Strand} H_{\Strand-1} & =\Twistn{\Strand} \Twistn{\Strand-1}\Twistn{\Strand}^{-1} && \text{ for } \Strand<\TheStrand, 
\\
H_\Str \Twistn{\Strand} H_\Str^{-1} & =H_\Str^{-1}\Twistn{\Strand} H_\Str=\Twistn{\Strand} && \text{ for } \Str\leq\Strand-2, 
\\
\TwP{\NPct}\Twistn{\Strand}\TwP{\NPct}^{-1} & =\TwP{\NPct}^{-1}\Twistn{\Strand}\TwP{\NPct}=\Twistn{\Strand} && \text{ for } 2\leq\Strand, 
\\
\TwC{\nu}\Twistn{\Strand}\TwC{\nu}^{-1} & =\TwC{\nu}^{-1}\Twistn{\Strand} \TwC{\nu}=\Twistn{\Strand} && \text{ for } 2\leq\Strand, 
\\
\TwP{\NPct}\Twistn{1}\TwP{\NPct}^{-1} & =\TwistnP{\NPct}^{-1}\Twistn{1}\TwistnP{\NPct}, 
\\
\TwP{\NPct}^{-1}\Twistn{1}\TwP{\NPct} & =\Twistn{1}\TwistnP{\NPct}\Twistn{1}\TwistnP{\NPct}^{-1}\Twistn{1}^{-1}, 
\\
\TwC{\nu}\Twistn{1}\TwC{\nu}^{-1} & =\TwistnC{\nu}^{-1}\Twistn{1}\TwistnC{\nu}, 
\\
\TwC{\nu}^{-1}\Twistn{1}\TwC{\nu} & =\Twistn{1}\TwistnC{\nu}\Twistn{1}\TwistnC{\nu}^{-1}\Twistn{1}^{-1}, 
\\
H_\Strand \TwistnP{\Pct}H_\Strand^{-1} & =H_\Strand^{-1}\TwistnP{\Pct}H_\Strand=\TwistnP{\Pct} && \text{ for } \Strand\leq\TheStrand-2, 
\\
\TwP{\Pc}\TwistnP{\Pct}\TwP{\Pc}^{-1} & =\TwP{\Pc}^{-1}\TwistnP{\Pct}\TwP{\Pc}=\TwistnP{\Pct}, 
\\
\TwP{\Pct}\TwistnP{\Pct}\TwP{\Pct}^{-1} & =\TwistnP{\Pct}^{-1}\Twistn{1}^{-1}\TwistnP{\Pct}\Twistn{1}\TwistnP{\Pct}, 
\\
\TwP{\Pct}^{-1}\TwistnP{\Pct}\TwP{\Pct} & =\Twistn{1}\TwistnP{\Pct}\Twistn{1}^{-1}, 
\\
\TwP{\NPct}\TwistnP{\Pct}\TwP{\NPct}^{-1} & =\TwistnP{\NPct}^{-1}\Twistn{1}^{-1}\TwistnP{\NPct}\Twistn{1}\TwistnP{\Pct}\Twistn{1}^{-1}\TwistnP{\NPct}^{-1}\Twistn{1}\TwistnP{\NPct}, 
\\
\TwP{\NPct}^{-1}\TwistnP{\Pct}\TwP{\NPct} & =\Twistn{1}\TwistnP{\NPct}\Twistn{1}^{-1}\TwistnP{\NPct}^{-1}\TwistnP{\Pct}\TwistnP{\NPct}\Twistn{1}\TwistnP{\NPct}^{-1}\Twistn{1}^{-1}, 
\\
\TwC{\nu}\TwistnP{\Pct}\TwC{\nu}^{-1} & =\TwistnC{\nu}^{-1}\Twistn{1}^{-1}\TwistnC{\nu}\Twistn{1}\TwistnP{\Pct}\Twistn{1}^{-1}\TwistnC{\nu}^{-1}\Twistn{1}\TwistnC{\nu}, 
\\
\TwC{\nu}^{-1}\TwistnP{\Pct}\TwC{\nu} & =\Twistn{1}\TwistnC{\nu}\Twistn{1}^{-1}\TwistnC{\nu}^{-1}\TwistnP{\Pct}\TwistnC{\nu}\Twistn{1}\TwistnC{\nu}^{-1}\Twistn{1}^{-1}, 
\\
H_\Strand \TwistnC{\nu}H_\Strand^{-1} & =H_\Strand^{-1}\TwistnC{\nu}H_\Strand=\TwistnC{\nu} && \text{ for } \Strand\leq\TheStrand-2, 
\\
\TwP{\NPct}\TwistnC{\nu}\TwP{\NPct}^{-1} & =\TwP{\NPct}^{-1}\TwistnC{\nu}\TwP{\NPct}=\TwistnC{\nu}, 
\\
\TwC{\mu}\TwistnC{\nu}\TwC{\mu}^{-1} & =\TwC{\mu}^{-1}\TwistnC{\nu}\TwC{\mu}=\TwistnC{\nu}, 
\\
\TwC{\nu}\TwistnC{\nu}\TwC{\nu}^{-1} & =\TwistnC{\nu}^{-1}\Twistn{1}^{-1}\TwistnC{\nu}\Twistn{1}\TwistnC{\nu}, 
\\
\TwC{\nu}^{-1}\TwistnC{\nu}\TwC{\nu} & =\Twistn{1}\TwistnC{\nu}\Twistn{1}^{-1}, 
\\
\TwC{\omicron} \TwistnC{\nu}\TwC{\omicron}^{-1} & =\TwistnC{\omicron}^{-1}\Twistn{1}^{-1}\TwistnC{\omicron}\Twistn{1}\TwistnC{\nu}\Twistn{1}^{-1}\TwistnC{\omicron}^{-1}\Twistn{1}\TwistnC{\omicron}, 
\\
\TwC{\omicron}^{-1}\TwistnC{\nu}\TwC{\omicron} & =\Twistn{1}\TwistnC{\omicron}\Twistn{1}^{-1}\TwistnC{\omicron}^{-1}\TwistnC{\nu}\TwistnC{\omicron}\Twistn{1}\TwistnC{\omicron}^{-1}\Twistn{1}^{-1}. 
\end{align*}
\end{lemma}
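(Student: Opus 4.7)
The proof splits into two tasks: first, verifying that the relations (1)--(4) hold in $\MapIdOrb{\TheStrand}{\Sigma_\freeprod(\ThePct)}$; second, deducing the long list of conjugation relations describing the action of $\MapIdOrb{\TheStrand-1}{\Sigma_\freeprod(\ThePct)}$ on the free subgroup generated by the $\Twistn{\Strand}, \TwistnP{\NPct}, \TwistnC{\nu}$.

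For the first task, the relations in (1) follow from Lemma \ref{lem:conj_manipulates_supp}: for $|\Strand-\NStrand|\geq 2$ the $\freeprod$-orbits of the supporting disks of $H_\Strand$ and $H_\NStrand$ are disjoint, giving the commutator relations, while the braid relation between $H_\Str$ and $H_{\Str+1}$ reduces to the classical half-twist braid relation on a disk containing $p_\Str, p_{\Str+1}, p_{\Str+2}$. Relations (2) are likewise of disjoint-support type: for $\Strand\geq 2$ the $\freeprod$-orbit of the supporting disk of $H_\Strand$ is disjoint from those of $\TwP{\NPct}$ and $\TwC{\nu}$ (whose supports involve $p_1$ rather than $p_\Strand, p_{\Strand+1}$). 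The commutator relations (4) follow similarly after observing that $\TwistP_{2\NPct}$ and $\TwistC_{2\nu}$ are supported on $H_1^{-1}$-images of the corresponding disks, which contain $p_2$ in place of $p_1$, so that their $\freeprod$-orbits are disjoint from the supports of $\TwP{\Pc}, \TwC{\mu}$, and $\TwP{\NPct}$ under the indexing hypotheses. The subtler relations (3) are of surface-braid type; the cleanest approach is to pass to the quotient $\MapId{\TheStrand}{D(\ThePct,\TheCone)}$ via the isomorphism $\piMap$ of Proposition \ref{prop:iso_Map_orb_disk}, where they reduce to standard relations in the mapping class group of a punctured disk.

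For the second task, the derivation is a direct computation using the definitions \eqref{eq:def_X_Strand}--\eqref{eq:def_X_tau_nu} that express $\Twistn{\Strand}, \TwistnP{\NPct}, \TwistnC{\nu}$ as conjugates of $H_\Strand^2, \TwP{\NPct}, \TwC{\nu}$ by words in the half-twists. For a typical case such as $H_\Str \Twistn{\Strand} H_\Str^{-1}$, one expands $\Twistn{\Strand} = H_{\TheStrand-1}^{-1}\cdots H_{\Strand+1}^{-1}H_\Strand^2 H_{\Strand+1}\cdots H_{\TheStrand-1}$, uses the commutator relations of (1) to commute $H_\Str$ past all $H_i$ with $|\Str-i|\geq 2$, and then applies a braid relation to handle the at most one non-commuting interaction. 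The relations asserting that $\TwP{\NPct}$ or $\TwC{\nu}$ commutes with $\Twistn{\Strand}$ for $\Strand\geq 2$ are immediate from (2) applied termwise. The more intricate relations, such as $\TwP{\NPct}\Twistn{1}\TwP{\NPct}^{-1}=\TwistnP{\NPct}^{-1}\Twistn{1}\TwistnP{\NPct}$ and the entire blocks involving $\TwistnP{\Pct}$ and $\TwistnC{\nu}$, reduce by the same procedure---expand, move through disjoint-support factors via (2), and invoke (3) or (4) at the core---to short calculations.

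The main obstacle is purely combinatorial bulk: there are roughly two dozen distinct conjugation patterns, each with a range of index combinations. However, all of them follow the same computational scheme (expand the conjugate using the definitions, commute past disjoint-support factors, invoke one of the core algebraic relations), and the parallel structure of the $\TwP{\NPct}$ and $\TwC{\nu}$ cases means that the manipulations for the $\TwistnP{\Pct}$-block mirror those for the $\TwistnC{\nu}$-block almost verbatim, effectively halving the bookkeeping.
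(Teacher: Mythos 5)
Your argument is correct overall, and for most of the relations—items (1), (2), and (4)—you take essentially the same route as the paper: the braid and commutator relations in (1) come from the classical disk picture and disjoint-support considerations, and (2) and (4) are verified by exhibiting representatives with disjoint $\freeprod$-orbits of supporting disks, which is exactly what the paper does via Lemma~\ref{lem:conj_manipulates_supp} and the figures.

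Where you diverge is in the treatment of (3). You propose to transfer the relations $[H_1\TwP{\NPct}H_1,\TwP{\NPct}]=1$ and $[H_1\TwC{\nu}H_1,\TwC{\nu}]=1$ across the isomorphism $\piMap$ of Proposition~\ref{prop:iso_Map_orb_disk} and read them off as standard (Lambropoulou-type) relations in the mapping class group of a multiply punctured disk. The paper instead observes that (3) is, via the definitions \eqref{eq:def_A_ji}--\eqref{eq:def_A_kc}, algebraically equivalent to a relation already proved in Corollary~\ref{cor:pres_PMap_free_prod} (relation (4) there), namely $\Twist_{21}\TwistP_{2\NPct}\TwistP_{1\NPct}=\TwistP_{1\NPct}\Twist_{21}\TwistP_{2\NPct}$ and its $\TwistC$-analogue. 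Both approaches are sound; yours is arguably more self-contained at the level of this lemma since it does not route through the pure presentation, while the paper's avoids any appeal to classical presentations of the disk braid group and instead reuses the internal machinery already developed. One small imprecision worth noting: $\piMap$ is an isomorphism, not literally a quotient map, although it is built from the quotient $\Sigma(\ThePct,\TheCone)\to\Sigma(\ThePct,\TheCone)/\freeprod\cong D(\ThePct,\TheCone)$, so your phrasing is slightly loose but the underlying idea is fine. Finally, for the long list of conjugation formulas, you give a mechanism (expand via \eqref{eq:def_X_Strand}--\eqref{eq:def_X_tau_nu}, commute past disjoint-support factors, invoke a braid or core relation) but do not carry out the computation; the paper is in exactly the same position, deferring the verification to an external reference, so neither treatment is more complete than the other on that point.
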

\begin{proof}
The braid and commutator relations for $H_1,...,H_{\TheStrand-1}$ in \ref{cor:pres_PMap_free_prod_rel1} follow as in the surface case. 

The relations in \ref{lem:gens_sat_rels_mcg_rel3} are reformulations of 
\[
\Twist_{21}\TwistC_{2\nu}\TwistC_{1\nu}=\TwistC_{1\nu}\Twist_{21}\TwistC_{2\nu} \text{ and } \Twist_{21}\TwistP_{2\NPct}\TwistP_{1\NPct}=\TwistP_{1\NPct}\Twist_{21}\TwistC_{2\NPct}
\]
from relation \ref{cor:pres_PMap_free_prod_rel4}. Using the definitions of $\Twist_{21}$, $\TwistP_{2\NPct}$ and $\TwistC_{2\nu}$ from (\ref{eq:def_A_ji}), (\ref{eq:def_A_kr}) and (\ref{eq:def_A_kc}), these relations are equivalent to   
\begin{align*}
(H_1^{\textcolor{\short}{2}})\textcolor{\short}{(H_1^{-1}}\TwC{\nu}H_1)\TwC{\nu} & =\TwC{\nu}(H_1^{\textcolor{\short}{2}})\textcolor{\short}{(H_1^{-1}}\TwC{\nu}H_1) \text{ and }
\\
(H_1^{\textcolor{\short}{2}})\textcolor{\short}{(H_1^{-1}}\TwP{\NPct}H_1)\TwP{\NPct} & =\TwP{\NPct}(H_1^{\textcolor{\short}{2}})\textcolor{\short}{(H_1^{-1}}\TwP{\NPct}H_1). 
\end{align*}
These are equivalent to $[\TwC{\nu},H_1\TwC{\nu}H_1]=1$ and $[\TwP{\NPct},H_1\TwP{\NPct}H_1]=1$. 

The commutator relations \ref{lem:gens_sat_rels_mcg_rel2} and \ref{lem:gens_sat_rels_mcg_rel4} are a direct consequence of the fact that the commuting mapping classes can be realized by homeomorphisms with disjoint supports (see Figure~\ref{fig:gens_sat_rels_mcg}). 
\begin{figure}[H]
\centerline{\import{Grafiken/orb_mcg_marked_pts/}{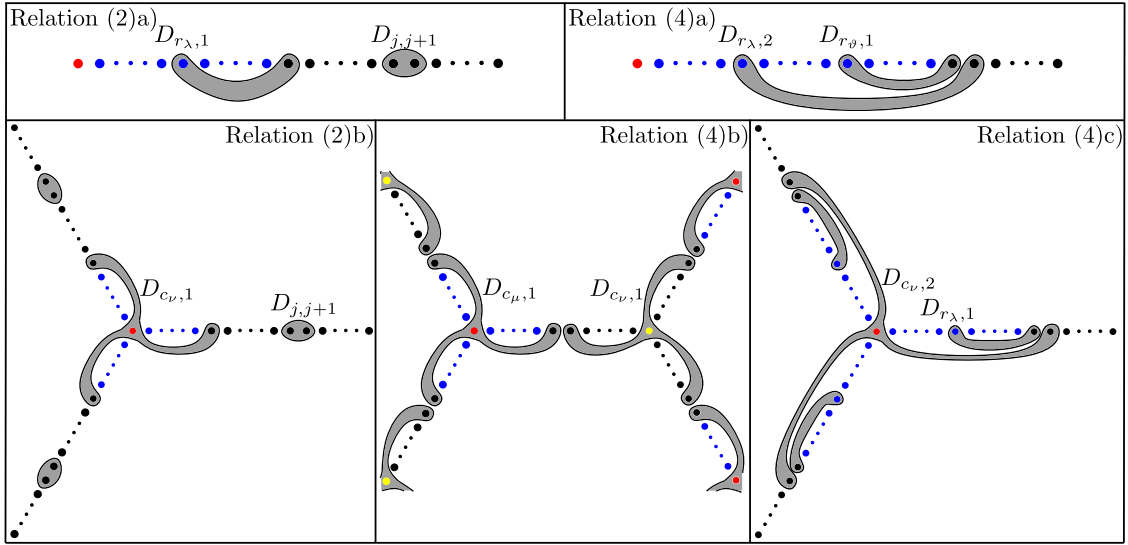}}
\caption{Observation of the relations \ref{lem:gens_sat_rels_mcg_rel2}-\ref{lem:gens_sat_rels_mcg_rel4} by consideration of the supporting disks.}
\label{fig:gens_sat_rels_mcg}
\end{figure}

\new{For the proof that the above conjugation relations follow from the relations \ref{lem:gens_sat_rels_mcg_rel1} to \ref{lem:gens_sat_rels_mcg_rel4}, we refer to \cite[Lemma 5.24]{Flechsig2023}.} 
\end{proof}

The above relations and the semidirect product structure together allow us to deduce a presentation of $\MapIdOrb{\TheStrand}{\Sigma_\freeprod(\ThePct)}$. The argument is motivated by the braid combing in \cite[Proposition 3.1]{GonzalezMeneses2010}. 

\begin{proposition}
\label{prop:pres_map_kcp}
\new{For $\TheStrand\geq1$,} the group $\MapIdOrb{\TheStrand}{\Sigma_\freeprod(\ThePct)}$ is presented by generators
\[
H_1,...,H_{\TheStrand-1},\TwP{1},...,\TwP{\ThePct},\TwC{1},...,\TwC{\TheCone}
\]
and defining relations for $2\leq\Strand<\TheStrand$, $1\leq\Pc,\NPct\leq\ThePct$ with $\Pc<\NPct$ and $1\leq\mu,\nu\leq\TheCone$ with $\mu<\nu$: 
\begin{enumerate}
\item \label{prop:pres_map_kcp_rel1}
braid and commutator relations for the generators $H_1,...,H_{\TheStrand-1}$, 
\item \label{prop:pres_map_kcp_rel2}
\begin{enumerate}
\item \label{prop:pres_map_kcp_rel2a}
$[\TwP{\NPct},H_\Strand]=1$, 
\item \label{prop:pres_map_kcp_rel2b}$[\TwC{\nu},H_\Strand]=1$, 
\end{enumerate}
\item \label{prop:pres_map_kcp_rel3}
\begin{enumerate}
\item \label{prop:pres_map_kcp_rel3a}
$[H_1\TwP{\NPct}H_1,\TwP{\NPct}]=1$, 
\item \label{prop:pres_map_kcp_rel3b} 
$[H_1\TwC{\nu} H_1,\TwC{\nu}]=1$ and 
\end{enumerate}
\item \label{prop:pres_map_kcp_rel4}
\begin{enumerate}
\item \label{prop:pres_map_kcp_rel4a} 
$[\TwP{\Pc},\TwistP_{2\NPct}]=1$ \; for \; $\TwistP_{2\NPct}=H_1^{-1}\TwP{\NPct}H_1$, 
\item \label{prop:pres_map_kcp_rel4b} 
$[\TwC{\mu},\TwistC_{2\nu}]=1$ \; for \; $\TwistC_{2\nu}=H_1^{-1}\TwC{\nu}H_1$ and
\item \label{prop:pres_map_kcp_rel4c}
$[\TwP{\Pc},\TwistC_{2\nu}]=1$ \; for \; $\TwistC_{2\nu}=H_1^{-1}\TwC{\nu}H_1$. 

\end{enumerate}
\end{enumerate}
\end{proposition}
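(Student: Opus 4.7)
The plan is to proceed by induction on $\TheStrand$, combining the short exact sequence
\[
1 \to \PMapIdOrb{\TheStrand}{\Sigma_\freeprod(\ThePct)} \to \MapIdOrb{\TheStrand}{\Sigma_\freeprod(\ThePct)} \to \Sym_\TheStrand \to 1
\]
with the pure presentation of Corollary \ref{cor:pres_PMap_free_prod}. The base case $\TheStrand = 1$ is immediate: all the listed relations become vacuous and both sides reduce to the free group $F_{\ThePct+\TheCone}$. For the inductive step, let $G$ denote the group with the proposed presentation. Lemmas \ref{lem:orb_mcg_n_gen_set} and \ref{lem:gens_sat_rels_mcg} supply a surjection $\phi : G \twoheadrightarrow \MapIdOrb{\TheStrand}{\Sigma_\freeprod(\ThePct)}$, so the task reduces to verifying injectivity of $\phi$.

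Inside $G$, we define the lifts $\Twist_{\Strand\Str}$, $\TwistP_{\NStrand\NPct}$, $\TwistC_{\NStrand\nu}$ by the formulas \eqref{eq:def_A_ji}, \eqref{eq:def_A_kr}, \eqref{eq:def_A_kc}, and let $P\leq G$ denote the subgroup they generate. The first step will be to verify that the full list of conjugation relations from the end of Lemma \ref{lem:gens_sat_rels_mcg} can be deduced in $G$ from the short list \ref{prop:pres_map_kcp_rel1}--\ref{prop:pres_map_kcp_rel4}, a case-by-case derivation carried out in detail in \cite[Lemma 5.24]{Flechsig2023}. Once this is in hand, conjugating each generator of $P$ by any of $H_\Strand$, $\TwP{\NPct}$, $\TwC{\nu}$ produces a word in the generators of $P$, so $P$ is normal in $G$.

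Next we verify that every defining relation of the pure presentation in Corollary \ref{cor:pres_PMap_free_prod} holds among $\Twist_{\Strand\Str}, \TwistP_{\NStrand\NPct}, \TwistC_{\NStrand\nu}$ in $P$. This is the main obstacle, since it requires a systematic use of the conjugation formulas together with the braid and commutator relations on the $H_i$ to translate relations between generators with ``low'' subscripts into the corresponding relations with ``high'' subscripts. Once established, it yields a surjection $\pi_P : \PMapIdOrb{\TheStrand}{\Sigma_\freeprod(\ThePct)} \twoheadrightarrow P$; since $\phi|_P \circ \pi_P$ is the identity on generators, $\phi|_P$ is an isomorphism. To identify $G/P$, note that $\TwP{\NPct} = \TwistP_{1\NPct}$ and $\TwC{\nu} = \TwistC_{1\nu}$ become trivial in the quotient, as does $H_\Strand^2 = \Twist_{\Strand+1,\Strand}$; the surviving relations are precisely the braid and commutator relations on $H_1,\dots,H_{\TheStrand-1}$ together with $H_\Strand^2 = 1$, so $G/P \cong \Sym_\TheStrand$.

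Applying the five lemma to the commutative diagram
\begin{center}
\begin{tikzcd}[column sep=25pt]
1 \ar[r] & P \ar[r]\ar[d,"\phi|_P"] & G \ar[r]\ar[d,"\phi"] & G/P \ar[r]\ar[d] & 1 \\
1 \ar[r] & \PMapIdOrb{\TheStrand}{\Sigma_\freeprod(\ThePct)} \ar[r] & \MapIdOrb{\TheStrand}{\Sigma_\freeprod(\ThePct)} \ar[r] & \Sym_\TheStrand \ar[r] & 1
\end{tikzcd}
\end{center}
with both outer vertical maps isomorphisms then forces $\phi$ to be an isomorphism. The guiding heuristic throughout is braid combing as in \cite[Proposition 3.1]{GonzalezMeneses2010}: the compact presentation packages the action of the half-twist generators on the point-pushing generators, and the bulk of the work lies in unpacking that action to recover the full pure presentation.
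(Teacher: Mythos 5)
Your proposal takes a genuinely different route from the paper, but leaves the hardest step open. You propose to build a subgroup $P\le G$ generated by the lifted pure generators, identify $P\cong\PMapIdOrb{\TheStrand}{\Sigma_\freeprod(\ThePct)}$ and $G/P\cong\Sym_\TheStrand$, and close with the short five lemma. The paper instead argues directly on words: given $W\in\ker\varphi$, it first rewrites $W$ in the pure generators $\Twist_{\Strand\Str},\TwistP_{\NStrand\NPct},\TwistC_{\NStrand\nu}$ (possible since $\varphi(W)$ is pure, citing \cite[Theorem~3.15]{Flechsig2023}), then uses the conjugation formulas of Lemma~\ref{lem:gens_sat_rels_mcg} to comb $W$ into a normal form $W_1W_2$, with $W_1$ a word in the top-level generators $\Twistn{\Strand},\TwistnP{\NPct},\TwistnC{\nu}$ and $W_2$ a word in the generators of $\MapIdOrb{\TheStrand-1}{\Sigma_\freeprod(\ThePct)}$. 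Then $\varphi(W_1)=1$ in the free group $\freegrp{\TheStrand-1+\ThePct+\TheCone}$ forces $W_1=1$, and the induction hypothesis gives $W_2=1$. Crucially, the paper's argument never needs to verify the pure presentation of Corollary~\ref{cor:pres_PMap_free_prod} inside $G$; it only uses the semidirect product structure from Corollary~\ref{cor:pure_orb_mcg_ses} and the freeness of its kernel factor.

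Your approach, by contrast, requires exactly that verification: the map $\pi_P:\PMapIdOrb{\TheStrand}{\Sigma_\freeprod(\ThePct)}\twoheadrightarrow P$ only exists once every defining relation of Corollary~\ref{cor:pres_PMap_free_prod} has been derived among the lifted $\Twist_{\Strand\Str},\TwistP_{\NStrand\NPct},\TwistC_{\NStrand\nu}$ from your compact list (1)--(4). You flag this as ``the main obstacle'' but do not carry it out, and it is substantial --- four families of relations over all admissible index patterns, essentially reproving Corollary~\ref{cor:pres_PMap_free_prod} formally inside $G$. A secondary imprecision: normality of $P$ is not a one-line consequence of the conjugation formulas, since those only record how $H_\Strand,\TwP{\NPct},\TwC{\nu}$ act on $\Twistn{\Strand},\TwistnP{\NPct},\TwistnC{\nu}$; you still need to conjugate the lower-level generators $\Twist_{\Strand\Str},\TwistP_{\NStrand\NPct},\TwistC_{\NStrand\nu}$ with $\Strand,\NStrand<\TheStrand$ by $H_{\TheStrand-1}$ and check they land in $P$ (this is true, e.g.\ via $\Twist_{(\TheStrand-1)\Str}=H_{\TheStrand-1}\Twistn{\Str}H_{\TheStrand-1}^{-1}$ from \eqref{eq:def_A_ji}, but it is not supplied by Lemma~\ref{lem:gens_sat_rels_mcg} alone). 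The skeleton of your argument is sound, but with these two verifications missing it does not yet constitute a proof, and the paper's combing argument is the more economical route.
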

\begin{proof}
For $\TheStrand=1$, the group presented above is free of rank $\ThePct+\TheCone$. On the other hand, for $\TheStrand=1$, we have $\MapIdOrb{1}{\Sigma_\freeprod(\ThePct)}=\PMapIdOrb{1}{\Sigma_\freeprod(\ThePct)}$ and the latter group is free of rank $\ThePct+\TheCone$ by Corollary \ref{cor:pure_orb_mcg_ses}. 

\new{For} $\TheStrand\geq2$, we suppose that $\MapIdOrb{\TheStrand-1}{\Sigma_\freeprod(\ThePct)}$ has a presentation as claimed above. Further, we know from Lemma \ref{lem:orb_mcg_n_gen_set} that $\MapIdOrb{\TheStrand}{\Sigma_\freeprod(\ThePct)}$ is generated by $H_1,..,H_{\TheStrand-1},\TwP{1},...,\TwP{\ThePct},\TwC{1},...,\TwC{\TheCone}$. \new{Due to Lemma \ref{lem:gens_sat_rels_mcg},} these generators satisfy the above relations. Hence, 
we have a surjective homomorphism $\varphi$ from the group with the above presentation onto $\MapIdOrb{\TheStrand}{\Sigma_\freeprod(\ThePct)}$. It remains to check that this homomorphism is also injective. 
\\
\new{Let $W=\sigma_1^{\varepsilon_1}...\sigma_\TheSubdivision^{\varepsilon_\TheSubdivision}$ with $\sigma_\Subdivision\in\left\{H_\Strand,\TwP{\NPct},\TwC{\nu}\mid1\leq\Strand<\TheStrand,1\leq\NPct\leq\ThePct,1\leq\nu\leq\TheCone\right\}$ and $\varepsilon_\Subdivision\in\{\pm1\}$ be a word in the kernel of the homomorphism $\varphi$. Using that the word represents a pure mapping class, we can rewrite $W$ as a word in letters $\Twist_{\Strand\Str},\TwistP_{\NStrand\NPct}$ and $\TwistC_{\NStrand\nu}$ for $1\leq\Str,\Strand,\NStrand\leq\TheStrand, \Str<\Strand$, $1\leq\NPct\leq\ThePct$ and $1\leq\nu\leq\TheCone$ using the abbreviations from \eqref{eq:def_A_ji} to \eqref{eq:def_A_kc}, 
see \cite[Theorem 3.15]{Flechsig2023} for details on an analogous rewriting. In particular, this rewriting only uses the above relations. If we further use the abbreviations $\Twistn{\Strand}, \TwistnP{\NPct}$ and $\TwistnC{\nu}$ as introduced in \eqref{eq:def_X_Strand} to \eqref{eq:def_X_tau_nu}, then the conjugation relations from Lemma \ref{lem:gens_sat_rels_mcg} allow us to rewrite $W$ as a product} 
\[
W_1(\Twistn{1},...,\Twistn{\TheStrand-1},\TwistnP{1},...,\TwistnP{\ThePct},\TwistnC{1},...,\TwistnC{\TheCone})\cdot W_2(H_1,...,H_{\TheStrand-2},\TwP{1},...,\TwP{\ThePct},\TwC{1},...,\TwC{\TheCone}). 
\]
Since $\varphi(W)=1$ is contained in the pure orbifold mapping class group, we can use the semidirect product structure 
\[
\PMapIdOrb{\TheStrand}{\Sigma_\freeprod(\ThePct)}=\freegrp{\TheStrand-1+\ThePct+\TheCone}\rtimes\PMapIdOrb{\TheStrand-1}{\Sigma_\freeprod(\ThePct)} 
\]
proven in Theorem \ref{cor:pure_orb_mcg_ses}. The letters used in $W_1$ imply that $\varphi(W_1)$ is contained in the free group $\freegrp{\TheStrand-1+\ThePct+\TheCone}$. In particular, $\varphi(W_1)$ is pure. Thus, the word $\varphi(W_2)$ is also pure and contained in $\PMapIdOrb{\TheStrand-1}{\Sigma_\freeprod(\ThePct)}$. This implies that $\varphi(W_1)\cdot\varphi(W_2)$ is the unique decomposition of $\varphi(W)=1$ into the normal subgroup and quotient of the semidirect product, i.e.\ $\varphi(W_1)=1$ in $\freegrp{\TheStrand-1+\ThePct+\TheCone}$ and $\varphi(W_2)=1$ in $\PMapIdOrb{\TheStrand-1}{\Sigma_\freeprod(\ThePct)}$. The first equation directly implies $W_1=1$. Using the induction hypothesis, we further obtain $W_2=1$. Thus, $\varphi$ is an isomorphism and $\MapIdOrb{\TheStrand}{\Sigma_\freeprod(\ThePct)}$ has the above presentation. 
\end{proof}

\bibliographystyle{plain}
\bibliography{paper_mcg}

\begin{thebibliography}{10}

\bibitem{Allcock2002}
D.~Allcock.
\newblock Braid pictures for {A}rtin groups.
\newblock {\em Trans. Am. Math. Soc.}, 354(9):3455--3474, 2002.

\bibitem{BridsonHaefliger2011}
M.~R. Bridson and A.~Haefliger.
\newblock {\em Metric Spaces of Non-Positive Curvature}.
\newblock Grundlehren der mathematischen Wissenschaften. Springer Berlin
  Heidelberg, 2011.

\bibitem{CrispParis2005}
J.~Crisp and L.~Paris.
\newblock Artin groups of type {$B$} and {$D$}.
\newblock {\em Adv. Geom.}, 5(4):607--636, 2005.

\bibitem{Dugundji1966}
J.~Dugundji.
\newblock {\em Topology}.
\newblock Allyn and Bacon, Inc., Boston, 1966.

\bibitem{FarbMargalit2011}
B.~Farb and D.~Margalit.
\newblock {\em A primer on mapping class groups}, volume~49 of {\em Princeton
  Mathematical Series}.
\newblock Princeton University Press, Princeton, 2012.

\bibitem{Flechsig2023braid}
J.~Flechsig.
\newblock Braid groups and mapping class groups for $2$-orbifolds, 2023.
\newblock Preprint.

\bibitem{Flechsig2023}
J.~Flechsig.
\newblock Braid groups and mapping class groups for $2$-orbifolds, 2023.
\newblock PhD thesis, Bielefeld.

\bibitem{GonzalezMeneses2010}
J.~Gonz\'{a}lez-Meneses.
\newblock Basic results on braid groups.
\newblock {\em Ann. Math. Blaise Pascal}, 18(1):15--59, 2011.

\bibitem{Hatcher1991}
A.~Hatcher.
\newblock On triangulations of surfaces.
\newblock {\em Topology and its Applications}, 40(2):189--194, 1991.

\bibitem{Rotman2012}
J.~J. Rotman.
\newblock {\em An introduction to the theory of groups}, volume 148 of {\em
  Graduate Texts in Mathematics}.
\newblock Springer, New York, 1995.

\bibitem{Roushon2021a}
S.~K. Roushon.
\newblock A certain structure of {A}rtin groups and the isomorphism conjecture.
\newblock {\em Can. J. Math.}, 73(4):1153--1170, 2021.

\bibitem{Roushon2021}
S.~K. Roushon.
\newblock Configuration {L}ie groupoids and orbifold braid groups.
\newblock {\em Bull. des Sci. Math.}, 171:Paper No. 103028, 35, 2021.

\bibitem{Roushon2022b}
S.~K. Roushon.
\newblock Almost-quasifibrations and fundamental groups of orbit configuration
  spaces, 2022.
\newblock arxiv:2111.06159.

\bibitem{Roushon2023}
S.~K. Roushon.
\newblock Orbifold braid groups, 2022.
\newblock arxiv:2301.02043.

\bibitem{Roushon2022a}
S.~K. Roushon.
\newblock Quasifibrations in configuration lie groupoids and orbifold braid
  groups, 2022.
\newblock arxiv:2106.08110.

\bibitem{Thurston1979}
W.~P. Thurston.
\newblock The geometry and topology of 3-manifolds, 1979.
\newblock http://library.msri.org/books/gt3m/.

\end{thebibliography}

\end{document}